\newcommand{\Gproj}{{\rm Gp}}
\newcommand{\gldim}{{\rm gl.dim}}
\newcommand{\gdim}{{\rm gl.dim}}
\newcommand{\Gp}{{\rm Gp}}
\newcommand{\Hom}{\operatorname{Hom}}
\newcommand{\End}{\operatorname{End}}
\newcommand{\Ker}{\operatorname{Ker}}
\newcommand{\cok}{\operatorname{Coker}}
\newcommand{\Ima}{\operatorname{Im}}
\newcommand{\rad}{\operatorname{rad}}
\newcommand{\Tor}{\operatorname{Tor}}
\newcommand{\Ext}{\operatorname{Ext}}
\newcommand{\add}{\operatorname{add}}
\newcommand{\op}{\operatorname{op}}
\newcommand{\rep}{{\rm rep}}
\newcommand{\Mon}{{\rm Mon}}
\newcommand{\modd}{\textendash\operatorname{mod}}
\newcommand{\dmod}{\operatorname{mod}\textendash}
\newcommand{\Refl}{\operatorname{Refl}}
\newcommand{\lxr}{\longrightarrow}
\newcommand{\iso}{\cong}
\newcommand{\modu}{\mbox{-}\operatorname{mod}}
\newcommand{\im}{\operatorname{Im}}
\DeclareMathOperator{\pd}{proj.dim}%
\DeclareMathOperator{\id}{\operatorname{Id}}
\DeclareMathOperator{\domdim}{domdim.}%
\DeclareMathOperator{\Dom}{Dom}%
\DeclareMathOperator{\findim}{fin.dim.}%
\newcommand{\calL}{{\mathcal L}}
\newtheorem{thm}{Theorem}[section]
\newtheorem{cor}[thm]{Corollary}
\newtheorem{lemdef}[thm]{Lemma-Definition}
\newtheorem{lem}[thm]{Lemma}
\newtheorem{exam}[thm]{Example}
\newtheorem{prop}[thm]{Proposition}
\newtheorem{rem}[thm]{Remark}
\newtheorem{defn}[thm]{Definition}
\title[Weakly Gorensteinness of tensor algebras and Morita algebras]{Weakly Gorensteinness  of \\ tensor algebras and Morita algebras}
\author{Nan Gao, \ \ Pu Zhang$^*$, \ \ Shijie Zhu}
\thanks{$^*$ Corresponding author}
\thanks{nangao@shu.edu.cn  \ \ \ \ pzhang$\symbol{64}$sjtu.edu.cn \ \ \ \ shijiezhu@ntu.edu.cn}
\thanks{Supported by  National Natural Science Foundation of China $($Grant No. 12131015, 12201321, 12271333$)$.}
\address{Nan Gao, Department of Mathematics, Shanghai University, Shanghai 200444, PR China}
\address{Pu Zhang, School of Mathematical Sciences, Shanghai Jiao Tong University, Shanghai 200240, PR China}
\address{Shijie Zhu, School of Mathematics And Statistics, Nantong University, Nantong 226019, PR China}
\begin{document}

\maketitle

\begin{abstract} \ An algebra $A$ is left weakly Gorenstein if any semi-Gorenstein-projective left $A$-modules
is Gorenstein-projective. The weakly Gorensteinness of two kinds of algebras are answered. Using the method of the monomorphism category,
it is proved that the tensor algebra $A\otimes B$ with ${\rm gl.dim} B< \infty$ is left weakly Gorenstein if and only if so is $A$.
For a class of Morita algebras $\Lambda=\begin{pmatrix}\begin{smallmatrix}
A & N \\
M & B \\
\end{smallmatrix}\end{pmatrix}$, the (semi-)Gorenstein-projective left $\Lambda$-modules are computed and described; and then it is proved that  $\Lambda$ is left weakly Gorenstein if and only if so are $A$ and $B$.
As an application, the upper triangular matrix algebra $T_n(A)$ is left weakly Gorenstein if and only if so is $A$.

\vskip5pt

{\it Key words and phrases.  $($semi-$)$Gorenstein-projective modules, torsionless modules, reflexive modules, left weakly Gorenstein algebra,  monomorphism category, tensor algebra, Morita ring }

\vskip5pt

2020 Mathematics Subject Classification. Primary 16G10, 16E65, 18G15, 16E30.
\end{abstract}

\section{\bf Introduction}

It is a fundamental problem to judge whether an algebra is left weakly Gorenstein, which is hard in general due to the lack of explicit characterizations of
(semi-)Gorenstein-projective modules. Meanwhile, the problem has its own flavor and significance, for their wide applications, for example in the representation theory,
relative homological algebra, and the singularity theory.
See e.g. \cite{Buch}, \cite{AR1}, \cite{EJ2}, \cite{V}, \cite{Bel}, \cite{Chr}, \cite{Chen}, \cite{RZ2}.

\vskip5pt

The aim of this paper is to answer this problem for two classes of algebras:
one is the tensor algebra $\Lambda = A\otimes_k B$ with ${\rm gl.dim} B< \infty$, where $A$ and $B$ are finite-dimensional algebras over a field $k$, and another is the Morita algebra $\Lambda = \Lambda_{(\phi, \psi)}=\begin{pmatrix}\begin{smallmatrix}
A & N \\
M & B \\
\end{smallmatrix}\end{pmatrix}$ with specified datum $(A, B, M, N, \phi, \psi)$.
Homological theoretic aspect of tensor algebras has got importance since
H. Cartan and S. Eilenberg's work \cite{CE56}. The Morita algebras, formulated by H. Bass \cite{B}, originating from
the Morita equivalences \cite{Mor}, are widely used in many branches of mathematics.

\vskip5pt

We apply different methods for the two classes of algebras. Since there are no descriptions for all the modules over a tensor algebra,
we adopt conceptual and constructive arguments via the so-called
monomorphism category. This monomorphism categories have been developed from C. M. Ringel and M. Schmidmeier's work \cite{RS}, and D. Simson's work \cite {S} on submodule categories.
While for the Morita algebras $\Lambda$,  we heavily  use the diagram description of $\Lambda$-modules given by E. L. Green \cite{Gr} (see also \cite{GrP}),  homological computations, and analysis on exactness.

\subsection{Gorenstein-projective modules  and left weakly Gorenstein algebras}

A module $M$ over a ring $R$ is  {\it semi-Gorenstein-projective} if $\Ext^i_R(M, R)=0$ for $i\ge 1$.
Consider the $R$-dual $M^* = \Hom_R(M,R)$ and the canonical $R$-homomorphism $\phi_M: M\longrightarrow M^{**}$, given by $\phi_M(m)(f) = f(m)$ for $m\in M$ and $f\in M^*$.
If  $\phi_M$ is injective (bijective, respectively), then $M$ is called {\it torsionless} ({\it reflexive}, respectively).
For a finitely generated left module $M$ over a two-sided noetherian ring $R$, M. Auslander [A] has introduced the following conditions:

\vskip5pt

(G1) \  $M$ is semi-Gorenstein-projective;

(G2)  \ $M^*$ is a semi-Gorenstein-projective right $R$-module; and

(G3) \ $M$ is reflexive.

\vskip5pt

Let $\mathcal A$ be an abelian category with enough projective objects. {\it A complete projective resolution} in $\mathcal A$ is an acyclic complex \ $P^\bullet: \ \cdots\longrightarrow P^{-1}\longrightarrow P^0\stackrel{d_0}\longrightarrow P^1\longrightarrow \cdots $ of projective objects of $\mathcal A$, such that $\Hom_\mathcal A(P^\bullet, P)$ is again acyclic for any projective object $P$ of $\mathcal A$.
An object $G$ is {\it Gorenstein-projective}, if there is a complete projective resolution $P^\bullet$ such that $G\cong {\rm Ker} d_0.$ See E. E. Enochs and O. M. G. Jenda  \cite{EJ1}.

\vskip5pt

Throughout this paper, unless otherwise specified, we consider finitely generated modules over Artin algebras.
Let $A\mbox{-}{\rm mod}$ (${\rm mod}A$, respectively) be the category of finitely generated left (right, respectively) modules over Artin algebra $A$, and $\Gp(A)$ the full subcategory
of $A\mbox{-}{\rm mod}$ consisting of Goresnstein-projective objects in $A\mbox{-}{\rm mod}$, which are called {\it the Goresnstein-projective modules}. Let \ $^\perp A$ denote
the full subcategory of $A\mbox{-}{\rm mod}$ consisting of semi-Goresnstein-projective modules. All the tensor product will be over field $k$, if not  otherwise specified.

\vskip5pt

An important feature is that a finitely generated module $M$ over an Artin algebra $A$ is Gorenstein-projective if and only if $M$ satisfies
the conditions (G1), (G2), and (G3) (see L. W. Christensen [Chr, Theorem 4.2.6]; also M. Auslander and M. Bridger [AB, Proposition 3.8]).
It was a longstanding problem  whether the
conditions (G1), (G2), (G3) are independent (see L. L. Avramov and A. Martsinkovsky \cite [p.398]{AM}).
This problem has been completely solved recently: D. A. Jorgensen and L. M. \c Sega [JS] present the modules satisfying (G1) and (G3), but not (G2),
and also modules satisfying (G2) and (G3), but not (G1); modules satisfying (G1) and (G2), but not torsionless,
have been given by C. M. Ringel and P. Zhang \cite {RZ2}. It is also natural to study
semi-Gorensten-projective modules, torsionless modules, reflexive modules, and Gorenstein-projective modules, separately.

\vskip5pt

Following \cite{RZ2}, an algebra $A$ is {\it left weakly Gorenstein}, if \ $^\perp A = {\rm Gp}(A)$, i.e., any semi-Gorentein-projective left $A$-module is Gorenstein-projective.
Similarly, one has the notion of {\it a right weakly Gorenstein algebra}. It is an open problem whether a left weakly Gorenstein algebra is right weakly Gorenstein (\cite{Mar2}; \cite[Subsection 9.3]{RZ2}).
A Gorenstein algebra $A$ (i.e., ${\rm inj. dim} _AA < \infty$ and ${\rm inj. dim} A_A < \infty$) is left and right weakly Gorenstein (see \cite[Corollary 11.5.3]{EJ2}),
but the converse is not true (see e.g. \cite [Subsection 1.5]{GLZ}).
There are indeed many algebras which are not left weakly Gorenstein (see \cite {JS}, \cite {Mar1}, \cite {RZ2}).
For more information on left weakly Gorenstein algebras we refer to \cite[Theorems 1.2 - 1.4]{RZ2} and references there.

\subsection{\bf Torsionless and reflexive modules on tensor algebras}

Using the compatibility of the Cartan-Eilenberg isomorphism with the canonical homomorphism $\phi$ (see Proposition \ref{lem:phi_tensor}) and Proposition \ref{tensorker}, one can
prove
\begin{thm} \ {\rm (Theorem \ref{thm:main_tensor})} \ Let $A$ and $B$ be finite-dimensional $k$-algebras, $X\in A$-{\rm mod} and $Y\in B$-{\rm mod}. Then

\vskip5pt

$(1)$ \ If $X \ne 0 \ne Y$, then $X\otimes Y$ is a torsionless $(A\otimes B)$-module if and only if  $X$ is a torsionless $A$-module and $Y$ is a torsionless $B$-module.

\vskip5pt

$(2)$ \ If $X \ne 0 \ne Y$, then $X\otimes Y$ is a reflexive $(A\otimes B)$-module if and only if  $X$ is a reflexive $A$-module and $Y$ is a reflexive $B$-module.
\end{thm}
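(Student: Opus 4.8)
The plan is to reduce both statements to the behavior of the canonical map $\phi$ under the Cartan–Eilenberg tensor identity. Recall that for $X \in A\text{-mod}$ and $Y \in B\text{-mod}$ one has a natural isomorphism $(X\otimes Y)^* \cong X^* \otimes Y^*$ of $(A\otimes B)$-modules (this is the Cartan–Eilenberg isomorphism for $\Hom$, using finite-dimensionality so that $\Hom_A(X,A)\otimes\Hom_B(Y,B)\to\Hom_{A\otimes B}(X\otimes Y, A\otimes B)$ is an isomorphism), and iterating gives $(X\otimes Y)^{**}\cong X^{**}\otimes Y^{**}$. By Proposition \ref{lem:phi_tensor} these isomorphisms are compatible with the canonical maps, i.e. there is a commutative square identifying $\phi_{X\otimes Y}$ with $\phi_X \otimes \phi_Y$ up to the natural isomorphisms above. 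So the whole theorem becomes a statement about when a tensor product $\phi_X\otimes\phi_Y$ of $k$-linear maps is injective, respectively bijective.

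First I would record the elementary linear-algebra fact that, for $k$-linear maps $f\colon U\to U'$ and $g\colon V\to V'$ between finite-dimensional $k$-spaces with $U\neq 0\neq V$, the map $f\otimes g$ is injective (resp. bijective) if and only if both $f$ and $g$ are injective (resp. bijective): one direction is immediate since $\otimes_k$ is exact, and the converse uses that $\Ker(f\otimes g) \supseteq \Ker f \otimes V + U \otimes \Ker g$, together with the rank identity $\operatorname{rank}(f\otimes g)=\operatorname{rank}(f)\cdot\operatorname{rank}(g)$, and the nonvanishing of $U$ and $V$ to rule out the degenerate cases. For part (1), apply this with $f=\phi_X$, $g=\phi_Y$, using that $\phi_{X\otimes Y}$ corresponds to $\phi_X\otimes\phi_Y$: then $X\otimes Y$ torsionless $\iff \phi_X\otimes\phi_Y$ injective $\iff \phi_X$ and $\phi_Y$ both injective $\iff X$ torsionless and $Y$ torsionless. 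For part (2), replace "injective" by "bijective" throughout and the same argument gives reflexivity. Here I would also invoke Proposition \ref{tensorker} if it is needed to identify $\Ker\phi_{X\otimes Y}$ (or the relevant kernel/cokernel) intrinsically rather than merely up to the abstract isomorphism.

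The main obstacle is making the identification of $\phi_{X\otimes Y}$ with $\phi_X\otimes\phi_Y$ genuinely precise: one must check that the Cartan–Eilenberg isomorphism $(X\otimes Y)^*\cong X^*\otimes Y^*$ is natural in both variables and that applying $(-)^*$ once more yields the square
\[
\begin{CD}
X\otimes Y @>{\phi_X\otimes\phi_Y}>> X^{**}\otimes Y^{**} \\
@| @VV{\cong}V \\
X\otimes Y @>{\phi_{X\otimes Y}}>> (X\otimes Y)^{**}
\end{CD}
\]
commute — but this is exactly the content of Proposition \ref{lem:phi_tensor}, so we may cite it. A secondary point of care is the hypothesis $X\neq 0\neq Y$: without it the "if" direction of the equivalences fails (e.g. $X=0$ makes $X\otimes Y$ vacuously reflexive while $Y$ need not be), so I would flag explicitly where nonvanishing of $X$ and $Y$ is used in the linear-algebra lemma. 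Everything else is bookkeeping with natural transformations and dimension counts.
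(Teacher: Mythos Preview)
Your proposal is correct and follows essentially the same approach as the paper: use Proposition~\ref{lem:phi_tensor} to identify $\phi_{X\otimes Y}$ with $\phi_X\otimes\phi_Y$ up to the Cartan--Eilenberg isomorphisms, then use Proposition~\ref{tensorker} (or the equivalent elementary linear-algebra fact you record) to conclude that $\phi_X\otimes\phi_Y$ is injective/bijective if and only if both factors are. One small slip: the nonvanishing hypothesis $X\ne 0\ne Y$ is needed for the \emph{only if} direction, not the \emph{if} direction (your example with $X=0$ is exactly a failure of ``$X\otimes Y$ reflexive $\Rightarrow$ $Y$ reflexive'').
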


\subsection{Weakly Gorensteinness  of tensor algebras} \ Many $(A\otimes B)$-modules are not of the form $X\otimes Y$, where $X$ is an $A$-module and $Y$ is a $B$-module; and in general,
there are no descriptions for all the $(A\otimes B)$-modules. This is one of the main difficulty in dealing with the tensor algebras.

\vskip5pt

If $B = kQ/I$, where $Q$ is an acyclic quiver, and $I$ is an admissible ideal of the path algebra $kQ$,
then any $(A\otimes B)$-module can be described as a representation of bound quiver $(Q, I)$ over the algebra $A$.
From this viewpoint, the category $\Gp(A\otimes B)$ of the Gorenstein-projective $(A\otimes B)$-modules can described via the monomorphism category $\Mon(A, Q, I)$, when $I$ is an admissible ideal of $kQ$ generated by monomial relations (see \cite[Theorem 5.1]{LZ1}, \cite[Theorem 4.1]{LZ2}). This monomorphism category is defined via the combinatorics of the bound quiver $(Q, I)$, but also admits homological interpretation (see \cite[Theorem 2.6]{ZX}).
With this homological interpretation, the monomorphism category $\Mon(B, \ ^\perp A)$ can be also defined for any finite-dimensional $k$-algebra $B$ (which is not necessarily of the form $kQ/I$) in \cite [Definitions 3.1, 3.6] {HLXZ}.
For more applications of monomorphism categories we refer to \cite{KLM}, \cite{HZ}, \cite{GKKP}.

\vskip5pt

Using conceptual and constructive arguments via the monomorphism category $\Mon(B, \ ^\perp A)$,  we can judge the weakly Gorensteinness of the tensor algebra $A\otimes B$ with $\gldim B <\infty$.

\begin{thm} \ {\rm (Theorem \ref{thm:weak_G})} \ Let $A$ and $B$ be finite-dimensional $k$-algebras with $\gldim B <\infty$, and $\Lambda=A\otimes B$. Then

\vskip5pt

$(1)$ \ $\Lambda$ is left weakly Gorenstein if and only if $A$ is left weakly Gorenstein.

\vskip5pt

$(2)$ \ If $A$ is left weakly Gorenstein,  then $\leftidx^\perp \Lambda = \Mon(B,\leftidx^\perp A).$

\vskip5pt

$(3)$ \ If $B$ is not semisimple, then the converse of $(2)$ is also true.
\end{thm}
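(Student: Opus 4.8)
The plan is to set up the monomorphism category machinery for the tensor algebra and then prove the three statements in a suitable order, with (2) playing the technical centerpiece. Since $\gldim B < \infty$ and $B$ is finite-dimensional, I would first record the basic homological facts: a $\Lambda$-module is (semi-)Gorenstein-projective only if its restriction to $A$ behaves well, and because $B$ has finite global dimension, $\Lambda = A \otimes B$ is projective as a right $B$-module and flat over $A$ in the appropriate sense, so the Cartan--Eilenberg type isomorphism $\Ext^i_\Lambda(X \otimes Y, \Lambda) \cong \bigoplus_{p+q=i} \Ext^p_A(X,A) \otimes \Ext^q_B(Y,B)$ (the compatibility with $\phi$ being Proposition~\ref{lem:phi_tensor}) is available at least on the tensor-decomposable modules, and more generally the change-of-rings spectral sequence collapses appropriately. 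I would also invoke the homological interpretation of $\Mon(B, {}^\perp A)$ from \cite{HLXZ} so that membership in this subcategory is phrased via vanishing of $\Ext^i_B(-, B)$-twisted conditions, which is what makes it comparable to $^\perp\Lambda$.

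For part~(1), the ``if'' direction would follow by showing $^\perp\Lambda \subseteq \Gp(\Lambda)$ under the assumption that $A$ is left weakly Gorenstein: given $Z \in {}^\perp\Lambda$, one uses Proposition~\ref{tensorker} and the vanishing $\Ext^i_\Lambda(Z,\Lambda)=0$ to transfer semi-Gorenstein-projectivity to the ``layers'' of $Z$ as an $A$-module (via the representation-of-$(Q,I)$ picture when $B = kQ/I$, or the abstract $\Mon(B, {}^\perp A)$ description otherwise), deduce those layers are Gorenstein-projective over $A$ by hypothesis, and then reassemble a complete projective resolution over $\Lambda$ — this is essentially where part~(2) is used. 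For the ``only if'' direction, I would take a semi-Gorenstein-projective $A$-module $X$, form $X \otimes B$ (noting $B$, having finite global dimension, has $\Ext^i_B(B,B)=0$ for $i \geq 1$, so $X \otimes B \in {}^\perp\Lambda$ by the Künneth-type formula), conclude $X \otimes B \in \Gp(\Lambda)$ by weak Gorensteinness of $\Lambda$, and then restrict back along a suitable exact functor (evaluation, or $- \otimes_B S$ for a simple $B$-module, using $\gldim B < \infty$ to control the homological cost) to recover that $X$ is Gorenstein-projective over $A$.

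For part~(2), the inclusion $\Mon(B, {}^\perp A) \subseteq {}^\perp\Lambda$ is the formal direction: an object of the monomorphism category has a filtration/resolution whose subquotients are induced from $^\perp A$, and the $\Ext^i_\Lambda(-,\Lambda)$-vanishing propagates along this because $\Lambda$ restricts nicely over $B$ (using $\gldim B < \infty$). The reverse inclusion ${}^\perp\Lambda \subseteq \Mon(B, {}^\perp A)$ is the heart of the matter and requires the hypothesis that $A$ is left weakly Gorenstein: starting from $Z \in {}^\perp\Lambda$, I would show its $A$-structure at each vertex is semi-Gorenstein-projective hence (by hypothesis) Gorenstein-projective over $A$, show the structure maps are monomorphisms with Gorenstein-projective-over-$A$ cokernels by a dévissage on the quiver $(Q,I)$ / by the homological criterion of \cite{ZX,HLXZ}, and thereby land in $\Mon(B, {}^\perp A)$. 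For part~(3), assuming $\Lambda = \Mon(B, {}^\perp A)$ with $B$ not semisimple, I would recover weak Gorensteinness of $A$ by choosing a vertex/simple $B$-module with a nontrivial arrow or relation, embedding a given semi-Gorenstein-projective $A$-module $X$ into the corresponding ``layer'' of $^\perp\Lambda$, using $^\perp\Lambda = \Mon(B, {}^\perp A) \subseteq \Gp(\Lambda)$ (which holds because objects of $\Mon(B, {}^\perp A)$ are always Gorenstein-projective over $\Lambda$ when $\gldim B < \infty$, by \cite{LZ1,LZ2,HLXZ}), and then restricting back to conclude $X \in \Gp(A)$; the non-semisimplicity of $B$ is exactly what guarantees there is enough ``room'' (a genuine arrow) for this embedding-and-restriction argument to detect all of $^\perp A$.

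I expect the main obstacle to be the reverse inclusion in part~(2): controlling the $A$-module structure of an arbitrary $Z \in {}^\perp\Lambda$ — in particular proving the structure maps are monic with Gorenstein-projective cokernels — without any concrete description of $Z$, relying only on the vanishing of $\Ext^i_\Lambda(Z,\Lambda)$ together with the change-of-rings machinery and the homological characterization of $\Mon(B,{}^\perp A)$. The delicate point is making the Cartan--Eilenberg/Künneth comparison and its compatibility with $\phi_Z$ work for modules that are \emph{not} of the form $X \otimes Y$, which is precisely why the conceptual monomorphism-category framework of \cite{HLXZ} rather than a direct computation is needed here.
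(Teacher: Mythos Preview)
Your proposal reverses the logical dependence between (1) and (2) relative to the paper, and this reversal is where the real difficulty lies. The paper proves the ``if'' direction of (1) \emph{first}, and then (2) is a one-line corollary: once $\Lambda$ is known to be left weakly Gorenstein, ${}^\perp\Lambda = \Gp(\Lambda) = \Mon(B,\Gp(A)) = \Mon(B,{}^\perp A)$ via Lemma~\ref{thm:GP_mon}. You instead want to prove the reverse inclusion ${}^\perp\Lambda \subseteq \Mon(B,{}^\perp A)$ directly by analyzing the ``layers'' and structure maps of an arbitrary $Z\in{}^\perp\Lambda$, and you correctly flag this as the main obstacle --- but you offer no mechanism to overcome it. The key idea you are missing is the \emph{syzygy trick}: for $d=\gldim B$, one has $\Omega^d(M)\in\Mon(B,A)$ for \emph{every} $\Lambda$-module $M$ (since $\mathrm{inj.dim}_\Lambda(D(A)\otimes Y)\le d$ for all $Y$), so for $M\in{}^\perp\Lambda$ one gets $\Omega^d(M)\in {}^\perp\Lambda\cap\Mon(B,A)=\Mon(B,{}^\perp A)=\Mon(B,\Gp(A))=\Gp(\Lambda)$, and then one lifts back to $M\in\Gp(\Lambda)$ using that a semi-Gorenstein-projective module with Gorenstein-projective syzygy is itself Gorenstein-projective. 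This completely sidesteps the problem of showing an arbitrary $M\in{}^\perp\Lambda$ is monic. (For the ``only if'' direction of (1), your plan is workable but the paper's is shorter: take $X\in{}^\perp A$ not torsionless and observe $X\otimes B\in{}^\perp\Lambda$ is not torsionless either, by Theorem~\ref{thm:main_tensor}.)

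Your approach to (3) contains a circular step. You assert ${}^\perp\Lambda = \Mon(B,{}^\perp A) \subseteq \Gp(\Lambda)$ ``because objects of $\Mon(B,{}^\perp A)$ are always Gorenstein-projective over $\Lambda$ when $\gldim B<\infty$.'' But the cited result is $\Gp(\Lambda)=\Mon(B,\Gp(A))$, not $\Mon(B,{}^\perp A)$; the containment $\Mon(B,{}^\perp A)\subseteq\Mon(B,\Gp(A))$ amounts to ${}^\perp A\subseteq\Gp(A)$, which is exactly the conclusion you are trying to reach. The paper's argument for (3) is entirely different and constructive: assuming $A$ is not left weakly Gorenstein, it takes $X\in{}^\perp A$ not torsionless with a (non-injective) left $(\add A)$-approximation $f\colon X\to P$, uses the non-semisimplicity of $B$ to choose a $B$-module $Y$ with $\pd Y=1$ and resolution $0\to Q_1\to Q_0\to Y\to 0$, and builds $M$ as the pushout of $X\otimes Q_0$ along $f\otimes\mathrm{Id}_{Q_1}$ (here Proposition~\ref{lem:app_tensor} ensures $f\otimes\mathrm{Id}_{Q_1}$ is a left $(\add\Lambda)$-approximation). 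A direct computation then shows $M\in{}^\perp\Lambda$ but $\Ext^1_\Lambda(M,D(A)\otimes Q_1)\ne 0$, so $M\notin\Mon(B,A)$, contradicting the hypothesis ${}^\perp\Lambda=\Mon(B,{}^\perp A)$.
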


Note that the ``if" part of \ref{thm:weak_G}(1) does not need the condition $\gldim B <\infty$. Also, if $B$ is semi-simple, then $\leftidx^\perp \Lambda = \Mon(B,\leftidx^\perp A)$ holds true for an arbitrary finite-dimensional $k$-algebra. Thus, in this case, the converse of Theorem \ref{thm:weak_G}$(2)$ is not true. See Remark \ref{remweakG}.

\vskip5pt

Theorem \ref{thm:weak_G}(1) is  proved first for $B = kQ$ in \cite[Theorem 1.3(2)]{Z},
and  then for $B = kQ/I$ in \cite[Theorem C]{LuoZhu}, where $Q$ is an acyclic quiver, and $I$ is an admissible ideal of  $kQ$ generated by monomial
relations.

\subsection{Weakly Gorensteinness of Morita algebras}

We consider the Morita ring $\Lambda = \Lambda_{(\phi, \psi)}=\begin{pmatrix}\begin{smallmatrix}
A & _{A}N_{B} \\
_{B}M_{A} & B \\
\end{smallmatrix}\end{pmatrix}$ which is an Artin algebra (see subsection 5.1).
Any left $\Lambda$-module can be identified with a quadruple $(X, Y, f, g)$,
where $X\in A\mbox{-}{\rm mod}$ and $Y\in B\mbox{-}{\rm mod}$, $f\in {\rm Hom}_{B}(M\otimes_{A}X, Y)$ and $g\in {\rm Hom}_{A}(N\otimes_{B}Y, X)$ (see \cite[Theorem 1.5]{Gr}).
We will write it as  $\begin{pmatrix}\begin{smallmatrix}X\\Y\end{smallmatrix}\end{pmatrix}_{f, g}$, in this way the $\Lambda$-action coincides with the multiplication of matrices.
For more information on $\Lambda$-modules we refer to  \cite{Gr, KT, GrP, GP}.

\vskip5pt

Notice that any  ring can be written as a Morita ring. Rather than working in such a very broad setting, one should usually impose some specific conditions on the datum $(A, B, M, N, \phi, \psi)$ in practice.
Of our particular interest, we will focus on the Morita rings satisfying
$M\otimes_{A}N=0=N\otimes_{B}M$, and hence $\phi = 0 = \psi$. This condition looks quite restrictive
at first glance, but in fact it contains many algebras we are interested, for example, the triangular matrix algebras.

\vskip5pt

Through careful homological computations and analysis on exactness,  we obtain the following description of the semi-Gorenstein-projective $\Lambda\mbox{-}$modules.

\begin{thm} \ {\rm (Theorem \ref{semi})} \ Let $\Lambda=\begin{pmatrix}\begin{smallmatrix}
A & N \\
M & B \\
\end{smallmatrix}\end{pmatrix}$ be a Morita ring with $M\otimes_{A}N=0=N\otimes_{B}M$. Assume that \ $_BM, \ _AN$, \ $M_{A}$ and $N_{B}$ are projective modules. Then $\begin{pmatrix}\begin{smallmatrix}X\\Y\end{smallmatrix}\end{pmatrix}_{f, g}\in \ ^\perp \Lambda$  if and only if the following conditions are satisfied$:$

\vskip 5pt

{\rm (1)} \  ${\rm Hom}_A(g, A):{\rm Hom}_{A}(X, A)\lxr {\rm Hom}_{A}(N\otimes_{B}Y, A)$ is an epimorphism$;$

\vskip 5pt

{\rm (2)} \ ${\rm Ext}_{A}^{i}(g, A): {\rm Ext}_{A}^{i}(X, A)\longrightarrow {\rm Ext}_{A}^{i}(N\otimes_{B}Y, A)$ is an isomorphism for $i\geq 1;$

\vskip 5pt

{\rm (3)} \  ${\rm Hom}_B(f, B):{\rm Hom}_{B}(Y, B)\lxr {\rm Hom}_B(M\otimes_AX, B)$ is an epimorphism$;$

\vskip 5pt

{\rm (4)} \   ${\rm Ext}_{B}^{i}(f, B):{\rm Ext}_{B}^{i}(Y, B)\longrightarrow {\rm Ext}_{B}^{i}(M\otimes_{A}X, B)$ is an isomorphism for $i\geq 1$.

\end{thm}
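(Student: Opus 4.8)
The plan is to use the induction functors attached to the two diagonal idempotents of $\Lambda$, to present $\begin{pmatrix}\begin{smallmatrix}X\\Y\end{smallmatrix}\end{pmatrix}_{f,g}$ by a short exact sequence of induced modules, and to read off the vanishing of $\Ext^{\ge 1}_\Lambda(-,\Lambda)$ from the resulting long exact sequence, using the four projectivity hypotheses to collapse the conditions obtained down to (1)--(4). First note that $M\otimes_A N=0=N\otimes_B M$ forces the structure maps to vanish, so the indecomposable projective left $\Lambda$-modules are $P_1=\begin{pmatrix}\begin{smallmatrix}A\\M\end{smallmatrix}\end{pmatrix}_{\cong,0}$ and $P_2=\begin{pmatrix}\begin{smallmatrix}N\\B\end{smallmatrix}\end{pmatrix}_{0,\cong}$ with $\Lambda=P_1\oplus P_2$; hence $\begin{pmatrix}\begin{smallmatrix}X\\Y\end{smallmatrix}\end{pmatrix}_{f,g}\in{}^\perp\Lambda$ if and only if $\Ext^i_\Lambda(\begin{pmatrix}\begin{smallmatrix}X\\Y\end{smallmatrix}\end{pmatrix}_{f,g},P_1)=0=\Ext^i_\Lambda(\begin{pmatrix}\begin{smallmatrix}X\\Y\end{smallmatrix}\end{pmatrix}_{f,g},P_2)$ for all $i\ge1$.

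Let $T_A=\Lambda e_1\otimes_A-$ and $T_B=\Lambda e_2\otimes_B-$ be the induction functors, so that $T_A(Z)=\begin{pmatrix}\begin{smallmatrix}Z\\M\otimes_A Z\end{smallmatrix}\end{pmatrix}_{\operatorname{id},0}$, $T_B(Z')=\begin{pmatrix}\begin{smallmatrix}N\otimes_B Z'\\Z'\end{smallmatrix}\end{pmatrix}_{0,\operatorname{id}}$, $T_A(A)=P_1$ and $T_B(B)=P_2$. Because $M_A$ and $N_B$ are projective, $T_A$ and $T_B$ are exact and preserve projectivity; being left adjoint to the restriction functors $e_1(-)$ and $e_2(-)$, they give, for any $\Lambda$-module $H$ and all $i\ge 0$, natural isomorphisms $\Ext^i_\Lambda(T_A Z,H)\cong\Ext^i_A(Z,e_1H)$ and $\Ext^i_\Lambda(T_B Z',H)\cong\Ext^i_B(Z',e_2H)$. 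Taking $H=P_1,P_2$ and reading off components, $\Ext^i_\Lambda(T_A Z,\Lambda)\cong\Ext^i_A(Z,A)\oplus\Ext^i_A(Z,N)$ and $\Ext^i_\Lambda(T_B Z',\Lambda)\cong\Ext^i_B(Z',B)\oplus\Ext^i_B(Z',M)$, the two summands corresponding to $P_1$ and $P_2$ respectively.

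The main step is a short exact sequence. The counits of the two adjunctions, namely $(\operatorname{id}_X,f)\colon T_A(X)\to\begin{pmatrix}\begin{smallmatrix}X\\Y\end{smallmatrix}\end{pmatrix}_{f,g}$ and $(g,\operatorname{id}_Y)\colon T_B(Y)\to\begin{pmatrix}\begin{smallmatrix}X\\Y\end{smallmatrix}\end{pmatrix}_{f,g}$, assemble into an epimorphism $\varepsilon\colon T_A(X)\oplus T_B(Y)\twoheadrightarrow\begin{pmatrix}\begin{smallmatrix}X\\Y\end{smallmatrix}\end{pmatrix}_{f,g}$, and $M\otimes_A N=0=N\otimes_B M$ forces $\Ker\varepsilon=\begin{pmatrix}\begin{smallmatrix}N\otimes_B Y\\M\otimes_A X\end{smallmatrix}\end{pmatrix}_{0,0}=T_A(N\otimes_B Y)\oplus T_B(M\otimes_A X)$, giving
\[
0\longrightarrow T_A(N\otimes_B Y)\oplus T_B(M\otimes_A X)\xrightarrow{\ \iota\ }T_A(X)\oplus T_B(Y)\xrightarrow{\ \varepsilon\ }\begin{pmatrix}\begin{smallmatrix}X\\Y\end{smallmatrix}\end{pmatrix}_{f,g}\longrightarrow 0,
\]
whose four components are $T_A(-g)$, $T_B(-f)$ and the two natural inclusions $\iota_{21}\colon T_A(N\otimes_B Y)\hookrightarrow T_B(Y)$, $\iota_{12}\colon T_B(M\otimes_A X)\hookrightarrow T_A(X)$. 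Applying $\Hom_\Lambda(-,\Lambda)$ and chasing the long exact sequence, $\begin{pmatrix}\begin{smallmatrix}X\\Y\end{smallmatrix}\end{pmatrix}_{f,g}\in{}^\perp\Lambda$ if and only if $\iota^{*}$ is an epimorphism on $\Hom_\Lambda(-,\Lambda)$ and an isomorphism on $\Ext^i_\Lambda(-,\Lambda)$ for every $i\ge1$. I would then compute $\iota^{*}$ through the isomorphisms of the previous paragraph, split into $P_1$- and $P_2$-components: the $T_A(-g)$- and $T_B(-f)$-entries become, up to sign, $\Ext^i_A(g,A)$, $\Ext^i_A(g,N)$, $\Ext^i_B(f,B)$, $\Ext^i_B(f,M)$, while the $\iota_{21}$- and $\iota_{12}$-entries become maps induced by the multiplications $N\otimes_B-$ and $M\otimes_A-$. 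Crucially, because the structure map of $P_1$ (the map $N\otimes_B M\to A$) and that of $P_2$ (the map $M\otimes_A N\to B$) vanish, the maps $\Hom_\Lambda(T_B(Q^\bullet),P_1)\to\Hom_\Lambda(T_A(N\otimes_B Q^\bullet),P_1)$ and $\Hom_\Lambda(T_A(P^\bullet),P_2)\to\Hom_\Lambda(T_B(M\otimes_A P^\bullet),P_2)$ vanish already at the level of projective resolutions; hence $\iota^{*}$ is block-triangular on each component, with diagonal entries $\Ext^i_A(g,A)$, $\Ext^i_B(f,M)$ on the $P_1$-part and $\Ext^i_A(g,N)$, $\Ext^i_B(f,B)$ on the $P_2$-part, the multiplication maps appearing only off the diagonal.

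Finally I would invoke the projectivity of ${}_AN$ and ${}_BM$: realizing ${}_AN$ as a direct summand of ${}_AA^{\oplus m}$, the morphism $\Ext^i_A(g,N)$ (resp.\ $\Hom_A(g,N)$) is, compatibly with the decompositions, a direct summand of $\Ext^i_A(g,A)^{\oplus m}$ (resp.\ of $\Hom_A(g,A)^{\oplus m}$), hence an isomorphism (resp.\ an epimorphism) whenever $\Ext^i_A(g,A)$ (resp.\ $\Hom_A(g,A)$) is, and symmetrically for ${}_BM$ with $f$. Consequently the off-diagonal multiplication terms are harmless --- for isomorphy by block-triangularity, and for surjectivity in degree $0$ because the diagonal maps in question are already surjective --- so $\iota^{*}$ is an epimorphism on $\Hom_\Lambda(-,\Lambda)$ exactly when $\Hom_A(g,A)$ and $\Hom_B(f,B)$ are epimorphisms (conditions (1) and (3)), and $\iota^{*}$ is an isomorphism on $\Ext^i_\Lambda(-,\Lambda)$ for all $i\ge1$ exactly when $\Ext^i_A(g,A)$ and $\Ext^i_B(f,B)$ are isomorphisms (conditions (2) and (4)). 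The step I expect to be the main obstacle is the explicit determination of the matrix of $\iota^{*}$ on the two components together with the vanishing of the two ``mixed'' entries, which genuinely uses $N\otimes_B M=0=M\otimes_A N$; this is exactly the careful homological bookkeeping and analysis of exactness that let block-triangularity, and then the projectivity of $M$ and $N$, do the remaining work.
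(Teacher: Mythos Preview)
Your argument is correct and genuinely different from the paper's. The paper does not use your induction–counit short exact sequence
\[
0\longrightarrow T_A(N\otimes_BY)\oplus T_B(M\otimes_AX)\xrightarrow{\ \iota\ }T_A(X)\oplus T_B(Y)\longrightarrow\begin{pmatrix}\begin{smallmatrix}X\\Y\end{smallmatrix}\end{pmatrix}_{f,g}\longrightarrow 0.
\]
Instead it works with the sequence $0\to\begin{pmatrix}\begin{smallmatrix}X\\ \im f\end{smallmatrix}\end{pmatrix}_{p,0}\to\begin{pmatrix}\begin{smallmatrix}X\\Y\end{smallmatrix}\end{pmatrix}_{f,g}\to\begin{pmatrix}\begin{smallmatrix}0\\\cok f\end{smallmatrix}\end{pmatrix}\to 0$ (and its symmetric counterpart), and builds up through a chain of lemmas (Lemmas~4.2--4.7) computing the various Ext groups of the outer terms against $\begin{pmatrix}\begin{smallmatrix}A\\0\end{smallmatrix}\end{pmatrix}$ and $\begin{pmatrix}\begin{smallmatrix}0\\B\end{smallmatrix}\end{pmatrix}$ by hand, paying careful attention to the compatibility of connecting maps. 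This yields first a characterization of ${}^\perp\begin{pmatrix}\begin{smallmatrix}A\\0\end{smallmatrix}\end{pmatrix}$ and ${}^\perp\begin{pmatrix}\begin{smallmatrix}0\\B\end{smallmatrix}\end{pmatrix}$ (Proposition~4.8), and only then deduces the theorem by playing these off against the short exact sequences $0\to\begin{pmatrix}\begin{smallmatrix}0\\M\end{smallmatrix}\end{pmatrix}\to P_1\to\begin{pmatrix}\begin{smallmatrix}A\\0\end{smallmatrix}\end{pmatrix}\to 0$ and $0\to\begin{pmatrix}\begin{smallmatrix}N\\0\end{smallmatrix}\end{pmatrix}\to P_2\to\begin{pmatrix}\begin{smallmatrix}0\\B\end{smallmatrix}\end{pmatrix}\to 0$. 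Your route is more economical: the adjunctions $\Ext^i_\Lambda(T_AZ,H)\cong\Ext^i_A(Z,e_1H)$ and $\Ext^i_\Lambda(T_BZ',H)\cong\Ext^i_B(Z',e_2H)$ absorb most of the paper's lemma work in one stroke, and the block-triangular reduction (the vanishing of the mixed entries, forced by $N\otimes_BM=0=M\otimes_AN$, and then the projectivity of ${}_AN$ and ${}_BM$ to pass from $\Ext(g,A)$, $\Ext(f,B)$ to $\Ext(g,N)$, $\Ext(f,M)$) replaces the paper's explicit compatibility checks with connecting homomorphisms. What the paper's approach buys is the intermediate Proposition~4.8, which may be of independent interest; what yours buys is a single, transparent long exact sequence argument. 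One small point worth making explicit in your write-up: you are using that the natural transformations $T_A(N\otimes_B-)\Rightarrow T_B(-)$ and $T_B(M\otimes_A-)\Rightarrow T_A(-)$ lift $\iota_{21}$ and $\iota_{12}$ to chain maps between the chosen projective resolutions, which is why the termwise vanishing against $P_1$ resp.\ $P_2$ propagates to Ext.
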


\vskip5pt

A left $\Lambda$-module $\begin{pmatrix}\begin{smallmatrix}X \\ Y\end{smallmatrix}\end{pmatrix}_{f, g}$ is said to be {\it monic}, if
 $f:M\otimes_{A}X\lxr Y$ and $g:N\otimes_{B}Y\lxr X$ are injective maps.
 The following result, which is a consequence of Q. Q. Guo and C. C. Xi \cite[Proposition 3.14]{GX24},  gives the description of Gorenstein-projective modules over the Morita rings in Theorem 1.3.

\begin{prop} \ {\rm (Corollary \ref{gpmorita})} \ Let $\Lambda=\begin{pmatrix}\begin{smallmatrix}
A & N \\
M & B \\
\end{smallmatrix}\end{pmatrix}$ be a Morita ring with $M\otimes_{A}N=0=N\otimes_{B}M$. Assume that \ $_BM, \ _AN$, \ $M_{A}$ and $N_{B}$ are projective modules.
Then $\begin{pmatrix}\begin{smallmatrix}X\\Y\end{smallmatrix}\end{pmatrix}_{f, g}\in \Gp(\Lambda)$ if and only if it is a monic $\Lambda\mbox{-}$module with $\cok f\in \Gp(B)$ and $\cok g\in \Gp(A)$.
\end{prop}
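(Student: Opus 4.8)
The plan is to deduce Corollary \ref{gpmorita} from the characterization of Gorenstein-projective modules in \cite[Proposition 3.14]{GX24}. That result (in the relevant special case) says that a $\Lambda$-module $\begin{pmatrix}\begin{smallmatrix}X\\Y\end{smallmatrix}\end{pmatrix}_{f,g}$ is Gorenstein-projective if and only if it is monic and the cokernels $\cok f$, $\cok g$ are Gorenstein-projective over $B$, $A$ respectively, \emph{together with} some extra vanishing/compatibility conditions coming from the bimodules $M$ and $N$. So the first step is to recall precisely what \cite[Proposition 3.14]{GX24} states, and then show that under the extra hypotheses we have imposed ($M\otimes_A N = 0 = N\otimes_B M$, and $\leftidx_B M$, $\leftidx_A N$, $M_A$, $N_B$ all projective) those extra conditions become automatic or vacuous. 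The vanishing $M\otimes_A N=0=N\otimes_B M$ forces $\phi=0=\psi$, which is what makes $\Lambda$ behave like a ``generalised triangular matrix algebra'' and kills the correction terms that would otherwise obstruct the clean statement.

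Second, I would set up the bookkeeping for the projectivity hypotheses. Because $M_A$ is projective, the functor $M\otimes_A -$ is exact on $A\modd$ and sends projectives to projective $B$-modules (being a direct summand of $M_A^{\oplus}$ argument via $A$ free, then $M\otimes_A A^n = M^n$ which is $B$-projective since $\leftidx_B M$ is projective); symmetrically for $N\otimes_B -$. This is exactly what is needed so that, for a monic module $\begin{pmatrix}\begin{smallmatrix}X\\Y\end{smallmatrix}\end{pmatrix}_{f,g}$, the short exact sequences $0\to M\otimes_A X\xrightarrow{f} Y\to\cok f\to 0$ and $0\to N\otimes_B Y\xrightarrow{g} X\to\cok g\to 0$ are genuinely exact sequences of $B$- and $A$-modules to which one can apply the theory of $\Gp(B)$ and $\Gp(A)$. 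I expect one also needs to observe the ``loop'' compositions: the module being monic together with $M\otimes_A N=0=N\otimes_B M$ means the composites $M\otimes_A g$ and $N\otimes_B f$ are automatically zero (their source is $M\otimes_A N\otimes_B Y = 0$, etc.), so the cocycle-type conditions in \cite{GX24} that involve these composites are trivially satisfied — this is the crux of why the datum-vanishing hypothesis makes the general criterion collapse to the stated one.

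Third, having matched hypotheses, the proof is a direct translation: the ``only if'' direction reads off monicity and the two cokernel conditions from \cite[Proposition 3.14]{GX24}, and the ``if'' direction checks that our hypotheses supply every remaining clause of that proposition. One should also double-check a small compatibility point: that $\cok f\in\Gp(B)$ and $\cok g\in\Gp(A)$, as conditions on the quadruple, are really the same as the conditions appearing in \cite{GX24} (possibly stated there in terms of $X$, $Y$ fitting into complete resolutions) — but since $M\otimes_A X$ and $N\otimes_B Y$ are $B$- resp.\ $A$-projective, being Gorenstein-projective is insensitive to adding them, so $\cok f\in\Gp(B) \iff$ the relevant syzygy condition on $Y$, and likewise for $\cok g$ and $X$. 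The remaining verification is routine once the dictionary is fixed.

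The main obstacle I anticipate is purely expository rather than mathematical: \cite[Proposition 3.14]{GX24} is presumably stated for general Morita rings with a list of conditions, and the real work is isolating which of those conditions survive and which trivialise under $M\otimes_A N=0=N\otimes_B M$ and the four projectivity assumptions. Concretely, the delicate point is confirming that the ``compatibility of the complete resolution with the bimodule structure'' clauses in \cite{GX24} reduce — using $M\otimes_A(\text{$A$-projective})$ is $B$-projective and the vanishing of the loop composites — to exactly: monic, $\cok f\in\Gp(B)$, $\cok g\in\Gp(A)$. I would want to write this reduction out carefully, since it is the only non-formal step; everything else is citing and unpacking definitions.
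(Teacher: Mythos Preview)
Your high-level strategy---deduce the corollary from \cite[Proposition 3.14]{GX24} by checking that its side conditions hold under the given hypotheses---is exactly what the paper does, via Theorem \ref{Gproj}. But your guess at what those side conditions are, and hence the mechanism you propose for discharging them (``loop composites vanish,'' ``cocycle-type conditions''), is off.

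The conditions in Theorem \ref{Gproj} are of \emph{compatibility} type in the sense of Definition \ref{compatible}: a left module $_A L$ is $\Hom$-compatible if $\Hom_A(P^\bullet,L)$ remains acyclic for every complete projective resolution $P^\bullet$, and a right module $L_B$ is tensor-compatible if $L\otimes_B Q^\bullet$ remains acyclic likewise. Theorem \ref{Gproj} requires that $_BM_A$ and $_AN_B$ be compatible bimodules (for the ``if'' direction), and additionally that the right $\Lambda$-module $(M,0)_{0,0}$ be tensor-compatible and the left $\Lambda$-module $\begin{pmatrix}\begin{smallmatrix}N\\0\end{smallmatrix}\end{pmatrix}_{0,0}$ be $\Hom$-compatible (for the ``only if'' direction). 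These are statements about preservation of acyclicity of complete resolutions, not about vanishing of composites between the structure maps $f,g$.

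The paper's verification is then short and uses a different observation from yours. The four one-sided projectivity assumptions make $_BM_A$ and $_AN_B$ compatible bimodules immediately (projective modules preserve acyclicity under both $\Hom$ and $\otimes$). For the two $\Lambda$-module conditions, the key point---which you do not identify---is that $M\otimes_A N=0$ forces $(M,0)_{0,0}=(M,\,M\otimes_A N)_{0,0}$ to be a \emph{projective} right $\Lambda$-module (by the description of indecomposable projectives), hence automatically tensor-compatible; and symmetrically $\begin{pmatrix}\begin{smallmatrix}N\\0\end{smallmatrix}\end{pmatrix}_{0,0}=\begin{pmatrix}\begin{smallmatrix}N\\M\otimes_A N\end{smallmatrix}\end{pmatrix}_{0,0}$ is a projective left $\Lambda$-module, hence $\Hom$-compatible. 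So the vanishing hypothesis $M\otimes_A N=0=N\otimes_B M$ is used to promote $(M,0)$ and $\begin{pmatrix}\begin{smallmatrix}N\\0\end{smallmatrix}\end{pmatrix}$ to projective $\Lambda$-modules, not to kill cocycle terms. Once you replace your guessed mechanism with this one, the proof is immediate from Theorem \ref{Gproj}.
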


\vskip5pt

The following observation gives a featured property of left weakly Gorenstein algebras.

\begin{thm}\ {\rm (Theorem \ref{wgmono})} \ Let $A$ be a left weakly Gorenstein algebra. If $\phi: X\longrightarrow Y$ is a left $A$-homomorphism such that $\phi^*=\Hom_A(\phi,A)$ is an epimorphism and
$\Ext^i_A(\phi,A)$ is an isomorphism for $i\geq 1$, then $\phi$ is a monomorphism and $\cok\phi$ is a Gorenstein-projective $A$-module.
\end{thm}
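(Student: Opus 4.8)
The plan is to manufacture a semi-Gorenstein-projective $\Lambda$-module out of $\phi$, where $\Lambda$ is a suitable triangular (Morita) algebra built from $A$, and then invoke the hypothesis that $A$ is left weakly Gorenstein together with the description of $\Gp(\Lambda)$ in Corollary \ref{gpmorita}. Concretely, I would take $B=A$, $M=0=N$ (so the datum trivially satisfies $M\otimes_AN=0=N\otimes_BM$ and the one-sided projectivity hypotheses of Theorem \ref{semi} and Corollary \ref{gpmorita} hold vacuously), and instead realize $\phi$ inside the triangular matrix algebra $T_2(A)=\begin{pmatrix}\begin{smallmatrix}A&A\\0&A\end{smallmatrix}\end{smallmatrix}\end{pmatrix}$, whose left modules are triples $\begin{pmatrix}\begin{smallmatrix}X\\Y\end{smallmatrix}\end{pmatrix}_{h}$ with $h\colon A\otimes_AX=X\to Y$ an $A$-map. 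This is exactly the Morita-ring setup of Theorems \ref{semi}--\ref{gpmorita} with the data chosen so that condition (3) and condition (4) of Theorem \ref{semi} become trivial. Set $\Theta=\begin{pmatrix}\begin{smallmatrix}X\\Y\end{smallmatrix}\end{pmatrix}_{\phi}$.

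First I would check $\Theta\in\ \leftidx^\perp\Lambda$ using Theorem \ref{semi}. With $N=0$, conditions (1) and (2) of that theorem read exactly: $\Hom_A(\phi,A)$ is an epimorphism and $\Ext^i_A(\phi,A)$ is an isomorphism for $i\ge1$ — these are precisely our hypotheses on $\phi$. With $M=0$, conditions (3) and (4) are $\Hom_B(\mathrm{id}_Y,B)$ epi and $\Ext^i_B(0,B)$ iso, which hold trivially. Hence $\Theta$ is semi-Gorenstein-projective over $\Lambda$. Since $\Lambda=T_2(A)$ has finite global dimension relative to $A$ in the appropriate sense and, more to the point, $A$ is left weakly Gorenstein, I would like to conclude $\Theta\in\Gp(\Lambda)$. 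The cleanest route is to observe that the left weakly Gorenstein property passes from $A$ to $T_2(A)$ — indeed this is precisely the content of Theorem \ref{thm:weak_G}(1) (or its analogue for triangular matrix algebras stated in the introduction as the application $T_n(A)$), applied with $B=kA_2$ or directly via the ``if'' part which, as the paper notes, does not need $\gldim B<\infty$. Thus $\leftidx^\perp\Lambda=\Gp(\Lambda)$ and $\Theta$ is Gorenstein-projective over $\Lambda$.

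Now I would read off the conclusion from Corollary \ref{gpmorita}. That result says $\begin{pmatrix}\begin{smallmatrix}X\\Y\end{smallmatrix}\end{pmatrix}_{f,g}\in\Gp(\Lambda)$ iff it is monic with $\cok f\in\Gp(B)$ and $\cok g\in\Gp(A)$. Applied to $\Theta$ with $g=0$ (so $\cok g=X$, but the relevant condition is the $N$-side, which is vacuous since $N=0$) and $f=\phi$: monic forces $\phi\colon X\to Y$ to be injective, i.e. a monomorphism, and $\cok f=\cok\phi\in\Gp(A)=\Gp(B)$, which is exactly the assertion. So the two desired conclusions — $\phi$ a monomorphism and $\cok\phi$ Gorenstein-projective over $A$ — drop out immediately.

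The main obstacle I anticipate is matching conventions so that the chosen Morita datum genuinely satisfies all standing hypotheses of Theorems \ref{semi} and \ref{gpmorita}: one must be careful that $_BM,\ _AN,\ M_A,\ N_B$ being the zero modules counts as ``projective'' in the sense used there (it does, but it is worth a word), and that the identification of $T_2(A)$-modules with the quadruple description $\begin{pmatrix}\begin{smallmatrix}X\\Y\end{smallmatrix}\end{pmatrix}_{f,g}$ is compatible (here $g$ lives in $\Hom_A(N\otimes_BY,X)=\Hom_A(0,X)=0$, so there is a unique choice). A secondary point to verify carefully is the step ``$A$ left weakly Gorenstein $\Rightarrow$ $T_2(A)$ left weakly Gorenstein'': if one prefers not to cite Theorem \ref{thm:weak_G}(1) one can instead argue directly that a semi-Gorenstein-projective $T_2(A)$-module of the form $\begin{pmatrix}\begin{smallmatrix}X\\Y\end{smallmatrix}\end{pmatrix}_{\phi}$, with $X,Y$ arising as above, has $X$ and $\cok\phi$ semi-Gorenstein-projective over $A$ (from conditions (1),(2) of Theorem \ref{semi} plus a long-exact-sequence chase on $\Ext^*(-,A)$ applied to $0\to X\to Y\to\cok\phi\to0$), hence Gorenstein-projective over $A$ by weak Gorensteinness of $A$, and then reassemble — but going through Theorem \ref{thm:weak_G}(1) and Corollary \ref{gpmorita} is shorter and is the intended argument.
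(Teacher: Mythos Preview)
Your strategy has a genuine gap. In the $T_2(A)$ setup (say $\Lambda=\begin{pmatrix}\begin{smallmatrix}A&A\\0&A\end{smallmatrix}\end{pmatrix}$, so $N=A$, $M=0$, $B=A$), the module you build from $\phi$ is $\Theta=\begin{pmatrix}\begin{smallmatrix}Y\\X\end{smallmatrix}\end{pmatrix}_{0,\phi}$ with $g=\phi\colon X\cong N\otimes_A X\to Y$. Conditions (1) and (2) of Theorem~\ref{semi} do indeed become the hypotheses on $\phi$. But condition (4) does \emph{not} become trivial: with $M=0$ it reads that $\Ext^i_B(f,B)\colon \Ext^i_A(X,A)\to \Ext^i_A(0,A)=0$ is an isomorphism for all $i\ge 1$, i.e.\ $X\in\leftidx^\perp A$. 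This is exactly what Corollary~\ref{triangsemi}(1) records: $\Theta\in\leftidx^\perp\Lambda$ iff the conditions on $\phi$ hold \emph{and} $X\in\leftidx^\perp A$. The hypotheses of the theorem do not give $X\in\leftidx^\perp A$; for instance $\phi=\mathrm{id}_X$ satisfies all assumptions for \emph{any} $X$. So $\Theta$ need not be semi-Gorenstein-projective, and the argument cannot get off the ground. (Your sentence ``with $N=0$, conditions (1) and (2) read exactly as conditions on $\phi$'' also reveals a mix-up of conventions: you cannot have both $M=0$ and $N=0$ and still be in $T_2(A)$.) Your fallback ``direct'' argument at the end also presupposes the short exact sequence $0\to X\to Y\to\cok\phi\to 0$, i.e.\ that $\phi$ is already injective --- which is part of what must be proved.

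The paper's proof is quite different and entirely internal to $A$-mod: take a projective cover $0\to L\to P\xrightarrow{\pi}Y\to 0$ and form the pullback $E$ of $\phi$ along $\pi$, obtaining $\psi\colon E\to P$ with $0\to L\to E\to X\to 0$. Comparing the two long exact sequences of $\Hom_A(-,A)$ and using the hypotheses on $\phi$ shows $\psi^*$ is an epimorphism and $E\in\leftidx^\perp A=\Gp(A)$. Since $(-)^*$ is a duality on $\Gp(A)$, $\psi$ (hence $\phi$) is injective; then $\cok\phi\cong\cok\psi\in\leftidx^\perp A=\Gp(A)$. The point is that passing to the pullback $E$ replaces the arbitrary $X$ by a module that \emph{is} forced into $\leftidx^\perp A$ --- precisely the missing ingredient in your approach.
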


Theorem 1.3, Proposition 1.4, and Theorem 1.5  enable us to judge the weakly Gorensteinness of a class of Morita algebras:
the following result gives a satisfied answer to the question: when are the Morita algebras in Theorem 1.3 left weakly Gorenstein?

\vskip5pt

\begin{thm} \ {\rm (Theorem \ref{weakGmorita})} \ Let $\Lambda=\begin{pmatrix}\begin{smallmatrix}
A & N \\
M & B \\
\end{smallmatrix}\end{pmatrix}$ be a Morita ring with $M\otimes_{A}N=0=N\otimes_{B}M$. Assume that \ $_BM, \ _AN$, \ $M_{A}$ and $N_{B}$ are projective modules. Then $\Lambda$ is left weakly-Gorenstein if and only if  $A$ and $B$ are left weakly-Gorenstein.
\end{thm}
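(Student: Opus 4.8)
The plan is to read off the equivalence from the three structural results already established: Theorem~\ref{semi} (which describes ${}^\perp\Lambda$), Corollary~\ref{gpmorita} (which describes $\Gp(\Lambda)$), and Theorem~\ref{wgmono} (the characteristic property of a left weakly Gorenstein algebra). Since $\Gp(\Lambda)\subseteq{}^\perp\Lambda$ always holds, ``$\Lambda$ left weakly Gorenstein'' is the same as ``${}^\perp\Lambda\subseteq\Gp(\Lambda)$'', and likewise for $A$ and $B$. I will also use that the datum of $\Lambda$ is symmetric under interchanging $(A,M)\leftrightarrow(B,N)$ --- concretely, $\Lambda\cong\begin{pmatrix}\begin{smallmatrix}B&{}_BM_A\\ {}_AN_B&A\end{smallmatrix}\end{pmatrix}$ via $\begin{pmatrix}\begin{smallmatrix}a&n\\ m&b\end{smallmatrix}\end{pmatrix}\mapsto\begin{pmatrix}\begin{smallmatrix}b&m\\ n&a\end{smallmatrix}\end{pmatrix}$, which is a ring isomorphism because $\phi=0=\psi$, and this swapped Morita ring satisfies the same hypotheses --- so that any assertion proved for $A$ transfers to $B$.

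For the ``if'' direction, suppose $A$ and $B$ are left weakly Gorenstein and let $\begin{pmatrix}\begin{smallmatrix}X\\ Y\end{smallmatrix}\end{pmatrix}_{f,g}\in{}^\perp\Lambda$. Theorem~\ref{semi} gives conditions (1)--(4). Conditions (1) and (2) say that $\Hom_A(g,A)$ is epic and $\Ext^i_A(g,A)$ is an isomorphism for $i\ge1$, so Theorem~\ref{wgmono} over $A$ yields that $g\colon N\otimes_B Y\to X$ is monic with $\cok g\in\Gp(A)$. Symmetrically, conditions (3) and (4) together with Theorem~\ref{wgmono} over $B$ yield that $f\colon M\otimes_A X\to Y$ is monic with $\cok f\in\Gp(B)$. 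Hence $\begin{pmatrix}\begin{smallmatrix}X\\ Y\end{smallmatrix}\end{pmatrix}_{f,g}$ is a monic $\Lambda$-module with $\cok f\in\Gp(B)$ and $\cok g\in\Gp(A)$, so it lies in $\Gp(\Lambda)$ by Corollary~\ref{gpmorita}. Thus ${}^\perp\Lambda\subseteq\Gp(\Lambda)$.

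For the ``only if'' direction, suppose $\Lambda$ is left weakly Gorenstein; by the symmetry above it suffices to show $A$ is left weakly Gorenstein. Let $X\in{}^\perp A$, so $\Ext^i_A(X,A)=0$ for $i\ge1$. Consider the $\Lambda$-module $W=\begin{pmatrix}\begin{smallmatrix}X\\ M\otimes_A X\end{smallmatrix}\end{pmatrix}_{1,\,0}$, where $1$ is the identity of $M\otimes_A X$; here the second structure map is forced to be $0$ since $N\otimes_B(M\otimes_A X)\cong(N\otimes_B M)\otimes_A X=0$, and the relations defining a $\Lambda$-module are vacuous because $\phi=0=\psi$ and both $N\otimes_B M$ and $M\otimes_A N$ vanish. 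Checking Theorem~\ref{semi}: (1) and (2) hold since $N\otimes_B(M\otimes_A X)=0$ and $\Ext^i_A(X,A)=0$ for $i\ge1$; (3) and (4) hold since $f=1$ makes $\Hom_B(f,B)$ and each $\Ext^i_B(f,B)$ into identity maps. Therefore $W\in{}^\perp\Lambda=\Gp(\Lambda)$, and Corollary~\ref{gpmorita} forces $W$ to be monic with cokernel of its second structure map in $\Gp(A)$; but that cokernel is $\cok(0\to X)=X$, so $X\in\Gp(A)$. Hence ${}^\perp A\subseteq\Gp(A)$, i.e. $A$ is left weakly Gorenstein.

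Once the three cited results are invoked, the argument is essentially formal. The only delicate points are that $W$ really is a $\Lambda$-module --- guaranteed by the vanishing of $M\otimes_A N$ and $N\otimes_B M$ --- and the symmetry reduction for $B$, which spares us from rerunning the ``only if'' computation. I expect no serious obstacle; if anything, the crux is simply to notice that $W=\begin{pmatrix}\begin{smallmatrix}X\\ M\otimes_A X\end{smallmatrix}\end{pmatrix}_{1,0}$ is the test module whose ${}^\perp\Lambda$-membership encodes exactly ``$X\in{}^\perp A$'' and whose $\Gp(\Lambda)$-membership encodes exactly ``$X\in\Gp(A)$''.
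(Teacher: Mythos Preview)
Your proof is correct and follows essentially the same route as the paper: both directions use Theorem~\ref{semi}, Theorem~\ref{wgmono}, and Corollary~\ref{gpmorita} in exactly the way you describe, with the test module $\begin{pmatrix}\begin{smallmatrix}X\\ M\otimes_A X\end{smallmatrix}\end{pmatrix}_{1,0}$ for the ``only if'' part. The only cosmetic difference is that the paper handles $B$ by saying ``similarly'' (using the analogous test module $\begin{pmatrix}\begin{smallmatrix}N\otimes_B Y\\ Y\end{smallmatrix}\end{pmatrix}_{0,1}$) rather than invoking your explicit swap isomorphism, but both are equally valid.
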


Let $T_n(A)$ ($n\ge 2$) be the triangular matrix algebra
$\left(\begin{smallmatrix}
A&A&\cdots&A&A\\
0&A&\cdots&A&A\\
\vdots&\vdots&\ddots&\vdots&\vdots\\
0&0&\cdots&A&A\\ 0&0&\cdots&0&A
\end{smallmatrix}\right)$. As an application one has the following results.

\vskip10pt

\begin{cor} \ {\rm (Corollary \ref{Tn(A)})} \  $(1)$ \ \ A left $T_n(A)$-module $\left(\begin{smallmatrix}
X_n\\
\vdots\\
X_1
\end{smallmatrix}\right)_{(\phi_i)}$ is semi-Gorenstein-projective if and only if
\ $\phi^*_i: \ \Hom_A(X_{i+1}, A) \longrightarrow \Hom_A(X_i, A)$
are epimorphisms for $1\le i\le n-1$, and $X_i\in \ ^\perp A$ for
all $i$.

\vskip5pt

$(2)$ \ $\left(\begin{smallmatrix}
X_n\\
\vdots\\
X_1
\end{smallmatrix}\right)_{(\phi_i)}$ is a  Gorenstein-projective left $T_n(A)$-module if
and only if $X_i\in \Gp(A)$ for all
$i$, $\phi_i: X_i \longrightarrow X_{i+1}$ are monomorphisms, and
$\cok\phi_i\in \Gp(A)$ for $1\le i\le
n-1$.

\vskip5pt

$(3)$ \  $T_n(A)$ is a left weakly Gorenstein algebra if and only if so is $A$.\end{cor}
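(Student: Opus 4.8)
The plan is to realize $T_n(A)$ as an iterated triangular matrix algebra and to induct on $n$, invoking Theorem~\ref{semi}, Corollary~\ref{gpmorita} and Theorem~\ref{weakGmorita} at each step. For $n\ge 2$ I would write
\[
T_n(A)=\begin{pmatrix}\begin{smallmatrix}A & R\\ 0 & T_{n-1}(A)\end{smallmatrix}\end{pmatrix},
\]
where $R=(A,\dots ,A)$ is the first row of $T_n(A)$ with its diagonal entry removed, viewed as an $(A,T_{n-1}(A))$-bimodule. This is a Morita ring with $N=R$ and $M=0$, so $M\otimes_A N=0=N\otimes_B M$ hold automatically; moreover ${}_AR$ is free, $R=f_1T_{n-1}(A)$ is a direct summand of the regular right module (with $f_1$ the first idempotent of $T_{n-1}(A)$), hence $R_{T_{n-1}(A)}$ is projective, and ${}_BM=M_A=0$ are projective. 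So the hypotheses of Theorem~\ref{semi}, Corollary~\ref{gpmorita} and Theorem~\ref{weakGmorita} are all met with $B=T_{n-1}(A)$. Under this decomposition a representation $\begin{pmatrix}\begin{smallmatrix}X_n\\\vdots\\X_1\end{smallmatrix}\end{pmatrix}_{(\phi_i)}$ becomes $\begin{pmatrix}\begin{smallmatrix}X_n\\Y\end{smallmatrix}\end{pmatrix}_{0,\,g}$ with $Y=\begin{pmatrix}\begin{smallmatrix}X_{n-1}\\\vdots\\X_1\end{smallmatrix}\end{pmatrix}_{(\phi_i)_{1\le i\le n-2}}$; the tensor-hom identity $R\otimes_{T_{n-1}(A)}Y\cong f_1Y=X_{n-1}$ then identifies the structure map $g$ with the arrow $\phi_{n-1}\colon X_{n-1}\to X_n$.

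For $(1)$ I would feed this into Theorem~\ref{semi}: since $M=0$, its condition $(3)$ is vacuous, its condition $(4)$ becomes ``$Y\in{}^\perp T_{n-1}(A)$'', and its conditions $(1)$--$(2)$ become statements over $A$ about $g=\phi_{n-1}$, namely that $\phi_{n-1}^{*}$ is an epimorphism and $\Ext_A^i(\phi_{n-1},A)$ is an isomorphism for $i\ge 1$. The induction hypothesis rewrites ``$Y\in{}^\perp T_{n-1}(A)$'' as ``$\phi_i^{*}$ epi for $1\le i\le n-2$ and $X_i\in{}^\perp A$ for $1\le i\le n-1$''. Finally, once $X_{n-1}\in{}^\perp A$ one has $\Ext_A^i(X_{n-1},A)=0$, so ``$\Ext_A^i(\phi_{n-1},A)$ an isomorphism for $i\ge 1$'' becomes just ``$X_n\in{}^\perp A$''. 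Assembling these equivalences gives exactly the stated list; the base case $n=1$ is trivial.

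For $(2)$ I would argue similarly with Corollary~\ref{gpmorita}: since $M=0$, the map $f\colon 0\to Y$ is automatically monic with $\cok f=Y$, so the criterion reads ``$Y\in\Gp(T_{n-1}(A))$, $\phi_{n-1}=g$ monic, $\cok\phi_{n-1}\in\Gp(A)$''. Using the induction hypothesis together with the short exact sequence $0\to X_{n-1}\to X_n\to\cok\phi_{n-1}\to 0$ and the fact that $\Gp(A)$ is closed under extensions, this becomes ``$X_i\in\Gp(A)$ for all $i$, and $\phi_i$ monic with $\cok\phi_i\in\Gp(A)$ for $1\le i\le n-1$'', as claimed. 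For $(3)$, Theorem~\ref{weakGmorita} applied to the same decomposition says $T_n(A)$ is left weakly Gorenstein iff both $A$ and $T_{n-1}(A)$ are, and the induction hypothesis collapses the latter to $A$ being left weakly Gorenstein.

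The part I expect to require the most care is the compatibility between the Morita-ring datum $\begin{pmatrix}\begin{smallmatrix}X_n\\Y\end{smallmatrix}\end{pmatrix}_{0,g}$ and the quiver-representation description of $T_n(A)$-modules: precisely, checking that $R$ is projective as a right $T_{n-1}(A)$-module and that $R\otimes_{T_{n-1}(A)}Y\cong X_{n-1}$ with the structure map $g$ becoming $\phi_{n-1}$. Once that is pinned down, $(1)$--$(3)$ are a routine telescoping of the recursive conditions, the only genuinely non-formal simplification being the replacement of ``$\Ext_A^i(\phi_{n-1},A)$ is an isomorphism'' by ``$X_n\in{}^\perp A$'' in $(1)$, which uses that $X_{n-1}$ is already semi-Gorenstein-projective.
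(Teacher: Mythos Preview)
Your proof is correct and follows the same inductive scheme the paper indicates (induction on $n$, invoking Theorem~\ref{semi}/Corollary~\ref{triangsemi}, Corollary~\ref{gpmorita}, and Theorem~\ref{weakGmorita} at each step). The only difference is the choice of decomposition: the paper writes $T_n(A)=\left(\begin{smallmatrix}T_{n-1}(A)&N\\0&A\end{smallmatrix}\right)$, peeling off the source vertex, whereas you write $T_n(A)=\left(\begin{smallmatrix}A&R\\0&T_{n-1}(A)\end{smallmatrix}\right)$, peeling off the sink vertex. Your choice is arguably tidier for part~$(1)$: the structure map $g$ becomes the single $A$-map $\phi_{n-1}$, so the Hom/Ext conditions of Theorem~\ref{semi} are immediately conditions over $A$, and the inductive hypothesis absorbs the $T_{n-1}(A)$-side entirely via $Y\in{}^\perp T_{n-1}(A)$. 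With the paper's decomposition the roles are reversed---$g$ is a $T_{n-1}(A)$-map with projective source $N\otimes_A X_1$, so the Ext condition collapses to $X'\in{}^\perp T_{n-1}(A)$ (handled by induction) while the Hom condition still needs to be unwound. Both routes reach the same conclusion with the same ingredients; yours just front-loads the simplification.
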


\subsection{The organization} The paper is organized as follows. Section 2 deals with torsionless or reflexive modules
of the form $X\otimes Y$ over tensor algebras $A\otimes B$, and proves Theorem 1.1 and Proposition \ref{tensorsgp}.
In Section 3 we develop properties of the monomorphism category and then prove Theorem 1.2.
Section 4  is devoted to the proof of Theorem 1.3.
In Section 5  the weakly Gorensteinness of the Morita algebras in Theorem 1.3 is studied,  Theorems 1.4 and 1.5 are proved, and Examples 5.8 - 5.11 are given to illustrate the application of the main results.

\vskip5pt

The appendix provides a direct and elementary proof for the Cartan-Eilenberg isomorphism {\rm (\cite[p.209, p.205]{CE56})},
based on the  K\"unneth formula for the homology of the tensor product of complexes.

\section{\bf Modules over tensor algebras}

\subsection{Some facts}  We need the following facts. For a module $M$, let $\add M$ be the class of modules which are direct summands of a finite direct sum of $M$.

\begin{lem} \label{torless} \  Let $A$ be a finite-dimensional $k$-algebra and $M\in A\mbox{-}{\rm mod}$.  Then the following are equivalent$:$

$(1)$ \ $M$ is torsionless.

$(2)$ \ $M$ is a submodule of some projective $A$-module $P$.

$(3)$ \ Any left $(\add A)$-approximation of $M$ is a monomorphism.
\end{lem}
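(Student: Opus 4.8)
The plan is to prove the chain of equivalences $(1)\Rightarrow(3)\Rightarrow(2)\Rightarrow(1)$, which keeps the arguments short and uses standard facts about approximations over Artin algebras. Recall that a left $(\add A)$-approximation of $M$ is a morphism $u\colon M\to P$ with $P\in\add A$ (hence projective) such that every morphism from $M$ to a projective module factors through $u$; such approximations exist since $A$ is finite-dimensional (equivalently, an Artin algebra), e.g. by taking $P=A^{M^*}$ with the evaluation-type map, or more economically the map induced by a finite generating set of $M^*$ as a right $A$-module.

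First I would prove $(1)\Rightarrow(3)$. Suppose $M$ is torsionless, so $\phi_M\colon M\to M^{**}$ is injective, and let $u\colon M\to P$ be any left $(\add A)$-approximation. I claim $u$ is injective. Indeed, for each $f\in M^*=\Hom_A(M,A)$, since $A\in\add A$ the map $f$ factors as $f=f'\circ u$ for some $f'\colon P\to A$; hence $\Ker u\subseteq\Ker f$ for every $f\in M^*$. Therefore $\Ker u\subseteq\bigcap_{f\in M^*}\Ker f=\Ker\phi_M=0$, so $u$ is a monomorphism. Next, $(3)\Rightarrow(2)$ is immediate: a left $(\add A)$-approximation always exists (this is where finite-dimensionality is used), and if it happens to be a monomorphism then $M$ embeds into the projective module $P\in\add A$.

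Finally I would prove $(2)\Rightarrow(1)$, which is the classical direction: if $M\hookrightarrow P$ with $P$ projective, then $M$ is torsionless. One reduces to the case $P=A^n$ (a direct summand of a projective is still projective, and a summand of a torsionless module is torsionless, but more directly $M$ embeds in $A^n$ for some $n$); then $A^n$ is reflexive, hence torsionless, and a submodule of a torsionless module is torsionless because the restriction $(A^n)^*\to M^*$ of duals still separates the points of $M$, which forces $\phi_M$ to be injective via the commuting square relating $\phi_M$ and $\phi_{A^n}$.

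I do not expect a serious obstacle here; the only point requiring care is the existence of left $(\add A)$-approximations in $(3)\Rightarrow(2)$ and the fact that the approximation property is exactly what lets one test injectivity against all of $M^*$ in $(1)\Rightarrow(3)$. Since $A$ is finite-dimensional over $k$, $M^*$ is a finitely generated right $A$-module, so a finite set of generators $f_1,\dots,f_m$ of $M^*$ yields the approximation $(f_1,\dots,f_m)\colon M\to A^m$, and the whole argument goes through with $\bigcap_i\Ker f_i=\Ker\phi_M$. This makes all three implications essentially formal.
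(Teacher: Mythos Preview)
Your proof is correct and complete. The paper states this lemma without proof, treating it as a standard fact, so there is no argument to compare against; your chain $(1)\Rightarrow(3)\Rightarrow(2)\Rightarrow(1)$ is the natural one and each step is handled cleanly, including the explicit construction of a left $(\add A)$-approximation from a generating set of $M^*$.
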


 Following \cite{RZ2}, an exact sequence  $0\longrightarrow X\stackrel{f}\longrightarrow Y\longrightarrow Z\longrightarrow 0$ in $A$-mod
is an {\it approximation sequence}, if $f$ is a left $(\add A)$-approximation.
The following fact shows a connection between torsionless and reflexive modules.

\begin{lem}\label{lem:refl_tors}{\rm (\cite[2.4(a)]{RZ2})} \ Let \ $0\longrightarrow X\stackrel{f}\longrightarrow P\longrightarrow M\longrightarrow 0$ be an approximation sequence. Then $X$ is reflexive if and only if $M$ is torsionless.
\end{lem}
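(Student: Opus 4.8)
The plan is to dualize the approximation sequence $0\to X\stackrel{f}\longrightarrow P\stackrel{g}\longrightarrow M\to 0$ (with $P\in\add A$) twice with respect to $(-)^{*}=\Hom(-,A)$ and then compare the result with the original sequence via the natural map $\phi$. First I would apply $\Hom_{A}(-,A)$: because $f$ is a left $(\add A)$-approximation, $f^{*}\colon P^{*}\to X^{*}$ is surjective, so left exactness of $\Hom_{A}(-,A)$ gives a short exact sequence of right $A$-modules
\[
0\to M^{*}\stackrel{g^{*}}\longrightarrow P^{*}\stackrel{f^{*}}\longrightarrow X^{*}\to 0
\]
(equivalently $\Ext^{1}_{A}(M,A)=0$). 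Applying $\Hom_{A^{\op}}(-,A)$ to this sequence, and using that $P^{*}$ is a projective right module (so $\Ext^{i}_{A^{\op}}(P^{*},A)=0$ for $i\ge1$ and $\phi_{P}\colon P\to P^{**}$ is an isomorphism), yields exactness of $0\to X^{**}\stackrel{f^{**}}\longrightarrow P^{**}\stackrel{g^{**}}\longrightarrow M^{**}$. Naturality of $\phi$ then provides a commutative diagram with top row $0\to X\to P\to M\to 0$, bottom row the exact sequence just obtained, and vertical maps $\phi_{X},\phi_{P},\phi_{M}$ with $\phi_{P}$ an isomorphism.

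Two observations reduce the statement to a diagram chase. Since $f$ is injective, $X$ is a submodule of the projective module $P$, hence torsionless by Lemma \ref{torless}, so $\phi_{X}$ is automatically injective; thus in the ``if'' direction only surjectivity of $\phi_{X}$ remains. And chasing the square $g^{**}\phi_{P}=\phi_{M}g$, with $g$ and $\phi_{P}$ surjective, shows $\im\phi_{M}=\im g^{**}$, so $\phi_{M}$ factors as $M\twoheadrightarrow\im g^{**}\hookrightarrow M^{**}$. For the ``only if'' direction I would assume $X$ reflexive, restrict the bottom row to the short exact sequence $0\to X^{**}\to P^{**}\to\im g^{**}\to 0$, and apply the five lemma to the resulting morphism of short exact sequences: $\phi_{X}$ and $\phi_{P}$ being isomorphisms forces the induced map $M\to\im g^{**}$ to be an isomorphism, hence $\phi_{M}$ is injective and $M$ is torsionless. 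For the ``if'' direction I would assume $\phi_{M}$ injective and chase $\xi\in X^{**}$: lift $f^{**}(\xi)$ along the isomorphism $\phi_{P}$ to some $p\in P$; then $\phi_{M}(g(p))=g^{**}f^{**}(\xi)=0$ forces $g(p)=0$, so $p=f(x)$ for some $x\in X$, and injectivity of $f^{**}$ gives $\phi_{X}(x)=\xi$. Hence $\phi_{X}$ is onto, so $X$ is reflexive.

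I do not expect a genuine obstacle here; the argument is a two-step dualization followed by the five lemma and an elementary chase. The only points that need care are keeping the bookkeeping of left versus right modules straight when iterating $(-)^{*}$, and isolating the fact that the whole hypothesis is used only to guarantee that $f^{*}$ is surjective (equivalently $\Ext^{1}_{A}(M,A)=0$), which is exactly what makes the first dual sequence short exact; without it the bottom row of the comparison diagram would merely be a complex and the chase would fail.
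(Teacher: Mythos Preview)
Your proof is correct. The paper does not actually prove this lemma; it simply cites \cite[2.4(a)]{RZ2} without argument, so there is no in-paper proof to compare against. Your double-dualization plus five-lemma/diagram-chase argument is exactly the standard proof of this fact and is what one would expect to find in the cited reference.
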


\begin{lem}\label{lem:GP_app} \
If $0\longrightarrow G\stackrel{f}\longrightarrow P\longrightarrow G'\longrightarrow 0$ is an exact sequence such that
 $G$ is a Gorenstein-projective $A$-module and $f$ is a left $(\add A)$-approximation of $G$, then $G'$ is also Gorenstein-projective.
\end{lem}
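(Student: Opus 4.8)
The plan is to reduce the statement to the fact that $\Gp(A)$ is closed under finite direct sums and under direct summands, by comparing the given approximation sequence with the cosyzygy sequence of $G$. The key bookkeeping device is the following elementary observation: if $b\colon B\to C$ is a split monomorphism and $a\colon A\to B$ is any morphism, then $\cok(ba)\cong\cok(a)\oplus\cok(b)$. Indeed, a retraction of $b$ gives $C\cong b(B)\oplus C'$ with $C'\cong\cok(b)$, and since $ba(A)\subseteq b(B)$ one has $C/ba(A)\cong\bigl(b(B)/b(a(A))\bigr)\oplus C'\cong\cok(a)\oplus\cok(b)$.

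First I would produce a second approximation sequence for $G$. Since $G$ is Gorenstein-projective, take the cosyzygy short exact sequence $0\to G\xrightarrow{\,i\,}P_0\to G_1\to 0$ with $P_0$ projective (hence $P_0\in\add A$) and $G_1\in\Gp(A)$. As $G_1$ is in particular semi-Gorenstein-projective, $\Ext^1_A(G_1,A)=0$, so applying $\Hom_A(-,A)$ shows $\Hom_A(P_0,A)\to\Hom_A(G,A)$ is surjective; since $\add A$ is generated by $A$, this already means $i$ is a left $(\add A)$-approximation of $G$. Now $i\colon G\to P_0$ and the given $f\colon G\to P$ are two left $(\add A)$-approximations of $G$, so they factor through each other: there are $s\colon P_0\to P$ and $t\colon P\to P_0$ with $f=si$ and $i=tf$.

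Next I form $\binom{i}{f}\colon G\to P_0\oplus P$ and set $W=\cok\binom{i}{f}$. From $i=tf$ we get $\binom{i}{f}=\binom{t}{1_P}\circ f$, where $\binom{t}{1_P}\colon P\to P_0\oplus P$ is a split monomorphism with cokernel $P_0$; from $f=si$ we get $\binom{i}{f}=\binom{1_{P_0}}{s}\circ i$, where $\binom{1_{P_0}}{s}\colon P_0\to P_0\oplus P$ is a split monomorphism with cokernel $P$. Applying the cokernel observation twice yields $W\cong\cok(f)\oplus P_0=G'\oplus P_0$ and $W\cong\cok(i)\oplus P=G_1\oplus P$, hence $G'\oplus P_0\cong G_1\oplus P$. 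The right-hand side lies in $\Gp(A)$, so $G'\oplus P_0\in\Gp(A)$, and closure of $\Gp(A)$ under direct summands gives $G'\in\Gp(A)$.

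The only steps needing a short verification are that the two diagonal maps $\binom{t}{1_P}$ and $\binom{1_{P_0}}{s}$ are split monomorphisms with the stated cokernels, together with the two factorizations of $\binom{i}{f}$; these are routine once $s$ and $t$ have been extracted from the approximation property. I do not expect a genuine obstacle here. The conceptually delicate point — that merely knowing $G'$ to be semi-Gorenstein-projective would not be enough to conclude it is Gorenstein-projective — is precisely what the approximation hypothesis on $f$ lets us bypass, since it pins down $G'$, up to a projective summand, as the honest cosyzygy $G_1$ of $G$.
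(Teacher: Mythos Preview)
Your proof is correct and takes a genuinely different route from the paper. The paper verifies the three Auslander conditions $(G1)$, $(G2)$, $(G3)$ for $G'$ directly: from the approximation hypothesis and $\Ext^i_A(G,A)=0$ it gets $\Ext^i_A(G',A)=0$; dualizing gives $0\to G'^*\to P^*\to G^*\to 0$, and from $\Ext^i_A(G^*,A)=0$ one obtains $\Ext^i_A(G'^*,A)=0$; finally a diagram chase with the canonical maps $\phi_G,\phi_P,\phi_{G'}$ shows $G'$ is reflexive. Your argument instead compares the given approximation $f$ with the cosyzygy embedding $i$ coming from a complete projective resolution of $G$, and extracts the Schanuel-type identity $G'\oplus P_0\cong G_1\oplus P$ via your cokernel observation; the conclusion then follows from closure of $\Gp(A)$ under direct summands. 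Your approach is more categorical and avoids the $(G1)$--$(G3)$ bookkeeping entirely, at the cost of invoking the direct-summand closure of $\Gp(A)$ as a black box; the paper's approach is more self-contained in that respect but requires tracking three separate conditions. Both are short and clean.
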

\begin{proof} \ For convenience we include a proof. Since $f$ is a left approximation and $\Ext_A^i(G,A)=0$ for $i\ge 1$, it follows that $\Ext^i_A(G',A)=0$ for $i\ge 1$.
Therefore
one has the exact sequence $0\longrightarrow G'^* \longrightarrow P^* \longrightarrow G^*\longrightarrow 0$. Since $\Ext_A^i(G^*, A)=0$ for $i\ge 1$,
it follows that $\Ext^i_A(G'^*, A)=0$ for $i\ge 1$. Thus one has a commutative diagram
with exact rows
$$\xymatrix{
0\ar[r]& G \ar[r]^f\ar[d]^-{\phi_{G}}_-\cong & P\ar[r]\ar[d]^-{\phi_{P}}_-\cong & G'\ar[r]\ar[d]^-{\phi_{G'}}&0\\
0\ar[r]& G^{**} \ar[r]& P^{**}\ar[r]& G'^{**}\ar[r]&0.
}
$$
Hence $\phi_{G'}$ is an isomorphism, i.e., $G'$ is reflexive.
Therefore $G'$ satisfies (G1), (G2), (G3),  so $G'$ is Gorenstein-projective.
\end{proof}

\subsection{The Cartan-Eilenberg isomorphism}

The Cartan-Eilenberg isomorphism is a powerful tool in studying modules over tensor algebras.

\begin{thm}\label{CEiso}{\rm (\cite[Theorem 3.1, p.209, p.205]{CE56})} \ Let $A$ and $B$ be finite-dimensional algebras over a field $k$,   $L,M\in A$-{\rm mod},  and $U,V\in B$-{\rm mod}. Then there is an isomorphism of $k$-spaces
$$\Ext^n_{
A\otimes B}(L \otimes U, M\otimes V )\cong
\bigoplus_{p+q=n}
(\Ext^p_
A(L, M) \otimes \Ext^q_
B(U, V )), \ \forall \ n \geq 0.$$
\end{thm}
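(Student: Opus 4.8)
The plan is to reduce the statement to the Künneth formula by resolving $L$ and $U$ by projectives over $A$ and $B$ respectively, and then identifying the total complex of the tensor product of the two $\Hom$-complexes. First I would fix projective resolutions $P^\bullet \to L$ in $A\modu$ and $Q^\bullet \to U$ in $B\modu$, consisting of finitely generated projectives since $A$, $B$ are finite-dimensional. The key initial observation is that $P^\bullet \otimes_k Q^\bullet$, the total complex of the double complex $P^i \otimes_k Q^j$, is a projective resolution of $L \otimes U$ over $A \otimes B$: each $P^i \otimes Q^j$ is projective over $A \otimes B$ (a summand of a free module, since $A^{a} \otimes B^{b} = (A\otimes B)^{ab}$), and exactness of the total complex follows because $k$ is a field, so that $P^\bullet \otimes_k Q^\bullet$ computes $\Tor^k_\bullet$, which is concentrated in degree zero and equals $L \otimes U$ there.

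Next I would compute $\Ext^n_{A\otimes B}(L\otimes U, M\otimes V)$ as the cohomology of $\Hom_{A\otimes B}(P^\bullet \otimes_k Q^\bullet,\, M\otimes_k V)$. The crucial algebraic identity is the adjunction / Hom-tensor interchange
\[
\Hom_{A\otimes B}(P^i \otimes_k Q^j,\ M\otimes_k V) \;\cong\; \Hom_A(P^i, M)\otimes_k \Hom_B(Q^j, V),
\]
which holds because $P^i$ and $Q^j$ are finitely generated projective (so that the natural map from the right-hand side to the left is an isomorphism — one checks it for $P^i = A$, $Q^j = B$ and extends by additivity and direct summands). This identifies the $\Hom$-complex computing $\Ext_{A\otimes B}$ with the total complex of the double complex $\Hom_A(P^\bullet, M) \otimes_k \Hom_B(Q^\bullet, V)$, up to the usual care with signs in the differentials of the total complex. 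Then the Künneth formula for the homology of a tensor product of complexes of $k$-vector spaces — where all $\Tor^k$ correction terms vanish because $k$ is a field — gives
\[
H^n\big(\Hom_A(P^\bullet,M)\otimes_k \Hom_B(Q^\bullet,V)\big)\;\cong\;\bigoplus_{p+q=n} H^p\big(\Hom_A(P^\bullet,M)\big)\otimes_k H^q\big(\Hom_B(Q^\bullet,V)\big),
\]
and the right-hand side is exactly $\bigoplus_{p+q=n}\Ext^p_A(L,M)\otimes_k\Ext^q_B(U,V)$.

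The main obstacle, and the point that deserves genuine care rather than a one-line dismissal, is verifying that the sign conventions in forming the total complex of the double complex $P^\bullet \otimes Q^\bullet$ (and dually of the $\Hom$ double complex) are compatible with the sign conventions in the statement of the Künneth theorem, so that the isomorphisms assemble into a single isomorphism of complexes rather than merely a termwise isomorphism; a careless choice produces anticommuting instead of commuting squares and breaks the argument. This bookkeeping is precisely what the appendix is meant to carry out in full: as indicated there, one organizes it around the Künneth formula for the homology of the tensor product of complexes, which packages both the sign issue and the vanishing of the $\Tor$ terms. I would also remark that naturality of the isomorphism in $L, M, U, V$ follows because every step — the comparison of resolutions, the Hom-tensor interchange, and the Künneth isomorphism — is natural, and this naturality is exactly what is needed for the compatibility with the canonical map $\phi$ used later (Proposition \ref{lem:phi_tensor}).
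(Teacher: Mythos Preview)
Your approach is essentially the same as the paper's appendix proof: take projective resolutions $P^\bullet\to L$ and $Q^\bullet\to U$, observe that $P^\bullet\otimes_k Q^\bullet$ resolves $L\otimes U$ over $A\otimes B$, identify $\Hom_{A\otimes B}(P^\bullet\otimes Q^\bullet,\,M\otimes V)$ with $\Hom_A(P^\bullet,M)\otimes_k\Hom_B(Q^\bullet,V)$ via the Hom--tensor interchange, and then apply the K\"unneth formula over the field $k$. The only difference in emphasis is that the paper develops the $n=0$ isomorphism $\Hom_A(L,M)\otimes\Hom_B(U,V)\cong\Hom_{A\otimes B}(L\otimes U,\,M\otimes V)$ for \emph{arbitrary} finitely generated $L,U$ (not just projectives), since the explicit form of this map is needed elsewhere (Proposition~\ref{lem:phi_tensor}); for the $\Ext$ computation itself, your reduction to the projective case suffices.
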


A direct and elementary proof of Theorem \ref{CEiso} will be included in the appendix.
For later use we need to write out the concrete isomorphism for the case $n = 0$. It is surprising that the inverse of this isomorphism can not be written out explicitly.

\begin{cor}\label{homiso}{\rm(\cite[Theorem 3.1, p.209, p.205]{CE56})}
Let $A,B$ be finite-dimensional $k$-algebras, $L,M\in A$-{\rm mod} and $U,V\in B$-{\rm mod}. Then there is an isomorphism
$$\chi_{_{L, U, M, V}}: \ \ \Hom_A(L, M) \otimes \Hom_B(U, V) \longrightarrow \Hom_{A\otimes B}(L \otimes U, M\otimes V)$$
given by \ $g\otimes h \mapsto ``l\otimes u \mapsto g(l)\otimes h(u)".$
\end{cor}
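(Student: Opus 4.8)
The plan is to prove Corollary \ref{homiso} directly from Theorem \ref{CEiso} by identifying exactly what the isomorphism does in degree $n=0$, rather than by re-deriving it from scratch. First I would note that taking $n = 0$ in Theorem \ref{CEiso} gives a $k$-space isomorphism $\Hom_A(L,M)\otimes\Hom_B(U,V)\cong \Hom_{A\otimes B}(L\otimes U, M\otimes V)$, since the only summand with $p+q=0$ is $p=q=0$. So the content of the corollary is that the map $\chi_{_{L,U,M,V}}$ written down explicitly by the formula $g\otimes h\mapsto (l\otimes u\mapsto g(l)\otimes h(u))$ \emph{is} this isomorphism (up to the identification built into the proof of Theorem \ref{CEiso}), in particular that it is well-defined, $A\otimes B$-linear, and bijective.

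The key steps, in order, are as follows. (i) Check that for fixed $g\in\Hom_A(L,M)$ and $h\in\Hom_B(U,V)$ the assignment $l\otimes u\mapsto g(l)\otimes h(u)$ extends to a well-defined $k$-linear map $L\otimes U\to M\otimes V$ (this is the universal property of the tensor product applied to the $k$-bilinear map $(l,u)\mapsto g(l)\otimes h(u)$), and that it is a homomorphism of $A\otimes B$-modules, using that the $A\otimes B$-action on a tensor product is $(a\otimes b)(m\otimes v)=am\otimes bv$ and that $g,h$ are $A$- resp. $B$-linear. (ii) Check that $g\otimes h\mapsto\chi(g\otimes h)$ is itself $k$-bilinear in $(g,h)$, hence induces a well-defined $k$-linear map $\chi_{_{L,U,M,V}}$ on the tensor product $\Hom_A(L,M)\otimes\Hom_B(U,V)$. (iii) Verify that $\chi$ coincides with the degree-$0$ Cartan-Eilenberg map. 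Here I would reduce to projectives: choose projective presentations (or just free covers) of $L$ over $A$ and $U$ over $B$; for $L=A$, $U=B$ the map $\chi_{_{A,B,M,V}}:\Hom_A(A,M)\otimes\Hom_B(B,V)\to\Hom_{A\otimes B}(A\otimes B, M\otimes V)$ is, under the canonical identifications $\Hom_A(A,M)\cong M$ etc., just the identity $M\otimes V\to M\otimes V$, which is visibly an isomorphism and visibly the one produced by the construction in the appendix. Then both $\chi$ and the Cartan-Eilenberg map are natural in $L$ and $U$, so a standard five-lemma / diagram chase over a presentation $A^{(J)}\to A^{(I)}\to L\to 0$ and similarly for $U$ promotes the isomorphism from the free case to arbitrary finitely generated $L,U$; naturality of $\chi$ in all four variables is immediate from the explicit formula. (iv) Remark on the final sentence: the inverse of $\chi$ cannot be written by a closed formula because a general element of $\Hom_{A\otimes B}(L\otimes U,M\otimes V)$ is a sum of "decomposable" homomorphisms of the form $g\otimes h$ with no canonical such decomposition — this is a remark, not something requiring proof.

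The main obstacle I anticipate is step (iii): pinning down that the \emph{explicit} formula really is the map coming out of the Cartan-Eilenberg construction in the appendix, i.e., that the identification $\Hom_A(L,M)\otimes\Hom_B(U,V)\cong\Hom_{A\otimes B}(L\otimes U,M\otimes V)$ produced there agrees on the nose with $\chi$. Depending on exactly how the appendix sets things up (via the K\"unneth formula applied to $\Hom$-complexes of projective resolutions $P_\bullet\to L$, $Q_\bullet\to U$, so that $P_\bullet\otimes Q_\bullet\to L\otimes U$ is a projective resolution over $A\otimes B$), one has to track the K\"unneth comparison map in homological degree $0$ and see it restricts to precisely $g\otimes h\mapsto g(l)\otimes h(u)$ on cycles; this is a bookkeeping check but the one place where one must actually open the black box. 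Everything else (well-definedness, bilinearity, $A\otimes B$-linearity, naturality) is routine and can be dispatched in a few lines. An alternative, cleaner route that avoids the appendix entirely: prove directly that $\chi$ is an isomorphism for $L=A$, $U=B$, observe both sides are additive and right exact in $L$ and $U$ (using that $A\otimes B$ is $k$-flat, indeed $k$ is a field), and conclude by a presentation argument — this gives a self-contained proof of the degree-$0$ statement without invoking Theorem \ref{CEiso} at all, which may in fact be the intended proof since the corollary is only cited for $n=0$.
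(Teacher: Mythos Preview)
Your proposal is essentially correct, and your ``alternative, cleaner route'' is very close to what the paper actually does in the appendix (Theorem \ref{CEisofd}(1)). One small slip: both sides are contravariant and \emph{left} exact in $L$ and $U$, not right exact; but this is exactly what makes the presentation argument work, since applying both functors to $A_1\to A_0\to L\to 0$ yields a diagram of kernels and one concludes by the five lemma. The paper's route differs from yours in one detail worth noting: rather than verifying the free case directly as an identity, the paper first proves injectivity of $\chi$ for \emph{all} $L,U$ by factoring through the $k$-linear isomorphism $\Phi_k:\Hom_k(L,M)\otimes\Hom_k(U,V)\to\Hom_k(L\otimes U,M\otimes V)$ (Corollary \ref{cor:Phi_inj}), and then for free $L,U$ obtains surjectivity by a dimension-count (Lemma \ref{lem:Phi_free}). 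Your approach of checking the free case directly is arguably simpler; the paper's injectivity-via-$k$-linear-embedding trick has the minor advantage of giving injectivity uniformly before any reduction. Your step (iii) --- tracking the K\"unneth comparison in degree $0$ --- is unnecessary here, since the corollary is proved independently of Theorem \ref{CEiso} in the appendix; the explicit formula \emph{is} the map, and bijectivity is established directly.
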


\vskip5pt

The following result will be used in the proof of Theorem \ref{thm:weak_G}, one of the main results of this paper, and it is also of independent interest.

 \begin{prop}\label{lem:app_tensor} \ Let $A$ and $B$ be finite-dimensional $k$-algebras and $\Lambda=A\otimes B$. Let $X$ and $Y$ be finitely generated modules over $A$ and $B$, respectively.
If $f: X\longrightarrow  P$ is a left $(\add A)$-approximation and $g:Y\longrightarrow Q$ is a left $(\add B)$-approximation, then
$f\otimes g: X\otimes Y\longrightarrow P\otimes Q$
is a left $(\add \Lambda)$-approximation.
\end{prop}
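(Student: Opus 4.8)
The plan is to unwind the definition of a left $(\add\Lambda)$-approximation and reduce it, via the Cartan--Eilenberg isomorphism in the form of Corollary~\ref{homiso}, to the two hypotheses on $f$ and $g$. First I would note that $P\otimes Q\in\add\Lambda$: indeed $\add\Lambda$ is closed under direct sums and summands, $\Lambda=A\otimes B$, and $(A^m)\otimes(B^n)\cong\Lambda^{mn}$, so since $P\in\add A$ and $Q\in\add B$ we get $P\otimes Q\in\add\Lambda$. Thus $f\otimes g$ is at least a map into an object of $\add\Lambda$, and what remains is the lifting property: for every $\Lambda$-module $R\in\add\Lambda$, every $\Lambda$-homomorphism $X\otimes Y\longrightarrow R$ factors through $f\otimes g$. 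Since $\add\Lambda$ is generated by $\Lambda$ under finite sums and summands, and the lifting property is clearly closed under finite direct sums and summands of the target, it suffices to treat the case $R=\Lambda=A\otimes B$.

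So the crux is to show that
$$\Hom_{\Lambda}(f\otimes g,\ A\otimes B):\ \Hom_{\Lambda}(P\otimes Q,\ A\otimes B)\longrightarrow \Hom_{\Lambda}(X\otimes Y,\ A\otimes B)$$
is surjective. Here I would invoke Corollary~\ref{homiso}: the isomorphism $\chi$ identifies $\Hom_{\Lambda}(X\otimes Y, A\otimes B)$ with $\Hom_A(X,A)\otimes_k\Hom_B(Y,B)$, and likewise $\Hom_{\Lambda}(P\otimes Q, A\otimes B)\cong\Hom_A(P,A)\otimes_k\Hom_B(Q,B)$, and these identifications are natural in the first variable, so that under $\chi$ the map $\Hom_{\Lambda}(f\otimes g, A\otimes B)$ corresponds to $\Hom_A(f,A)\otimes_k\Hom_B(g,B)$. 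By hypothesis $f$ is a left $(\add A)$-approximation, so $\Hom_A(f,A):\Hom_A(P,A)\to\Hom_A(X,A)$ is surjective; similarly $\Hom_B(g,B):\Hom_B(Q,B)\to\Hom_B(Y,B)$ is surjective. The tensor product over the field $k$ of two surjective $k$-linear maps is surjective, hence $\Hom_A(f,A)\otimes_k\Hom_B(g,B)$ is surjective, and therefore so is $\Hom_{\Lambda}(f\otimes g, A\otimes B)$, as required.

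The one point that needs genuine care — and which I expect to be the main obstacle — is the naturality claim: that the concrete map $\chi$ of Corollary~\ref{homiso} is natural in the first argument, i.e. that the square relating $\chi_{_{X,Y,A,B}}$, $\chi_{_{P,Q,A,B}}$, $f^*\otimes g^*$ and $(f\otimes g)^*$ actually commutes. This is a direct check on elements: starting from $\alpha\otimes\beta\in\Hom_A(P,A)\otimes\Hom_B(Q,B)$, applying $\chi$ gives the map $p\otimes q\mapsto\alpha(p)\otimes\beta(q)$, and precomposing with $f\otimes g$ gives $x\otimes y\mapsto \alpha(f(x))\otimes\beta(g(y))$, which is exactly $\chi$ applied to $(\alpha f)\otimes(\beta g)=(f^*\otimes g^*)(\alpha\otimes\beta)$; so the square commutes on simple tensors and hence everywhere by $k$-linearity. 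One should also remark that $P\otimes Q$ need not be of the form $P'\otimes Q'$ with $P',Q'$ free in a way that makes $\chi$ literally an equality rather than an isomorphism, but since $\chi$ is a natural isomorphism this causes no difficulty. Note also that we do not need $\gldim B<\infty$ here — only the surjectivity in degree zero is used — which is consistent with the remark in the introduction that the ``if'' direction of Theorem~\ref{thm:weak_G}(1) does not require that hypothesis. Finally, assembling the pieces: $f\otimes g$ maps into $P\otimes Q\in\add\Lambda$ and has the lifting property against $\Lambda$, hence against all of $\add\Lambda$, so $f\otimes g$ is a left $(\add\Lambda)$-approximation of $X\otimes Y$.
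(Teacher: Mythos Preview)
Your proof is correct and follows essentially the same approach as the paper: both reduce to showing that $\Hom_\Lambda(f\otimes g,\,A\otimes B)$ is surjective via the Cartan--Eilenberg isomorphism $\chi$ of Corollary~\ref{homiso}. The only difference is presentational: you package the argument as ``$\chi$ is natural in the first variable, so the map identifies with $\Hom_A(f,A)\otimes_k\Hom_B(g,B)$, a tensor of surjections over a field,'' whereas the paper carries out the same computation element by element (writing $\chi^{-1}(\iota\varphi)=\sum s_i\otimes t_i$, factoring $s_i=u_if$, $t_i=v_ig$, and checking $\psi\circ(f\otimes g)=\iota\varphi$).
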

\begin{proof}  \ It suffices to show that for any direct summand $T$ of $_\Lambda\Lambda$, any homomorphism $\varphi:X\otimes Y\longrightarrow T$ factors through $f\otimes g$.
Denote by $\iota:T\longrightarrow \Lambda$ the inclusion and $\pi:\Lambda\longrightarrow T$ its retraction. The  composition $\iota\circ\varphi\in \Hom_\Lambda(X\otimes Y, A\otimes B)$.
According to the Cartan-Eilenberg isomorphism, $\chi_{_{X, Y, A, B}}^{-1}(\iota\circ\varphi)\in\Hom_A(X,A)\otimes\Hom(Y,B)$.
Thus, there exist $s_i:X\longrightarrow A$ and $t_i:Y\longrightarrow B$ such that $\chi_{_{X, Y, A, B}}^{-1}(\iota\circ\varphi)= \sum\limits s_i \otimes t_i$.
Since $f$ is a left $(\add A)$-approximation and $g$ is a left $(\add B)$-approximation, there are homomorphisms $u_i:P\longrightarrow A$ and $v_i:Q\longrightarrow B$ such that each $s_i=u_if$ and $t_i=v_ig$ for each $i$. Denote by $\psi=\chi_{_{P, Q, A, B}}(\sum u_i\otimes v_i)\in\Hom_\Lambda(P\otimes Q, A\otimes B)$. Then for any $x\otimes y\in X\otimes Y$, on one hand,
$$
(\iota\circ\varphi)(x\otimes y)=\chi_{_{X, Y, A,B}}(\sum s_i \otimes t_i)(x\otimes y)=\sum s_i(x) \otimes t_i(y).
$$
On the other hand,
$$
\psi\circ(f\otimes g)(x\otimes y)=\chi_{_{P, Q, A, B}}(\sum u_i\otimes v_i)(f(x)\otimes g(y))=\sum u_if(x)\otimes v_ig(y)=\sum s_i(x)\otimes t_i(y).
$$
So $\iota\circ \varphi=\psi\circ(f\otimes g)$. Therefore, $\varphi=\pi\circ \iota \circ\varphi = \pi\circ \psi\circ(f\otimes g)$ factors through $f\otimes g$.
\end{proof}

\subsection{Torsionless modules and reflexive modules over tensor algebras}

The following fact shows the compatibility of the canonical homomorphism $\phi$ with the Cartan-Eilenberg isomorphism.

\begin{prop}\label{lem:phi_tensor}
Let $A$ and $B$ be finite-dimensional $k$-algebras,  $X\in A$-{\rm mod} and $Y\in B$-{\rm mod}. Then
$$
\phi_{X\otimes Y}=\Hom(\chi^{-1}_{_{X, Y, A, B}}, A\otimes B)\circ \chi_{_{X^*, Y^*, A, B}}\circ (\phi_X\otimes \phi_Y).$$
That is, the composition of the isomorphisms
\begin{align*}X\otimes Y & \stackrel{\phi_X\otimes \phi_Y}\cong X^{**}\otimes Y^{**}  = \Hom_A(\Hom_A(X,A), A)\otimes \Hom_B(\Hom_B(Y,B),B)
\\& \stackrel{\chi_{_{X^*, Y^*, A, B}}}\longrightarrow \Hom_{A\otimes B}(\Hom_A(X,A)\otimes \Hom_B(Y,B), A\otimes B)
\\& \stackrel{(\chi^{-1}_{_{X, Y, A, B}}, A\otimes B)}\longrightarrow \Hom_{A\otimes B}(\Hom_{A\otimes B}(X\otimes Y, A\otimes B), A\otimes B) = (X\otimes Y)^{**}\end{align*}
is precisely the canonical homomorphism
$\phi_{X\otimes Y}: X\otimes Y\longrightarrow (X\otimes Y)^{**}$,
where
$$\chi_{_{X^*,Y^*,  A, B}}: \Hom_A(X^*, A)\otimes \Hom_B(Y^*,B)\longrightarrow \Hom_{A\otimes B}(X^*\otimes Y^*, A\otimes B)$$
is explicitly given in  {\rm Corollary \ref{homiso}},  and
$$\chi^{-1}_{_{X, Y, A, B}}: \Hom_{A\otimes B}(X\otimes Y, A\otimes B)\longrightarrow \Hom_A(X,A)\otimes \Hom_B(Y,B).$$
\end{prop}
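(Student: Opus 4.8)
The statement to prove, Proposition~\ref{lem:phi_tensor}, asserts that a specific composite of three isomorphisms equals the canonical evaluation map $\phi_{X\otimes Y}$. Since both sides are $k$-linear maps out of $X\otimes Y$, and $X\otimes Y$ is spanned by elementary tensors $x\otimes y$, the plan is simply to chase an elementary tensor through the composite and compare with the definition of $\phi$. This is a direct computation; the only subtlety is bookkeeping with the explicit formula for $\chi$ from Corollary~\ref{homiso} versus the inexplicitness of $\chi^{-1}$.

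First I would fix an element $x\otimes y \in X\otimes Y$ and apply $\phi_X\otimes\phi_Y$ to get $\phi_X(x)\otimes\phi_Y(y) \in X^{**}\otimes Y^{**}$, where $\phi_X(x)$ is the functional $``s\mapsto s(x)"$ on $X^* = \Hom_A(X,A)$ and similarly for $\phi_Y(y)$. Next I would apply $\chi_{_{X^*,Y^*,A,B}}$; by the explicit formula of Corollary~\ref{homiso}, this yields the element of $\Hom_{A\otimes B}(X^*\otimes Y^*, A\otimes B)$ sending $s\otimes t \mapsto \phi_X(x)(s)\otimes\phi_Y(y)(t) = s(x)\otimes t(y)$. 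Denote this homomorphism by $\theta_{x,y}$. The third arrow is precomposition with $\chi^{-1}_{_{X,Y,A,B}}$, producing the functional on $\Hom_{A\otimes B}(X\otimes Y, A\otimes B)$ given by $\alpha \mapsto \theta_{x,y}\bigl(\chi^{-1}_{_{X,Y,A,B}}(\alpha)\bigr)$. I must show this functional is exactly $\phi_{X\otimes Y}(x\otimes y)$, i.e.\ that it sends $\alpha$ to $\alpha(x\otimes y)$.

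The key point is that although $\chi^{-1}$ has no closed form, I only need to evaluate $\theta_{x,y}$ on $\chi^{-1}(\alpha)$, and $\theta_{x,y}$ is itself built from $\chi$. Write $\chi^{-1}_{_{X,Y,A,B}}(\alpha) = \sum_i s_i\otimes t_i$ with $s_i\in X^*$, $t_i\in Y^*$. Then on one hand $\theta_{x,y}(\sum_i s_i\otimes t_i) = \sum_i s_i(x)\otimes t_i(y)$. On the other hand, applying $\chi_{_{X,Y,A,B}}$ to $\sum_i s_i\otimes t_i$ recovers $\alpha$, and by the explicit formula $\alpha(x\otimes y) = \chi_{_{X,Y,A,B}}(\sum_i s_i\otimes t_i)(x\otimes y) = \sum_i s_i(x)\otimes t_i(y)$. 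Hence $\theta_{x,y}(\chi^{-1}(\alpha)) = \alpha(x\otimes y) = \phi_{X\otimes Y}(x\otimes y)(\alpha)$, which is exactly what is needed. Since elementary tensors span, the two maps agree.

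I do not anticipate a genuine obstacle: the argument is a formal diagram chase exploiting the fact that the inexplicit map $\chi^{-1}$ appears only in a position where it is immediately undone by an evaluation built from $\chi$. The one thing to be careful about is that $\chi^{-1}(\alpha)$ is not canonically a \emph{single} elementary tensor but a finite sum $\sum_i s_i\otimes t_i$, and that all the maps involved are $k$-linear so the computation passes through the sum; I would also note explicitly that $\chi_{_{X,Y,A,B}}$ is an isomorphism (Corollary~\ref{homiso}, the $n=0$ case of Theorem~\ref{CEiso}) so that $\chi(\chi^{-1}(\alpha)) = \alpha$ is legitimate. Everything else is routine substitution of the formula $g\otimes h\mapsto ``l\otimes u\mapsto g(l)\otimes h(u)"$.
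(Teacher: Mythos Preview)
Your proposal is correct and follows essentially the same approach as the paper's own proof: evaluate both sides on an elementary tensor $x\otimes y$ and an arbitrary $\alpha\in(X\otimes Y)^*$, write $\chi^{-1}_{X,Y,A,B}(\alpha)=\sum_i s_i\otimes t_i$, and use the explicit formula for $\chi$ twice (once for $\chi_{X^*,Y^*,A,B}$ applied to $\phi_X(x)\otimes\phi_Y(y)$, once for $\chi_{X,Y,A,B}$ applied to $\sum_i s_i\otimes t_i$ to recover $\alpha$) to see both sides equal $\sum_i s_i(x)\otimes t_i(y)$. The paper's argument is identical up to notation.
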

\begin{proof} \ We evaluate both sides on $x\otimes y\in X\otimes Y$ and $\alpha\in (X\otimes Y)^*$.

\vskip5pt

One the left hand side one has
$$\phi_{X\otimes Y}(x\otimes y)(\alpha) = \alpha(x\otimes y), \ \ \forall \ x\otimes y\in X\otimes Y, \ \ \forall \ \alpha\in (X\otimes Y)^*.$$

On the right hand side, one has
\begin{align*}& \{[(\chi^{-1}_{_{X, Y, A, B}}, A\otimes B)\circ \chi_{_{X^*, Y^*, A, B}}\circ (\phi_X\otimes \phi_Y)](x\otimes y)\}(\alpha)
\\& =\{[(\chi^{-1}_{_{X, Y, A, B}}, A\otimes B)\circ \chi_{_{X^*, Y^*, A, B}}](\phi_X(x)\otimes \phi_Y(y))\}(\alpha)
\\ & = \chi_{_{X^*, Y^*, A, B}}(\phi_X(x)\otimes \phi_Y(y))(\chi^{-1}_{_{X, Y, A, B}}(\alpha)) \end{align*}
where $\chi^{-1}_{_{X, Y, A, B}}(\alpha)\in X^*\otimes Y^*$, and $\chi_{_{X^*, Y^*, A, B}}(\phi_X(x)\otimes \phi_Y(y))\in (X^*\otimes Y^*)^*.$
To see it is precisely the canonical homomorphism   $\phi_{X\otimes Y}$, assume that $\chi^{-1}_{_{X, Y, A, B}}(\alpha) = \sum\limits_if_i\otimes g_i$.
Then by the definition of $\chi_{_{X^*, Y^*, A, B}}$ (see Corollary \ref{homiso}) and the definition of $\phi_X(x)$ one has
\begin{align*}& \chi_{_{X^*, Y^*, A, B}}(\phi_X(x)\otimes \phi_Y(y))(\chi^{-1}_{_{X, Y, A, B}}(\alpha))
\\ & = \sum\limits_i\chi_{_{X^*, Y^*, A, B}}(\phi_X(x)\otimes \phi_Y(y))(f_i\otimes g_i)
\\ & = \sum\limits_i \phi_X(x)(f_i)\otimes \phi_Y(y)(g_i)
\\ & = \sum\limits_i f_i(x)\otimes g_i(y).\end{align*}
By the Cartan-Eilenberg isomorphism one has $\chi_{_{X, Y, A, B}}\circ \chi^{-1}_{_{X, Y, A, B}} = {\rm Id}_{(X\otimes Y)^*}$. It follows that
$$\alpha = \chi_{_{X, Y, A, B}}\circ \chi^{-1}_{_{X, Y, A, B}}(\alpha)= \sum\limits_i\chi_{_{X, Y, A, B}}(f_i\otimes g_i).$$
Thus, by the definition of $\chi_{_{X, Y, A, B}}$ one has
$$\alpha(x\otimes y) = \sum\limits_if_i(x)\otimes g_i(y).$$
This finishes the proof.
\end{proof}

\begin{prop}\label{tensorker} \ Let $A$ and $B$ be finite-dimensional $k$-algebras,
$f:X\longrightarrow X'$ an $A$-homomorphism, and $g: Y\longrightarrow Y'$ a $B$-homomorphism.
Then $${\rm Ker} (f\otimes g)={\rm Ker} f\otimes Y+X\otimes {\rm Ker} g.$$

In particular, if both $X$ and $Y$ are non-zero, then $f\otimes g$ is a monomorphism if and only if both $f$ and $g$ are monomorphisms.
\end{prop}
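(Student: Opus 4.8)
The plan is to read this as a statement of linear algebra over the ground field $k$: all tensor products are over $k$, and $X,X',Y,Y'$ are merely finite-dimensional $k$-vector spaces with $f,g$ arbitrary $k$-linear maps, the $A$- and $B$-module structures being irrelevant. The inclusion $\Ker f\otimes Y+X\otimes\Ker g\subseteq\Ker(f\otimes g)$ is immediate from $(f\otimes g)(x\otimes y)=f(x)\otimes g(y)$, which vanishes whenever $x\in\Ker f$ or $y\in\Ker g$. The content is the reverse inclusion.

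For that, I would split off complements: choose $k$-subspaces $X_1\subseteq X$ and $Y_1\subseteq Y$ with $X=\Ker f\oplus X_1$ and $Y=\Ker g\oplus Y_1$. This yields the decomposition
$$X\otimes Y=(\Ker f\otimes\Ker g)\oplus(\Ker f\otimes Y_1)\oplus(X_1\otimes\Ker g)\oplus(X_1\otimes Y_1),$$
whose first three summands together span exactly $\Ker f\otimes Y+X\otimes\Ker g$ and are each annihilated by $f\otimes g$. It therefore suffices to show that $f\otimes g$ is injective on $X_1\otimes Y_1$. But $f$ restricts to an injection $X_1\to X'$ (since $X_1\cap\Ker f=0$) and $g$ to an injection $Y_1\to Y'$, and the tensor product over a field of two injective linear maps is again injective; hence $\Ker(f\otimes g)\cap(X_1\otimes Y_1)=0$, and combined with the previous observations this forces $\Ker(f\otimes g)=\Ker f\otimes Y+X\otimes\Ker g$.

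The only place a hypothesis is really used is the injectivity of $f|_{X_1}\otimes g|_{Y_1}$, and this is exactly where it matters that we tensor over the field $k$ (not over $A$ or $B$): $k$-vector spaces are flat, so tensoring preserves monomorphisms; over a general ring the statement would fail. An alternative to the splitting argument, which makes the role of flatness explicit, is to factor $f\otimes g=(f\otimes 1_{Y'})\circ(1_X\otimes g)$, identify $\Ker(1_X\otimes g)=X\otimes\Ker g$ and $\Ker(f\otimes 1_{Y'})=\Ker f\otimes Y'$ by flatness, and then compute the preimage $(1_X\otimes g)^{-1}(\Ker f\otimes Y')$ by chasing through the surjection $X\otimes Y\to(X/\Ker f)\otimes Y$; I expect to present the splitting version, since it is the shortest. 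I do not anticipate a serious obstacle here.

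For the final assertion, suppose $X\ne 0\ne Y$. Then $\Ker(f\otimes g)=0$ if and only if both subspaces $\Ker f\otimes Y$ and $X\otimes\Ker g$ vanish. Since $\dim_k(\Ker f\otimes Y)=\dim_k(\Ker f)\cdot\dim_k(Y)$ and $Y\ne 0$, the first vanishes if and only if $\Ker f=0$; symmetrically the second vanishes if and only if $\Ker g=0$. Hence $f\otimes g$ is a monomorphism if and only if both $f$ and $g$ are monomorphisms, as claimed.
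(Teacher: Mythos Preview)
Your proof is correct. The paper takes a different route: it sets up the $3\times 3$ commutative diagram obtained by tensoring the short exact sequence $0\to\Ker g\to Y\to\Ima g\to 0$ with $\Ker f\to X\to X'$ (using exactness of $-\otimes_k-$), and then does an explicit diagram chase to show that any element of $\Ker(f\otimes g)$ lands in $\Ker f\otimes Y+X\otimes\Ker g$. Your argument instead exploits that $k$-vector spaces split, choosing complements $X_1,Y_1$ to $\Ker f,\Ker g$ and reducing the question to the injectivity of $f|_{X_1}\otimes g|_{Y_1}$. Your version is shorter and avoids the element chase; the paper's version is splitting-free and hence works verbatim in any setting where the relevant tensor functors are exact (not that this generality is needed here). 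The alternative you sketch, factoring $f\otimes g=(f\otimes 1_{Y'})\circ(1_X\otimes g)$, is in spirit closer to the paper's approach.
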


\begin{proof} \ Denote by $\sigma_1: {\rm Ker} f \longrightarrow X$ the canonical embedding. Consider
the exact sequence $0\longrightarrow {\rm Ker} g \stackrel {\sigma_2} \longrightarrow Y \stackrel {\tilde{g}} \longrightarrow \Ima g \longrightarrow  0.$  By the exactness of $-\otimes_k-$, there is a commutative diagram with exact rows and columns:
$$
\xymatrix{&0\ar[d]&& 0\ar[d]&& 0\ar[d]&\\
0\ar[r]&{\rm Ker} f\otimes {\rm Ker} g \ar[rr]^{1_{{\rm Ker} f}\otimes \sigma_2}\ar[d]_{\sigma_1\otimes 1_{{\rm Ker}g}}&&{\rm Ker} f \otimes Y\ar[rr]^{1_{{\rm Ker} f}\otimes \tilde{g}}\ar[d]^{\sigma_1\otimes 1_{Y}}&&
{\rm Ker} f\otimes \Ima g\ar[d]^{\sigma_1\otimes 1_{\Ima \tilde{g}}}\ar[r] & 0 \\
0\ar[r]&X\otimes {\rm Ker}g \ar[rr]^{1_X\otimes {\sigma_2}}\ar[d]_{f\otimes 1_{{\rm Ker}g}}&& X \otimes Y\ar[rr]^{1_X\otimes \tilde{g}}\ar[d]^{f\otimes 1_Y}& & X\otimes \Ima g \ar[d]^{f\otimes 1_{\Ima \tilde{g}}} \ar[r] & 0 \\
0\ar[r]& X'\otimes {\rm Ker}g \ar[rr]^{1_{X'}\otimes {\sigma_2}}&& X' \otimes Y \ar[rr]^{1_{X'}\otimes \tilde{g}}&& X'\otimes \Ima g \ar[r] & 0}
$$
By diagram chasing one sees that ${\rm Ker} (f\otimes g)={\rm Ker} f\otimes Y+X\otimes {\rm Ker} g.$ In fact, for any
$\sum\limits_i x_i\otimes y_i\in {\rm Ker} (f\otimes g)$, then there is an element $\sum\limits_j k_j\otimes g(\tilde{y}_j)\in {\rm Ker} f\otimes \Ima g$ such that
\begin{align*}(1_X\otimes \tilde{g} ) (\sum\limits_i x_i\otimes y_i)&  = (\sigma_1\otimes 1_{\Ima \tilde{g}}) (\sum\limits_j k_j\otimes g(\tilde{y}_j)) = (\sigma_1\otimes 1_{\Ima \tilde{g}}) \circ (1_{{\rm Ker} f}\otimes \tilde{g}) (\sum\limits_j k_j\otimes \tilde{y}_j) \\ & =  (1_X\otimes \tilde{g})\circ (\sigma_1\otimes 1_Y) (\sum\limits_j k_j\otimes \tilde{y}_j) = (1_X\otimes \tilde{g}) (\sum\limits_j k_j\otimes \tilde{y}_j). \end{align*}
Thus $\sum\limits_i x_i\otimes y_i - \sum\limits_j k_j\otimes \tilde{y}_j $ is in $\Ker (1_X\otimes \tilde{g}) = \Ima (1_X\otimes \sigma_2) = X\otimes \Ker g,$ and hence
$\sum\limits_i x_i\otimes y_i\in {\rm Ker} f\otimes Y+X\otimes {\rm Ker} g $. \end{proof}

%
%

\begin{thm}\label{thm:main_tensor}
Let $A$ and $B$ be finite-dimensional $k$-algebras, $0\ne X\in A$-{\rm mod} and $0\ne Y\in B$-{\rm mod}. Then
\vskip5pt

$(1)$ \ $X\otimes Y$ is a torsionless $(A\otimes B)$-module if and only if  $X$ is a torsionless $A$-module and $Y$ is a torsionless $B$-module.

\vskip5pt

$(2)$ \ $X\otimes Y$ is a reflexive $(A\otimes B)$-module if and only if  $X$ is a reflexive $A$-module and $Y$ is a reflexive $B$-module.

\end{thm}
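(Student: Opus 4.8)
The plan is to reduce both parts, via Proposition~\ref{lem:phi_tensor}, to an elementary statement about tensor products of $k$-linear maps. Proposition~\ref{lem:phi_tensor} exhibits the canonical homomorphism as a composite
$$\phi_{X\otimes Y}=\Hom(\chi^{-1}_{_{X, Y, A, B}},A\otimes B)\circ\chi_{_{X^*, Y^*, A, B}}\circ(\phi_X\otimes\phi_Y),$$
in which $\chi_{_{X^*, Y^*, A, B}}$ is an isomorphism by Corollary~\ref{homiso} and $\Hom(\chi^{-1}_{_{X, Y, A, B}},A\otimes B)$ is an isomorphism, being $\Hom(-,A\otimes B)$ applied to the isomorphism $\chi^{-1}_{_{X, Y, A, B}}$. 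Hence $\phi_{X\otimes Y}$ is injective, respectively bijective, if and only if $\phi_X\otimes\phi_Y$ is. So everything rests on understanding when a tensor product over $k$ of two linear maps between finite-dimensional spaces is injective, respectively bijective.

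For part~$(1)$: applying the last assertion of Proposition~\ref{tensorker} to $f=\phi_X$ and $g=\phi_Y$, and using $X\ne 0\ne Y$, we get that $\phi_X\otimes\phi_Y$ is a monomorphism if and only if both $\phi_X$ and $\phi_Y$ are monomorphisms, that is, if and only if $X$ is a torsionless $A$-module and $Y$ is a torsionless $B$-module. Together with the first paragraph this settles~$(1)$.

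For part~$(2)$: the point to isolate is the elementary fact that if $\alpha:U\to U'$ and $\beta:V\to V'$ are $k$-linear maps of finite-dimensional spaces with $U\ne 0\ne V$, then $\alpha\otimes\beta$ is bijective if and only if both $\alpha$ and $\beta$ are bijective. The ``if'' direction is immediate. For ``only if'', injectivity of $\alpha$ and of $\beta$ follows from Proposition~\ref{tensorker} as in~$(1)$; for surjectivity one uses that, over a field, the canonical map identifies $\Ima(\alpha\otimes\beta)$ with $\Ima\alpha\otimes\Ima\beta$ inside $U'\otimes V'$. If $\alpha\otimes\beta$ is bijective then $U'\otimes V'\cong U\otimes V\ne 0$, so $U'\ne 0\ne V'$; then $\dim_k\Ima\alpha\cdot\dim_k\Ima\beta=\dim_k U'\cdot\dim_k V'$ combined with $\dim_k\Ima\alpha\le\dim_k U'$ and $\dim_k\Ima\beta\le\dim_k V'$ forces $\Ima\alpha=U'$ and $\Ima\beta=V'$. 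Applying this with $\alpha=\phi_X$ and $\beta=\phi_Y$, and using the first paragraph, we conclude that $X\otimes Y$ is reflexive if and only if $\phi_X$ and $\phi_Y$ are both bijective, i.e. if and only if $X$ is a reflexive $A$-module and $Y$ is a reflexive $B$-module.

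Most of the substance of this theorem has already been absorbed into Proposition~\ref{lem:phi_tensor} and the Cartan--Eilenberg isomorphism it relies on; granting those, the argument above is short. The only delicate points are the degenerate cases in which $X^{**}$ or $Y^{**}$ vanishes even though $X$ or $Y$ does not (precisely the non-torsionless cases), but these are handled automatically since $X\otimes Y\ne 0$ by finite-dimensionality. I would not pursue the alternative route of deducing~$(2)$ from~$(1)$ by tensoring approximation sequences $0\to X\to P\to\cok f\to 0$ and $0\to Y\to Q\to\cok g\to 0$ (using Proposition~\ref{lem:app_tensor} and Proposition~\ref{tensorker} to see that $f\otimes g$ is a monic $(\add\Lambda)$-approximation, together with Lemma~\ref{lem:refl_tors}), since identifying $\cok(f\otimes g)$ and verifying that it is torsionless is appreciably more involved than the direct computation with $\phi$.
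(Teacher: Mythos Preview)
Your proof is correct and follows the same route as the paper: both use Proposition~\ref{lem:phi_tensor} to reduce to showing that $\phi_X\otimes\phi_Y$ is injective (respectively bijective) iff both factors are, then appeal to Proposition~\ref{tensorker}. You are slightly more explicit than the paper about the surjectivity direction in~(2), supplying a dimension count for $\Ima(\alpha\otimes\beta)=\Ima\alpha\otimes\Ima\beta$ where the paper simply cites Proposition~\ref{tensorker}.
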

\begin{proof}
According to Proposition \ref{lem:phi_tensor},
$\phi_{X\otimes Y}=\Hom(\chi^{-1}_{_{X, Y, A, B}}, A\otimes B)\circ \chi_{_{X^*, Y^*, A, B}}\circ (\phi_X\otimes \phi_Y)$,
where $\Hom(\chi^{-1}_{_{X, Y, A, B}}, A\otimes B)$ and $\chi_{_{X^*, Y^*, A, B}}$ are isomorphisms. Thus, $\phi_{X\otimes Y}$ is injective (bijective, respectively) if and only if $(\phi_X\otimes \phi_Y)$ is injective
(bijective, respectively), which is, by Proposition \ref{tensorker}, equivalent to that both  $\phi_X$ and $\phi_Y$ are injective (bijective, respectively).
\end{proof}

\begin{rem}
$(1)$ \  The assumption $X$ and $Y$ are non-zero is used only for the ``only if'' part of {\rm Theorem \ref{thm:main_tensor}}.

\vskip5pt

$(2)$ \ Not all the torsionless or reflexive modules over $A\otimes B$ has the form $X\otimes Y$. For example, consider the algebras $A=k(1\longrightarrow 2)$, $B=k(1\stackrel{x}\longrightarrow 2\stackrel{y}\longrightarrow 3)/\langle yx\rangle$. Then $A\otimes B$ is isomorphic to the following bound quiver algebra with relations
$\beta\alpha, \ \beta'\alpha', \ \delta\alpha-\alpha'\gamma, \ \epsilon\beta-\beta'\delta$.
$$
\xymatrix{ 1\ar[r]^\alpha\ar[d]_-\gamma &2\ar[d]^\delta\ar[r]^\beta&3\ar[d]^\epsilon\\
4\ar[r]^{\alpha'} &5\ar[r]^{\beta'}&6}
$$
Since there is an approximation sequence
$$
0\longrightarrow \rad P_2\longrightarrow P_2\oplus P_4 \longrightarrow \rad P_1\longrightarrow 0,
$$
where $\rad P_1$ is a torsionless $(A\otimes B)$-module, it follows from {\rm Lemma  \ref {lem:refl_tors}} that $\rad P_2$ is a reflexive $(A\otimes B)$-module$.$ Since
both $\rad P_1$ and $\rad P_2$ are indecomposable $(A\otimes B)$-modules of dimension $3$, they can not be of the form $X\otimes Y$.
\end{rem}

\subsection{Semi-Gorenstein-projective Modules over tensor algebras}

The strong Nakayama conjecture asserts that for a finite-dimensional algebra $A$, a finitely generated left $A$-module $M=0$ if $\Ext^i_A(M,A)=0$ for all $i\geq 0$.
It is known that the strong Nakayama conjecture holds for some classes of algebras. However, it is still open in general.

\vskip5pt

\begin{prop} \label{tensorsgp} \  Let $A$ and $B$ be finite-dimensional $k$-algebras,  $X\in A$-{\rm mod} and $Y\in B$-{\rm mod}.
If both $X$ and $Y$ are semi-Gorenstein-projective, then so is $X\otimes Y$.

\vskip5pt

Conversely, assume that $\findim A^{\op}<\infty$ and $\findim B^{\op}<\infty$.
If $X\otimes Y$ is semi-Gorenstein-projective and $X \ne 0 \ne Y$, then both $X$ and $Y$ are semi-Gorenstein-projective.
\end{prop}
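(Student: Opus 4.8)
The plan is to read off both implications from the Cartan--Eilenberg isomorphism (Theorem~\ref{CEiso}) applied with $L=X$, $M={}_AA$, $U=Y$, $V={}_BB$, namely
\[
\Ext^n_{A\otimes B}(X\otimes Y,\ A\otimes B)\ \cong\ \bigoplus_{p+q=n}\bigl(\Ext^p_A(X,A)\otimes_k\Ext^q_B(Y,B)\bigr),\qquad n\ge 0 .
\]
For the first assertion: if $X$ and $Y$ are semi-Gorenstein-projective then, for each $n\ge 1$, every summand on the right has $p\ge 1$ (so $\Ext^p_A(X,A)=0$) or $q\ge1$ (so $\Ext^q_B(Y,B)=0$); hence $\Ext^n_{A\otimes B}(X\otimes Y,A\otimes B)=0$ for all $n\ge1$, i.e.\ $X\otimes Y\in{}^{\perp}(A\otimes B)$. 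This half uses no finiteness hypothesis.

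For the converse, the same isomorphism together with the elementary fact that a tensor product $V\otimes_k W$ of $k$-vector spaces vanishes only if $V=0$ or $W=0$ shows that, for each $(p,q)\ne(0,0)$,
\[
\Ext^p_A(X,A)=0\qquad\text{or}\qquad\Ext^q_B(Y,B)=0 .
\]
I would first show $X\in{}^{\perp}A$. If not, then $\Ext^{p_0}_A(X,A)\ne0$ for some $p_0\ge1$; applying the dichotomy to the pairs $(p_0,q)$ with $q\ge 0$ forces $\Ext^q_B(Y,B)=0$ for \emph{every} $q\ge 0$. It then suffices to prove the following instance of the strong Nakayama conjecture: \emph{if $\findim B^{\op}<\infty$ and $0\ne Y$ is a finitely generated left $B$-module, then $\Ext^i_B(Y,B)\ne 0$ for some $i\ge 0$}. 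This contradicts the displayed vanishing and gives $X\in{}^{\perp}A$; by the symmetric argument, using $\findim A^{\op}<\infty$ and $X\ne0$, one gets $Y\in{}^{\perp}B$, finishing the converse. (Note that both finitistic-dimension hypotheses are genuinely needed: if the strong Nakayama conjecture failed for $B$, a nonzero $Y$ with $\Ext^{\ge 0}_B(Y,B)=0$ together with any non-semi-Gorenstein-projective $X$ would make $X\otimes Y$ semi-Gorenstein-projective.)

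So the crux is that Nakayama-type statement, which I expect to be the main obstacle; it is known that the strong Nakayama conjecture holds when $\findim B^{\op}<\infty$, so it could simply be cited. For a direct proof, suppose $0\ne Y$ and take its minimal projective resolution $\cdots\to P_1\xrightarrow{d_1}P_0\to Y\to 0$, so $P_0\ne 0$ and each $d_n$ $(n\ge1)$ is a radical morphism. Applying $\Hom_B(-,B)$, which restricts to a duality between finitely generated projective left and right $B$-modules and preserves radical morphisms, and using $\Ext^i_B(Y,B)=0$ for all $i\ge0$, one obtains an \emph{exact} complex $0\to P_0^{*}\xrightarrow{d_1^{*}}P_1^{*}\xrightarrow{d_2^{*}}\cdots$ of finitely generated projective right $B$-modules with $P_0^{*}\ne 0$ and all $d_n^{*}$ radical. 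Set $Z_0=P_0^{*}$ and $Z_m=\cok(d_m^{*})$ for $m\ge1$; exactness gives short exact sequences $0\to Z_{m-1}\to P_m^{*}\to Z_m\to 0$ in which $Z_{m-1}\hookrightarrow P_m^{*}$ has image in $\rad P_m^{*}$, and splicing the first $d+1$ of them, $d:=\findim B^{\op}$, exhibits $0\to P_0^{*}\to\cdots\to P_{d+1}^{*}\to Z_{d+1}\to 0$ as a projective resolution of $Z_{d+1}$ of length $\le d+1$. Hence $\pd_{B^{\op}}Z_{d+1}<\infty$, so $\pd_{B^{\op}}Z_{d+1}\le d$, and therefore the $d$-th syzygy of $Z_{d+1}$ in this resolution is projective; that syzygy is exactly $Z_1$. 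Then $0\to P_0^{*}\xrightarrow{d_1^{*}}P_1^{*}\to Z_1\to 0$ splits, so $d_1^{*}$ is a split monomorphism; being also radical, it forces its source $P_0^{*}$ to be $0$, whence $P_0=0$ and $Y=0$, a contradiction. The only delicate points are that $\Hom_B(-,B)$-duality preserves radical morphisms, that one must invoke the finitistic dimension of the \emph{opposite} algebra, and the identification of the relevant syzygy with $Z_1$.
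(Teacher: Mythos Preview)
Your proof is correct and follows essentially the same route as the paper: both directions are read off from the Cartan--Eilenberg isomorphism, and the converse reduces to the strong Nakayama conjecture under the hypothesis $\findim B^{\op}<\infty$ (and symmetrically for $A$). The only difference is that the paper simply cites this case of the strong Nakayama conjecture as known, whereas you supply a self-contained argument via dualizing a minimal projective resolution and using the finitistic-dimension bound; that extra paragraph is fine and the reasoning there is sound.
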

\begin{proof} The first assertion directly follows from the Cartan-Elienberg isomorphism.

\vskip5pt

Conversely, assume that $\findim A^{\op}<\infty$,  $\findim B^{\op}<\infty$,
$X \ne 0 \ne Y$,  and that $X\otimes Y$ is semi-Gorenstein-projective. By the Cartan-Eilenberg isomorphism
   $$\Ext^m_{
A\otimes B}(X \otimes Y, A\otimes B)\cong
\bigoplus_{p+q=m}
(\Ext^p_
A(X, A) \otimes \Ext^q_
B(Y, B))=0,  \ \forall \ m\geq 1$$
one has $\Ext^p_A(X, A) \otimes \Ext^q_B(Y, B)=0$ for any integers $p+q\geq 1$. If $\Ext^p_A(X, A)\neq 0$ for some $p>0$, then $\Ext^q_B(Y, B)=0$ for all $q\geq 0$.
Under the assumption $\findim B^{\op}<\infty$, it is known that the strong Nakayama conjecture holds.
Therefore $Y=0$, which is a contradiction. Thus $X$ is semi-Gorenstein-projective. Similarly, $Y$ is semi-Gorenstein-projective.
\end{proof}

\section{\bf Weakly Gorensteinness of tensor algebras}

Throughout this section, $A$ and $B$ are finite-dimensional algebras over a field $k$, and $\Lambda=A\otimes B$, where $\otimes = \otimes_k$.
The aim of this section is to show that if ${\rm gl.dim}B <\infty$  then $\Lambda$ is left weakly Gorenstein if and only if so is $A$.
The method used will be the monomorphism category $\Mon(B, \ \leftidx^\perp A).$

\vskip5pt

\begin{lemdef} {\rm (\cite[Lemma 3.2]{HLXZ})} \label{defmonic} \ $(1)$ \ A left $\Lambda$-module $M$ is called {\it a monic representation of $B$ over $A$}, provided that one of the following equivalent conditions holds$:$

\vskip5pt

${\rm (i)}$ \ $\Tor_i^\Lambda (A\otimes Y, M)=0$ for all $i\geq 1$ and each right module $Y\in {\rm mod} B$.

\vskip5pt

${\rm (ii)}$ \ $\Ext^i_\Lambda(M, D(A)\otimes X)=0$ for $i\geq 1$ and for each left module $X\in B\mbox{-}{\rm mod}$, where $D = \Hom_k(-, k)$.

\vskip5pt

${\rm (iii)}$ \ As a left $B$-module, $M$ is a projective module.

\vskip5pt

Denote by $\Mon(B,A)$ the full subcategory of $\Lambda$-{\rm mod} consisting of monic representation of $B$ over $A$, which is called the monomorphism category of $B$ over $A$.

\vskip5pt

${\rm (2)}$ \ For a full additive subcategory $\calL$ of $A$-$\modd$, put
$$
\Mon(B,\calL)=\{M\in\Mon(B,A)\mid (A\otimes Y)\otimes_\Lambda M\in\calL, \ \forall \ Y\in\dmod B\}.
$$
A module in $\Mon(B,\calL)$ is called a monic representation of $B$ over $\calL$, and $\Mon(B,\calL)$ is called the monomorphism category of $B$ over $\calL$.
\end{lemdef}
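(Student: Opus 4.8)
\emph{Proof strategy.} The content of this Lemma-Definition is the equivalence of conditions (i), (ii) and (iii) in part~(1); part~(2) is only a definition and needs no argument. The plan is to reduce everything to base change along the algebra embedding $\iota\colon B\hookrightarrow\Lambda=A\otimes B$, $b\mapsto 1_A\otimes b$, and to show that each of (i)--(iii) is equivalent to (iii) read as: \emph{the restriction of $M$ along $\iota$ is a projective left $B$-module.}

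First I would record two module identifications. For a right $B$-module $Y$, the right $\Lambda$-module $A\otimes Y$ (with right $A$-action on the first tensor factor and right $B$-action on $Y$) is isomorphic to the induced module $Y\otimes_B\Lambda$ via $y\otimes(a\otimes b)\mapsto a\otimes yb$. For a left $B$-module $X$, the left $\Lambda$-module $D(A)\otimes X$ is isomorphic to the coinduced module $\Hom_B(\Lambda,X)$, where $\Lambda$ is regarded as a $B$-$\Lambda$-bimodule through $\iota$ and one uses the finite-dimensionality of $X$ to identify $\Hom_B(\Lambda,X)$ with $D(A)\otimes X$ as left $\Lambda$-modules. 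The structural input behind both is that, along $\iota$, the bimodule $\Lambda=A\otimes B$ is free of rank $\dim_kA$ as a left $B$-module and as a right $B$-module; consequently the restriction functor $(-)|_B\colon\Lambda\modd\to B\modd$ is exact and carries projective $\Lambda$-modules to projective $B$-modules.

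Next I would feed a projective resolution $P_\bullet\to M$ over $\Lambda$ into these identifications. By the previous paragraph $P_\bullet|_B\to M|_B$ is a projective resolution over $B$, so tensor--hom associativity (for $\Tor$) and the restriction--coinduction adjunction (for $\Ext$) yield natural isomorphisms
$$\Tor_i^\Lambda(A\otimes Y,M)\cong\Tor_i^B(Y,M|_B),\qquad \Ext_\Lambda^i(M,D(A)\otimes X)\cong\Ext_B^i(M|_B,X)$$
for all $i\ge0$ and all $Y\in\dmod B$, $X\in B\modd$. Given these, (iii)$\Rightarrow$(i) and (iii)$\Rightarrow$(ii) are immediate, since projectivity of $M|_B$ kills $\Tor_{\ge 1}^B(-,M|_B)$ and $\Ext_B^{\ge 1}(M|_B,-)$. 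For (i)$\Rightarrow$(iii) I would take $Y=B/\rad B$: then $\Tor_1^B(B/\rad B,M|_B)=0$, and since $M|_B$ is finitely generated over the Artin algebra $B$, a projective-cover computation together with Nakayama's lemma forces $M|_B$ to be projective. For (ii)$\Rightarrow$(iii) I would let $X$ range over the simple left $B$-modules: $\Ext_B^1(M|_B,X)=0$ for all of them, and the same type of projective-cover argument gives that $M|_B$ is projective.

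I do not anticipate a genuine obstacle here; the statement is in essence a base-change computation. The one delicate point is the bookkeeping of the various one-sided module structures and the precise form of the two identifications in the first step --- in particular verifying that the left $\Lambda$-action on $\Hom_B(\Lambda,X)$ transports to the stated action on $D(A)\otimes X$ --- together with the easy but essential observation that $(-)|_B$ is exact and projective-preserving. Once these are in place, the rest is formal homological algebra.
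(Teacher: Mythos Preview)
The paper does not give its own proof of this Lemma-Definition; it is quoted verbatim from \cite[Lemma 3.2]{HLXZ}, so there is nothing in the present paper to compare your argument against. That said, your proposal is correct and is exactly the standard argument one would expect: the two key identifications $A\otimes Y\cong Y\otimes_B\Lambda$ and $D(A)\otimes X\cong\Hom_B({}_B\Lambda,X)$ (the latter using finite-dimensionality) reduce the $\Tor^\Lambda$ and $\Ext_\Lambda$ groups, via Eckmann--Shapiro, to $\Tor^B(-,M|_B)$ and $\Ext_B(M|_B,-)$, and the freeness of $\Lambda$ over $B$ guarantees that restricting a projective $\Lambda$-resolution of $M$ yields a projective $B$-resolution of $M|_B$. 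The implications (i)$\Rightarrow$(iii) and (ii)$\Rightarrow$(iii) then follow from the standard criteria for projectivity over Artin algebras (vanishing of $\Tor_1$ against $B/\rad B$, or of $\Ext^1$ against all simples). Your only ``delicate point'' --- checking the $\Lambda$-actions transport correctly under the two identifications --- is routine bookkeeping and poses no obstacle.
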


Note that $\Mon(B, A\mbox{-}{\rm mod}) = \Mon(B, A).$ When $B = kQ/I$, where $Q$ is an acyclic quiver and $I$ is an admissible ideal generated by monomial relations,
then the monomorphism categories $\Mon(B,A)$ and $\Mon(B,\calL)$  coincide with the ones defined in \cite{LZ1}, \cite{LZ2}, \cite{ZX}, via the combinatorial datum of $Q$,
where the terminology ``monomorphism" has an explicit combinatorial interpretation.

\vskip5pt

It turns out that if $\gldim B<\infty$ then  Gorenstein-projective modules over $A\otimes B$ could be characterized by the monomorphism category:

\begin{lem} {\rm (\cite[Theorem 4.5]{HLXZ})} \label{thm:GP_mon} \
Let $A$ and $B$ be finite-dimensional $k$-algebras and $\Lambda=A\otimes B$. If $B$ is a Gorenstein algebra, then $\Gp(\Lambda)=\Mon(B,\Gp(A))$ if and only if $\gldim B<\infty$.
\end{lem}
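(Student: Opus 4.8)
The plan is to prove the biconditional in Lemma \ref{thm:GP_mon} by establishing the two implications separately, with the ``only if'' direction being the substantive one. First I would handle the ``if'' direction: assume $\gldim B<\infty$. Since $B$ is Gorenstein, this hypothesis is in force, and one appeals to the homological interpretation of $\Mon(B,A)$ in Lemma-Definition \ref{defmonic}: a $\Lambda$-module $M$ lies in $\Mon(B,A)$ precisely when it is projective as a left $B$-module. The strategy is to show that any $G\in\Gp(\Lambda)$ restricts to a projective $B$-module and that its ``cokernel data'' lands in $\Gp(A)$, and conversely that any module in $\Mon(B,\Gp(A))$ admits a complete $\Lambda$-projective resolution. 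The key mechanism: a complete projective resolution $P^\bullet$ of $G$ over $\Lambda$ restricts, along the (exact, since $k$ is a field) functor $\Lambda\otimes_\Lambda-$ composed with restriction to $B$, to an acyclic complex; using $\gldim B<\infty$ together with the $\Tor$-vanishing in \ref{defmonic}(i), one forces $G$ to be $B$-projective. Then tensoring $P^\bullet$ with $A\otimes Y$ over $\Lambda$ and invoking the Cartan-Eilenberg machinery identifies the resulting complexes with complete $A$-projective resolutions, placing the relevant subquotients in $\Gp(A)$.

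For the ``only if'' direction I would argue by contrapositive: suppose $\gldim B=\infty$ (while $B$ remains Gorenstein, so $B$ is a non-semisimple self-injective-up-to-finite-dimension algebra with infinite global dimension) and produce a Gorenstein-projective $\Lambda$-module not in $\Mon(B,\Gp(A))$. The natural candidate is $A\otimes N$ where $N$ is a non-projective Gorenstein-projective $B$-module — such $N$ exists exactly because $B$ is Gorenstein with $\gldim B=\infty$. By the Cartan-Eilenberg isomorphism (Theorem \ref{CEiso}), $\Ext^i_\Lambda(A\otimes N, A\otimes B)\cong\bigoplus_{p+q=i}\Ext^p_A(A,A)\otimes\Ext^q_B(N,B)=\Ext^i_B(N,B)=0$ for $i\geq1$, and more generally one checks $A\otimes N$ is Gorenstein-projective over $\Lambda$ (its $\Lambda$-dual is $A\otimes N^*$, reflexivity transfers via the compatibility in Proposition \ref{lem:phi_tensor}, and a complete resolution is obtained by tensoring $A$ with a complete $B$-resolution of $N$). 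But $A\otimes N$ is not projective as a left $B$-module — here one uses that restriction of $A\otimes N$ to $B$ is a direct sum of copies of $N$ indexed by a $k$-basis of $A$, hence $B$-projective iff $N$ is — so $A\otimes N\notin\Mon(B,A)\supseteq\Mon(B,\Gp(A))$, giving the required counterexample.

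The main obstacle I anticipate is the bookkeeping in the ``if'' direction needed to push a complete $\Lambda$-projective resolution of an arbitrary $G\in\Gp(\Lambda)$ through the two functors (restriction to $B$, and $(A\otimes Y)\otimes_\Lambda-$) while preserving acyclicity \emph{and} total-acyclicity, and then recognizing that the outputs are genuinely complete resolutions in the respective module categories. The finiteness of $\gldim B$ is what lets one truncate and conclude $B$-projectivity of $G$ from the vanishing of the relevant $\Tor$ and $\Ext$ groups; without it the restriction need only be a module of finite projective dimension over $B$, which is insufficient. A secondary subtlety is verifying that the Hom-acyclicity condition against all projective $\Lambda$-modules is equivalent, under the Cartan-Eilenberg identification, to the corresponding conditions over $A$ and $B$ — this should follow from Theorem \ref{CEiso} applied degreewise, but care is needed since indecomposable projective $\Lambda$-modules need not be of the form $P\otimes Q$ in general (though $\add(\Lambda\Lambda)=\add\{P\otimes Q\}$ does hold, which is enough). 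Since this lemma is quoted from \cite{HLXZ}, I would, in the final writeup, either reproduce the argument at this level of detail or simply cite it and move on, depending on how self-contained the section needs to be.
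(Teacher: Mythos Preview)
The paper does not supply its own proof of this lemma; it is stated with a citation to \cite[Theorem 4.5]{HLXZ} and used as a black box. So there is no in-paper argument to compare your proposal against.

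That said, your strategy is sound. The ``only if'' direction is clean and correct: when $B$ is Gorenstein with $\gldim B=\infty$, pick a non-projective $N\in\Gp(B)$, tensor a complete $B$-projective resolution of $N$ with $A$ over $k$, and use Corollary \ref{homiso} to see the result is a complete $\Lambda$-projective resolution; meanwhile $A\otimes N$ restricts to $N^{\dim_k A}$ over $B$, which is not projective, so $A\otimes N\notin\Mon(B,A)$.

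For the ``if'' direction your outline is right but two points deserve tightening. First, the phrase ``$\Lambda\otimes_\Lambda-$ composed with restriction'' is just restriction; what you actually use is that projective $\Lambda$-modules restrict to projective $B$-modules (summands of free $B$-modules), so a complete $\Lambda$-resolution restricts to an acyclic complex of $B$-projectives, and $\gldim B<\infty$ then forces every kernel, in particular $G$, to be $B$-projective. Second, you need both inclusions. For $\Gp(\Lambda)\subseteq\Mon(B,\Gp(A))$, once you know every $G'\in\Gp(\Lambda)$ lies in $\Mon(B,A)$, the functor $(A\otimes Y)\otimes_\Lambda-$ takes a complete $\Lambda$-resolution to an acyclic complex of $A$-projectives, and Hom-acyclicity against $A$ follows from the adjunction-type isomorphism in the proof of Lemma \ref{lem:SGP_cap_mon} (the formula $(*)$ there) combined with $\pd_\Lambda(A\otimes D(Y))<\infty$. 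For the reverse inclusion $\Mon(B,\Gp(A))\subseteq\Gp(\Lambda)$ you only gesture at it; one route is to note $\Mon(B,\Gp(A))\subseteq\leftidx^\perp\Lambda$ by Lemma \ref{lem:SGP_cap_mon}, and then build the rightward half of a complete resolution inductively using that $\Mon(B,\Gp(A))$ is closed under the cokernel of a left $(\add\Lambda)$-approximation. In a self-contained writeup these steps need to be made explicit; otherwise, citing \cite{HLXZ} as the paper does is the pragmatic choice.
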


\begin{lem}\label{lem:syzygy_GP}
Let $X$ be a semi-Gorenstein-projective $A$-module. If there is some non-negative integer $d$ such that $\Omega^d(X)$ is Gorenstein-projective, then so is $X$.
\end{lem}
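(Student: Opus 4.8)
The plan is to argue by induction on $d$, the case $d=0$ being the hypothesis itself. The mechanism of the induction is the elementary observation that a syzygy of a semi-Gorenstein-projective module is again semi-Gorenstein-projective: choosing a short exact sequence $0\to\Omega X\to P\to X\to 0$ with $P$ projective, the long exact sequence of $\Hom_A(-,A)$ gives $\Ext^i_A(\Omega X,A)\cong\Ext^{i+1}_A(X,A)=0$ for $i\ge 1$, so $\Omega X\in{}^\perp A$. Granting the statement for $d-1$ and applying it to $\Omega X$, for which $\Omega^{d-1}(\Omega X)=\Omega^d X$ is Gorenstein-projective, we obtain $\Omega X\in\Gp(A)$. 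Hence it suffices to treat the case $d=1$, i.e. to show that $X\in{}^\perp A$ together with $\Omega X\in\Gp(A)$ forces $X\in\Gp(A)$.

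For the case $d=1$ I would verify the three Auslander conditions (G1), (G2), (G3) recalled in the introduction, which jointly characterize Gorenstein-projectivity; the point of this route is that it avoids constructing a complete projective resolution of $X$ by hand, which would be the awkward part of a direct approach. Fix $0\to\Omega X\xrightarrow{\iota}P\xrightarrow{\pi}X\to 0$ with $P$ projective. Condition (G1) is the hypothesis $X\in{}^\perp A$. For (G2): since $\Ext^1_A(X,A)=0$, applying $(-)^*=\Hom_A(-,A)$ yields an exact sequence $0\to X^*\to P^*\to(\Omega X)^*\to 0$ of right $A$-modules with $P^*$ projective; because $\Omega X\in\Gp(A)$ it satisfies (G2), so $\Ext^i_{A^{\op}}((\Omega X)^*,A)=0$ for $i\ge 1$, and feeding this into the long exact sequence of $\Hom_{A^{\op}}(-,A)$ gives $\Ext^i_{A^{\op}}(X^*,A)\cong\Ext^{i+1}_{A^{\op}}((\Omega X)^*,A)=0$ for $i\ge 1$. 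Thus $X$ satisfies (G2).

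For (G3): the vanishing $\Ext^1_{A^{\op}}((\Omega X)^*,A)=0$ shows that dualizing $0\to X^*\to P^*\to(\Omega X)^*\to 0$ once more preserves exactness, producing $0\to(\Omega X)^{**}\to P^{**}\to X^{**}\to 0$; naturality of $\phi$ then yields a commutative diagram with these two exact sequences as rows and $\phi_{\Omega X},\phi_P,\phi_X$ as vertical maps, exactly as in the proof of Lemma \ref{lem:GP_app}. Since $\Omega X$ is Gorenstein-projective it is reflexive, so $\phi_{\Omega X}$ is an isomorphism, and $\phi_P$ is an isomorphism because $P$ is projective; the snake lemma then forces $\phi_X$ to be an isomorphism, so $X$ satisfies (G3). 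Having (G1), (G2), (G3), we conclude $X\in\Gp(A)$. I expect no genuine obstacle in carrying this out; the only points needing care are the reduction to $d=1$ and the verification that the two successive $(-)^*$-dualizations of $0\to\Omega X\to P\to X\to 0$ stay exact, which is precisely where the hypotheses $\Ext^1_A(X,A)=0$ and $\Ext^1_{A^{\op}}((\Omega X)^*,A)=0$ enter.
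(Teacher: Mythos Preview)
Your proof is correct and follows essentially the same approach as the paper: both reduce by induction to $d=1$ and then verify the Auslander conditions (G1)--(G3) for $X$ via the syzygy sequence $0\to\Omega X\to P\to X\to 0$. The only cosmetic difference is that the paper packages the verification of (G1)--(G3) into a separate lemma (Lemma~\ref{lem:GP_app}) about cokernels of left $(\add A)$-approximations of Gorenstein-projective modules, and then observes that $\sigma:\Omega X\hookrightarrow P(X)$ is such an approximation because $\Ext^1_A(X,A)=0$; your argument simply unpacks that lemma inline.
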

\begin{proof} \ By induction it suffices to prove that if $\Omega(X)$ is Gorenstein-projective then so is $X$. Assume that $\Omega(X)$ is Gorenstein-projective
and $f:\Omega(X)\longrightarrow P$ is a minimal left $\add A$-approximation of $\Omega(X)$. By Lemma \ref{lem:GP_app}, $\cok f$ is Gorenstein-projective.
Let $0\longrightarrow \Omega(X)\stackrel{\sigma}\longrightarrow P(X)\longrightarrow X\longrightarrow 0$ be an exact sequence with $P(X)$ the projective cover of $X$.
Since by assumption $X$ is semi-Gorenstein-projective, it follows that the map $\sigma$
is also a left $(\add A)$-approximation of $\Omega(X)$. Thus $X\cong \cok f\oplus Q$ for some projective summand $Q$ of $P(X)$. Hence $X$ is Gorenstein-projective.
\end{proof}

\begin{lem}\label{lem:syzygy_in_mon} \ Let $A$ and $B$ be finite-dimensional $k$-algebras with $\gldim B=d<\infty$ and $\Lambda=A\otimes B$. Then $\Omega^t(M)\in\Mon(B,A)$ for every $\Lambda$-module $M$ and for $t\geq d$.
\end{lem}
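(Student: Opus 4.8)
The plan is to reduce the statement to an elementary fact about syzygies over $B$, using the characterisation (iii) of Lemma-Definition \ref{defmonic}: a $\Lambda$-module $N$ lies in $\Mon(B,A)$ if and only if $N$ is projective as a left $B$-module. Thus it suffices to prove that for every $M\in\Lambda\mbox{-}{\rm mod}$ and every $t\ge d$, the syzygy $\Omega^t(M)$ is a projective left $B$-module.

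First I would record the one structural input: $_B\Lambda$ is free. Indeed, choosing a $k$-basis of $A$ gives an isomorphism of left $B$-modules $\Lambda=A\otimes_k B\cong B^{(\dim_k A)}$, where $B$ acts through the embedding $b\mapsto 1_A\otimes b$. Hence the restriction functor $\Lambda\mbox{-}{\rm mod}\to B\mbox{-}{\rm mod}$ sends projective modules to projective modules. Now pick a projective resolution $\cdots\to P_1\to P_0\to M\to 0$ over $\Lambda$, so that $\Omega^t(M)=\Ker(P_{t-1}\to P_{t-2})$ (with the convention $P_{-1}=M$); restricting scalars to $B$, and using that restriction is exact, produces a projective resolution of the left $B$-module $M$ in which $\Omega^t(M)$ occurs as the $t$-th syzygy. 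Since $\gldim B=d$ we have $\pd_B M\le d$, and consequently $\Omega^t(M)$ is a projective $B$-module for every $t\ge d$. By the characterisation above this means $\Omega^t(M)\in\Mon(B,A)$, which is the assertion.

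I do not expect a real obstacle here; the only point that deserves a word of care is that the resolution obtained by restricting to $B$ need not be minimal, so instead of trying to identify the restriction of $\Omega^t_\Lambda(M)$ with a minimal $B$-syzygy, one should simply invoke the standard fact that over an algebra of finite global dimension $d$ the $t$-th syzygy in an \emph{arbitrary} projective resolution of any module is projective as soon as $t\ge d$ (equivalently, $\pd_B$ of that syzygy is $\le\max\{0,d-t\}=0$).
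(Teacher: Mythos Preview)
Your argument is correct. It is, however, a genuinely different route from the paper's. The paper verifies membership in $\Mon(B,A)$ via condition~(ii) of Lemma-Definition~\ref{defmonic}: it first reduces to $t=d$ using that $\Mon(B,A)$ is resolving, then shows $\Ext^i_\Lambda(\Omega^d(M),D(A)\otimes Y)=0$ for all $i\ge 1$ by arguing that ${\rm inj.dim}_\Lambda(D(A)\otimes Y)\le d$ and applying dimension shift. Your proof instead uses condition~(iii) and the observation that $_B\Lambda$ is free, so that restriction carries a $\Lambda$-projective resolution of $M$ to a $B$-projective resolution; then $\gldim B=d$ forces every $t$-th syzygy with $t\ge d$ to be $B$-projective. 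Your approach is more elementary: it bypasses both the resolving property of $\Mon(B,A)$ and the (unproved in the paper) bound on ${\rm inj.dim}_\Lambda(D(A)\otimes Y)$, and it handles all $t\ge d$ uniformly without a separate reduction step. The paper's approach, on the other hand, stays entirely within the $\Ext$-vanishing framework that is used elsewhere in the section.
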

\begin{proof}
Since  $\Mon(B,A)$ is resolving (\cite[Corollary 3.4]{HLXZ}), it suffices to show  $\Omega^{d}(M)\in\Mon(B,A)$,
By definition, one needs to show  $\Ext^i_\Lambda(\Omega^{d}(M), D(A)\otimes Y)=0$ for $i\geq 1$ and $Y\in B\mbox{-}{\rm mod}$. Since $\gldim B=d$,  it follows that
${\rm inj.dim} (D(A)\otimes Y)\leq d$ as a left $\Lambda$-module. Hence, by dimension shift $\Ext^i_\Lambda(\Omega^{d}(M), D(A)\otimes Y)=\Ext^{i+d}_\Lambda(M, D(A)\otimes Y)=0$, for all $i\geq 1$.
\end{proof}

\vskip5pt

The following result will play a key role in the proof of the main result Theorem \ref{thm:weak_G}.

\begin{lem}{\rm (\cite[Theorem 1.1]{Z})} \label{lem:SGP_cap_mon}
Let $A$ and $B$ be finite-dimensional $k$-algebras with $\gldim B<\infty$, and $\Lambda=A\otimes B$. Then
$\leftidx^\perp\Lambda\cap\Mon(B,A)=\Mon(B,\leftidx^\perp A)$.
\end{lem}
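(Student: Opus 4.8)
The plan is to reduce the asserted equality of subcategories to a single natural isomorphism transporting $\Ext$-groups over $\Lambda$ to $\Ext$-groups over $A$ along the ``fibre'' functors $Y\otimes_B(-)$. The first thing I would establish is: \emph{for every $M\in\Mon(B,A)$ and every $Y\in\dmod B$ there is a natural isomorphism}
$$\Ext^i_A(Y\otimes_B M,\,A)\ \cong\ \Ext^i_\Lambda\bigl(M,\,A\otimes D(Y)\bigr),\qquad i\ge 0,$$
\emph{where} $D(Y)=\Hom_k(Y,k)$ \emph{is viewed in} $B\modd$ \emph{and} $A\otimes D(Y)$ \emph{in} $\Lambda\modd$. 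Observe that $(A\otimes Y)\otimes_\Lambda M\cong Y\otimes_B M$, with $A$ acting through $M$, so the left-hand side makes sense.

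To prove this I would fix a projective resolution $P_\bullet\to M$ in $\Lambda\modd$ with each $P_j\in\add\Lambda$. Since $M$ is monic, condition (i) of Lemma-Definition \ref{defmonic} gives $\Tor_i^\Lambda(A\otimes Y,M)=0$ for all $i\ge1$, so $(A\otimes Y)\otimes_\Lambda P_\bullet$ is a resolution of $(A\otimes Y)\otimes_\Lambda M\cong Y\otimes_B M$; and it consists of projective $A$-modules, because $(A\otimes Y)\otimes_\Lambda\Lambda\cong A\otimes Y$ is $A$-free. Next, $A\otimes Y$ is an $(A,\Lambda)$-bimodule that is free over $A$, and from this one identifies $\Hom_A(A\otimes Y,A)\cong A\otimes D(Y)$ as left $\Lambda$-modules. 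Finally the term-wise tensor--hom adjunction yields an isomorphism of complexes
$$\Hom_A\bigl((A\otimes Y)\otimes_\Lambda P_\bullet,\,A\bigr)\ \cong\ \Hom_\Lambda\bigl(P_\bullet,\,A\otimes D(Y)\bigr),$$
and passing to cohomology gives the claim. Finiteness of $\gldim B$ is not used at this step.

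Granting the isomorphism, both inclusions become short. If $M\in\Mon(B,{}^{\perp}A)$, then $Y\otimes_B M\in{}^{\perp}A$ for all $Y\in\dmod B$, hence $\Ext^i_\Lambda(M,A\otimes D(Y))=0$ for $i\ge1$; since $D$ is a duality between $\dmod B$ and $B\modd$, this says $\Ext^i_\Lambda(M,A\otimes W)=0$ for every $W\in B\modd$, and $W={}_BB$ gives $A\otimes W=\Lambda$, so $M\in{}^{\perp}\Lambda$. Conversely, let $M\in{}^{\perp}\Lambda\cap\Mon(B,A)$, so $\Ext^i_\Lambda(M,P)=0$ for $i\ge1$ and all projective $P$. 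For $W\in B\modd$ we have $\pd_B W\le\gldim B=d<\infty$; applying the exact functor $A\otimes_k(-)$ to a finite projective resolution of $W$ produces a projective resolution of $A\otimes W$ in $\Lambda\modd$ of length $\le d$ (as $A\otimes(\add B)\subseteq\add\Lambda$), and a dimension shift along it gives $\Ext^i_\Lambda(M,A\otimes W)=0$ for $i\ge1$. Taking $W=D(Y)$ and reading the isomorphism backwards shows $Y\otimes_B M\in{}^{\perp}A$ for all $Y\in\dmod B$, i.e. $M\in\Mon(B,{}^{\perp}A)$.

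The only delicate point — the main obstacle — is the natural isomorphism of the second paragraph: all the genuine content lies in recognising that the monic condition (Lemma-Definition \ref{defmonic}(i)) is exactly what makes $(A\otimes Y)\otimes_\Lambda(-)$ carry a projective $\Lambda$-resolution of $M$ to a projective $A$-resolution of $Y\otimes_B M$, together with the bimodule identification $\Hom_A(A\otimes Y,A)\cong A\otimes D(Y)$. Once that is in place the two inclusions are formal, and the hypothesis $\gldim B<\infty$ is needed only to bound $\pd_\Lambda(A\otimes W)$ in the second of them.
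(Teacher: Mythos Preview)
Your proposal is correct and follows essentially the same route as the paper: both rest on the natural isomorphism $\Ext^i_A\bigl((A\otimes Y)\otimes_\Lambda M,\,A\bigr)\cong \Ext^i_\Lambda\bigl(M,\,A\otimes D(Y)\bigr)$ for $M\in\Mon(B,A)$, after which both inclusions are obtained exactly as you do (taking $Y=D(B)$ for one direction, and using $\pd_\Lambda(A\otimes D(Y))<\infty$ for the other). The only difference is that you give a self-contained proof of this isomorphism via the $\Tor$-vanishing in Lemma-Definition~\ref{defmonic}(i) and tensor--hom adjunction, whereas the paper quotes it from \cite[Lemma~3.5]{HLXZ}.
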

\begin{proof} \ This result has been proved in \cite[Theorem 1.1]{Z}. For the convenience of the reader we include here another proof, by using the following isomorphism
$$\Ext^i_\Lambda((A\otimes Y)\otimes_\Lambda M, U) \cong \Ext^i_\Lambda(M, U\otimes D(Y)), \ \forall \ i\ge 0 \eqno(*)$$
where $M\in \Mon(B,  A)$, $Y$ is an arbitrary right $B$-module, and $U$ is an arbitrary left $A$-module. For the proof of this isomorphism we refer to \cite[Lemma 3.5]{HLXZ}.

\vskip5pt

Now, let $M\in\Mon(B,\leftidx^\perp A)$. By definition, $M\in\Mon(B,A)$ and $(A\otimes Y)\otimes_\Lambda M\in\leftidx^\perp A$ for any $Y\in\dmod B$.
In particular,  $(A\otimes D(B))\otimes_\Lambda M\in\leftidx^\perp A$.
Hence by the isomorphism $(*)$ one has
$$\Ext^i_\Lambda(M, \Lambda) = \Ext^i_\Lambda(M, A\otimes B)\cong \Ext^i_\Lambda((A\otimes D(B))\otimes_\Lambda M,A)=0, \ \ \forall \ i\ge 1.$$ So $\Mon(B,\leftidx^\perp A)\subseteq \leftidx^\perp \Lambda\cap\Mon(B,A)$.

\vskip5pt

Conversely, let $M\in \leftidx^\perp \Lambda\cap\Mon(B, A)$. It suffices to show that for any  $Y\in\dmod B$, $(A\otimes Y)\otimes_\Lambda M\in\leftidx^\perp A$.
In fact, since $\gldim B<\infty$, it follows that $\pd_\Lambda (A\otimes D(Y))<\infty$ as a left $\Lambda$-module. Therefore by the isomorphism $(*)$  one has $\Ext^i_A((A\otimes Y)\otimes_\Lambda M,A)\cong \Ext^i_\Lambda(M, A\otimes D(Y))=0$ for  $i\ge 1$.
\end{proof}

\begin{thm}\label{thm:weak_G} \ Let $A$ and $B$ be finite-dimensional $k$-algebras with $\gldim B <\infty$, and $\Lambda=A\otimes B$. Then

\vskip5pt

$(1)$ \ $\Lambda$ is left weakly Gorenstein if and only if $A$ is left weakly Gorenstein.

\vskip5pt

$(2)$ \ If $A$ is left weakly Gorenstein,  then $\leftidx^\perp \Lambda = \Mon(B,\leftidx^\perp A).$

\vskip5pt

$(3)$ \ If $B$ is not semisimple, then the converse of $(2)$ is also true, and hence  $\Lambda$ is left weakly Gorenstein.
\end{thm}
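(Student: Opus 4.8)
The plan is to reduce all three parts to two structural facts already available: the identification $\Gp(\Lambda)=\Mon(B,\Gp(A))$ of Lemma~\ref{thm:GP_mon} (which uses $\gldim B<\infty$) and the identity $\leftidx^\perp\Lambda\cap\Mon(B,A)=\Mon(B,\leftidx^\perp A)$ of Lemma~\ref{lem:SGP_cap_mon}, together with the syzygy Lemmas~\ref{lem:syzygy_GP} and \ref{lem:syzygy_in_mon}. Since $\Gp(\Lambda)\subseteq\leftidx^\perp\Lambda$ always, $\Lambda$ is left weakly Gorenstein iff $\leftidx^\perp\Lambda\subseteq\Gp(\Lambda)$. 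For the ``if'' part of $(1)$: given $A$ left weakly Gorenstein and $M\in\leftidx^\perp\Lambda$, set $d=\gldim B$; then $\Omega^{d}_\Lambda M\in\Mon(B,A)$ by Lemma~\ref{lem:syzygy_in_mon}, and $\Omega^{d}_\Lambda M\in\leftidx^\perp\Lambda$ because a syzygy of a semi-Gorenstein-projective module is semi-Gorenstein-projective, so $\Omega^{d}_\Lambda M\in\leftidx^\perp\Lambda\cap\Mon(B,A)=\Mon(B,\leftidx^\perp A)=\Mon(B,\Gp(A))=\Gp(\Lambda)$; then Lemma~\ref{lem:syzygy_GP} gives $M\in\Gp(\Lambda)$. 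Part $(2)$ is then immediate: $A$ left weakly Gorenstein implies $\Lambda$ left weakly Gorenstein by the ``if'' part, hence $\leftidx^\perp\Lambda=\Gp(\Lambda)=\Mon(B,\Gp(A))=\Mon(B,\leftidx^\perp A)$. For the ``only if'' part of $(1)$: if $\Lambda$ is left weakly Gorenstein and $X\in\leftidx^\perp A$, then $X\otimes B$ is semi-Gorenstein-projective over $\Lambda$ by Proposition~\ref{tensorsgp} (as $_BB$ is projective, hence semi-Gorenstein-projective), so $X\otimes B\in\Gp(\Lambda)=\Mon(B,\Gp(A))$; applying $(A\otimes B)\otimes_\Lambda-$ identifies $X\otimes B$ as a left $A$-module with $X^{\oplus\dim_k B}\in\Gp(A)$, whence $X\in\Gp(A)$ and $A$ is left weakly Gorenstein.

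The substantial part is the converse in $(3)$. By Lemma~\ref{lem:SGP_cap_mon}, the hypothesis $\leftidx^\perp\Lambda=\Mon(B,\leftidx^\perp A)$ is equivalent to $\leftidx^\perp\Lambda\subseteq\Mon(B,A)$, i.e.\ to: every semi-Gorenstein-projective $\Lambda$-module is projective as a $B$-module. Granting the claim that, under this hypothesis, \emph{every left $(\add A)$-approximation of a semi-Gorenstein-projective $A$-module is a monomorphism}, the theorem follows: given $X\in\leftidx^\perp A$, take a left $(\add A)$-approximation $f_0\colon X\hookrightarrow P_0$; its cokernel $X_1$ is again semi-Gorenstein-projective (because $f_0^{*}$ is an epimorphism), so it has a left $(\add A)$-approximation $f_1\colon X_1\hookrightarrow P_1$, and so on; splicing the sequences $0\to X_i\to P_i\to X_{i+1}\to0$ together and with a projective resolution of $X$ produces a complete projective resolution of $A$-modules in which $X$ occurs as a kernel of one of the differentials (one checks, as for approximation sequences, that it stays exact under $\Hom_A(-,A)$), so $X\in\Gp(A)$; thus $A$ is left weakly Gorenstein, and then $\Lambda$ is by $(1)$.

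To prove the claim, suppose $f\colon X\to P$ is a left $(\add A)$-approximation of some $X\in\leftidx^\perp A$ that is \emph{not} a monomorphism, and derive a contradiction by producing a semi-Gorenstein-projective $\Lambda$-module that is not $B$-projective. Since $B$ is non-semisimple with $\gldim B=d\geq1$, fix a simple $B$-module $S$ with $\pd_B S=d$ and put $V:=\Omega^{d}_B S$ and $Q:=P_B(\Omega^{d-1}_B S)$ (the projective cover); then $V,Q$ are projective $B$-modules, $V\neq0$, the inclusion $\iota\colon V\hookrightarrow Q$ is a radical monomorphism, and $Q/V\cong\Omega^{d-1}_B S$ is not projective --- this is where non-semisimplicity is used. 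Let $N$ be the pushout of $f\otimes 1_V\colon X\otimes V\to P\otimes V$ along $1_X\otimes\iota\colon X\otimes V\to X\otimes Q$, so that
$$0\to X\otimes V\xrightarrow{\ (1_X\otimes\iota,\ -(f\otimes 1_V))\ }(X\otimes Q)\oplus(P\otimes V)\to N\to0$$
is exact over $\Lambda$. Then $N$ is semi-Gorenstein-projective: by the Cartan--Eilenberg isomorphism (Theorem~\ref{CEiso}), $X\otimes Q$ and $X\otimes V$ are semi-Gorenstein-projective over $\Lambda$ (since $X$ is semi-Gorenstein-projective and $V,Q$ are projective over $B$) and $P\otimes V$ is projective over $\Lambda$, so $\Ext^{i}_\Lambda(N,\Lambda)=0$ for $i\geq2$, while applying $\Hom_\Lambda(-,\Lambda)$ to the displayed sequence and using Corollary~\ref{homiso} gives $\Ext^{1}_\Lambda(N,\Lambda)=(X^{*}\otimes V^{*})/(\im(1_{X^{*}}\otimes\iota^{*})+\im(f^{*}\otimes 1_{V^{*}}))$, which vanishes because $f^{*}$ is an epimorphism and hence so is $f^{*}\otimes 1_{V^{*}}$. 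On the other hand $N$ is not $B$-projective: restricting the displayed sequence to $B$ gives $0\to V^{\oplus\dim_k X}\to Q^{\oplus\dim_k X}\oplus V^{\oplus\dim_k P}\to N|_B\to0$ with projective outer terms, and a splitting of the first map would, after reduction modulo $\rad B$ (where the $\iota$-component dies because $\iota$ is radical), produce a left inverse of the $k$-linear map $f$, contradicting that $f$ is not injective. Hence $N\in\leftidx^\perp\Lambda\setminus\Mon(B,A)$, contradicting the hypothesis, and the claim is proved.

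I expect the construction in $(3)$ to be the main obstacle. The point is that the required semi-Gorenstein-projective $\Lambda$-module cannot have the tensor form $X\otimes Y$: over $B$ of finite global dimension a semi-Gorenstein-projective $B$-module is already projective, so Proposition~\ref{tensorsgp} would force $Y$ projective and hence $X\otimes Y$ projective over $B$. One must therefore build a genuinely mixed module, and the pushout $N$ above is engineered so that finiteness of $\gldim B$ (which makes $X\otimes(\text{projective }B\text{-module})$ semi-Gorenstein-projective) forces $N$ to be semi-Gorenstein-projective, while the radicality of $\iota$ ties $B$-projectivity of $N$ to injectivity of $f$. Verifying these two points --- especially the modulo-$\rad B$ splitting argument --- is the delicate part; the rest is bookkeeping with the Cartan--Eilenberg isomorphism, the syzygy lemmas, and the $\Hom_A(-,A)$-exactness of spliced approximation sequences.
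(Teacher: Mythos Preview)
Your proof is correct and follows the same skeleton as the paper: the ``if'' direction of $(1)$ via syzygies and Lemma~\ref{lem:syzygy_in_mon}/\ref{lem:SGP_cap_mon}/\ref{thm:GP_mon} is identical, and the heart of $(3)$ --- the pushout of $f\otimes 1_V$ along $1_X\otimes\iota$ for a non-injective approximation $f$ and a suitable projective inclusion $\iota$ over $B$ --- is exactly the construction the paper uses (with your $V\hookrightarrow Q$ playing the role of the paper's $Q_1\stackrel{d_1}\to Q_0$ in a resolution of a module of projective dimension~$1$).

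The differences are in the verifications. For the ``only if'' direction of $(1)$ the paper argues by contrapositive through Theorem~\ref{thm:main_tensor}: a semi-Gorenstein-projective $X$ that is not torsionless gives a non-torsionless $X\otimes B\in{}^\perp\Lambda$; you instead pass through $\Gp(\Lambda)=\Mon(B,\Gp(A))$ and read off $X\in\Gp(A)$ from $(A\otimes B)\otimes_\Lambda(X\otimes B)$, which is cleaner but uses $\gldim B<\infty$ (the paper's route does not, cf.\ Remark~\ref{remweakG}). For $(3)$, the paper shows the pushout is semi-Gorenstein-projective using Proposition~\ref{lem:app_tensor} (that $f\otimes 1_{Q_1}$ is again a left approximation), while you compute the relevant cokernel directly from Corollary~\ref{homiso} and the surjectivity of $f^*$; both are short. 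To show the pushout is not monic, the paper checks $\Ext^1_\Lambda(M,D(A)\otimes Q_1)\ne 0$ via criterion~(ii) of Lemma--Definition~\ref{defmonic}, whereas you use criterion~(iii) directly: your reduction modulo $\rad B$ (killing the radical inclusion $\iota$) forces a left inverse of $f$ over $k$, which is impossible. Your argument needs the resolution to be minimal so that $\iota$ is radical; the paper's $\Ext$-computation works for any projective resolution of a module with $\pd=1$. Either way the construction and the logic are the same.
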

\begin{proof} $(1)$ \ Assume $A$ is left weakly Gorenstein. For any semi-Gorenstein-projective left $\Lambda$-module $M$, we need to show that $M$ is Gorenstein-projective. Put $d = \gldim B<\infty$.
By Lemma \ref{lem:syzygy_in_mon}, $\Omega^d(M)\in\Mon(B,A)$. On the other hand, $\Omega^d(M)$ is semi-Gorenstein-projective as well. Hence $\Omega^d(M)\in \leftidx^\perp\Lambda\cap\Mon(B,A)=\Mon(B,\leftidx^\perp A)$ by Lemma \ref{lem:SGP_cap_mon}. Since $A$ is left weakly Gorenstein, $\Mon(B,\leftidx^\perp A)=\Mon(B,\Gp(A))$. By Lemma \ref{thm:GP_mon}, $\Mon(B,\Gp(A))= \Gp(\Lambda)$, which shows that  $\Omega^d(M)$ is Gorenstein-projective. Therefore, by Lemma \ref{lem:syzygy_GP},  $M$ is Gorenstein-projective as well.

\vskip5pt

Conversely, assume $\Lambda$ is left weakly Gorenstein. Suppose otherwise that $A$ is not left weakly Gorenstein. Then by \cite[Theorem 1.2]{RZ2} there is a semi-Gorenstein-projective $A$-module $X$ which is not torsionless.
By  Theorem \ref{thm:main_tensor} and Proposition \ref{tensorsgp},  $X\otimes B$ is a semi-Gorenstein-projective $\Lambda$-module which is not torsionless.
This contradicts the assumption that $\Lambda$ is left weakly Gorenstein.

\vskip5pt

$(2)$ \ If $A$ is a left weakly Gorenstein algebra, then by the assertion (1) and Lemma \ref{thm:GP_mon} one has
$\leftidx^\perp\Lambda = \Gp(\Lambda) = \Mon(B, \Gp(A)) = \Mon(B, \leftidx^\perp A).$

\vskip5pt

$(3)$ \ Assume that $B$ is not semisimple and  $\leftidx^\perp\Lambda = \Mon(B,\leftidx^\perp A)$.
It remains to prove that  $A$ is left weakly Gorenstein.
Suppose otherwise that $A$ is not left weakly Gorenstein. By Lemma \ref{lem:SGP_cap_mon} one has $\Mon(B,\leftidx^\perp A)=\leftidx^\perp\Lambda\cap\Mon(B,A)\subseteq\leftidx^\perp\Lambda$.
Thus, in order to get a contradiction,
it suffices to show that there is  a semi-Gorenstein-projective $\Lambda$-module $M$ which is not a monic representation of $B$ over $A$,
i.e., $M\in \ ^\perp \Lambda$ but $M\notin \Mon(B,  A)$. Indeed, such a $\Lambda$-module $M$ can be constructed in the following  way.

\vskip5pt

Since $A$ is not left weakly Gorenstein, again by \cite[Theorem 1.2]{RZ2}, there is a semi-Gorenstein-projective $A$-module $X$ which is not torsionless.
Take a minimal left $(\add A)$-approximation $f:X\longrightarrow P$ of $X$. Then $f$ is not a monomorphism (cf. Lemma \ref{torless}).

\vskip5pt

\vskip5pt

Since $0<\gldim B<\infty$, one can take a $B$-module $Y$ with $\pd Y=1$, say, with a projective resolution $0\longrightarrow Q_1\stackrel{d_1}\longrightarrow  Q_0\stackrel{d_0}\longrightarrow  Y\longrightarrow  0.$
Consider a pushout diagram

$$\xymatrix{
0\ar[r]& X\otimes Q_1 \ar[r]^{{\rm Id}_X\otimes d_1}\ar[d]_{f\otimes {\rm Id}_{Q_1}}& X\otimes Q_0\ar[r]^{{\rm Id}_X\otimes d_0}\ar[d]& X\otimes Y\ar[r]\ar@{=}[d]&0\\
0\ar[r]& P\otimes Q_1 \ar[r]& M\ar[r]& X\otimes Y\ar[r]&0.
}
$$
By Proposition \ref{lem:app_tensor},  $f\otimes {\rm Id}_{Q_1}$ is a left $(\add\Lambda)$-approximation of $X\otimes Q_1$. Since $f$ is not a monomorphism,
$f\otimes {\rm Id}_{Q_1}$ is not a monomorphism.

\vskip5pt

{\bf Claim 1:} $M\in\leftidx^\perp \Lambda$.

\vskip5pt

In fact, applying $\Hom_\Lambda(-,\Lambda)$ to the second row of the above diagram one gets an exact sequence
$$
0=\Ext^i_\Lambda(P\otimes Q_1, \Lambda)\longrightarrow\Ext^{i+1}_\Lambda(X\otimes Y, \Lambda)\longrightarrow \Ext^{i+1}_\Lambda(M,\Lambda)\longrightarrow \Ext^{i+1}_\Lambda(P\otimes Q_1, \Lambda)=0
$$
for all $i\geq 1$. Therefore by the Cartan-Eilenberg isomorphism one has the isomorphism for all $i\geq 1$, $$\Ext^{i+1}_\Lambda(M,\Lambda)\cong \Ext^{i+1}_\Lambda(X\otimes Y, \Lambda)\cong \Hom_A(X,A)\otimes \Ext^{i+1}_B(Y,B)=0.$$
It remains to show that $\Ext^1_\Lambda(M,\Lambda)=0$. Applying $\Hom_\Lambda(-,\Lambda)$ to the above pushout diagram yields a commutative diagram with exact rows:
$$\xymatrix{ \cdots\ar[r]&\Hom_\Lambda(P\otimes Q_1,\Lambda)\ar[r]^{\delta}\ar@{->>}[d]_{\Hom(f\otimes {\rm Id}_{Q_1},\Lambda)}& \Ext^1_\Lambda(X\otimes Y,\Lambda)\ar[r]\ar@{=}[d]&\Ext^1_\Lambda(M,\Lambda)\ar[r]\ar[d]&0 \\
\cdots\ar[r]&\Hom_\Lambda(X\otimes Q_1,\Lambda)\ar[r]\ar[r]^\partial &\Ext_\Lambda^1(X\otimes Y,\Lambda)\ar[r]&\Ext^1_\Lambda(X\otimes Q_0,\Lambda)\ar[r]&\cdots}$$
Since $X$ is a semi-Gorenstein-projective $A$-module, according to the Cartan-Eilenberg isomorphism one has  $\Ext^1_\Lambda(X\otimes Q_0,\Lambda)=0$. Hence the connecting morphism $\partial$ is an epimorphism.
Since $f\otimes {\rm Id}_{Q_1}$ is a left $\add \Lambda$-approximation, $\Hom(f\otimes {\rm Id}_{Q_1},\Lambda)$ is an epimorphism. Thus, the connecting morphism $\delta$ is epic as well. Therefore $\Ext^1_\Lambda(M,\Lambda)=0$.

\vskip5pt

{\bf Claim 2:} $M\not\in \Mon(B,A)$.
\vskip5pt

In fact, from the pushout diagram we obtain an exact sequence of $\Lambda$-modules:
$$
0\longrightarrow X\otimes Q_1 \stackrel{\left(\begin{smallmatrix}{\rm Id}_{X}\otimes d_1 \\f\otimes {\rm Id}_{Q_1}
\end{smallmatrix}\right)}\longrightarrow (X\otimes Q_0)\oplus (P\otimes Q_1)  \longrightarrow M \longrightarrow 0.
$$
Applying $\Hom_\Lambda(-, D(A)\otimes Q_1)$  and the Cartan-Eilenberg isomorphism, one gets a commutative diagram with exact rows (where ${\rm E} = {\rm Ext}$)
$$
\xymatrix@C=0.6cm{((X\otimes Q_0)\oplus (P\otimes Q_1), D(A)\otimes Q_1) \ar[r]\ar[d]_\wr& (X\otimes Q_1, D(A)\otimes Q_1)\ar[r]\ar[d]_\wr& {\rm E}^1(M,D(A)\otimes Q_1)\to 0\\
{\begin{smallmatrix}\Hom_A(X,D(A))\otimes \Hom_B(Q_0,Q_1)\\ \oplus\\ \Hom_A(P, D(A))\otimes \Hom_B(Q_1, Q_1)\end{smallmatrix}}\ar[r]^-\theta & \Hom_A(X,D(A))\otimes \Hom_B(Q_1,Q_1)}
$$
where the map $\theta$ is given by  $$\left(\Hom_A({\rm Id}_X,D(A))\otimes\Hom_B(d_1, Q_1), \ \Hom_A(f,D(A))\otimes{\rm Id}_{\Hom_B(Q_1,Q_1)}\right)$$ with  image
$$
\Ima \theta= \Hom_A(X,D(A))\otimes \Ima\Hom_B(d_1,Q_1)+\Ima\Hom_A(f,D(A))\otimes\Hom_B(Q_1, Q_1)
$$
Since $\pd Y=1$, it follows that $\Hom_B(d_1, Q_1): \Hom_B(Q_0, Q_1) \longrightarrow \Hom_B(Q_1,Q_1)$ is not an epimorphism.
Since $f:X\longrightarrow P$ is not a monomorphism, $D(f): D(P) \longrightarrow D(X)$ is not an epimorphism, i.e., $\Hom_A(f, D(A)): \Hom_A(P, D(A)) \longrightarrow \Hom_A(X, D(A))$ is not an epimorphism.
Thus,  $\Ima\Hom_B(d_1, Q_1)$ is a proper $k$-subspace of $\Hom_B(Q_1,Q_1)$, and $\Ima\Hom_A(f,D(A))$ is a proper $k$-subspace of  $\Hom_A(X,D(A))$. Therefore, $\Ima\theta$ is a proper subspace of
$\Hom_A(X,D(A))\otimes\Hom_B(Q_1,Q_1)$. Thus, $\theta$ is not an epimorphism, which implies $\Ext^1_\Lambda(M,D(A)\otimes Q_1)\neq 0$.
By definition, $M\not\in\Mon(B,A)$. This completes the proof.
\end{proof}

\begin{rem} \label{remweakG}  ${\rm (1)}$ \ From the proof one can see that the ``if" part of {\rm Theorem \ref{thm:weak_G}(1)} does not need the condition $\gldim B <\infty$, i.e.,
$A\otimes B$ is left weakly Gorenstein always implies that $A$ and $B$ are left weakly Gorenstein. In fact, we do not know whether the converse is true in general.

\vskip5pt

${\rm (2)}$ \ If $\gldim B=0$, i.e., $B$ is semisimple, then the conclusion of {\rm Theorem \ref{thm:weak_G}(3)} is no longer true. In fact, by Definition {\rm \ref{defmonic}} one sees that if $B$ is semi-simple then $\Lambda\mbox{-}{\rm mod} =\Mon(B,A)$, and that the equation
$$\leftidx^\perp\Lambda = \Mon(B,\leftidx^\perp A) = \{M\in \Lambda\mbox{-}{\rm mod} \ | \ _AM\in \ ^\perp A\}$$ holds for any finite-dimensional $k$-algebra $A$
$($see also {\rm Lemma  \ref{lem:SGP_cap_mon}}$)$.
\end{rem}

\section{\bf Semi-Gorenstein-projective modules over Morita rings}

\subsection{Modules over Morita rings} \ Let $A$ and $B$ be rings, $_{A}N_{B}$ an $A\mbox{-}B$-bimodule, $_{B}M_{A}$ a $B\mbox{-}A$-bimodule, $\phi:M\otimes_{A}N\lxr B$ a $B\mbox{-}B$-bimodule homomorphism, and $\psi:N\otimes_{B}M\lxr A$ an $A\mbox{-}A$-bimodule homomorphism. The Morita ring
$$\Lambda = \Lambda_{(\phi, \psi)}=\begin{pmatrix}\begin{smallmatrix}
A & _{A}N_{B} \\
_{B}M_{A} & B \\
\end{smallmatrix}\end{pmatrix}$$
is defined (\cite{B}) with componentwise addition and multiplication by
$$\begin{pmatrix}\begin{smallmatrix}
a & n \\
m & b \\
\end{smallmatrix}\end{pmatrix}\begin{pmatrix}\begin{smallmatrix}
a' & n' \\
m' & b' \\
\end{smallmatrix}\end{pmatrix}=\begin{pmatrix}\begin{smallmatrix}
aa'+\psi(n\otimes{m'}) & an'+nb' \\
ma'+bm' & bb'+\phi(m\otimes{n'}) \\
\end{smallmatrix}\end{pmatrix}$$
for all $m,m'\in M$ and $n,n'\in N$.
One always assumes that
$$\phi(m\otimes n)m'=m\psi(n\otimes{m'}) \ \ \mbox{and}  \ \ n\phi(m\otimes {n'})=\psi(n\otimes m)n', \ \ \forall \ m, m'\in M, \ \ \ \forall \ n, n'\in N$$
which is precisely  the sufficient and necessary condition such that $\Lambda_{(\phi, \psi)}$ is an associative ring.

\vskip 5pt

We will assume $M\otimes_{A}N=0=N\otimes_{B}M$.
Also, we assume that $\Lambda$ is an Artin algebra, i.e., $A$ and $B$ are Artin $R$-algebras with  a commutative artinian ring $R$, $M$ and $N$ are finitely generated $R$-modules,
such that $R$ acts centrally both on $M$ and $N$ (\cite[Proposition 2.2]{GrP}).
Recall from \cite[Theorem 1.5]{Gr} that a left $\Lambda$-module can be identified with a quadruple $(X, Y, f, g)$,
where $X\in A\modu,\ Y\in B\modu,\ f\in {\rm Hom}_{B}(M\otimes_{A}X, Y)$ and $g\in {\rm Hom}_{A}(N\otimes_{B}Y, X)$. We will write a left $\Lambda$-module as
$\begin{pmatrix}\begin{smallmatrix}X \\ Y \end{smallmatrix}\end{pmatrix}_{f, g}.$
In this way the $\Lambda$-action will coincide with the multiplication of matrices.

\vskip5pt

A homomorphism of left $\Lambda$-module is a pair
$$\begin{pmatrix}\begin{smallmatrix}a \\ b\end{smallmatrix}\end{pmatrix}: \ \begin{pmatrix}\begin{smallmatrix}X \\ Y \end{smallmatrix}\end{pmatrix}_{f, g}\lxr \begin{pmatrix}\begin{smallmatrix}X' \\ Y' \end{smallmatrix}\end{pmatrix}_{f', g'}$$
where $a: X\lxr X'$ is an $A$-homomorphism and $b: Y\lxr Y'$ is a $B$-homomorphism, such that the following diagrams commute:
\[
\xymatrix@C=0.5cm{
M\otimes_{A}X\ar@{->}[rr]^-{1\otimes a }\ar@{->}[d]_-{f} && M\otimes_{A}X^{\prime}\ar@{->}[d]^{f'}\\
Y\ar@{->}[rr]^{b}     &&Y^{\prime}\\
} \ \ \ \ \ \ \ \ \ \ \ \ \ \ \ \ \ \xymatrix@C=0.5cm{
N\otimes_{B}Y\ar@{->}[d]_-{g}\ar@{->}[rr]^-{1\otimes b} && N\otimes_{B}Y^{\prime}\ar@{->}[d]^-{g^{\prime}}\\
X\ar@{->}[rr]^{a}     && X^{\prime}.\\
}
\]

\vskip 5pt

Since by assumption $M\otimes_A N = 0 = N\otimes _B M$, there is an exact sequence $M\otimes_{A}\cok g\stackrel{\tilde{f}}{\lxr} Y\stackrel{\pi_{f}}{\lxr} \cok f\lxr 0$ of $B$-modules
and an exact sequence $N\otimes_{B}\cok f\stackrel{\tilde{g}}{\lxr} X\stackrel{\pi_{g}}{\lxr} \cok g\lxr 0$ of $A$-modules.

\vskip 5pt

\begin{lem}\label{projectives}  {\rm (\cite[Proposition 3.2]{GrP})} \ Let  $\Lambda=\begin{pmatrix}\begin{smallmatrix}
A &N \\
M & B \\
\end{smallmatrix}\end{pmatrix}$ be a Morita ring which is an Artin algebra, with $M\otimes_{A}N=0=N\otimes_{B}M$.
Then

\vskip 5pt

{\rm (1)} \  The indecomposable projective left $\Lambda$-modules are precisely
$$\begin{pmatrix}\begin{smallmatrix}P \\ M\otimes_{A}P\end{smallmatrix}\end{pmatrix}_{\id_{M\otimes_{A}P}, 0}\ \ \ \
\mbox{and}\ \ \ \ \begin{pmatrix}\begin{smallmatrix}N\otimes_{B}Q \\ Q\end{smallmatrix}\end{pmatrix}_{0, \id_{N\otimes_{B}Q}}$$
where $P$ and $Q$ range over the indecomposable projective left $A$-modules and $B$-modules, respectively.

\vskip 5pt

{\rm (2)} \ The indecomposable injective left $\Lambda$-modules are precisely
$$\begin{pmatrix}\begin{smallmatrix} I \\ {\rm Hom}_{A}(N, I)\end{smallmatrix}\end{pmatrix}_{0, \epsilon'_{I}}
\ \ \ \ \mbox{and}\ \ \ \
\begin{pmatrix}\begin{smallmatrix}  {\rm Hom}_{B}(M, J) \\ J\end{smallmatrix}\end{pmatrix}_{\epsilon_{J}, 0}$$
where $I$ and $J$ range over indecomposable injective left $A$-modules and $B$-modules, respectively, and $\epsilon'_{I}: N\otimes_{B}{\rm Hom}_{A}(N, I)\lxr I$ and $\epsilon_{J}: M\otimes_{A}{\rm Hom}_{B}(M, J)\lxr J$ are the evaluation maps.

\vskip 5pt

{\rm (3)} \ Let $$\begin{pmatrix}\begin{smallmatrix}X_1\\Y_1\end{smallmatrix}\end{pmatrix}_{f_1, g_1}
\xrightarrow{\begin{pmatrix}\begin{smallmatrix}a\\b\end{smallmatrix}\end{pmatrix}} \begin{pmatrix}\begin{smallmatrix}X_2\\Y_2\end{smallmatrix}\end{pmatrix}_{f_2, g_2}
\xrightarrow{\begin{pmatrix}\begin{smallmatrix}c\\d\end{smallmatrix}\end{pmatrix}}
\begin{pmatrix}\begin{smallmatrix}X_3\\Y_3\end{smallmatrix}\end{pmatrix}_{f_3, g_3}$$

\vskip5pt

\noindent be a sequence of $\Lambda$-homomorphisms. Then it is exact in $\Lambda\modu$ if and only if $X_{1}\stackrel{a}{\lxr} X_{2}\stackrel{c}{\lxr}X_{3}$ is exact in $A\mbox{-}{\rm mod}$ and $Y_{1}\stackrel{b}{\lxr} Y_{2}\stackrel{d}{\lxr} Y_{3}$
is exact in  $B\mbox{-}{\rm mod}$.
\end{lem}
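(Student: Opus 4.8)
The plan is to prove the three parts in the order $(3)$, $(1)$, $(2)$, since $(3)$ feeds into the injective case. \emph{For $(3)$:} let $e_1=\begin{pmatrix}\begin{smallmatrix}1_A&0\\0&0\end{smallmatrix}\end{pmatrix}$ and $e_2=\begin{pmatrix}\begin{smallmatrix}0&0\\0&1_B\end{smallmatrix}\end{pmatrix}$, a complete set of orthogonal idempotents of $\Lambda$ with $e_1\Lambda e_1=A$ and $e_2\Lambda e_2=B$. For $Z=\begin{pmatrix}\begin{smallmatrix}X\\Y\end{smallmatrix}\end{pmatrix}_{f,g}$ the underlying abelian group is $e_1Z\oplus e_2Z$ with $e_1Z=X$ as an $A$-module and $e_2Z=Y$ as a $B$-module, and $Z\mapsto X$, $Z\mapsto Y$ are additive functors. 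Since exactness of a sequence of modules is tested on underlying abelian groups, the given sequence is exact in $\Lambda\mbox{-}{\rm mod}$ iff the sequences obtained by applying these two functors are exact in $A\mbox{-}{\rm mod}$ and $B\mbox{-}{\rm mod}$; in particular the functors $Z\mapsto X$ and $Z\mapsto Y$ are exact.

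\emph{For $(1)$:} the hypothesis $M\otimes_AN=0=N\otimes_BM$ forces $\psi=0$ and $\phi=0$, and then unwinding the multiplication rule gives $\Lambda e_1\cong\begin{pmatrix}\begin{smallmatrix}A\\M\end{smallmatrix}\end{pmatrix}_{\mathrm{can},\,0}$ and $\Lambda e_2\cong\begin{pmatrix}\begin{smallmatrix}N\\B\end{smallmatrix}\end{pmatrix}_{0,\,\mathrm{can}}$ as left $\Lambda$-modules, $\mathrm{can}$ being the canonical isomorphism $M\otimes_AA\to M$, resp. $N\otimes_BB\to N$. Writing ${}_AA=\bigoplus_iP_i^{\,n_i}$ with $P_i$ indecomposable, additivity of $M\otimes_A-$ and of the passage to quadruples gives $\Lambda e_1\cong\bigoplus_i\bigl(\begin{pmatrix}\begin{smallmatrix}P_i\\M\otimes_AP_i\end{smallmatrix}\end{pmatrix}_{\mathrm{id},\,0}\bigr)^{n_i}$, symmetrically for $\Lambda e_2$. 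A $\Lambda$-endomorphism $\begin{pmatrix}\begin{smallmatrix}a\\b\end{smallmatrix}\end{pmatrix}$ of $\begin{pmatrix}\begin{smallmatrix}P_i\\M\otimes_AP_i\end{smallmatrix}\end{pmatrix}_{\mathrm{id},\,0}$ satisfies $b=1_M\otimes a$ (first square, whose verticals are identities) with the second square vacuous, so $\End_\Lambda\begin{pmatrix}\begin{smallmatrix}P_i\\M\otimes_AP_i\end{smallmatrix}\end{pmatrix}_{\mathrm{id},\,0}\cong\End_A(P_i)$ is local; hence these modules are indecomposable and pairwise non-isomorphic, and by Krull--Schmidt every indecomposable projective $\Lambda$-module, being a summand of $\Lambda=\Lambda e_1\oplus\Lambda e_2$, is among them.

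\emph{For $(2)$:} I would deduce it from $(3)$ via the natural isomorphism
\[
\Hom_\Lambda\Bigl(\begin{pmatrix}\begin{smallmatrix}X\\Y\end{smallmatrix}\end{pmatrix}_{f,g},\ \begin{pmatrix}\begin{smallmatrix}I\\\Hom_A(N,I)\end{smallmatrix}\end{pmatrix}_{0,\,\epsilon'_I}\Bigr)\ \cong\ \Hom_A(X,I).
\]
By tensor--hom adjunction a $B$-linear $b\colon Y\to\Hom_A(N,I)$ is the same as an $A$-linear $\bar b\colon N\otimes_BY\to I$; for a $\Lambda$-map $\begin{pmatrix}\begin{smallmatrix}a\\b\end{smallmatrix}\end{pmatrix}$ the $g$-square reads exactly $\bar b=a\circ g$ (so $b$ is determined by $a$), while the $f$-square reads $b\circ f=0$, which is automatic since $N\otimes_BM=0$ forces $n\otimes f(m\otimes x)=0$ in $N\otimes_BY$ for all $n,m,x$. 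Thus $\begin{pmatrix}\begin{smallmatrix}a\\b\end{smallmatrix}\end{pmatrix}\mapsto a$ is the desired bijection, natural in $\begin{pmatrix}\begin{smallmatrix}X\\Y\end{smallmatrix}\end{pmatrix}_{f,g}$. As $I$ is injective over $A$, $\Hom_A(-,I)$ is exact, and composing with the exact functor $Z\mapsto X$ of $(3)$ shows $\begin{pmatrix}\begin{smallmatrix}I\\\Hom_A(N,I)\end{smallmatrix}\end{pmatrix}_{0,\,\epsilon'_I}$ is an injective $\Lambda$-module; taking $X=I$ identifies its endomorphism ring with $\End_A(I)$, so it is indecomposable when $I$ is. The family $\begin{pmatrix}\begin{smallmatrix}\Hom_B(M,J)\\J\end{smallmatrix}\end{pmatrix}_{\epsilon_J,\,0}$ is symmetric. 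Finally $M,N$ span a square-zero ideal of $\Lambda$, so $\Lambda/\rad\Lambda\cong A/\rad A\times B/\rad B$ and $\Lambda$ has exactly as many indecomposable injectives as $A$ and $B$ together, so the two families are exhaustive.

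\emph{Expected main obstacle.} Parts $(1)$ and $(3)$ are essentially bookkeeping. The one delicate step is the claim ``$b\circ f=0$ is automatic'' in $(2)$: one has to recognize that the apparently extra condition the first commuting square imposes on a morphism into the candidate injective is in fact forced by $N\otimes_BM=0$ — this is precisely where the standing hypothesis on $M,N$ is used in the injective case, mirroring the role of $\phi=\psi=0$ in the projective case. A duality argument (apply $D=\Hom_R(-,E)$ to the right-module analogue of $(1)$) is an alternative, but then the subtlety moves to verifying that the structure maps transported by $D$ are literally $\epsilon'_I$ and $\epsilon_J$.
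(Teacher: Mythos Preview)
The paper does not give its own proof of this lemma; it is stated with a citation to \cite[Proposition 3.2]{GrP} and used as input. So there is no ``paper's proof'' to compare against. Your proposal, however, is a correct and self-contained argument, and the order $(3)\Rightarrow(1)\Rightarrow(2)$ is sensible.

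A couple of small remarks. In $(2)$, your phrasing ``$N\otimes_BM=0$ forces $n\otimes f(m\otimes x)=0$ in $N\otimes_BY$'' is right but deserves one more word: the element $n\otimes f(m\otimes x)$ is the image of $n\otimes m\otimes x\in N\otimes_BM\otimes_AX=0$ under $1_N\otimes f$, hence is zero; this makes the adjoint of $b\circ f$ vanish, so $b\circ f=0$ regardless of the choice of $b$. Also, your counting argument at the end of $(2)$ shows there are exactly $|\mathrm{Irr}(A)|+|\mathrm{Irr}(B)|$ isomorphism classes of indecomposable injectives; to conclude the two displayed families exhaust them you should note they are pairwise non-isomorphic. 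Within each family this follows from the endomorphism computation. Between families, observe that the adjunction you established identifies the socle of $\begin{pmatrix}\begin{smallmatrix}I\\\Hom_A(N,I)\end{smallmatrix}\end{pmatrix}_{0,\epsilon'_I}$ with the simple $\Lambda$-module $\begin{pmatrix}\begin{smallmatrix}\mathrm{soc}\,I\\0\end{smallmatrix}\end{pmatrix}$, while the symmetric family has socle of shape $\begin{pmatrix}\begin{smallmatrix}0\\\mathrm{soc}\,J\end{smallmatrix}\end{pmatrix}$, so no overlap is possible.
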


We will start from left $\Lambda$-modules. However right  $\Lambda$-modules are also needed.
A right  $\Lambda$-module can be identified with a quadruple  $(X, Y, f, g)$,
where $X\in {\rm mod} A$ and $Y\in {\rm mod} B$,
$f\in {\rm Hom}_{A}(Y\otimes_{B}M, X)$ and $g\in {\rm Hom}_{B}(X\otimes_AN, Y)$.
We will write a right  $\Lambda$-module as
$(X, Y)_{f, g}$.
The indecomposable projective right $\Lambda$-modules are precisely
$$(P, P\otimes_{A}N)_{0, \id_{P\otimes_{A}N}} \ \ \ \
\mbox{and}\ \ \ \ (Q\otimes_{B}M, Q)_{\id_{Q\otimes_{B}M}, 0}$$
where $P$ and $Q$ range over the indecomposable projective right $A$-modules and $B$-modules, respectively.
The indecomposable injective right $\Lambda$-modules are precisely
$$(I, {\rm Hom}_{A}(M, I))_{\epsilon_{I}, 0}
\ \ \ \ \mbox{and}\ \ \ \
({\rm Hom}_{B}(N, J), J)_{0, \epsilon'_{J}}$$
where $I$ and $J$ range over indecomposable injective right $A$-modules and $B$-modules, respectively, and
$\epsilon_{I}: {\rm Hom}_{A}(M, I)\otimes_{B}M\lxr I$ and $\epsilon'_{J}:{\rm Hom}_{B}(N, J) \otimes_{A}N \lxr J$ are the evaluation maps.
\subsection{Some isomorphisms} \

\vskip 10pt

\begin{lem} \label{Extiso} \ {\rm ( [GP, Lemma 4.8])} \ Let $\Lambda=\begin{pmatrix}\begin{smallmatrix}
A & N \\
M & B \\
\end{smallmatrix}\end{pmatrix}$ be a Morita ring, and $\begin{pmatrix}\begin{smallmatrix}X'\\Y'\end{smallmatrix}\end{pmatrix}_{f', g'}$ a left $\Lambda$-module.

\vskip 5pt

{\rm (1)} \ If $M_A$ is projective, then for any left $A$-module $X$ there holds

$${\rm Ext}_{\Lambda}^i(\begin{pmatrix}\begin{smallmatrix} X \\ M\otimes_{A}X \end{smallmatrix}\end{pmatrix}_{\id_{M\otimes_AX}, 0}, \ \begin{pmatrix}\begin{smallmatrix}X'\\Y'\end{smallmatrix}\end{pmatrix}_{f', g'})\iso {\rm Ext}_{A}^{i}(X, X'), \ \forall \ i\ge 0.$$

\vskip 5pt

{\rm (2)} \ If $N_B$ is projective, then for any left $B$-module $Y$ there holds

$${\rm Ext}_{\Lambda}^{i}(\begin{pmatrix}\begin{smallmatrix} N\otimes_BY \\ Y \end{smallmatrix}\end{pmatrix}_{0, \id_{N\otimes_BY}}, \ \begin{pmatrix}\begin{smallmatrix}X'\\Y'\end{smallmatrix}\end{pmatrix}_{f', g'})\iso {\rm Ext}_{B}^{i}(Y, Y'), \ \forall \ i\ge 0.$$
\end{lem}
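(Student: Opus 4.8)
The plan is to recognise the assignment $X\mapsto\begin{pmatrix}\begin{smallmatrix}X\\ M\otimes_A X\end{smallmatrix}\end{pmatrix}_{\id_{M\otimes_A X},0}$ as the functor $\Lambda e_1\otimes_A(-)$ from $A\mbox{-}{\rm mod}$ to $\Lambda\mbox{-}{\rm mod}$, where $e_1=\begin{pmatrix}\begin{smallmatrix}1&0\\0&0\end{smallmatrix}\end{pmatrix}\in\Lambda$ and $A$ is identified with $e_1\Lambda e_1$. Indeed $\Lambda e_1\cong\begin{pmatrix}\begin{smallmatrix}A\\ M\end{smallmatrix}\end{pmatrix}$ as a $\Lambda$-$A$-bimodule, so a routine identification gives $\Lambda e_1\otimes_A X\cong\begin{pmatrix}\begin{smallmatrix}X\\ M\otimes_A X\end{smallmatrix}\end{pmatrix}_{\id,0}$ naturally in $X$; the lower structure map is forced to be $0$ precisely because $N\otimes_B M=0$. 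For $i=0$ I would then use the tensor--hom adjunction together with $\Hom_\Lambda(\Lambda e_1,W)\cong e_1W$ to get, for $W=\begin{pmatrix}\begin{smallmatrix}X'\\ Y'\end{smallmatrix}\end{pmatrix}_{f',g'}$,
\[
\Hom_\Lambda(\Lambda e_1\otimes_A X,\,W)\;\cong\;\Hom_A\bigl(X,\,\Hom_\Lambda(\Lambda e_1,W)\bigr)\;\cong\;\Hom_A(X,\,e_1W)\;\cong\;\Hom_A(X,X'),
\]
naturally in both variables. Equivalently, in Green's quadruple language a $\Lambda$-morphism $\begin{pmatrix}\begin{smallmatrix}a\\ b\end{smallmatrix}\end{pmatrix}$ out of $\begin{pmatrix}\begin{smallmatrix}X\\ M\otimes_A X\end{smallmatrix}\end{pmatrix}_{\id,0}$ must satisfy $b=f'\circ(\id_M\otimes a)$, while the second commuting square is vacuous since $N\otimes_B M\otimes_A X=0$; so $\begin{pmatrix}\begin{smallmatrix}a\\ b\end{smallmatrix}\end{pmatrix}\mapsto a$ is the isomorphism.

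Next I would promote this to all $i\ge 0$ by applying it to a projective resolution. Two facts are needed. First, $\Lambda e_1\otimes_A(-)$ sends projectives to projectives: $\Lambda e_1\otimes_A A=\Lambda e_1\cong\bigoplus_j\begin{pmatrix}\begin{smallmatrix}P_j\\ M\otimes_A P_j\end{smallmatrix}\end{pmatrix}_{\id,0}$ (sum over the indecomposable projective $A$-modules $P_j$), which is projective over $\Lambda$ by Lemma \ref{projectives}(1), and the functor commutes with direct sums and summands. Second, $\Lambda e_1\otimes_A(-)$ is exact: as a right $A$-module $\Lambda e_1\cong A\oplus\,{}_BM_A$, and $M_A$ being projective is in particular flat, so $\Lambda e_1$ is flat as a right $A$-module; alternatively one invokes Lemma \ref{projectives}(3), the point being that $M\otimes_A(-)$ preserves monomorphisms once $M_A$ is flat. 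Granting both, for a projective resolution $P_\bullet\to X$ over $A$ the complex $\Lambda e_1\otimes_A P_\bullet\to\Lambda e_1\otimes_A X$ is a projective resolution over $\Lambda$, whence
\[
\Ext^i_\Lambda(\Lambda e_1\otimes_A X,\,W)\;=\;H^i\bigl(\Hom_\Lambda(\Lambda e_1\otimes_A P_\bullet,\,W)\bigr)\;\cong\;H^i\bigl(\Hom_A(P_\bullet,X')\bigr)\;=\;\Ext^i_A(X,X'),
\]
using the natural isomorphism from the previous step on each term. Part (2) is entirely symmetric: replace $e_1$ by $e_2=\begin{pmatrix}\begin{smallmatrix}0&0\\ 0&1\end{smallmatrix}\end{pmatrix}$, note $\Lambda e_2\cong\begin{pmatrix}\begin{smallmatrix}N\\ B\end{smallmatrix}\end{pmatrix}_{0,\id}$, use $N_B$ projective for exactness, and use $M\otimes_A N=0$ to make the relevant structure map vanish and the relevant commuting square vacuous.

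I do not expect a genuine obstacle: the statement is soft, and essentially all of its content is the bookkeeping that identifies the functor in the statement with $\Lambda e_1\otimes_A(-)$ and identifies its right adjoint on the target side with $e_1(-)$. The one place real care is needed — and the only place the hypotheses on $M_A$ (resp.\ $N_B$) are used — is the exactness of $\Lambda e_1\otimes_A(-)$ (resp.\ $\Lambda e_2\otimes_A(-)$); here one should be explicit that projectivity of $M_A$ is invoked only through flatness, so that the functor is exact and the resolution argument goes through. Anyone preferring to avoid the idempotent/adjunction formalism can instead verify the $i=0$ natural isomorphism by hand from Green's quadruple description and then run the resolution argument using Lemma \ref{projectives} directly; this is the same proof, with naturality checked by chasing the two defining squares of a $\Lambda$-homomorphism.
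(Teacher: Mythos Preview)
Your proof is correct. The paper does not supply its own argument for this lemma; it merely records the statement and cites \cite[Lemma 4.8]{GP}. Your approach---recognising the construction as the exact left adjoint $\Lambda e_1\otimes_A(-)$ (respectively $\Lambda e_2\otimes_B(-)$), verifying exactness via flatness of $M_A$ (respectively $N_B$), and then transporting a projective resolution---is the standard way to obtain these adjunction-derived Ext isomorphisms, and every step you outline is valid under the paper's standing hypotheses $M\otimes_AN=0=N\otimes_BM$ from Subsection~4.1. Nothing is missing.
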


\vskip 10pt

\begin{lem}\label{baersum}
Let $\Lambda=\begin{pmatrix}\begin{smallmatrix}
A & N \\
M & B \\
\end{smallmatrix}\end{pmatrix}$ be a Morita ring,  $_{A}X$ an $A$-module and $_{B}Y$ a $B$-module. Then

\vskip 5pt

{\rm (1)} \  ${\rm Hom}_{A}(N\otimes_{B}Y, X) \iso {\rm Ext}_{\Lambda}^{1}(\begin{pmatrix}\begin{smallmatrix}0\\ Y\end{smallmatrix}\end{pmatrix}_{0, 0},\ \begin{pmatrix}\begin{smallmatrix}X\\0\end{smallmatrix}\end{pmatrix}_{0, 0})$, and this isomorphism is functorial
both in $X$ and $Y$. Explicitly,  this isomorphism $\Phi$ sends $g$ to
the equivalence class of the exact sequence

\[
0\lxr \begin{pmatrix}\begin{smallmatrix}X\\0\end{smallmatrix}\end{pmatrix}_{0, 0}\stackrel {\binom{1}{0}}
\lxr \begin{pmatrix}\begin{smallmatrix}X\\Y\end{smallmatrix}\end{pmatrix}_{0, g}\stackrel {\binom{0}{1}}\lxr \begin{pmatrix}\begin{smallmatrix}0\\Y\end{smallmatrix}\end{pmatrix}_{0, 0}\lxr 0.
\]

\vskip 5pt

{\rm (2)} \  ${\rm Hom}_{B}(M\otimes_{A}X, Y)\iso {\rm Ext}_{\Lambda}^{1}(\begin{pmatrix}\begin{smallmatrix}X\\ 0\end{smallmatrix}\end{pmatrix}_{0, 0},\ \begin{pmatrix}\begin{smallmatrix}0\\Y\end{smallmatrix}\end{pmatrix}_{0, 0})$,
and this isomorphism is functorial both in $X$ and $Y$.
Explicitly,  this isomorphism sends $f$ to
the equivalence class of the exact sequence

\[
0\lxr \begin{pmatrix}\begin{smallmatrix}0\\Y\end{smallmatrix}\end{pmatrix}_{0, 0}\stackrel {\binom{0}{1}}
\lxr \begin{pmatrix}\begin{smallmatrix}X\\Y\end{smallmatrix}\end{pmatrix}_{f, 0}\stackrel {\binom{1}{0}}\lxr \begin{pmatrix}\begin{smallmatrix}X\\ 0\end{smallmatrix}\end{pmatrix}_{0, 0}\lxr 0.
\]

\end{lem}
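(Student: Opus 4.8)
The plan is to prove part (1) by classifying the relevant extensions explicitly, and then to deduce part (2) by a symmetry argument. The essential point, used throughout, is that the hypothesis $M\otimes_A N=0=N\otimes_B M$ (so that $\phi=0=\psi$) makes $\binom{X}{Y}_{0,g}$ a left $\Lambda$-module for \emph{every} $A$-homomorphism $g\colon N\otimes_B Y\to X$, with no further compatibility requirement, and makes the two matrices $\binom{1_X}{0}$ and $\binom{0}{1_Y}$ in the displayed sequence into $\Lambda$-homomorphisms. So one may \emph{define} $\Phi(g)$ to be the equivalence class of that sequence; the content of (1) is that $\Phi$ is a well-defined bijection, that it is additive, and that it is functorial in $X$ and $Y$.

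For surjectivity together with the normal form: given any extension $0\to\binom{X}{0}_{0,0}\xrightarrow{\binom{a}{b}}\binom{X''}{Y''}_{f'',g''}\xrightarrow{\binom{c}{d}}\binom{0}{Y}_{0,0}\to 0$, Lemma \ref{projectives}(3) applied to the two component sequences shows that $a$ and $d$ are isomorphisms and $b=0=c$; identifying $X''$ with $X$ along $a$ and $Y''$ with $Y$ along $d$, the extension becomes $0\to\binom{X}{0}_{0,0}\xrightarrow{\binom{1}{0}}\binom{X}{Y}_{f'',g''}\xrightarrow{\binom{0}{1}}\binom{0}{Y}_{0,0}\to 0$, and writing out the two commuting squares that express that $\binom{1}{0}$ is a $\Lambda$-homomorphism forces $f''=0$. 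Hence every extension is represented by some $\binom{X}{Y}_{0,g}$, so $\Phi$ is onto. For injectivity, if $\binom{X}{Y}_{0,g}$ and $\binom{X}{Y}_{0,g'}$ represent equivalent extensions, the comparison isomorphism must restrict to the identity on the sub $\binom{X}{0}_{0,0}$ and induce the identity on the quotient $\binom{0}{Y}_{0,0}$, hence equals $\binom{1_X}{1_Y}$; the $g$-square for $\binom{1_X}{1_Y}$ then reads $g=g'$. Thus $\Phi$ is a bijection of sets.

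It remains to check additivity and naturality. For additivity one computes the Baer sum of $\Phi(g)$ and $\Phi(g')$: the direct-sum extension is $\binom{X\oplus X}{Y\oplus Y}_{0,\,g\oplus g'}$; its pullback along the diagonal $\binom{0}{\Delta_Y}$ is $\binom{X\oplus X}{Y}_{0,\,h}$ with $h=(g\oplus g')\circ(1_N\otimes\Delta_Y)$, and the pushout of the latter along the codiagonal $\binom{\nabla_X}{0}$ is $\binom{X}{Y}_{0,\,\nabla_X\circ h}=\binom{X}{Y}_{0,\,g+g'}$, which is $\Phi(g+g')$. Naturality is the same kind of one-line computation: pushing out $\binom{X}{Y}_{0,g}$ along $\binom{\alpha}{0}\colon\binom{X}{0}_{0,0}\to\binom{X_1}{0}_{0,0}$ gives $\binom{X_1}{Y}_{0,\,\alpha g}$, and pulling back along $\binom{0}{\beta}\colon\binom{0}{Y_1}_{0,0}\to\binom{0}{Y}_{0,0}$ gives $\binom{X}{Y_1}_{0,\,g\circ(1_N\otimes\beta)}$, matching the maps $g\mapsto\alpha g$ and $g\mapsto g\circ(1_N\otimes\beta)$ on $\Hom_A(N\otimes_B Y,X)$. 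This proves (1).

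Finally, (2) follows from (1) applied to the Morita ring $\Lambda'=\left(\begin{smallmatrix}B & M\\ N & A\end{smallmatrix}\right)$ (with the evident bimodule structures), which still satisfies $M\otimes_A N=0=N\otimes_B M$: the assignment $\binom{X}{Y}_{f,g}\mapsto\binom{Y}{X}_{g,f}$ is an isomorphism from the category of left $\Lambda$-modules to the category of left $\Lambda'$-modules, and it carries $\binom{X}{0}_{0,0}$ to $\binom{0}{X}_{0,0}$, carries $\binom{0}{Y}_{0,0}$ to $\binom{Y}{0}_{0,0}$, and carries the sequence displayed in (2) to the sequence displayed in (1) for $\Lambda'$; hence (1) for $\Lambda'$ delivers exactly the functorial isomorphism $\Hom_B(M\otimes_A X,Y)\cong\Ext^1_\Lambda(\binom{X}{0}_{0,0},\binom{0}{Y}_{0,0})$ of (2), with the stated description of $\Phi$. (Alternatively one simply repeats Steps above with the two coordinates interchanged.) The only step demanding genuine care is extracting $f''=0$ from the fact that $\binom{1}{0}$ is a $\Lambda$-homomorphism — that, rather than any homological subtlety, is what forces every extension into the normal form $\binom{X}{Y}_{0,g}$; everything else is routine bookkeeping with pullbacks and pushouts.
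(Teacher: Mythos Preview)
Your proof is correct and follows essentially the same approach as the paper's: classify extensions explicitly via the quadruple description, observe that the condition for $\binom{1}{0}$ to be a $\Lambda$-map forces $f''=0$, and verify additivity via the Baer sum. Your treatment is slightly more detailed than the paper's in two places---you spell out why an arbitrary extension normalizes to the displayed form (the paper just asserts it) and give an explicit argument for injectivity (the paper says ``Clearly'')---and you deduce (2) from (1) by passing to the swapped Morita ring $\Lambda'$, whereas the paper simply writes ``similarly proved''; these are cosmetic differences, not a different method.
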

\begin{proof}\ (1) \ Any equivalence class in ${\rm Ext}_{\Lambda}^{1}(\begin{pmatrix}\begin{smallmatrix}0\\Y\end{smallmatrix}\end{pmatrix}_{0, 0},\ \begin{pmatrix}\begin{smallmatrix}X\\0\end{smallmatrix}\end{pmatrix}_{0, 0})$ has to be of the form

\[
0\lxr \begin{pmatrix}\begin{smallmatrix}X\\0\end{smallmatrix}\end{pmatrix}_{0, 0}\stackrel {\binom{1}{0}}
\lxr \begin{pmatrix}\begin{smallmatrix}X\\Y\end{smallmatrix}\end{pmatrix}_{f, g}\stackrel {\binom{0}{1}}\lxr \begin{pmatrix}\begin{smallmatrix}0\\Y\end{smallmatrix}\end{pmatrix}_{0, 0}\lxr 0.
\]
Since $\binom{1}{0}: \begin{pmatrix}\begin{smallmatrix}X\\0\end{smallmatrix}\end{pmatrix}_{0, 0} \longrightarrow \begin{pmatrix}\begin{smallmatrix}X\\Y\end{smallmatrix}\end{pmatrix}_{f, g}$ is a left $\Lambda$-homomorphism, it follows that $f = 0$, and hence $\Phi$ is a surjective map. Clearly, $\Phi$ is an injective map.
One directly verifies that $\Phi$ is functorial in $X$ and $Y$.

\vskip5pt

By the Baer sum one sees that $\Phi (g_1 + g_2) = \Phi (g_1) + \Phi (g_2)$.  In fact, one can see this via pushout

\[\xymatrix{0\ar[r] & {\begin{pmatrix}\begin{smallmatrix} X\oplus X \\ 0 \end{smallmatrix}\end{pmatrix}_{0, 0}}\ar[d]_{\binom{(1, 1)}{0}}
\ar[r]^-{\begin{pmatrix}\begin{smallmatrix} 1 \\ 0 \end{smallmatrix}\end{pmatrix}} & {\begin{pmatrix}\begin{smallmatrix} X\oplus X \\ Y\oplus Y \end{smallmatrix}\end{pmatrix}_{0, g_1\oplus g_2}}
\ar[d]_-{\binom{(1,1)}{1}}\ar[r]^-{\begin{pmatrix}\begin{smallmatrix} 0 \\ 1 \end{smallmatrix}\end{pmatrix}} &   {\begin{pmatrix}\begin{smallmatrix} 0 \\ Y\oplus Y \end{smallmatrix}\end{pmatrix}_{0, 0}} \ar[r] \ar@{=}[d]& 0
\\ 0 \ar[r]  & {\begin{pmatrix}\begin{smallmatrix} X \\ 0 \end{smallmatrix}\end{pmatrix}_{0,0}} \ar[r]^-{\binom{1}{0}} & {\begin{pmatrix}\begin{smallmatrix} X\\ Y\oplus Y \end{smallmatrix}\end{pmatrix}_{0, (g_1, g_2)}} \ar[r]^-{\binom{0}{1}} &   {\begin{pmatrix}\begin{smallmatrix} 0 \\ Y\oplus Y \end{smallmatrix}\end{pmatrix}_{0, 0}} \ar[r] & 0.}\]

\noindent and pullback

\[\xymatrix{0\ar[r] & {\begin{pmatrix}\begin{smallmatrix} X \\ 0 \end{smallmatrix}\end{pmatrix}_{0, 0}}\ar@{=}[d]
\ar[r]^-{\begin{pmatrix}\begin{smallmatrix} 1 \\ 0 \end{smallmatrix}\end{pmatrix}} & {\begin{pmatrix}\begin{smallmatrix} X \\ Y \end{smallmatrix}\end{pmatrix}_{0, g_1+ g_2}}
\ar[d]_-{\binom{1}{\binom{1}{1}}}\ar[r]^-{\begin{pmatrix}\begin{smallmatrix} 0 \\ 1 \end{smallmatrix}\end{pmatrix}} &   {\begin{pmatrix}\begin{smallmatrix} 0 \\ Y \end{smallmatrix}\end{pmatrix}_{0, 0}} \ar[r] \ar[d]^-{\binom{0}{\binom{1}{1}}}& 0
\\ 0 \ar[r]  & {\begin{pmatrix}\begin{smallmatrix} X \\ 0 \end{smallmatrix}\end{pmatrix}_{0,0}} \ar[r]^-{\binom{1}{0}} & {\begin{pmatrix}\begin{smallmatrix} X\\ Y\oplus Y \end{smallmatrix}\end{pmatrix}_{0, (g_1, g_2)}} \ar[r]^-{\binom{0}{1}} &   {\begin{pmatrix}\begin{smallmatrix} 0 \\ Y\oplus Y \end{smallmatrix}\end{pmatrix}_{0, 0}} \ar[r] & 0.}\]

\vskip5pt

The assertion $(2)$ can be similarly proved. \end{proof}

\begin{lem}\label{iso}
Let $\Lambda=\begin{pmatrix}\begin{smallmatrix}
A & N \\
M & B \\
\end{smallmatrix}\end{pmatrix}$ be a Morita ring with $N\otimes_{B}M =0$, $_{A}X$ an $A$-module and $_{B}Y$ a $B$-module. Assume that  $N_{B}$ is a projective module. Then

\vskip 5pt

{\rm (1)} \  For any injective $A$-module $_{A}I$, there holds \ ${\rm Ext}_{\Lambda}^{i}(\begin{pmatrix}\begin{smallmatrix}0\\Y\end{smallmatrix}\end{pmatrix}_{0, 0},\ \begin{pmatrix}\begin{smallmatrix}I\\0\end{smallmatrix}\end{pmatrix}_{0, 0})=0,\ \forall\ i\geq 2$.

\vskip 5pt

{\rm (2)} \  There is an isomorphism \ ${\rm Ext}_{\Lambda}^{i+1}(\begin{pmatrix}\begin{smallmatrix}0\\Y\end{smallmatrix}\end{pmatrix}_{0, 0},\ \begin{pmatrix}\begin{smallmatrix}X\\0\end{smallmatrix}\end{pmatrix}_{0, 0})
\iso {\rm Ext}_{A}^{i}(N\otimes_{B}Y, X),\ \forall\ i\geq 0,$ which  is functorial both in $X$ and $Y$.
\vskip5pt Moreover, if \ $0 \longrightarrow  X \longrightarrow I \longrightarrow I^1 \stackrel {d^1}\longrightarrow \cdots $ is an injective resolution of $X$, then the isomorphisms  make the following diagram commute, where $\partial$ refers to the connecting maps$:$

\[\xymatrix@C=12pt@R=20pt{
{\rm Ext}_{\Lambda}^{1}({\begin{pmatrix}\begin{smallmatrix}0\\Y\end{smallmatrix}\end{pmatrix}_{0, 0}}, {\begin{pmatrix}\begin{smallmatrix}\Ker d^1 \\0\end{smallmatrix}\end{pmatrix}_{0, 0}})\ar[r]^-{\partial} \ar[d]_-{\iso}
&   {\rm Ext}_\Lambda^2({\begin{pmatrix}\begin{smallmatrix}0\\Y\end{smallmatrix}\end{pmatrix}_{0, 0}}, {\begin{pmatrix}\begin{smallmatrix}X\\0\end{smallmatrix}\end{pmatrix}_{0, 0}})\ar[d]^{\iso}
\\ {\rm Hom}_A(N\otimes_B Y, \Ker d^1)\ar[r]^-{\partial}   &    {\rm Ext}_A^1(N\otimes_BY, X). }
\]

\end{lem}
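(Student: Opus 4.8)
The plan is to build everything on the long exact sequence obtained from the short exact sequence of $\Lambda$-modules
\[
0\lxr \begin{pmatrix}\begin{smallmatrix}N\otimes_B Y\\Y\end{smallmatrix}\end{pmatrix}_{0,\,\id_{N\otimes_BY}}\lxr \begin{pmatrix}\begin{smallmatrix}N\otimes_B Y\\Y\end{smallmatrix}\end{pmatrix}_{0,\,0}\lxr \begin{pmatrix}\begin{smallmatrix}0\\Y\end{smallmatrix}\end{pmatrix}_{0,\,0}\lxr 0,
\]
which is exact by Lemma \ref{projectives}(3) (the $A$-components form $0\to N\otimes_BY\xrightarrow{\id} N\otimes_BY\to 0\to 0$ and the $B$-components form $0\to Y\xrightarrow{\id}Y\to Y\to 0$; wait — the $B$-row must be $0\to Y\xrightarrow{\id}Y\to 0$ and the quotient picks up $Y$ only in the $A$-slot: more carefully, the middle term has underlying pair $\binom{N\otimes_BY}{Y}$, the sub is the ``standard'' module $\binom{N\otimes_BY}{Y}_{0,\id}$, and the $A$-part of the quotient is $0$ while the $B$-part is $Y$). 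First I would apply $\Hom_\Lambda(-,\binom{I}{0}_{0,0})$ and $\Hom_\Lambda(-,\binom{X}{0}_{0,0})$ to this sequence. Since $N_B$ is projective, the left term $\binom{N\otimes_BY}{Y}_{0,\id}$ is projective as a left $\Lambda$-module by Lemma \ref{projectives}(1), so $\Ext^{i}_\Lambda$ out of it vanishes for $i\ge 1$; combining this with Lemma \ref{Extiso}(2), which identifies $\Ext^i_\Lambda(\binom{N\otimes_BY}{Y}_{0,0},\ \binom{X'}{Y'}_{f',g'})\iso\Ext^i_B(Y,Y')$, and noting the target modules $\binom{I}{0}_{0,0}$ and $\binom{X}{0}_{0,0}$ have $B$-component $0$, gives that $\Ext^i_\Lambda$ of the \emph{middle} term into these targets also vanishes for $i\ge1$.

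With these vanishings in hand, part (1) is immediate: the long exact sequence forces $\Ext^{i}_\Lambda(\binom{0}{Y}_{0,0},\binom{I}{0}_{0,0})\iso\Ext^{i-1}_\Lambda(\binom{N\otimes_BY}{Y}_{0,0},\binom{I}{0}_{0,0})=0$ for $i\ge2$, using that $\binom{I}{0}_{0,0}$ is injective — no, one does not even need injectivity of $I$ here; I will just use $N_B$ projective and Lemma \ref{Extiso}(2) which shows the relevant $\Ext^{i-1}_\Lambda$ equals $\Ext^{i-1}_B(Y,0)=0$. Actually injectivity of $I$ is needed to control the term $\Ext^1$ (the boundary map out of $\Hom$), so I will keep it: the connecting map $\Hom_\Lambda(\binom{N\otimes_BY}{Y}_{0,0},\binom{I}{0}_{0,0})\to\Ext^1_\Lambda(\binom{0}{Y}_{0,0},\binom{I}{0}_{0,0})$ has target that we must show is $0$; by Lemma \ref{baersum}(1) this $\Ext^1$ is $\Hom_A(N\otimes_BY,I)$, and surjectivity of the connecting map follows from comparing with the evident description of $\Hom_\Lambda$ on the two middle terms. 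For part (2), the same long exact sequence gives, for $i\ge1$, an isomorphism $\Ext^{i+1}_\Lambda(\binom{0}{Y}_{0,0},\binom{X}{0}_{0,0})\iso \Ext^{i}_\Lambda(\binom{N\otimes_BY}{Y}_{0,0},\binom{X}{0}_{0,0})$ coming from the vanishing of the two neighbouring $\Ext$-groups of the middle term. Applying Lemma \ref{Extiso}(2) again (it only needs $N_B$ projective), the right-hand side is $\Ext^i_\Lambda(\binom{N\otimes_BY}{Y}_{0,\id},\binom{X}{0}_{0,0})$ — no, Lemma \ref{Extiso}(2) is stated for the module $\binom{N\otimes_BY}{Y}_{0,\id}$, so I would instead need the dual fact for $\binom{N\otimes_BY}{Y}_{0,0}$; the clean route is rather: $\binom{N\otimes_BY}{Y}_{0,\id}$ is \emph{projective}, so the surjection $\binom{N\otimes_BY}{Y}_{0,\id}\to\binom{N\otimes_BY}{Y}_{0,0}$ is... not the map I wrote. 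Let me instead take the projective presentation $0\to\binom{N\otimes_BY}{Y}_{0,\id}\to ?\to\binom{0}{Y}_{0,0}\to0$ directly: since $\binom{0}{Y}_{0,0}=\binom{N\otimes_BY}{Y}_{0,\id}/\binom{N\otimes_BY}{0}_{0,0}$ is false too. The correct short exact sequence is
\[
0\lxr \begin{pmatrix}\begin{smallmatrix}N\otimes_BY\\0\end{smallmatrix}\end{pmatrix}_{0,0}\lxr \begin{pmatrix}\begin{smallmatrix}N\otimes_BY\\Y\end{smallmatrix}\end{pmatrix}_{0,\id}\lxr \begin{pmatrix}\begin{smallmatrix}0\\Y\end{smallmatrix}\end{pmatrix}_{0,0}\lxr0,
\]
whose middle term is projective. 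This is what I will use: it is exact by Lemma \ref{projectives}(3), and it realizes $\binom{N\otimes_BY}{0}_{0,0}$ as a first syzygy of $\binom{0}{Y}_{0,0}$. Dimension shift then gives $\Ext^{i+1}_\Lambda(\binom{0}{Y}_{0,0},\binom{X}{0}_{0,0})\iso\Ext^{i}_\Lambda(\binom{N\otimes_BY}{0}_{0,0},\binom{X}{0}_{0,0})$ for $i\ge1$, and by Lemma \ref{Extiso}(1) (which needs $M_A$ projective — a hypothesis \emph{not} present!) this would be $\Ext^i_A(N\otimes_BY,X)$; since $M_A$ projectivity is not assumed, I will instead prove directly that $\Ext^i_\Lambda(\binom{N\otimes_BY}{0}_{0,0},\binom{X}{0}_{0,0})\iso\Ext^i_A(N\otimes_BY,X)$ by taking a projective resolution of $X$ over $A$, tensoring up via the exact functor $X\mapsto\binom{X}{M\otimes_AX}_{\id,0}$ (wait, that needs $M\otimes_AN=0$, which gives $M\otimes_A(N\otimes_BY)=0$, so actually $\binom{N\otimes_BY}{0}_{0,0}=\binom{N\otimes_BY}{M\otimes_A(N\otimes_BY)}_{\id,0}$ \emph{is} projective when $N\otimes_BY$ is projective over $A$, but in general it is the image of a projective $A$-module under this functor), and computing $\Hom_\Lambda$ into $\binom{X}{0}_{0,0}$ as $\Hom_A(-,X)$ on the $A$-slot. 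For the $i=0$ case of (2), the same long exact sequence plus Lemma \ref{baersum}(1) gives $\Ext^1_\Lambda(\binom{0}{Y}_{0,0},\binom{X}{0}_{0,0})\iso\Hom_A(N\otimes_BY,X)$, matching the claimed formula at $i=0$.

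Finally, for the commutative-diagram (naturality with the connecting maps of an injective coresolution $0\to X\to I\to I^1\to\cdots$): I would argue that \emph{all} the isomorphisms constructed above are induced by a single morphism of functors, hence commute with any connecting homomorphism. Concretely, fix the short exact sequence $0\to\binom{N\otimes_BY}{0}_{0,0}\to\binom{N\otimes_BY}{Y}_{0,\id}\to\binom{0}{Y}_{0,0}\to0$ with projective middle term; the induced dimension-shift isomorphism $\Ext^{j+1}_\Lambda(\binom{0}{Y}_{0,0},-)\iso\Ext^{j}_\Lambda(\binom{N\otimes_BY}{0}_{0,0},-)$ is a natural isomorphism of $\delta$-functors in the second variable (for $j\ge1$, and for $j=0$ after the $\Hom$-term correction handled above), and the identification $\Ext^j_\Lambda(\binom{N\otimes_BY}{0}_{0,0},\binom{-}{0}_{0,0})\iso\Ext^j_A(N\otimes_BY,-)$ is likewise natural in the $A$-module. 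Composing, we get a natural isomorphism of $\delta$-functors on $A\modd$, and such an isomorphism automatically intertwines the connecting maps $\partial$ associated to $0\to X\to I\to\Ker d^1\to0$ (rather, to $0\to\Ker d^1\to I^1\to\Ker d^2\to0$ and the relation between $X$ and $\Ker d^1$ via $0\to X\to I\to\Ker d^1\to 0$) — which is exactly the asserted commuting square. \textbf{The main obstacle} I anticipate is the absence of the hypothesis ``$M_A$ projective'': Lemma \ref{Extiso}(1) cannot be invoked directly, so the identification $\Ext^i_\Lambda(\binom{N\otimes_BY}{0}_{0,0},\binom{X}{0}_{0,0})\iso\Ext^i_A(N\otimes_BY,X)$ must be proved by hand using $M\otimes_AN=0$ (to compute the functor $X\mapsto\binom{X}{M\otimes_AX}_{\id,0}$ applied to the $A$-projectives appearing in a resolution of $N\otimes_BY$) and the adjunction-style computation of $\Hom_\Lambda(\binom{P}{M\otimes_AP}_{\id,0},\binom{X}{0}_{0,0})\iso\Hom_A(P,X)$; keeping track of this carefully, and making sure the resulting isomorphism is still natural enough to give the commuting square, is the delicate point.
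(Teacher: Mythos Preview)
Your approach contains two genuine gaps.

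First, the claim that the middle term $\begin{pmatrix}\begin{smallmatrix}N\otimes_BY\\Y\end{smallmatrix}\end{pmatrix}_{0,\id}$ of your short exact sequence is \emph{projective} is false: by Lemma~\ref{projectives}(1) it is projective only when $_BY$ is projective. This is not fatal, since what you actually need---that $\Ext_\Lambda^{i}\bigl(\begin{pmatrix}\begin{smallmatrix}N\otimes_BY\\Y\end{smallmatrix}\end{pmatrix}_{0,\id},\,\begin{pmatrix}\begin{smallmatrix}X\\0\end{smallmatrix}\end{pmatrix}_{0,0}\bigr)=0$ for all $i\ge 0$---does follow from Lemma~\ref{Extiso}(2), as the $B$-component of the target is zero. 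So your dimension shift $\Ext^{i+1}_\Lambda\bigl(\begin{pmatrix}\begin{smallmatrix}0\\Y\end{smallmatrix}\end{pmatrix},\begin{pmatrix}\begin{smallmatrix}X\\0\end{smallmatrix}\end{pmatrix}\bigr)\cong\Ext^{i}_\Lambda\bigl(\begin{pmatrix}\begin{smallmatrix}N\otimes_BY\\0\end{smallmatrix}\end{pmatrix},\begin{pmatrix}\begin{smallmatrix}X\\0\end{smallmatrix}\end{pmatrix}\bigr)$ survives, but \emph{not} as a syzygy computation.

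Second, and more seriously, your identification $\Ext^{i}_\Lambda\bigl(\begin{pmatrix}\begin{smallmatrix}N\otimes_BY\\0\end{smallmatrix}\end{pmatrix}_{0,0},\begin{pmatrix}\begin{smallmatrix}X\\0\end{smallmatrix}\end{pmatrix}_{0,0}\bigr)\cong\Ext^{i}_A(N\otimes_BY,X)$ does not go through. Both of your proposed fixes invoke hypotheses that are \emph{not} present: Lemma~\ref{Extiso}(1) needs $M_A$ projective, and your ``by hand'' argument needs $M\otimes_AN=0$ (to identify $\begin{pmatrix}\begin{smallmatrix}N\otimes_BY\\0\end{smallmatrix}\end{pmatrix}_{0,0}$ with $\begin{pmatrix}\begin{smallmatrix}N\otimes_BY\\M\otimes_A(N\otimes_BY)\end{smallmatrix}\end{pmatrix}_{\id,0}$) and further needs the functor $Z\mapsto\begin{pmatrix}\begin{smallmatrix}Z\\M\otimes_AZ\end{smallmatrix}\end{pmatrix}_{\id,0}$ to be exact, i.e.\ $M_A$ flat. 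The lemma only assumes $N\otimes_BM=0$ and $N_B$ projective. Working in the first variable, there is no evident way to control a projective resolution of $\begin{pmatrix}\begin{smallmatrix}N\otimes_BY\\0\end{smallmatrix}\end{pmatrix}_{0,0}$ under these hypotheses alone.

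The paper avoids this by working entirely in the \emph{second} variable. From the exact sequence $0\to\begin{pmatrix}\begin{smallmatrix}I\\0\end{smallmatrix}\end{pmatrix}_{0,0}\to\begin{pmatrix}\begin{smallmatrix}I\\\Hom_A(N,I)\end{smallmatrix}\end{pmatrix}_{0,e_I}\to\begin{pmatrix}\begin{smallmatrix}0\\\Hom_A(N,I)\end{smallmatrix}\end{pmatrix}_{0,0}\to 0$, the middle term is injective by Lemma~\ref{projectives}(2), and the right term is injective too because $N_B$ projective makes $\Hom_A(N,I)$ an injective $B$-module and $N\otimes_BM=0$ gives $\Hom_B(M,\Hom_A(N,I))=0$. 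Thus $\begin{pmatrix}\begin{smallmatrix}I\\0\end{smallmatrix}\end{pmatrix}_{0,0}$ has injective dimension $\le 1$, which yields part~(1) directly and then part~(2) by dimension shift along an injective resolution of $X$, combined with Lemma~\ref{baersum}(1) for the base case. This uses exactly the stated hypotheses and nothing more.
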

\begin{proof}\ (1) \ By the exact sequence of left $\Lambda$-modules
\[
0\lxr \begin{pmatrix}\begin{smallmatrix}I\\0\end{smallmatrix}\end{pmatrix}_{0, 0}\lxr \begin{pmatrix}\begin{smallmatrix}I\\{\rm Hom}_{A}(N, I)\end{smallmatrix}\end{pmatrix}_{0, e_{I}}\lxr \begin{pmatrix}\begin{smallmatrix}0\\{\rm Hom}_{A}(N, I)\end{smallmatrix}\end{pmatrix}_{0, 0}\lxr 0
\]
one has an exact sequence for $i\ge 1$
\[
{\rm Ext}_{\Lambda}^{i}(\begin{pmatrix}\begin{smallmatrix}0\\Y\end{smallmatrix}\end{pmatrix}_{0, 0},\ \begin{pmatrix}\begin{smallmatrix}I\\{\rm Hom}_{A}(N, I)\end{smallmatrix}\end{pmatrix}_{0, e_{I}})\lxr {\rm Ext}_{\Lambda}^{i}(\begin{pmatrix}\begin{smallmatrix}0\\Y\end{smallmatrix}\end{pmatrix}_{0, 0},\ \begin{pmatrix}\begin{smallmatrix}0\\{\rm Hom}_{A}(N, I)\end{smallmatrix}\end{pmatrix}_{0, 0})
\]
\[
\lxr {\rm Ext}_{\Lambda}^{i+1}(\begin{pmatrix}\begin{smallmatrix}0\\Y\end{smallmatrix}\end{pmatrix}_{0, 0},\ \begin{pmatrix}\begin{smallmatrix}I\\0\end{smallmatrix}\end{pmatrix}_{0, 0})\lxr {\rm Ext}_{\Lambda}^{i+1}(\begin{pmatrix}\begin{smallmatrix}0\\Y\end{smallmatrix}\end{pmatrix}_{0, 0},\ \begin{pmatrix}\begin{smallmatrix}I\\{\rm Hom}_{A}(N, I)\end{smallmatrix}\end{pmatrix}_{0, e_{I}}).
\]
Since $\begin{pmatrix}\begin{smallmatrix}I\\{\rm Hom}_{A}(N, I)\end{smallmatrix}\end{pmatrix}_{0, e_{I}}$ is an injective left $\Lambda$-module,
it follows that
$${\rm Ext}_{\Lambda}^{i}(\begin{pmatrix}\begin{smallmatrix}0\\Y\end{smallmatrix}\end{pmatrix}_{0, 0},\ \begin{pmatrix}\begin{smallmatrix}0\\{\rm Hom}_{A}(N, I)\end{smallmatrix}\end{pmatrix}_{0, 0})
\cong {\rm Ext}_{\Lambda}^{i+1}(\begin{pmatrix}\begin{smallmatrix}0\\Y\end{smallmatrix}\end{pmatrix}_{0, 0}, \ \begin{pmatrix}\begin{smallmatrix}I\\0\end{smallmatrix}\end{pmatrix}_{0, 0}).$$
Since $N_{B}$ is projective, one sees from the well-known isomorphism
\[
{\rm Ext}_{B}^{n}(Y,\ {\rm Hom}_{A}(N, I))\iso {\rm Hom}_{A}({\rm Tor}_{n}^{B}(N, Y),\ I), \ \forall \ Y\in B\modu, \ \forall \ n\geq 1
\]
 that ${\rm Hom}_{A}(N, I)$ is an injective left $B$-module. Since $N\otimes_{B}M=0$, it follows that ${\rm Hom}_{B}(M, {\rm Hom}_{A}(N, I))=0$, and hence
\[
\begin{pmatrix}\begin{smallmatrix}0\\{\rm Hom}_{A}(N, I)\end{smallmatrix}\end{pmatrix}_{0, 0}=\begin{pmatrix}\begin{smallmatrix}{\rm Hom}_{B}(M, {\rm Hom}_{A}(N, I))\\{\rm Hom}_{A}(N, I)\end{smallmatrix}\end{pmatrix}_{0, 0}\]
is an injective left $\Lambda$-module. Therefore
\[
{\rm Ext}_{\Lambda}^{i+1}(\begin{pmatrix}\begin{smallmatrix}0\\Y\end{smallmatrix}\end{pmatrix}_{0, 0},\ \begin{pmatrix}\begin{smallmatrix}I\\0\end{smallmatrix}\end{pmatrix}_{0, 0})\iso {\rm Ext}_{\Lambda}^{i}(\begin{pmatrix}\begin{smallmatrix}0\\Y\end{smallmatrix}\end{pmatrix}_{0, 0},\ \begin{pmatrix}\begin{smallmatrix}0\\{\rm Hom}_{A}(N, I)\end{smallmatrix}\end{pmatrix}_{0, 0}) = 0, \ \forall \ i\geq 1. \]

\vskip 5pt

(2) \ If $i=0$ then the assertion is Lemma \ref{baersum}(1). For $i =1$, let $0\lxr X\lxr I^{0}\stackrel{d^{0}}{\lxr} I^{1}\stackrel{d^{1}}{\lxr} \cdots$
be an injective resolution of $X$. For $m\geq 0$, the exact sequence $0\lxr \Ker{d^{m}}\lxr I^{m}\lxr \Ker{d^{m+1}}\lxr 0$ induces an exact sequence of left $\Lambda$-modules:
\[
0\lxr \begin{pmatrix}\begin{smallmatrix}\Ker{d^{m}}\\0\end{smallmatrix}\end{pmatrix}_{0, 0}\lxr \begin{pmatrix}\begin{smallmatrix}I^{m}\\0\end{smallmatrix}\end{pmatrix}_{0, 0}\lxr \begin{pmatrix}\begin{smallmatrix}\Ker{d^{m+1}}\\0\end{smallmatrix}\end{pmatrix}_{0, 0}\lxr 0.
\]
By Lemma \ref{baersum}(1) and the assertion (1) one gets a commutative diagram with exact rows:
{\footnotesize
\[\xymatrix@C=12pt@R=20pt{
{\rm Ext}_{\Lambda}^{1}({\begin{pmatrix}\begin{smallmatrix}0\\Y\end{smallmatrix}\end{pmatrix}_{0, 0}}, {\begin{pmatrix}\begin{smallmatrix}I^{m}\\0\end{smallmatrix}\end{pmatrix}_{0, 0}})\ar[r] \ar[d]_-{\iso}   &   {\rm Ext}_{\Lambda}^{1}({\begin{pmatrix}\begin{smallmatrix}0\\Y\end{smallmatrix}\end{pmatrix}_{0, 0}}, {\begin{pmatrix}\begin{smallmatrix}\Ker{d^{m+1}}\\0\end{smallmatrix}\end{pmatrix}_{0, 0}})\ar[r]^-{\partial} \ar[d]_-{\iso}  &   {\rm Ext}_{\Lambda}^{2}({\begin{pmatrix}\begin{smallmatrix}0\\Y\end{smallmatrix}\end{pmatrix}_{0, 0}}, {\begin{pmatrix}\begin{smallmatrix}\Ker{d^{m}}\\0\end{smallmatrix}\end{pmatrix}_{0, 0}})\ar[r]\ar@{.>}[d]_-{\iso}  &  0 \\
{\rm Hom}_{A}(N\otimes_{B}Y, I^{m})\ar[r]^{}   &   {\rm Hom}_{A}(N\otimes_{B}Y, \Ker{d^{m+1}})\ar[r]^-{\partial}   &    {\rm Ext}_{A}^{1}(N\otimes_{B}Y, \Ker{d^{m}})\ar[r]  & 0}
\]
}
where both $\partial$'s are the connecting maps. Thus
\[
{\rm Ext}_{\Lambda}^{2}({\begin{pmatrix}\begin{smallmatrix}0\\Y\end{smallmatrix}\end{pmatrix}_{0, 0}},\ {\begin{pmatrix}\begin{smallmatrix}\Ker{d^{m}}\\0\end{smallmatrix}\end{pmatrix}_{0, 0}})\iso {\rm Ext}_{A}^{1}(N\otimes_{B}Y,\ \Ker{d^{m}}),\ \forall\ m\geq 0.
\]
Taking $m = 0$ one gets ${\rm Ext}_{\Lambda}^{2}({\begin{pmatrix}\begin{smallmatrix}0\\Y\end{smallmatrix}\end{pmatrix}_{0, 0}},\ {\begin{pmatrix}\begin{smallmatrix}X\\0\end{smallmatrix}\end{pmatrix}_{0, 0}})\iso {\rm Ext}_{A}^{1}(N\otimes_{B}Y,\ X)$
and the commutative diagram

\[\xymatrix@C=12pt@R=20pt{
{\rm Ext}_{\Lambda}^{1}({\begin{pmatrix}\begin{smallmatrix}0\\Y\end{smallmatrix}\end{pmatrix}_{0, 0}}, {\begin{pmatrix}\begin{smallmatrix}\Ker d^1 \\0\end{smallmatrix}\end{pmatrix}_{0, 0}})\ar[r]^-{\partial} \ar[d]_-{\iso}
&   {\rm Ext}_\Lambda^2({\begin{pmatrix}\begin{smallmatrix}0\\Y\end{smallmatrix}\end{pmatrix}_{0, 0}}, {\begin{pmatrix}\begin{smallmatrix}X\\0\end{smallmatrix}\end{pmatrix}_{0, 0}})\ar[d]^{\iso}
\\ {\rm Hom}_A(N\otimes_B Y, \Ker d^1)\ar[r]^-{\partial}   &    {\rm Ext}_A^1(N\otimes_BY, X)}
\]
This proves the assertion holds for $i=1$.

\vskip 5pt

For $i\geq 2$, from the exact sequence of $\Lambda$-modules
\[
{\rm Ext}_{\Lambda}^{i}({\begin{pmatrix}\begin{smallmatrix}0\\Y\end{smallmatrix}\end{pmatrix}_{0, 0}}, {\begin{pmatrix}\begin{smallmatrix}I^{m}\\0\end{smallmatrix}\end{pmatrix}_{0, 0}})\lxr {\rm Ext}_{\Lambda}^{i}({\begin{pmatrix}\begin{smallmatrix}0\\Y\end{smallmatrix}\end{pmatrix}_{0, 0}}, {\begin{pmatrix}\begin{smallmatrix}\Ker{d^{m+1}}\\0\end{smallmatrix}\end{pmatrix}_{0, 0}})
\]
\[
\lxr {\rm Ext}_{\Lambda}^{i+1}({\begin{pmatrix}\begin{smallmatrix}0\\Y\end{smallmatrix}\end{pmatrix}_{0, 0}}, {\begin{pmatrix}\begin{smallmatrix}\Ker{d^{m}}\\0\end{smallmatrix}\end{pmatrix}_{0, 0}})\lxr {\rm Ext}_{\Lambda}^{i+1}({\begin{pmatrix}\begin{smallmatrix}0\\Y\end{smallmatrix}\end{pmatrix}_{0, 0}}, {\begin{pmatrix}\begin{smallmatrix}I^{m}\\0\end{smallmatrix}\end{pmatrix}_{0, 0}})\]
one sees from (1) that
\[
{\rm Ext}_{\Lambda}^{i+1}({\begin{pmatrix}\begin{smallmatrix}0\\Y\end{smallmatrix}\end{pmatrix}_{0, 0}}, {\begin{pmatrix}\begin{smallmatrix}\Ker{d^{m}}\\0\end{smallmatrix}\end{pmatrix}_{0, 0}})\iso {\rm Ext}_{\Lambda}^{i}({\begin{pmatrix}\begin{smallmatrix}0\\Y\end{smallmatrix}\end{pmatrix}_{0, 0}}, {\begin{pmatrix}\begin{smallmatrix}\Ker{d^{m+1}}\\0\end{smallmatrix}\end{pmatrix}_{0, 0}}),\ \forall\ m\geq 0.
\]
It follows that for $i\geq 2$ there holds
\[
\begin{aligned}
{\rm Ext}_{\Lambda}^{i+1}({\begin{pmatrix}\begin{smallmatrix}0\\Y\end{smallmatrix}\end{pmatrix}_{0, 0}}, {\begin{pmatrix}\begin{smallmatrix}X\\0\end{smallmatrix}\end{pmatrix}_{0, 0}}) & \iso {\rm Ext}_{\Lambda}^{i}({\begin{pmatrix}\begin{smallmatrix}0\\Y\end{smallmatrix}\end{pmatrix}_{0, 0}}, {\begin{pmatrix}\begin{smallmatrix}\Ker{d^{1}}\\0\end{smallmatrix}\end{pmatrix}_{0, 0}})\\
&\iso \cdots \iso {\rm Ext}_{\Lambda}^{2}({\begin{pmatrix}\begin{smallmatrix}0\\Y\end{smallmatrix}\end{pmatrix}_{0, 0}}, {\begin{pmatrix}\begin{smallmatrix}\Ker{d^{i-1}}\\0\end{smallmatrix}\end{pmatrix}_{0, 0}})\\
&\iso {\rm Ext}_{A}^{1}(N\otimes_{B}Y,\ \Ker{d^{i-1}}).
\end{aligned}
\]
On the other side, by dimension shift one gets
\[
{\rm Ext}_{A}^{i}(N\otimes_{B}Y,\ X)\iso {\rm Ext}_{A}^{1}(N\otimes_{B}Y,\ \Ker{d^{i-1}}),\ \mbox{for}\ i\geq 1.
\]
Hence
\[
{\rm Ext}_{\Lambda}^{i+1}({\begin{pmatrix}\begin{smallmatrix}0\\Y\end{smallmatrix}\end{pmatrix}_{0, 0}}, {\begin{pmatrix}\begin{smallmatrix}X\\0\end{smallmatrix}\end{pmatrix}_{0, 0}})\iso {\rm Ext}_{A}^{i}(N\otimes_{B}Y,\ X),\ \ \mbox{for}\ i\geq 2.
\]
This isomorphism is functorial in $X$ and $Y$, since so is every step. \end{proof}

\vskip 5pt

Similarly, one can prove the following fact, by using Lemma \ref{baersum}(2).

\begin{lem}\label{iso2}
Let $\Lambda=\begin{pmatrix}\begin{smallmatrix}
A & N \\
M & B \\
\end{smallmatrix}\end{pmatrix}$ be a Morita ring with $M\otimes_{A}N=0$, $_{A}X$ an $A$-module and $_{B}Y$ a $B$-module. Assume that  $M_{A}$ is a projective module.

\vskip 5pt

{\rm (1)} \  For any injective $B$-module $_{B}J$,  there holds \ ${\rm Ext}_{\Lambda}^{i}(\begin{pmatrix}\begin{smallmatrix}X\\0\end{smallmatrix}\end{pmatrix}_{0, 0},\ \begin{pmatrix}\begin{smallmatrix}0\\J\end{smallmatrix}\end{pmatrix}_{0, 0})=0,\ \forall\ i\geq2$.

\vskip 5pt

{\rm (2)} \  There is an isomorphism \ ${\rm Ext}_{\Lambda}^{i+1}(\begin{pmatrix}\begin{smallmatrix}X\\0\end{smallmatrix}\end{pmatrix}_{0, 0},\ \begin{pmatrix}\begin{smallmatrix}0\\Y\end{smallmatrix}\end{pmatrix}_{0, 0})
\iso {\rm Ext}_{B}^{i}(M\otimes_{A}X, Y),\ \forall\ i\geq 0,$  which is functorial both in $X$ and $Y$.
\vskip5pt Moreover, if \ $0 \longrightarrow  Y \longrightarrow J \longrightarrow J^1 \stackrel {d^1}\longrightarrow \cdots $ is an injective resolution of $Y$, then the isomorphisms  make the following diagram commutes,
where $\partial$ refers to the connecting maps$:$

\[\xymatrix@C=12pt@R=20pt{
{\rm Ext}_{\Lambda}^{1}({\begin{pmatrix}\begin{smallmatrix}X\\0\end{smallmatrix}\end{pmatrix}_{0, 0}}, {\begin{pmatrix}\begin{smallmatrix}0\\ \Ker d^1 \end{smallmatrix}\end{pmatrix}_{0, 0}})\ar[r]^-{\partial} \ar[d]_-{\iso}
&   {\rm Ext}_\Lambda^2({\begin{pmatrix}\begin{smallmatrix}X\\ 0\end{smallmatrix}\end{pmatrix}_{0, 0}}, {\begin{pmatrix}\begin{smallmatrix}0 \\ Y\end{smallmatrix}\end{pmatrix}_{0, 0}})\ar[d]^{\iso}
\\ {\rm Hom}_B(M\otimes_A X, \Ker d^1)\ar[r]^-{\partial}   &    {\rm Ext}_B^1(M\otimes_A X, Y). }
\]\end{lem}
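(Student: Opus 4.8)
The plan is to run the argument of Lemma~\ref{iso} with the two corners of $\Lambda$ interchanged, so that the hypotheses used there ($N\otimes_B M=0$ and $N_B$ projective) are replaced by the present ones ($M\otimes_A N=0$ and $M_A$ projective). Concretely, I would imitate the three-step structure of that proof: first establish the vanishing in part (1); then deduce the cases $i=0$ and $i=1$ of part (2) together with the naturality square; then reduce the higher cases $i\geq 2$ to $i=1$ by dimension shift.

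For part (1), I would begin from the short exact sequence of left $\Lambda$-modules
\[
0\lxr \begin{pmatrix}\begin{smallmatrix}0\\ J\end{smallmatrix}\end{pmatrix}_{0,0}\lxr \begin{pmatrix}\begin{smallmatrix}{\rm Hom}_B(M,J)\\ J\end{smallmatrix}\end{pmatrix}_{\epsilon_{J},\, 0}\lxr \begin{pmatrix}\begin{smallmatrix}{\rm Hom}_B(M,J)\\ 0\end{smallmatrix}\end{pmatrix}_{0,0}\lxr 0,
\]
whose middle term is an injective $\Lambda$-module by Lemma~\ref{projectives}(2). Applying ${\rm Hom}_\Lambda(\begin{pmatrix}\begin{smallmatrix}X\\ 0\end{smallmatrix}\end{pmatrix}_{0,0},-)$ gives a dimension-shift isomorphism
\[
{\rm Ext}_\Lambda^{i+1}(\begin{pmatrix}\begin{smallmatrix}X\\ 0\end{smallmatrix}\end{pmatrix}_{0,0},\ \begin{pmatrix}\begin{smallmatrix}0\\ J\end{smallmatrix}\end{pmatrix}_{0,0})\ \iso\ {\rm Ext}_\Lambda^{i}(\begin{pmatrix}\begin{smallmatrix}X\\ 0\end{smallmatrix}\end{pmatrix}_{0,0},\ \begin{pmatrix}\begin{smallmatrix}{\rm Hom}_B(M,J)\\ 0\end{smallmatrix}\end{pmatrix}_{0,0}),\quad i\geq 1,
\]
so it suffices to check that $\begin{pmatrix}\begin{smallmatrix}{\rm Hom}_B(M,J)\\ 0\end{smallmatrix}\end{pmatrix}_{0,0}$ is injective. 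This is where the hypotheses enter: since $M_A$ is projective, the natural isomorphism ${\rm Ext}_A^n(X',{\rm Hom}_B(M,J))\iso {\rm Hom}_B({\rm Tor}_n^A(M,X'),J)$ (valid for $J$ injective, all left $A$-modules $X'$ and all $n\geq 1$) shows that ${\rm Hom}_B(M,J)$ is an injective left $A$-module; and since $M\otimes_A N=0$ one gets ${\rm Hom}_A(N,{\rm Hom}_B(M,J))\iso{\rm Hom}_B(M\otimes_A N,J)=0$, so by Lemma~\ref{projectives}(2) the module $\begin{pmatrix}\begin{smallmatrix}{\rm Hom}_B(M,J)\\ 0\end{smallmatrix}\end{pmatrix}_{0,0}=\begin{pmatrix}\begin{smallmatrix}{\rm Hom}_B(M,J)\\ {\rm Hom}_A(N,{\rm Hom}_B(M,J))\end{smallmatrix}\end{pmatrix}_{0,\, \epsilon'}$ is indeed injective. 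Hence the right-hand side above vanishes for $i\geq 1$, which is exactly (1).

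For part (2) the case $i=0$ is Lemma~\ref{baersum}(2). For $i=1$, I would fix an injective resolution $0\to Y\to J^0\xrightarrow{d^0} J^1\xrightarrow{d^1}\cdots$ and, for each $m\geq 0$, apply ${\rm Hom}_\Lambda(\begin{pmatrix}\begin{smallmatrix}X\\ 0\end{smallmatrix}\end{pmatrix}_{0,0},-)$ to the short exact sequence $0\to\begin{pmatrix}\begin{smallmatrix}0\\ \Ker d^m\end{smallmatrix}\end{pmatrix}_{0,0}\to\begin{pmatrix}\begin{smallmatrix}0\\ J^m\end{smallmatrix}\end{pmatrix}_{0,0}\to\begin{pmatrix}\begin{smallmatrix}0\\ \Ker d^{m+1}\end{smallmatrix}\end{pmatrix}_{0,0}\to 0$; using part (1) to kill the ${\rm Ext}^{\geq 2}$ terms coming from $\begin{pmatrix}\begin{smallmatrix}0\\ J^m\end{smallmatrix}\end{pmatrix}_{0,0}$ and the functoriality of the isomorphism in Lemma~\ref{baersum}(2), this produces a commutative ladder giving ${\rm Ext}_\Lambda^2(\begin{pmatrix}\begin{smallmatrix}X\\ 0\end{smallmatrix}\end{pmatrix}_{0,0},\begin{pmatrix}\begin{smallmatrix}0\\ \Ker d^m\end{smallmatrix}\end{pmatrix}_{0,0})\iso {\rm Ext}_B^1(M\otimes_A X,\Ker d^m)$, naturally and compatibly with the connecting maps; taking $m=0$ settles $i=1$ and the displayed square. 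For $i\geq 2$, iterating the dimension-shift isomorphisms coming from these same sequences gives ${\rm Ext}_\Lambda^{i+1}(\begin{pmatrix}\begin{smallmatrix}X\\ 0\end{smallmatrix}\end{pmatrix}_{0,0},\begin{pmatrix}\begin{smallmatrix}0\\ Y\end{smallmatrix}\end{pmatrix}_{0,0})\iso{\rm Ext}_\Lambda^2(\begin{pmatrix}\begin{smallmatrix}X\\ 0\end{smallmatrix}\end{pmatrix}_{0,0},\begin{pmatrix}\begin{smallmatrix}0\\ \Ker d^{i-1}\end{smallmatrix}\end{pmatrix}_{0,0})\iso {\rm Ext}_B^1(M\otimes_A X,\Ker d^{i-1})$, while ${\rm Ext}_B^i(M\otimes_A X,Y)\iso{\rm Ext}_B^1(M\otimes_A X,\Ker d^{i-1})$ by the usual dimension shift; combining these yields the asserted isomorphism, natural in $X$ and $Y$ since every step is.

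The work here is bookkeeping rather than conceptual. The points that need care are: that $\begin{pmatrix}\begin{smallmatrix}0\\ J\end{smallmatrix}\end{pmatrix}_{0,0}$ really is a $\Lambda$-submodule of the chosen injective with the stated cokernel; that the identification of $\begin{pmatrix}\begin{smallmatrix}{\rm Hom}_B(M,J)\\ 0\end{smallmatrix}\end{pmatrix}_{0,0}$ with an injective of Lemma~\ref{projectives}(2) genuinely uses \emph{both} hypotheses ($M_A$ projective and $M\otimes_A N=0$); and that the chain of isomorphisms assembled in (2) is natural, so that the square with the connecting maps commutes --- which is precisely why the functoriality clause in Lemma~\ref{baersum}(2) is needed.
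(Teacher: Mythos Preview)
Your proposal is correct and is exactly what the paper intends: the paper omits the proof entirely and simply writes ``Similarly, one can prove the following fact, by using Lemma~\ref{baersum}(2),'' so you have faithfully carried out the corner-swapped version of the proof of Lemma~\ref{iso}, using the injective $\begin{pmatrix}\begin{smallmatrix}{\rm Hom}_B(M,J)\\ J\end{smallmatrix}\end{pmatrix}_{\epsilon_J,0}$ in place of $\begin{pmatrix}\begin{smallmatrix}I\\ {\rm Hom}_A(N,I)\end{smallmatrix}\end{pmatrix}_{0,e_I}$, the hypothesis $M\otimes_A N=0$ in place of $N\otimes_B M=0$, and $M_A$ projective in place of $N_B$ projective. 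All the care points you flag (the submodule inclusion, the identification of the quotient as an injective using both hypotheses, and the functoriality needed for the commutative square) are precisely the ones that arise in the proof of Lemma~\ref{iso}.
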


\vskip5pt

\subsection {The left perpendicular categories $^\perp \begin{pmatrix}\begin{smallmatrix}A\\ 0\end{smallmatrix}\end{pmatrix}_{0, 0}$ and \ $^\perp \begin{pmatrix}\begin{smallmatrix}0\\ B\end{smallmatrix}\end{pmatrix}_{0, 0}$}
For a left $\Lambda$-module $\begin{pmatrix}\begin{smallmatrix}X\\Y\end{smallmatrix}\end{pmatrix}_{f, g}$, by the decomposition of $f$:
\[
\xymatrix@C=0.6cm@R=0.5cm{0\ar[r] & \Ker f\ar[r]^-\sigma &
M\otimes_{A}X \ar[rr]^-{f}    &&  Y \ar[r]^-{\pi} \ar@{<-_{)}}[dl]^-{\iota} &   \cok{f} \ar[r]   &    0\\
&&& \im{f} \ar@{<<-}[ul]^-{p}  &&& \\
}
\]
one has an exact sequence of left $\Lambda$-modules
$$
0\lxr {\begin{pmatrix}\begin{smallmatrix} X \\ \im{f} \end{smallmatrix}\end{pmatrix}_{p, 0}}
\stackrel{\begin{pmatrix}\begin{smallmatrix} 1 \\ \iota \end{smallmatrix}\end{pmatrix}}\lxr {\begin{pmatrix}\begin{smallmatrix} X \\ Y \end{smallmatrix}\end{pmatrix}_{f, g}}
\stackrel{\begin{pmatrix}\begin{smallmatrix} 0 \\ \pi \end{smallmatrix}\end{pmatrix}}\lxr  {\begin{pmatrix}\begin{smallmatrix} 0 \\ \cok{f} \end{smallmatrix}\end{pmatrix}_{0, 0}}  \lxr 0.
$$

\vskip5pt

Similarly, one has (for simplicity we use the same notation $\sigma, \ p, \ \iota, \ \pi$)
\[
\xymatrix@C=0.6cm@R=0.5cm{0\ar[r] & \Ker g\ar[r]^-\sigma &
N\otimes_{B}Y \ar[rr]^-{g}    &&  X \ar[r]^-{\pi} \ar@{<-_{)}}[dl]^-{\iota} &   \cok{g} \ar[r]   &    0\\
&&& \im{g} \ar@{<<-}[ul]^-{p}  &&& \\
}
\]
and the exact sequence of left $\Lambda$-modules
$$
0\lxr {\begin{pmatrix}\begin{smallmatrix} \im{g} \\ Y \end{smallmatrix}\end{pmatrix}_{0, p}}
\stackrel{\begin{pmatrix}\begin{smallmatrix}  \iota \\ 1 \end{smallmatrix}\end{pmatrix}}\lxr {\begin{pmatrix}\begin{smallmatrix} X \\ Y \end{smallmatrix}\end{pmatrix}_{f, g}}
\stackrel{\begin{pmatrix}\begin{smallmatrix} \pi \\ 0 \end{smallmatrix}\end{pmatrix}}\lxr  {\begin{pmatrix}\begin{smallmatrix} \cok{g} \\ 0 \end{smallmatrix}\end{pmatrix}_{0, 0}}  \lxr 0.$$

\begin{lem}\label{im} Let $\Lambda=\begin{pmatrix}\begin{smallmatrix}
A & N \\
M & B \\
\end{smallmatrix}\end{pmatrix}$ be a Morita ring,  and $\begin{pmatrix}\begin{smallmatrix}X\\Y\end{smallmatrix}\end{pmatrix}_{f, g}$ a left $\Lambda$-module.
Assume that $M_{A}$ and $N_{B}$ are projective modules.

\vskip5pt

$(1)$ \  If $N\otimes_{B}M = 0$, then for any injective left $A$-module $I$ one has

$${\rm Ext}_{\Lambda}^{i}(\begin{pmatrix}\begin{smallmatrix}X\\ \im{f}\end{smallmatrix}\end{pmatrix}_{p, 0}, \begin{pmatrix}\begin{smallmatrix}I\\0\end{smallmatrix}\end{pmatrix}_{0, 0})=0, \ \ \forall \ i\ge 1.$$

\vskip5pt

$(2)$ \  If $N\otimes_{B}M = 0$, then for any module \ $_AU$ there is an isomorphism which is functorial in $U:$

$${\rm Ext}_{\Lambda}^{i}({\begin{pmatrix}\begin{smallmatrix} X \\ \im{f} \end{smallmatrix}\end{pmatrix}_{p, 0}},\ {\begin{pmatrix}\begin{smallmatrix} U \\ 0 \end{smallmatrix}\end{pmatrix}_{0, 0}})\iso {\rm Ext}_{A}^{i}(X, U), \ \ \forall \ i\ge 0.$$

\vskip5pt

$(3)$ \  If $M\otimes_{A}N=0$, then for any injective left $B$-module $J$, one has

$${\rm Ext}_{\Lambda}^{i}(\begin{pmatrix}\begin{smallmatrix}\im{g} \\ Y\end{smallmatrix}\end{pmatrix}_{0, p}, \begin{pmatrix}\begin{smallmatrix}0\\ J\end{smallmatrix}\end{pmatrix}_{0, 0})=0, \ \ \forall \ i\ge 1.$$

\vskip5pt

$(4)$ \  If $M\otimes_{A}N=0$, then for any module \ $_AV$ there is an isomorphism which is functorial in $V:$

$${\rm Ext}_{\Lambda}^{i}({\begin{pmatrix}\begin{smallmatrix} \im{g} \\ Y \end{smallmatrix}\end{pmatrix}_{0, p}},\ {\begin{pmatrix}\begin{smallmatrix} 0 \\ V \end{smallmatrix}\end{pmatrix}_{0, 0}})\iso {\rm Ext}_{A}^{i}(Y, V), \ \ \forall \ i\ge 0.$$
\end{lem}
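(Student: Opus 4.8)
The plan is to realize $\begin{pmatrix}X\\ \im f\end{pmatrix}_{p,0}$ as a quotient of the module $E:=\begin{pmatrix}X\\ M\otimes_{A}X\end{pmatrix}_{\id,0}$, whose $\Ext_{\Lambda}$-groups into modules of the form $\begin{pmatrix}U\\0\end{pmatrix}_{0,0}$ are already computed by Lemma \ref{Extiso}(1), and then to show that the kernel of the quotient map is invisible to $\Hom_{\Lambda}(-,\begin{pmatrix}U\\0\end{pmatrix}_{0,0})$ and to all of its derived functors. It suffices to prove (1) and (2); assertions (3) and (4) then follow by symmetry, explained at the end. Concretely: with $\sigma\colon\Ker f\hookrightarrow M\otimes_{A}X$ and $p\colon M\otimes_{A}X\twoheadrightarrow\im f$ as in the factorization of $f$ displayed just before the statement, the pairs $(0,\sigma)$ and $(1_{X},p)$ are morphisms of left $\Lambda$-modules, and they fit into a short exact sequence
$$0\lxr \begin{pmatrix}0\\ \Ker f\end{pmatrix}_{0,0}\st{(0,\sigma)}\lxr E \st{(1_{X},p)}\lxr \begin{pmatrix}X\\ \im f\end{pmatrix}_{p,0}\lxr 0 .$$
Checking $\Lambda$-linearity is a routine verification of the two compatibility squares, and exactness follows from Lemma \ref{projectives}(3) because the sequence is exact in both components.

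The one step with real content is the vanishing $N\otimes_{B}\Ker f=0$. Applying the right exact functor $N\otimes_{B}-$ to $0\to\Ker f\st{\sigma}\to M\otimes_{A}X\st{p}\to\im f\to0$ and using that $N_{B}$ is projective, hence flat (so $\Tor^{B}_{1}(N,\im f)=0$), together with $N\otimes_{B}(M\otimes_{A}X)\iso(N\otimes_{B}M)\otimes_{A}X=0$, forces $N\otimes_{B}\Ker f=0$. Hence, applying Lemma \ref{iso}(2) with the $B$-module $\Ker f$ in the role of $Y$ (legitimate since $N_{B}$ is projective and $N\otimes_{B}M=0$), one obtains $\Ext^{j}_{\Lambda}\!\big(\begin{pmatrix}0\\ \Ker f\end{pmatrix}_{0,0},\begin{pmatrix}U\\0\end{pmatrix}_{0,0}\big)\iso\Ext^{j-1}_{A}(N\otimes_{B}\Ker f,U)=0$ for all $j\geq1$, while $\Hom_{\Lambda}\!\big(\begin{pmatrix}0\\ \Ker f\end{pmatrix}_{0,0},\begin{pmatrix}U\\0\end{pmatrix}_{0,0}\big)=0$ trivially, such a homomorphism having zero first component and second component a map $\Ker f\to0$.

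Now I would apply $\Hom_{\Lambda}(-,\begin{pmatrix}U\\0\end{pmatrix}_{0,0})$ to the short exact sequence above. By the previous paragraph all groups $\Ext^{\ast}_{\Lambda}\!\big(\begin{pmatrix}0\\ \Ker f\end{pmatrix}_{0,0},\begin{pmatrix}U\\0\end{pmatrix}_{0,0}\big)$ vanish, so the long exact sequence collapses to natural isomorphisms $\Ext^{i}_{\Lambda}\!\big(\begin{pmatrix}X\\ \im f\end{pmatrix}_{p,0},\begin{pmatrix}U\\0\end{pmatrix}_{0,0}\big)\iso\Ext^{i}_{\Lambda}\big(E,\begin{pmatrix}U\\0\end{pmatrix}_{0,0}\big)$ for all $i\geq0$; Lemma \ref{Extiso}(1) (valid because $M_{A}$ is projective) identifies the latter with $\Ext^{i}_{A}(X,U)$, naturally in $U$. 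This is (2), and (1) is the special case $U=I$ injective, for then $\Ext^{i}_{A}(X,I)=0$ for $i\geq1$.

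Assertions (3) and (4) follow by the symmetry that transposes the Morita ring, i.e.\ passes to $\begin{pmatrix}B&M\\ N&A\end{pmatrix}$ and sends $\begin{pmatrix}X\\Y\end{pmatrix}_{f,g}$ to $\begin{pmatrix}Y\\X\end{pmatrix}_{g,f}$; this interchanges (1) with (3) and (2) with (4), and replaces Lemma \ref{iso}(2) and Lemma \ref{Extiso}(1) by Lemma \ref{iso2}(2) and Lemma \ref{Extiso}(2), with the role of the flatness of $N_{B}$ now played by that of $M_{A}$. The main (and essentially only) obstacle is the identification of $\begin{pmatrix}0\\ \Ker f\end{pmatrix}_{0,0}$ as the kernel and the proof that it is acyclic for $\Hom_{\Lambda}(-,\begin{pmatrix}U\\0\end{pmatrix}_{0,0})$; once these are in hand, the rest is the standard long-exact-sequence formalism together with the two cited $\Ext$-isomorphisms.
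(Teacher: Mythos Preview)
Your proof is correct and follows essentially the same approach as the paper: both use the short exact sequence $0\to\begin{pmatrix}\begin{smallmatrix}0\\ \Ker f\end{smallmatrix}\end{pmatrix}_{0,0}\to\begin{pmatrix}\begin{smallmatrix}X\\ M\otimes_{A}X\end{smallmatrix}\end{pmatrix}_{\id,0}\to\begin{pmatrix}\begin{smallmatrix}X\\ \im f\end{smallmatrix}\end{pmatrix}_{p,0}\to0$, establish $N\otimes_{B}\Ker f=0$ via flatness of $N_{B}$ and the hypothesis $N\otimes_{B}M=0$, invoke Lemma~\ref{iso}(2) to kill all $\Ext^{\ast}_{\Lambda}$ of the kernel term into $\begin{pmatrix}\begin{smallmatrix}U\\0\end{smallmatrix}\end{pmatrix}_{0,0}$, and finish with Lemma~\ref{Extiso}(1). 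The only cosmetic difference is that you prove (2) first and obtain (1) as the special case $U=I$, whereas the paper handles (1) before (2); the content is the same.
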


\begin{proof} \ $(1)$ \ By Lemma \ref{Extiso}(1)  one has

$${\rm Ext}_{\Lambda}^{i}(\begin{pmatrix}\begin{smallmatrix}X\\ M\otimes_{A}X \end{smallmatrix}\end{pmatrix}_{\id_{M\otimes_{A}X}, 0}, \begin{pmatrix}\begin{smallmatrix}I\\0\end{smallmatrix}\end{pmatrix}_{0, 0})
\cong {\rm Ext}_{A}^{i}(X, I) =0, \ \forall \ i\ge 1.$$

\vskip5pt

\noindent Since $N_B$ is projective and $N\otimes_BM = 0$, by the exact sequence $0 \longrightarrow N\otimes_B \Ker f \stackrel{1\otimes \sigma} \longrightarrow N\otimes_B M\otimes_A X$  one sees that $N\otimes_B \Ker f = 0$. It follows from Lemma \ref{iso}(2) that

$${\rm Ext}_{\Lambda}^{i}({\begin{pmatrix}\begin{smallmatrix} 0 \\ {\Ker}f \end{smallmatrix}\end{pmatrix}_{0, 0}},\ {\begin{pmatrix}\begin{smallmatrix} I \\ 0 \end{smallmatrix}\end{pmatrix}_{0, 0}})
\iso {\rm Ext}_A^{i-1}(N\otimes_{B}{\Ker}f, I)=0,  \ \forall \ i\ge 1.$$

\vskip5pt

\noindent Applying ${\rm Hom}_\Lambda(-, \begin{pmatrix}\begin{smallmatrix}I\\0\end{smallmatrix}\end{pmatrix}_{0, 0})$ to the exact sequence

$$0\lxr {\begin{pmatrix}\begin{smallmatrix} 0 \\ {\Ker}f \end{smallmatrix}\end{pmatrix}_{0, 0}}
\stackrel{\begin{pmatrix}\begin{smallmatrix} 0 \\ \sigma \end{smallmatrix}\end{pmatrix}} \lxr  {\begin{pmatrix}\begin{smallmatrix} X \\ M\otimes_{A}X \end{smallmatrix}\end{pmatrix}_{\id_{M\otimes_{A}X}, 0}}
\stackrel{\begin{pmatrix}\begin{smallmatrix} 1 \\ p \end{smallmatrix}\end{pmatrix}}\lxr  {\begin{pmatrix}\begin{smallmatrix} X \\ \im{f} \end{smallmatrix}\end{pmatrix}_{p, 0}}  \lxr 0$$

\vskip5pt \noindent one sees that ${\rm Ext}_{\Lambda}^{i}(\begin{pmatrix}\begin{smallmatrix}X\\ \im{f}\end{smallmatrix}\end{pmatrix}_{p, 0}, \begin{pmatrix}\begin{smallmatrix}I\\0\end{smallmatrix}\end{pmatrix}_{0, 0})=0$ for $i\ge 2$, and
the following exact sequence

\begin{align*}0 = & {\rm Hom}_\Lambda(\begin{pmatrix}\begin{smallmatrix} 0 \\ {\Ker}f \end{smallmatrix}\end{pmatrix}_{0, 0}, \begin{pmatrix}\begin{smallmatrix}I\\0\end{smallmatrix}\end{pmatrix}) \longrightarrow
{\rm Ext}_{\Lambda}^{1}(\begin{pmatrix}\begin{smallmatrix}X\\ \im{f}\end{smallmatrix}\end{pmatrix}_{p, 0}, \begin{pmatrix}\begin{smallmatrix}I\\0\end{smallmatrix}\end{pmatrix}) \longrightarrow
{\rm Ext}_{\Lambda}^{1}(\begin{pmatrix}\begin{smallmatrix}X\\ \\ M\otimes_{A}X \end{smallmatrix}\end{pmatrix}_{\id_{M\otimes_{A}X}, 0}, \begin{pmatrix}\begin{smallmatrix}I\\0\end{smallmatrix}\end{pmatrix}) = 0.\end{align*}
Thus also ${\rm Ext}_{\Lambda}^{1}(\begin{pmatrix}\begin{smallmatrix}X\\ \im{f}\end{smallmatrix}\end{pmatrix}_{p, 0}, \begin{pmatrix}\begin{smallmatrix}I\\0\end{smallmatrix}\end{pmatrix}_{0,0}) =0$.

\vskip5pt

$(2)$ \ Applying ${\rm Hom}_{\Lambda}(-, {\begin{pmatrix}\begin{smallmatrix} U \\ 0 \end{smallmatrix}\end{pmatrix}_{0, 0}})$ to the exact sequence

$$
0\lxr {\begin{pmatrix}\begin{smallmatrix} 0 \\ {\Ker}f \end{smallmatrix}\end{pmatrix}_{0, 0}}
\stackrel{\begin{pmatrix}\begin{smallmatrix} 0 \\ \sigma \end{smallmatrix}\end{pmatrix}} \lxr  {\begin{pmatrix}\begin{smallmatrix} X \\ M\otimes_{A}X \end{smallmatrix}\end{pmatrix}_{\id_{M\otimes_{A}X}, 0}}
\stackrel{\begin{pmatrix}\begin{smallmatrix} 1 \\ p \end{smallmatrix}\end{pmatrix}}\lxr  {\begin{pmatrix}\begin{smallmatrix} X \\ \im{f} \end{smallmatrix}\end{pmatrix}_{p, 0}}  \lxr 0
$$
 one gets an exact sequence for $i\geq 0$

\[
{\rm Ext}_{\Lambda}^{i}({\begin{pmatrix}\begin{smallmatrix} 0 \\ {\Ker}f \end{smallmatrix}\end{pmatrix}_{0, 0}},\ {\begin{pmatrix}\begin{smallmatrix} U \\ 0 \end{smallmatrix}\end{pmatrix}_{0, 0}}
)\lxr {\rm Ext}_{\Lambda}^{i+1}({\begin{pmatrix}\begin{smallmatrix} X \\ \im{f} \end{smallmatrix}\end{pmatrix}_{p, 0}},\ {\begin{pmatrix}\begin{smallmatrix} U \\ 0 \end{smallmatrix}\end{pmatrix}_{0, 0}}) \lxr
\]
\[
{\rm Ext}_{\Lambda}^{i+1}({\begin{pmatrix}\begin{smallmatrix} X \\ M\otimes_{A}X \end{smallmatrix}\end{pmatrix}_{\id_{M\otimes_{A}X}, 0}},\ {\begin{pmatrix}\begin{smallmatrix} U \\ 0 \end{smallmatrix}\end{pmatrix}_{0, 0}})\lxr {\rm Ext}_{\Lambda}^{i+1}({\begin{pmatrix}\begin{smallmatrix} 0 \\ {\Ker}f \end{smallmatrix}\end{pmatrix}_{0, 0}},\ {\begin{pmatrix}\begin{smallmatrix} U \\ 0 \end{smallmatrix}\end{pmatrix}_{0, 0}}).
\]

\noindent Since $N\otimes_B \Ker f = 0$, it follows from Lemma \ref{iso}(2) that
$${\rm Ext}_{\Lambda}^{i}({\begin{pmatrix}\begin{smallmatrix} 0 \\ {\Ker}f \end{smallmatrix}\end{pmatrix}_{0, 0}},\ {\begin{pmatrix}\begin{smallmatrix} U \\ 0 \end{smallmatrix}\end{pmatrix}_{0, 0}})
\iso \begin{cases} {\rm Ext}_{\Lambda}^{i-1}(N\otimes_{B}{\Ker}f, U)=0, & \ \forall \ i\ge 1; \\ 0, & i = 0.\end{cases}$$
Hence, by Lemma \ref{Extiso}(1) one gets
\[
\begin{aligned}
{\rm Ext}_{\Lambda}^{i}({\begin{pmatrix}\begin{smallmatrix} X \\ \im{f} \end{smallmatrix}\end{pmatrix}_{p, 0}},\ {\begin{pmatrix}\begin{smallmatrix} U \\ 0 \end{smallmatrix}\end{pmatrix}_{0, 0}}) & \iso {\rm Ext}_{\Lambda}^{i}({\begin{pmatrix}\begin{smallmatrix} X \\ M\otimes_{A}X \end{smallmatrix}\end{pmatrix}_{\id_{M\otimes_{A}X}, 0}},\ {\begin{pmatrix}\begin{smallmatrix} U \\ 0 \end{smallmatrix}\end{pmatrix}_{0, 0}}) \iso {\rm Ext}_{A}^{i}(X, U)
\end{aligned}
\]
for all $i\geq 1$. This isomorphism is functorial in $U$, since every step involved is functorial in $U$.

\vskip5pt

The assertions $(3)$ and $(4)$ can be similarly proved, by using Lemmas \ref{Extiso}(2) and \ref{iso2}(2).
 \end{proof}

\vskip5pt

Furthermore, the isomorphisms in Lemma \ref{im}(2) and (4) are compatible with the connecting map, as the following lemma claims. This is important to get semi-Gorenstein-projective $\Lambda$-modules.

\vskip5pt

\begin{lem}\label{induction} Let $\Lambda=\begin{pmatrix}\begin{smallmatrix}
A & N \\
M & B \\
\end{smallmatrix}\end{pmatrix}$ be a Morita ring,  and $\begin{pmatrix}\begin{smallmatrix}X\\Y\end{smallmatrix}\end{pmatrix}_{f, g}$ a left $\Lambda$-module. Assume that $M_{A}$ and $N_{B}$ are projective modules. Then

\vskip5pt

$(1)$ \ If  $N\otimes_{B}M = 0$, then for any left $A$-module $U$, there is a commutative diagram for $i\ge 0,$ where $\partial$ is the connecting map$:$

\[\xymatrix{
{\rm Ext}_{\Lambda}^{i}({\begin{pmatrix}\begin{smallmatrix} X \\ \im{f} \end{smallmatrix}\end{pmatrix}_{p, 0}}, {\begin{pmatrix}\begin{smallmatrix} U \\ 0 \end{smallmatrix}\end{pmatrix}})\ar[r]^-{\partial} \ar[d]_-{\iso}   &   {\rm Ext}_{\Lambda}^{i+1}({\begin{pmatrix}\begin{smallmatrix} 0 \\ \cok{f} \end{smallmatrix}\end{pmatrix}_{0, 0}}, {\begin{pmatrix}\begin{smallmatrix} U \\ 0 \end{smallmatrix}\end{pmatrix}})\ar[d]^{\iso}\\
{\rm Ext}_{A}^{i}(X, U)\ar[r]^-{{\rm Ext}_\Lambda^{i}(g, U)}   &   {\rm Ext}_{A}^{i}(N\otimes_{B}Y, U).}
\]

\vskip5pt

$(2)$ \ If $M\otimes_{A}N=0$, then for any left $B$-module $V$, there is a commutative diagram for $i\ge 0,$ where $\partial$ is the connecting map$:$

\[\xymatrix{
{\rm Ext}_{\Lambda}^{i}({\begin{pmatrix}\begin{smallmatrix} \im g \\ Y \end{smallmatrix}\end{pmatrix}_{0, p}}, {\begin{pmatrix}\begin{smallmatrix} 0 \\ V \end{smallmatrix}\end{pmatrix}})\ar[r]^-{\partial} \ar[d]_-{\iso}   &   {\rm Ext}_{\Lambda}^{i+1}({\begin{pmatrix}\begin{smallmatrix} \cok{g} \\ 0 \end{smallmatrix}\end{pmatrix}_{0, 0}}, {\begin{pmatrix}\begin{smallmatrix} 0 \\ V \end{smallmatrix}\end{pmatrix}})\ar[d]^{\iso}\\
{\rm Ext}_{B}^{i}(Y, V)\ar[r]^-{{\rm Ext}_\Lambda^{i}(g, V)}   &   {\rm Ext}_{B}^{i}(M\otimes_{A}X, V).}
\]
\end{lem}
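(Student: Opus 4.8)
The plan is to reduce the whole family of squares (for all $i\ge 0$) in (1) to the single case $i=0$ by a dimension-shift argument in the $A$-module $U$, and to settle the case $i=0$ by computing the connecting map $\partial$ explicitly as a pushout of extensions. Statement (2) then follows by the symmetric argument, with the roles of $(A,{}_AN,g)$ and $(B,{}_BM,f)$ interchanged, using Lemmas \ref{Extiso}(2), \ref{iso2} and \ref{im}(3),(4) in place of Lemmas \ref{Extiso}(1), \ref{iso} and \ref{im}(1),(2); so I only treat (1).

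For $i=0$, I would first observe that the right-hand vertical isomorphism factors as
$$\Ext^1_\Lambda\Big(\begin{pmatrix}\begin{smallmatrix}0\\ \cok f\end{smallmatrix}\end{pmatrix}_{0,0},\ \begin{pmatrix}\begin{smallmatrix}U\\ 0\end{smallmatrix}\end{pmatrix}_{0,0}\Big)\ \iso\ \Hom_A(N\otimes_B\cok f,\ U)\ \iso\ \Hom_A(N\otimes_B Y,\ U),$$
the first isomorphism being Lemma \ref{baersum}(1) and the second being induced by $1_N\otimes\pi_f\colon N\otimes_B Y\to N\otimes_B\cok f$, which is an isomorphism because $N\otimes_B\im f$ — a quotient of $N\otimes_B M\otimes_A X=0$ — vanishes, so that applying the right-exact $N\otimes_B-$ to $0\to\im f\to Y\to\cok f\to 0$ gives it. Given a $\Lambda$-homomorphism $\binom{a}{0}\colon\begin{pmatrix}\begin{smallmatrix}X\\ \im f\end{smallmatrix}\end{pmatrix}_{p,0}\to\begin{pmatrix}\begin{smallmatrix}U\\ 0\end{smallmatrix}\end{pmatrix}_{0,0}$, which the left-hand isomorphism identifies with $a\in\Hom_A(X,U)$, its $\partial$-image is the pushout along $\binom{a}{0}$ of the defining short exact sequence $0\to\begin{pmatrix}\begin{smallmatrix}X\\ \im f\end{smallmatrix}\end{pmatrix}_{p,0}\to\begin{pmatrix}\begin{smallmatrix}X\\ Y\end{smallmatrix}\end{pmatrix}_{f,g}\to\begin{pmatrix}\begin{smallmatrix}0\\ \cok f\end{smallmatrix}\end{pmatrix}_{0,0}\to 0$. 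As pushouts of $\Lambda$-modules are formed componentwise (cf. Lemma \ref{projectives}(3)), this pushout is $0\to\begin{pmatrix}\begin{smallmatrix}U\\ 0\end{smallmatrix}\end{pmatrix}_{0,0}\to\begin{pmatrix}\begin{smallmatrix}U\\ \cok f\end{smallmatrix}\end{pmatrix}_{0,\,g'}\to\begin{pmatrix}\begin{smallmatrix}0\\ \cok f\end{smallmatrix}\end{pmatrix}_{0,0}\to 0$: its $A$-component is the pushout of $X\xrightarrow{a}U$ along $\id_X$, hence $U$; its $B$-component is $Y/\im f=\cok f$; the upper structure map is $0$ since $\binom{1}{0}$ must be a $\Lambda$-morphism; and the lower map $g'\colon N\otimes_B\cok f\to U$ is the unique one with $g'\circ(1_N\otimes\pi_f)=a\circ g$, forced by $\binom{a}{\pi_f}$ being a $\Lambda$-morphism. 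By Lemma \ref{baersum}(1) this extension corresponds to $g'$, hence to $g'\circ(1_N\otimes\pi_f)=a\circ g=\Hom_A(g,U)(a)\in\Hom_A(N\otimes_B Y,U)$. Thus the $i=0$ square commutes.

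For $i\ge 1$ I would conclude by dimension shift. As functors of $U$, the two vertical isomorphisms and the bottom map $\Hom_A(g,-)$ are morphisms of cohomological $\delta$-functors — the left vertical one by the construction of Lemma \ref{im}(2) (through Lemma \ref{Extiso}(1) and the resolving sequence $0\to\begin{pmatrix}\begin{smallmatrix}0\\ \Ker f\end{smallmatrix}\end{pmatrix}_{0,0}\to\begin{pmatrix}\begin{smallmatrix}X\\ M\otimes_A X\end{smallmatrix}\end{pmatrix}_{\id,0}\to\begin{pmatrix}\begin{smallmatrix}X\\ \im f\end{smallmatrix}\end{pmatrix}_{p,0}\to 0$, whose outer term has no positive-degree $\Ext$ into $\begin{pmatrix}\begin{smallmatrix}U\\ 0\end{smallmatrix}\end{pmatrix}_{0,0}$ because $N\otimes_B\Ker f=0$), the right vertical one by the ``moreover'' clause of Lemma \ref{iso}(2) and functoriality of $(1_N\otimes\pi_f)^*$; and $\partial$ intertwines the connecting maps of the fixed sequence with those of an arbitrary $0\to U'\to U\to U''\to 0$ (the standard commutation, up to sign, of the two connecting homomorphisms of a bifunctor). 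Since $\Ext^\bullet_A(X,-)$ is a universal $\delta$-functor — effaceable in positive degrees by injective $A$-modules — the two morphisms of $\delta$-functors $\Ext^\bullet_A(X,-)\to\Ext^\bullet_A(N\otimes_B Y,-)$, namely $\Ext^\bullet_A(g,-)$ and $\partial$ carried across the vertical isomorphisms, coincide once they do in degree $0$, which is the previous step. (Alternatively, run the shift by hand: choose an embedding $U\hookrightarrow I$ into an injective $A$-module with cokernel $K$, apply $\Hom_\Lambda(-,\begin{pmatrix}\begin{smallmatrix}-\\ 0\end{smallmatrix}\end{pmatrix}_{0,0})$ to $0\to\begin{pmatrix}\begin{smallmatrix}U\\ 0\end{smallmatrix}\end{pmatrix}_{0,0}\to\begin{pmatrix}\begin{smallmatrix}I\\ 0\end{smallmatrix}\end{pmatrix}_{0,0}\to\begin{pmatrix}\begin{smallmatrix}K\\ 0\end{smallmatrix}\end{pmatrix}_{0,0}\to 0$, kill the $I$-terms via Lemmas \ref{im}(1) and \ref{iso}(1), and induct, reducing the degree-$i$ square for $U$ to the degree-$(i-1)$ square for $K$.) I expect the main obstacle to be the $i=0$ step: identifying the componentwise pushout \emph{together with both of its structure maps} and matching the resulting class against the cocycle of Lemma \ref{baersum}(1); the dimension shift is routine, the only delicate point being the usual sign in the double connecting homomorphism, which is immaterial here since the sign-free case $i=0$ fixes the normalization.
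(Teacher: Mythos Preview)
Your proof is correct and follows essentially the same route as the paper: the $i=0$ case via the explicit pushout computation is identical, and the reduction of higher $i$ to lower ones by dimension-shifting along an injective embedding $U\hookrightarrow I$ (killing the $I$-terms with Lemmas~\ref{im}(1) and~\ref{iso}(1)) is exactly what the paper does for $i\ge 2$. The only difference is that the paper treats $i=1$ as a separate explicit cube-chase (verifying the five auxiliary faces one by one) before invoking the shift, whereas you subsume it into the general inductive step via the $\delta$-functor compatibilities recorded in Lemmas~\ref{im}(2) and~\ref{iso}(2); this is a streamlining rather than a different argument.
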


\begin{proof}  \ $(1)$ \ Use induction on $i\ge 0$. For $i = 0$, we need to prove that there is a commutative diagram

\[\xymatrix{
{\rm Hom}_{\Lambda}({\begin{pmatrix}\begin{smallmatrix} X \\ \im{f} \end{smallmatrix}\end{pmatrix}_{p, 0}}, {\begin{pmatrix}\begin{smallmatrix} U \\ 0 \end{smallmatrix}\end{pmatrix}})\ar[r]^-{\partial} \ar[d]_-{\iso}^-{\Psi}   &   {\rm Ext}_{\Lambda}^{1}({\begin{pmatrix}\begin{smallmatrix} 0 \\ \cok{f} \end{smallmatrix}\end{pmatrix}_{0, 0}}, {\begin{pmatrix}\begin{smallmatrix} U \\ 0 \end{smallmatrix}\end{pmatrix}})\ar[d]_-{\iso}^-{\Phi^{-1}}\\
{\rm Hom}_{A}(X, U)\ar[r]^-{{\rm Hom}_A(g, U)}   &   {\rm Hom}_{A}(N\otimes_{B}Y, U).}
\]

\noindent In fact, taking $\Psi$ to be the homomorphism of abelian groups given by $\begin{pmatrix}\begin{smallmatrix} a \\ 0 \end{smallmatrix}\end{pmatrix}\mapsto a$,  and taking
$\Phi$ to be the homomorphism of abelian groups given in Lemma \ref{baersum}(1), which sends $h\in {\rm Hom}_{A}(N\otimes_{B}Y, U)$ to the equivalence class of exact sequence
$$0 \longrightarrow \begin{pmatrix}\begin{smallmatrix} U \\ 0 \end{smallmatrix}\end{pmatrix}_{0,0}\longrightarrow \begin{pmatrix}\begin{smallmatrix} U \\ \cok f \end{smallmatrix}\end{pmatrix}_{0, h\circ ({\rm Id}_N\otimes \pi)^{-1}} \longrightarrow  \begin{pmatrix}\begin{smallmatrix} 0 \\ \cok{f} \end{smallmatrix}\end{pmatrix}_{0, 0} \longrightarrow 0$$ in ${\rm Ext}_{\Lambda}^{1}(\begin{pmatrix}\begin{smallmatrix} 0 \\ \cok{f} \end{smallmatrix}\end{pmatrix}_{0, 0}, \begin{pmatrix}\begin{smallmatrix} U \\ 0 \end{smallmatrix}\end{pmatrix}).$ (Note that, by the exact sequence $M\otimes_AX\stackrel f \longrightarrow Y \stackrel {\pi} \longrightarrow \cok f\longrightarrow 0$
and the assumption $N\otimes_BM = 0$, the map ${\rm Id}_N\otimes \pi:  N\otimes_{B}Y\longrightarrow N\otimes_{B}\cok f$ is an isomorphism.)
The connecting map $\partial$ is by definition given by the pushout

\[\xymatrix{0\ar[r] & {\begin{pmatrix}\begin{smallmatrix} X \\ \im{f} \end{smallmatrix}\end{pmatrix}_{p, 0}}\ar[d]_{\binom{a}{0}}\ar[r]^-{\begin{pmatrix}\begin{smallmatrix} 1 \\ \iota \end{smallmatrix}\end{pmatrix}} & {\begin{pmatrix}\begin{smallmatrix} X \\ Y \end{smallmatrix}\end{pmatrix}_{f, g}} \ar[d]_-{\binom{a}{\pi}}\ar[r]^-{\begin{pmatrix}\begin{smallmatrix} 0 \\ \pi \end{smallmatrix}\end{pmatrix}} &   {\begin{pmatrix}\begin{smallmatrix} 0 \\ \cok{f} \end{smallmatrix}\end{pmatrix}_{0, 0}} \ar[r] \ar@{=}[d]& 0
\\ 0 \ar[r]  & {\begin{pmatrix}\begin{smallmatrix} U \\ 0 \end{smallmatrix}\end{pmatrix}_{0,0}} \ar[r]^-{\binom{1}{0}} & {\begin{pmatrix}\begin{smallmatrix} U \\ \cok f \end{smallmatrix}\end{pmatrix}_{f', g'}} \ar[r]^-{\binom{0}{1}} &   {\begin{pmatrix}\begin{smallmatrix} 0 \\ \cok{f} \end{smallmatrix}\end{pmatrix}_{0, 0}} \ar[r] & 0.}\]

\vskip5pt

\noindent We need to determine the structure maps $f': M\otimes_A U \longrightarrow \cok f$ and $g': N\otimes_B\cok f \longrightarrow U.$ Since
$\binom{1}{0}$ is a left $\Lambda$-homomorphism,
$f' = 0$. Since
$\binom{a}{\pi}$ is a left $\Lambda$-homomorphism,
by the commutative diagram

\[
\xymatrix@C=0.5cm{
N\otimes_{B}Y\ar@{->}[d]_-{g}\ar@{->}[rr]^-{1\otimes \pi} && N\otimes_{B}\cok f\ar@{->}[d]^-{g^{\prime}}\\
X\ar@{->}[rr]^{a}     && U\\
}
\]

\noindent one has $g' = a\circ g\circ ({\rm Id}_N\otimes \pi)^{-1}.$ Thus

$$ \Phi^{-1} \partial (\begin{pmatrix}\begin{smallmatrix} a \\ 0 \end{smallmatrix}\end{pmatrix}) = a\circ g = {\rm Hom}_\Lambda(g, U)\Psi (\begin{pmatrix}\begin{smallmatrix} a \\ 0 \end{smallmatrix}\end{pmatrix}).$$

\vskip5pt
\noindent This proves the assertion for $i = 0$.

\vskip5pt

For $i = 1$, we need to prove that there is a commutative diagram

\[\xymatrix{
{\rm Ext}_{\Lambda}^1({\begin{pmatrix}\begin{smallmatrix} X \\ \im{f} \end{smallmatrix}\end{pmatrix}_{p, 0}}, {\begin{pmatrix}\begin{smallmatrix} U \\ 0 \end{smallmatrix}\end{pmatrix}})\ar[r]^-{\partial} \ar[d]_-{\iso}   &   {\rm Ext}_{\Lambda}^{2}({\begin{pmatrix}\begin{smallmatrix} 0 \\ \cok{f} \end{smallmatrix}\end{pmatrix}_{0, 0}}, {\begin{pmatrix}\begin{smallmatrix} U \\ 0 \end{smallmatrix}\end{pmatrix}})\ar[d]_-{\iso}\\
{\rm Ext}_{A}^1(X, U)\ar[r]^-{{\rm Ext}^1_\Lambda(g, U)}   &   {\rm Ext}^1_{A}(N\otimes_{B}Y, U).}
\]

\vskip5pt \noindent For this let  $0\lxr U\lxr I\stackrel{d^{0}}{\lxr} I^{1}\stackrel{d^{1}}{\lxr} \cdots$
be an injective resolution of $U$. Consider the exact sequences
$$0\lxr {\begin{pmatrix}\begin{smallmatrix} U \\ 0 \end{smallmatrix}\end{pmatrix}}
\lxr  {\begin{pmatrix}\begin{smallmatrix} I \\ 0 \end{smallmatrix}\end{pmatrix}}
\lxr  {\begin{pmatrix}\begin{smallmatrix} \Ker d^1 \\ 0 \end{smallmatrix}\end{pmatrix}}  \lxr 0
$$
and
$$
0\lxr {\begin{pmatrix}\begin{smallmatrix} X \\ \im{f} \end{smallmatrix}\end{pmatrix}_{p, 0}}
\stackrel{\begin{pmatrix}\begin{smallmatrix} 1 \\ \iota \end{smallmatrix}\end{pmatrix}}\lxr {\begin{pmatrix}\begin{smallmatrix} X \\ Y \end{smallmatrix}\end{pmatrix}_{f, g}}
\stackrel{\begin{pmatrix}\begin{smallmatrix} 0 \\ \pi \end{smallmatrix}\end{pmatrix}}\lxr  {\begin{pmatrix}\begin{smallmatrix} 0 \\ \cok{f} \end{smallmatrix}\end{pmatrix}_{0, 0}}  \lxr 0.
$$

\vskip5pt
\noindent We claim that each parallelogram and each square commute in the following diagram of  exact rows (to save the space, we omit ${\rm Hom}$ and write ${\rm Ext}$ as ${\rm E}$), where the isomorphisms follow from
Lemmas \ref{im}(2) and \ref{iso}(2):

\vskip10pt

{\tiny\[\xymatrix@C=0.00001cm{({\begin{pmatrix}\begin{smallmatrix} X \\ \im f \end{smallmatrix}\end{pmatrix}}, {\begin{pmatrix}\begin{smallmatrix} I \\ 0 \end{smallmatrix}\end{pmatrix}}) \ar[rd]^-{\cong}\ar[rr] \ar[dd] &&
({\begin{pmatrix}\begin{smallmatrix} X \\ \im f \end{smallmatrix}\end{pmatrix}}, {\begin{pmatrix}\begin{smallmatrix} \Ker d^1 \\ 0 \end{smallmatrix}\end{pmatrix}})\ar[rr] \ar[rd]^-{\cong} \ar[dd]
& &{\rm E}^{1}({\begin{pmatrix}\begin{smallmatrix} X \\ \im f \end{smallmatrix}\end{pmatrix}}, {\begin{pmatrix}\begin{smallmatrix} U \\ 0 \end{smallmatrix}\end{pmatrix}})\ar[rd]^-{\cong}\ar[rr] \ar[dd]
& & 0
\\ & (X, I) \ar[rr] \ar[dd] &&
(X, \Ker d^1)\ar[rr]  \ar[dd]
&& {\rm E}^{1}(X, U)\ar[r] \ar[dd]
&& 0
\\
{\rm E}^{1}({\begin{pmatrix}\begin{smallmatrix} 0 \\ \cok{f} \end{smallmatrix}\end{pmatrix}}, {\begin{pmatrix}\begin{smallmatrix} I \\ 0 \end{smallmatrix}\end{pmatrix}})\ar[rd]^-{\cong}\ar[rr]  &&
{\rm E}^{1}({\begin{pmatrix}\begin{smallmatrix} 0 \\ \cok{f} \end{smallmatrix}\end{pmatrix}}, {\begin{pmatrix}\begin{smallmatrix} \Ker d^1 \\ 0 \end{smallmatrix}\end{pmatrix}})\ar[rd]^-{\cong}\ar[rr]
&& {\rm E}^{2}({\begin{pmatrix}\begin{smallmatrix} 0 \\ \cok{f} \end{smallmatrix}\end{pmatrix}}, {\begin{pmatrix}\begin{smallmatrix} U \\ 0 \end{smallmatrix}\end{pmatrix}})\ar[rd]^-{\cong}\ar[rr]
&& 0
\\
& (N\otimes_BY, I) \ar[rr] &&
(N\otimes_BY, \Ker d^1)\ar[rr]
&& {\rm E}^{1}(N\otimes_BY, U)\ar[rr]
&& 0.
}\]}

\vskip5pt

\noindent (Note that ${\rm Ext}_{\Lambda}^{i}(\begin{pmatrix}\begin{smallmatrix}X\\ \im{f}\end{smallmatrix}\end{pmatrix}_{p, 0}, \begin{pmatrix}\begin{smallmatrix}I\\0\end{smallmatrix}\end{pmatrix}_{0, 0})=0, \ \ \forall \ i\ge 1,$ by Lemma \ref{im}(1);
and that ${\rm Ext}_{\Lambda}^{i}(\begin{pmatrix}\begin{smallmatrix}0\\Y\end{smallmatrix}\end{pmatrix}_{0, 0},\ \begin{pmatrix}\begin{smallmatrix}I\\0\end{smallmatrix}\end{pmatrix}_{0, 0})=0,\ \forall\ i\geq 2$, by Lemma \ref{iso}(1).)

\vskip5pt

In fact, by the case $i = 0$ one has the commutative parallelogram:

{\tiny\[\xymatrix@R=0.5cm@C=0.01cm{{\rm Hom}_\Lambda({\begin{pmatrix}\begin{smallmatrix} X \\ \im f \end{smallmatrix}\end{pmatrix}}, {\begin{pmatrix}\begin{smallmatrix} \Ker d^1 \\ 0 \end{smallmatrix}\end{pmatrix}}) \ar[rd]^-{\cong}\ar[dd]_-{\partial}
\\ & {\rm Hom}_A(X, \Ker d^1)\ar[dd]^-{{\rm Hom}_A(g, \Ker d^1)}
\\
{\rm Ext}^{1}_\Lambda({\begin{pmatrix}\begin{smallmatrix} 0 \\ \cok{f} \end{smallmatrix}\end{pmatrix}}, {\begin{pmatrix}\begin{smallmatrix} \Ker d^1 \\ 0 \end{smallmatrix}\end{pmatrix}})\ar[rd]^-{\cong}
\\
& {\rm Hom}_A(N\otimes_BY, \Ker d^1). }\]}

\noindent By the naturality of the connecting map one gets  the commutative squares

{\tiny\[\xymatrix@R=0.5cm@C=0.2cm {({\begin{pmatrix}\begin{smallmatrix} X \\ \im f \end{smallmatrix}\end{pmatrix}}, {\begin{pmatrix}\begin{smallmatrix} \Ker d^1 \\ 0 \end{smallmatrix}\end{pmatrix}})\ar[rr] \ar[d]
&& {\rm E}^{1}({\begin{pmatrix}\begin{smallmatrix} X \\ \im f \end{smallmatrix}\end{pmatrix}}, {\begin{pmatrix}\begin{smallmatrix} U \\ 0 \end{smallmatrix}\end{pmatrix}})\ar[d]
\\
{\rm E}^{1}({\begin{pmatrix}\begin{smallmatrix} 0 \\ \cok{f} \end{smallmatrix}\end{pmatrix}}, {\begin{pmatrix}\begin{smallmatrix} \Ker d^1 \\ 0 \end{smallmatrix}\end{pmatrix}})\ar[rr]
&& {\rm E}^{2}({\begin{pmatrix}\begin{smallmatrix} 0 \\ \cok{f} \end{smallmatrix}\end{pmatrix}}, {\begin{pmatrix}\begin{smallmatrix} U \\ 0 \end{smallmatrix}\end{pmatrix}})}
\ \ \ \ \ \ \ \ \ \ \
\xymatrix@R=0.5cm@C=0.2cm {(X, \Ker d^1)\ar[rr]\ar[d]
&& {\rm E}^{1}(X, U)\ar[d]
\\
(N\otimes_BY, \Ker d^1)\ar[rr]
&& {\rm E}^{1}(N\otimes_BY, U).}\]}

\vskip5pt

The commutativity of the parallelogram

{\tiny \[\xymatrix@C=0.3cm{{\rm Hom}_\Lambda({\begin{pmatrix}\begin{smallmatrix} X \\ \im f \end{smallmatrix}\end{pmatrix}}, {\begin{pmatrix}\begin{smallmatrix} I \\ 0 \end{smallmatrix}\end{pmatrix}})\ar[rr] \ar[rd]^-{\cong}
& & {\rm Hom}_\Lambda({\begin{pmatrix}\begin{smallmatrix} X \\ \im f \end{smallmatrix}\end{pmatrix}}, {\begin{pmatrix}\begin{smallmatrix} \Ker d^1 \\ 0 \end{smallmatrix}\end{pmatrix}})\ar[rd]^-{\cong}
\\ & {\rm Hom}_A(X, I)\ar[rr]
&& {\rm Hom}_A(X, \Ker d^1)}\]}
is guaranteed by Lemma \ref{im}(2), and this induces the commutativity of the parallelogram involving the cokernels:

{\tiny \[\xymatrix@C=0.3cm{{\rm Hom}_\Lambda({\begin{pmatrix}\begin{smallmatrix} X \\ \im f \end{smallmatrix}\end{pmatrix}}, {\begin{pmatrix}\begin{smallmatrix} \Ker d^1 \\ 0 \end{smallmatrix}\end{pmatrix}})\ar[rr]^{\partial} \ar[rd]^-{\cong}
& & {\rm Ext}_\Lambda^{1}({\begin{pmatrix}\begin{smallmatrix} X \\ \im f \end{smallmatrix}\end{pmatrix}}, {\begin{pmatrix}\begin{smallmatrix} U \\ 0 \end{smallmatrix}\end{pmatrix}})\ar[rd]^-{\cong}
\\ & {\rm Hom}_A(X, \Ker d^1)\ar[rr]^{\partial'}
&& {\rm Ext}_A^{1}(X, U).}\]}

\vskip5pt

The commutativity of the parallelogram

{\tiny \[\xymatrix@C=0.1cm{{\rm Ext}^{1}({\begin{pmatrix}\begin{smallmatrix} 0 \\ \cok{f} \end{smallmatrix}\end{pmatrix}}, {\begin{pmatrix}\begin{smallmatrix} \Ker d^1 \\ 0 \end{smallmatrix}\end{pmatrix}})\ar[rd]_-{\cong}\ar[rr]
&& {\rm Ext}^{2}({\begin{pmatrix}\begin{smallmatrix} 0 \\ \cok{f} \end{smallmatrix}\end{pmatrix}}, {\begin{pmatrix}\begin{smallmatrix} U \\ 0 \end{smallmatrix}\end{pmatrix}})\ar[rd]_-{\cong}
\\ & {\rm Hom}(N\otimes_BY, \Ker d^1)\ar[rr]
&& {\rm Ext}^{1}(N\otimes_BY, U)}\]}

\noindent follows from Lemma \ref{iso}(2).

\vskip5pt

Finally, all these five commutative diagrams induce the commutativity of the following parallelogram, by a standard argument

{\tiny\[\xymatrix@C=0.00001cm{{\rm Ext}^{1}_\Lambda({\begin{pmatrix}\begin{smallmatrix} X \\ \im f \end{smallmatrix}\end{pmatrix}}, {\begin{pmatrix}\begin{smallmatrix} U \\ 0 \end{smallmatrix}\end{pmatrix}})\ar[rd]^-{\cong}\ar[dd]_{\partial}
\\ & {\rm Ext}^{1}_A(X, U)\ar[dd]^-{{\rm Ext}_A^{1}(g, U)}
\\
{\rm Ext}^{2}_\Lambda({\begin{pmatrix}\begin{smallmatrix} 0 \\ \cok{f} \end{smallmatrix}\end{pmatrix}}, {\begin{pmatrix}\begin{smallmatrix} U \\ 0 \end{smallmatrix}\end{pmatrix}})\ar[rd]^-{\cong}
\\
& {\rm Ext}_A^{1}(N\otimes_BY, U)
}\]}

\noindent This is precisely the assertion for $i = 1$.

\vskip10pt

For $i\ge 2,$ let  $0\lxr U\lxr I\stackrel{d^{0}}{\lxr} I^{1}\stackrel{d^{1}}{\lxr} \cdots$
be an injective resolution of $U$. Consider the exact sequences
$$0\lxr {\begin{pmatrix}\begin{smallmatrix} U \\ 0 \end{smallmatrix}\end{pmatrix}}
\lxr  {\begin{pmatrix}\begin{smallmatrix} I \\ 0 \end{smallmatrix}\end{pmatrix}}
\lxr  {\begin{pmatrix}\begin{smallmatrix} \Ker d^1 \\ 0 \end{smallmatrix}\end{pmatrix}}  \lxr 0
$$
and
$$
0\lxr {\begin{pmatrix}\begin{smallmatrix} X \\ \im{f} \end{smallmatrix}\end{pmatrix}_{p, 0}}
\stackrel{\begin{pmatrix}\begin{smallmatrix} 1 \\ \iota \end{smallmatrix}\end{pmatrix}}\lxr {\begin{pmatrix}\begin{smallmatrix} X \\ Y \end{smallmatrix}\end{pmatrix}_{f, g}}
\stackrel{\begin{pmatrix}\begin{smallmatrix} 0 \\ \pi \end{smallmatrix}\end{pmatrix}}\lxr  {\begin{pmatrix}\begin{smallmatrix} 0 \\ \cok{f} \end{smallmatrix}\end{pmatrix}_{0, 0}}  \lxr 0.
$$

\vskip5pt
\noindent Then one gets the following commutative diagram with exact rows and columns

\vskip10pt

{\tiny\[\xymatrix@C = 0.3cm {
{\rm Ext}_{\Lambda}^{i-1}({\begin{pmatrix}\begin{smallmatrix} X \\ \im f \end{smallmatrix}\end{pmatrix}}, {\begin{pmatrix}\begin{smallmatrix} I \\ 0 \end{smallmatrix}\end{pmatrix}})\ar[r] \ar[d] &
{\rm Ext}_{\Lambda}^{i-1}({\begin{pmatrix}\begin{smallmatrix} X \\ \im f \end{smallmatrix}\end{pmatrix}}, {\begin{pmatrix}\begin{smallmatrix} \Ker d^1 \\ 0 \end{smallmatrix}\end{pmatrix}})\ar[r]^-\cong  \ar[d]^-{\partial}
& {\rm Ext}_{\Lambda}^{i}({\begin{pmatrix}\begin{smallmatrix} X \\ \im f \end{smallmatrix}\end{pmatrix}}, {\begin{pmatrix}\begin{smallmatrix} U \\ 0 \end{smallmatrix}\end{pmatrix}})\ar[r] \ar[d]^-{\partial}
& {\rm Ext}_{\Lambda}^{i}({\begin{pmatrix}\begin{smallmatrix} X \\ \im f \end{smallmatrix}\end{pmatrix}}, {\begin{pmatrix}\begin{smallmatrix} I \\ 0 \end{smallmatrix}\end{pmatrix}}) \ar[d]
\\
{\rm Ext}_{\Lambda}^{i}({\begin{pmatrix}\begin{smallmatrix} 0 \\ \cok{f} \end{smallmatrix}\end{pmatrix}}, {\begin{pmatrix}\begin{smallmatrix} I \\ 0 \end{smallmatrix}\end{pmatrix}})\ar[r]  &
{\rm Ext}_{\Lambda}^{i}({\begin{pmatrix}\begin{smallmatrix} 0 \\ \cok{f} \end{smallmatrix}\end{pmatrix}}, {\begin{pmatrix}\begin{smallmatrix} \Ker d^1 \\ 0 \end{smallmatrix}\end{pmatrix}})\ar[r]^-\cong
& {\rm Ext}_{\Lambda}^{i+1}({\begin{pmatrix}\begin{smallmatrix} 0 \\ \cok{f} \end{smallmatrix}\end{pmatrix}}, {\begin{pmatrix}\begin{smallmatrix} U \\ 0 \end{smallmatrix}\end{pmatrix}})\ar[r]
&{\rm Ext}_{\Lambda}^{i+1}({\begin{pmatrix}\begin{smallmatrix} 0 \\ \cok{f} \end{smallmatrix}\end{pmatrix}}, {\begin{pmatrix}\begin{smallmatrix} I \\ 0 \end{smallmatrix}\end{pmatrix}})}
\]}

\vskip10pt

\noindent  By Lemmas \ref{im}(1)  and  \ref{iso}(1) one has ${\rm Ext}_{\Lambda}^{i-1}({\begin{pmatrix}\begin{smallmatrix} X \\ \im f \end{smallmatrix}\end{pmatrix}}, {\begin{pmatrix}\begin{smallmatrix} I \\ 0 \end{smallmatrix}\end{pmatrix}}) = 0$
and ${\rm Ext}_{\Lambda}^{i}({\begin{pmatrix}\begin{smallmatrix} 0 \\ \cok{f} \end{smallmatrix}\end{pmatrix}}, {\begin{pmatrix}\begin{smallmatrix} I \\ 0 \end{smallmatrix}\end{pmatrix}}) =0$ for $i\ge 2$.
Then one has the commutative diagram (rotating the diagram)

\vskip5pt

\[\xymatrix{
{\rm Ext}_{\Lambda}^{i}({\begin{pmatrix}\begin{smallmatrix} X \\ \im{f} \end{smallmatrix}\end{pmatrix}_{p, 0}}, {\begin{pmatrix}\begin{smallmatrix} U \\ 0 \end{smallmatrix}\end{pmatrix}})\ar[r]^-{\partial}    &   {\rm Ext}_{\Lambda}^{i+1}({\begin{pmatrix}\begin{smallmatrix} 0 \\ \cok{f} \end{smallmatrix}\end{pmatrix}_{0, 0}}, {\begin{pmatrix}\begin{smallmatrix} U \\ 0 \end{smallmatrix}\end{pmatrix}})\\
{\rm Ext}_{\Lambda}^{i-1}({\begin{pmatrix}\begin{smallmatrix} X \\ \im{f} \end{smallmatrix}\end{pmatrix}_{p, 0}}, {\begin{pmatrix}\begin{smallmatrix} \Ker d^1 \\ 0 \end{smallmatrix}\end{pmatrix}})\ar[r]^-{\partial} \ar[u]^{\iso} &
 {\rm Ext}_{\Lambda}^{i}({\begin{pmatrix}\begin{smallmatrix} 0 \\ \cok{f} \end{smallmatrix}\end{pmatrix}}, {\begin{pmatrix}\begin{smallmatrix} \Ker d^1 \\ 0 \end{smallmatrix}\end{pmatrix}})\ar[u]^{\iso}}
\]

\vskip5pt
\noindent Then one get the following diagram

\vskip5pt

\[\xymatrix@C=2cm{
{\rm Ext}_{\Lambda}^{i}({\begin{pmatrix}\begin{smallmatrix} X \\ \im{f} \end{smallmatrix}\end{pmatrix}_{p, 0}}, {\begin{pmatrix}\begin{smallmatrix} U \\ 0 \end{smallmatrix}\end{pmatrix}})\ar[r]^-{\partial}
&   {\rm Ext}_{\Lambda}^{i+1}({\begin{pmatrix}\begin{smallmatrix} 0 \\ \cok{f} \end{smallmatrix}\end{pmatrix}_{0, 0}}, {\begin{pmatrix}\begin{smallmatrix} U \\ 0 \end{smallmatrix}\end{pmatrix}})\\
{\rm Ext}_{\Lambda}^{i-1}({\begin{pmatrix}\begin{smallmatrix} X \\ \im{f} \end{smallmatrix}\end{pmatrix}_{p, 0}}, {\begin{pmatrix}\begin{smallmatrix} \Ker d^1 \\ 0 \end{smallmatrix}\end{pmatrix}})\ar[r]^-{\partial} \ar[u]^{\iso} \ar[d]_{\iso}&
 {\rm Ext}_{\Lambda}^{i}({\begin{pmatrix}\begin{smallmatrix} 0 \\ \cok{f} \end{smallmatrix}\end{pmatrix}}, {\begin{pmatrix}\begin{smallmatrix}  \Ker d^1 \\ 0 \end{smallmatrix}\end{pmatrix}})\ar[u]^{\iso}\ar[d]_{\iso}
 \\
{\rm Ext}_{A}^{i-1}(X, \Ker d^1)\ar[r]^-{{\rm Ext}_{A}^{i-1}(g, \Ker d^1)}  \ar[d]_{\iso} &  {\rm Ext}_{A}^{i-1}(N\otimes_BY, \Ker d^1) \ar[d]_{\iso} \\
{\rm Ext}_{A}^{i}(X, U)\ar[r]^-{{\rm Ext}_{A}^{i}(g, U)}  &  {\rm Ext}_{A}^{i}(N\otimes_BY, U)}\]

\vskip5pt
\noindent The first square is already proved commutative; the second square commutes by induction; the third square also commutes,
since  both the vertical isomorphisms are obtained by dimension shift.
All together, the first and the fourth row give rise to the desired commutative diagram.

\vskip10pt The assertion (2) can be similarly proved, by using Lemma \ref{baersum}(2), Lemma \ref{im}(3) and (4), and Lemma \ref{iso2}.
\end{proof}

\begin{prop}\label{iff} Let $\Lambda=\begin{pmatrix}\begin{smallmatrix}
A & N \\
M & B \\
\end{smallmatrix}\end{pmatrix}$ be a Morita ring, and $\begin{pmatrix}\begin{smallmatrix}X\\Y\end{smallmatrix}\end{pmatrix}_{f, g}$ a left $\Lambda$-module.
Assume that $M_{A}$ and $N_{B}$ are projective modules. Then

\vskip 5pt

{\rm (1)} \   Assume that  $N\otimes_{B}M = 0$. Then $\begin{pmatrix}\begin{smallmatrix}X\\Y\end{smallmatrix}\end{pmatrix}_{f, g}\in \ {^{\perp}\begin{pmatrix}\begin{smallmatrix}A\\0\end{smallmatrix}\end{pmatrix}_{0, 0}}$
if and only if \ ${\rm Hom}_A(g, A):{\rm Hom}_{A}(X, A)\lxr {\rm Hom}_{A}(N\otimes_{B}Y, A)$ is an epimorphism and \ ${\rm Ext}_{A}^{i}(g, A): {\rm Ext}_{A}^{i}(X, A)\longrightarrow {\rm Ext}_{A}^{i}(N\otimes_{B}Y, A)$ is an isomorphism for $i\geq 1$.

\vskip 5pt

{\rm (2)} \  Assume that $M\otimes_{A}N=0$. Then  $\begin{pmatrix}\begin{smallmatrix}X\\Y\end{smallmatrix}\end{pmatrix}_{f, g}\in \ {^{\perp}\begin{pmatrix}\begin{smallmatrix}0\\B\end{smallmatrix}\end{pmatrix}_{0, 0}}$ if and only if
\ ${\rm Hom}_B(f, B):{\rm Hom}_{B}(Y, B)\lxr {\rm Hom}_B(M\otimes_AX, B)$ is an epimorphism and \ ${\rm Ext}_{B}^{i}(f, B):{\rm Ext}_{B}^{i}(Y, B)\iso {\rm Ext}_{B}^{i}(M\otimes_{A}X, B)$ is an isomorphism for $i\geq 1$.
\end{prop}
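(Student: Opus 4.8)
The plan is to prove parts (1) and (2) simultaneously, as they are mirror images of one another under interchanging the roles of $A$ and $B$, of $f$ and $g$, and of $M\otimes_A X$ and $N\otimes_B Y$; so I will carry out (1) in detail (using the hypothesis $N\otimes_B M = 0$, on top of the blanket assumption that $M_A$ and $N_B$ are projective) and then indicate the translation for (2) (which uses $M\otimes_A N = 0$ instead). The engine of the proof is the canonical short exact sequence of left $\Lambda$-modules attached to the image factorization of $f$, displayed just before Lemma \ref{im}:
$$0\lxr {\begin{pmatrix}\begin{smallmatrix} X \\ \im{f} \end{smallmatrix}\end{pmatrix}_{p, 0}}\lxr {\begin{pmatrix}\begin{smallmatrix} X \\ Y \end{smallmatrix}\end{pmatrix}_{f, g}}\lxr {\begin{pmatrix}\begin{smallmatrix} 0 \\ \cok{f} \end{smallmatrix}\end{pmatrix}_{0, 0}}\lxr 0.$$
Applying $\Hom_\Lambda(-, \ {\begin{pmatrix}\begin{smallmatrix} A \\ 0 \end{smallmatrix}\end{pmatrix}_{0, 0}})$ yields a long exact sequence, and the entire argument consists in translating each of its terms and connecting maps back to the category of $A$-modules with the help of the lemmas already at our disposal.

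First I would assemble the needed identifications. Lemma \ref{im}(2) with $U = A$ gives $\Ext^i_\Lambda({\begin{pmatrix}\begin{smallmatrix} X \\ \im f \end{smallmatrix}\end{pmatrix}_{p, 0}}, \ {\begin{pmatrix}\begin{smallmatrix} A \\ 0 \end{smallmatrix}\end{pmatrix}_{0, 0}}) \iso \Ext^i_A(X, A)$ for all $i \ge 0$. On the other side, a $\Lambda$-homomorphism from ${\begin{pmatrix}\begin{smallmatrix} 0 \\ \cok f \end{smallmatrix}\end{pmatrix}_{0, 0}}$ to ${\begin{pmatrix}\begin{smallmatrix} A \\ 0 \end{smallmatrix}\end{pmatrix}_{0, 0}}$ is a pair $(a, b)$ with $a\colon 0 \to A$ and $b\colon \cok f \to 0$, hence is zero, so $\Hom_\Lambda({\begin{pmatrix}\begin{smallmatrix} 0 \\ \cok f \end{smallmatrix}\end{pmatrix}_{0, 0}}, \ {\begin{pmatrix}\begin{smallmatrix} A \\ 0 \end{smallmatrix}\end{pmatrix}_{0, 0}}) = 0$. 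For the higher groups, Lemma \ref{induction}(1) with $U = A$ tells me at once that $\Ext^{i+1}_\Lambda({\begin{pmatrix}\begin{smallmatrix} 0 \\ \cok f \end{smallmatrix}\end{pmatrix}_{0, 0}}, \ {\begin{pmatrix}\begin{smallmatrix} A \\ 0 \end{smallmatrix}\end{pmatrix}_{0, 0}}) \iso \Ext^i_A(N\otimes_B Y, A)$ for all $i \ge 0$, and that under these isomorphisms together with the one from Lemma \ref{im}(2) the connecting homomorphism of the long exact sequence becomes $\Ext^i_A(g, A)\colon \Ext^i_A(X, A) \to \Ext^i_A(N\otimes_B Y, A)$. (The isomorphism $N\otimes_B Y \iso N\otimes_B\cok f$ implicit here is ${\rm Id}_N\otimes\pi$, which is bijective precisely because $N\otimes_B M = 0$, and it is already built into Lemmas \ref{iso}(2) and \ref{induction}(1).)

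Feeding these into the long exact sequence collapses it, for every $i \ge 1$, to the exact fragment
$$\Ext^{i-1}_A(X, A)\ \xrightarrow{\ \Ext^{i-1}_A(g, A)\ }\ \Ext^{i-1}_A(N\otimes_B Y, A)\ \lxr\ \Ext^i_\Lambda({\begin{pmatrix}\begin{smallmatrix} X \\ Y \end{smallmatrix}\end{pmatrix}_{f, g}}, \ {\begin{pmatrix}\begin{smallmatrix} A \\ 0 \end{smallmatrix}\end{pmatrix}_{0, 0}})\ \lxr\ \Ext^i_A(X, A)\ \xrightarrow{\ \Ext^i_A(g, A)\ }\ \Ext^i_A(N\otimes_B Y, A),$$
with the convention $\Ext^0 = \Hom$. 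From this fragment one reads off that $\Ext^i_\Lambda({\begin{pmatrix}\begin{smallmatrix} X \\ Y \end{smallmatrix}\end{pmatrix}_{f, g}}, \ {\begin{pmatrix}\begin{smallmatrix} A \\ 0 \end{smallmatrix}\end{pmatrix}_{0, 0}}) = 0$ for every $i \ge 1$ if and only if $\Ext^{i-1}_A(g, A)$ is an epimorphism and $\Ext^i_A(g, A)$ is a monomorphism for every $i \ge 1$; letting $i$ range, this is equivalent to $\Hom_A(g, A)$ being an epimorphism and $\Ext^i_A(g, A)$ being an isomorphism for all $i \ge 1$, which is exactly the assertion of (1). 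For (2) I would repeat the argument verbatim with the mirror short exact sequence $0 \to {\begin{pmatrix}\begin{smallmatrix} \im g \\ Y \end{smallmatrix}\end{pmatrix}_{0, p}} \to {\begin{pmatrix}\begin{smallmatrix} X \\ Y \end{smallmatrix}\end{pmatrix}_{f, g}} \to {\begin{pmatrix}\begin{smallmatrix} \cok g \\ 0 \end{smallmatrix}\end{pmatrix}_{0, 0}} \to 0$, applying $\Hom_\Lambda(-, \ {\begin{pmatrix}\begin{smallmatrix} 0 \\ B \end{smallmatrix}\end{pmatrix}_{0, 0}})$ and invoking Lemma \ref{im}(4), Lemma \ref{iso2}(2) and Lemma \ref{induction}(2) in place of their $A$-sided counterparts, with $f$, $M\otimes_A X$, $B$ taking over the roles of $g$, $N\otimes_B Y$, $A$.

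The computation is essentially bookkeeping. The two points that genuinely require attention are, first, the low-degree boundary of the long exact sequence, where one must observe the vanishing of the relevant $\Hom$ group so that degree $1$ of the sequence has the stated shape, and second, the degree shift built into Lemmas \ref{iso}(2) and \ref{iso2}(2), which is exactly what separates the epimorphism condition on $\Hom_A(g, A)$ from the isomorphism conditions on $\Ext^i_A(g, A)$ for $i \ge 1$. Since all of the substantial work --- Lemmas \ref{im}, \ref{iso}, \ref{iso2}, and \ref{induction}, the last of which carries the identification of the connecting map --- is already in place, no serious obstacle remains beyond assembling these pieces in the correct order.
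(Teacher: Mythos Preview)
Your proposal is correct and follows essentially the same approach as the paper's proof: both apply $\Hom_\Lambda(-, \begin{pmatrix}\begin{smallmatrix} A \\ 0 \end{smallmatrix}\end{pmatrix})$ to the short exact sequence arising from the image factorization of $f$, identify the outer terms via Lemmas \ref{im}(2) and \ref{iso}(2), and use Lemma \ref{induction}(1) to recognize the connecting maps as $\Ext^i_A(g,A)$. Your presentation is in fact slightly cleaner in that you write out the five-term fragment explicitly and read off the epi/mono conditions directly, whereas the paper displays the same content as two separate commutative diagrams.
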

\begin{proof}\ (1) \ The exact sequence of left $B$-modules
\[
\xymatrix@C=0.6cm@R=0.5cm{
0\ar[r]  &   \Ker{f}\ar[r]^-{\sigma}   &    M\otimes_{A}X \ar[rr]^-{f}    &&  Y \ar[r]^-{\pi} \ar@{<-_{)}}[dl]_{\iota} &   \cok{f} \ar[r]   &    0\\
 &&& \im{f} \ar@{<<-}[ul]_{p}  &&& \\
}
\]
induces the exact sequence of left $\Lambda$-modules:

$$
0\lxr {\begin{pmatrix}\begin{smallmatrix} X \\ \im{f} \end{smallmatrix}\end{pmatrix}_{p, 0}}
\stackrel{\begin{pmatrix}\begin{smallmatrix} 1 \\ \iota \end{smallmatrix}\end{pmatrix}}\lxr {\begin{pmatrix}\begin{smallmatrix} X \\ Y \end{smallmatrix}\end{pmatrix}_{f, g}}
\stackrel{\begin{pmatrix}\begin{smallmatrix} 0 \\ \pi \end{smallmatrix}\end{pmatrix}}\lxr  {\begin{pmatrix}\begin{smallmatrix} 0 \\ \cok{f} \end{smallmatrix}\end{pmatrix}_{0, 0}}  \lxr 0.
$$

\vskip5pt

\noindent By Lemma \ref{im}(2) one has
$
{\rm Ext}_{\Lambda}^{i}({\begin{pmatrix}\begin{smallmatrix} X \\ \im{f} \end{smallmatrix}\end{pmatrix}_{p, 0}},\ {\begin{pmatrix}\begin{smallmatrix} A \\ 0 \end{smallmatrix}\end{pmatrix}_{0, 0}}) \iso {\rm Ext}_{A}^{i}(X, A)
$ for $i\geq 0$.

\vskip 5pt

Since $N\otimes_{B}M=0$,  $N\otimes_{B}{\cok}f\iso N\otimes_{B}Y$.
By Lemma \ref{iso}(2) one has
${\rm Ext}_{\Lambda}^{i+1}({\begin{pmatrix}\begin{smallmatrix} 0 \\ \cok{f} \end{smallmatrix}\end{pmatrix}_{0, 0}},\ {\begin{pmatrix}\begin{smallmatrix} A \\ 0 \end{smallmatrix}\end{pmatrix}_{0, 0}}) \iso {\rm Ext}_{A}^{i}(N\otimes_BY, A)
$ for $i\geq 0$. Applying ${\rm Hom}_{\Lambda}(-, {\begin{pmatrix}\begin{smallmatrix} A \\ 0 \end{smallmatrix}\end{pmatrix}_{0, 0}})$  we get the diagram with exact row:

{\footnotesize
\[\xymatrix@C=12pt@R=20pt{
{\rm Hom}_{\Lambda}({\begin{pmatrix}\begin{smallmatrix} X \\ \im{f} \end{smallmatrix}\end{pmatrix}_{p, 0}},\ {\begin{pmatrix}\begin{smallmatrix} A \\ 0 \end{smallmatrix}\end{pmatrix}_{0, 0}})\ar[r] \ar[d]_-{\iso}   &   {\rm Ext}_{\Lambda}^{1}({\begin{pmatrix}\begin{smallmatrix} 0 \\ \cok{f} \end{smallmatrix}\end{pmatrix}_{0, 0}},\ {\begin{pmatrix}\begin{smallmatrix} A \\ 0 \end{smallmatrix}\end{pmatrix}_{0, 0}})\ar[r] \ar[d]^{\iso}  &   {\rm Ext}_{\Lambda}^{1}({\begin{pmatrix}\begin{smallmatrix} X \\ Y \end{smallmatrix}\end{pmatrix}_{f, g}},\ {\begin{pmatrix}\begin{smallmatrix} A \\ 0 \end{smallmatrix}\end{pmatrix}_{0, 0}}) \\
{\rm Hom}_{A}(X, A)\ar[r]^-{{\rm Hom}(g, A)}   &   {\rm Hom}_{A}(N\otimes_{B}Y, A)   &  }
\]
}

\vskip5pt \noindent and the diagram with exact row for $i\geq1$

{\tiny
\[\xymatrix@C=10pt@R=20pt{
{\rm Ext}_{\Lambda}^{i}\left({\begin{pmatrix}\begin{smallmatrix} X \\ Y \end{smallmatrix}\end{pmatrix}_{f, g}}, {\begin{pmatrix}\begin{smallmatrix} A \\ 0 \end{smallmatrix}\end{pmatrix}}\right) \ar[r]  &  {\rm Ext}_{\Lambda}^{i}\left({\begin{pmatrix}\begin{smallmatrix} X \\ \im{f} \end{smallmatrix}\end{pmatrix}_{p, 0}}, {\begin{pmatrix}\begin{smallmatrix} A \\ 0 \end{smallmatrix}\end{pmatrix}}\right)\ar[r] \ar[d]_-{\iso}   &   {\rm Ext}_{\Lambda}^{i+1}\left({\begin{pmatrix}\begin{smallmatrix} 0 \\ \cok{f} \end{smallmatrix}\end{pmatrix}}, {\begin{pmatrix}\begin{smallmatrix} A \\ 0 \end{smallmatrix}\end{pmatrix}}\right)\ar[r] \ar[d]^{\iso}  &   {\rm Ext}_{\Lambda}^{i+1}\left({\begin{pmatrix}\begin{smallmatrix} X \\ Y \end{smallmatrix}\end{pmatrix}_{f, g}}, {\begin{pmatrix}\begin{smallmatrix} A \\ 0 \end{smallmatrix}\end{pmatrix}}\right) \\
& {\rm Ext}_{A}^{i}(X, A)\ar[r]^-{{\rm Ext}^{i}(g, A)}   &   {\rm Ext}_{A}^{i}(N\otimes_{B}Y, A).   &  }
\]}

\vskip5pt

\noindent The commutativity of the two squares follows from Lemma \ref{induction}(1). From these we see that $\begin{pmatrix}\begin{smallmatrix}X\\Y\end{smallmatrix}\end{pmatrix}_{f, g}\in {^{\perp}\begin{pmatrix}\begin{smallmatrix}A\\0\end{smallmatrix}\end{pmatrix}_{0, 0}}$ if and only if \ ${\rm Hom}(g, A):{\rm Hom}_{A}(X, A)\lxr {\rm Hom}_{A}(N\otimes_{B}Y, A)$ is an epimorphism and ${\rm Ext}^i_A(g, A): \ {\rm Ext}_{A}^{i}(X, A)\iso {\rm Ext}_{A}^{i}(N\otimes_{B}Y, A)$ is an isomorphism for $i\geq 1$. This completes the proof of (1).

\vskip5pt

The assertion (2) can be similarly proved, by using Lemmas \ref{im}(4), \ref{iso2}(2), and  \ref{induction}(2). \end{proof}

\subsection{Semi-Gorenstein-projective modules} Now we are in position to prove the main result of this section.

\begin{thm}\label{semi} \ Let $\Lambda=\begin{pmatrix}\begin{smallmatrix}
A & N \\
M & B \\
\end{smallmatrix}\end{pmatrix}$ be a Morita ring with $M\otimes_{A}N=0=N\otimes_{B}M$. Assume that \ $_BM, \ _AN$, \ $M_{A}$ and $N_{B}$ are projective modules. Then $\begin{pmatrix}\begin{smallmatrix}X\\Y\end{smallmatrix}\end{pmatrix}_{f, g}$ is a semi-Gorenstein-projective $\Lambda\mbox{-}$module if and only if the following conditions are satisfied$:$

\vskip 5pt

{\rm (1)} \  ${\rm Hom}_A(g, A):{\rm Hom}_{A}(X, A)\lxr {\rm Hom}_{A}(N\otimes_{B}Y, A)$ is an epimorphism$;$

\vskip 5pt

{\rm (2)} \ ${\rm Ext}_{A}^{i}(g, A): {\rm Ext}_{A}^{i}(X, A)\longrightarrow {\rm Ext}_{A}^{i}(N\otimes_{B}Y, A)$ is an isomorphism for $i\geq 1;$

\vskip 5pt

{\rm (3)} \  ${\rm Hom}_B(f, B):{\rm Hom}_{B}(Y, B)\lxr {\rm Hom}_B(M\otimes_AX, B)$ is an epimorphism$;$

\vskip 5pt

{\rm (4)} \ \  ${\rm Ext}_{B}^{i}(f, B):{\rm Ext}_{B}^{i}(Y, B)\longrightarrow {\rm Ext}_{B}^{i}(M\otimes_{A}X, B)$ is an isomorphism for $i\geq 1$.

\end{thm}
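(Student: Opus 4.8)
The strategy is to combine Proposition \ref{iff} with the identity
$$\leftidx^\perp\Lambda=\ {^{\perp}\begin{pmatrix}\begin{smallmatrix}A\\0\end{smallmatrix}\end{pmatrix}_{0, 0}}\ \cap\ {^{\perp}\begin{pmatrix}\begin{smallmatrix}0\\B\end{smallmatrix}\end{pmatrix}_{0, 0}} \eqno(\star)$$
Granting $(\star)$, a left $\Lambda$-module $\begin{pmatrix}\begin{smallmatrix}X\\Y\end{smallmatrix}\end{pmatrix}_{f,g}$ is semi-Gorenstein-projective exactly when it lies in both ${^{\perp}\begin{pmatrix}\begin{smallmatrix}A\\0\end{smallmatrix}\end{pmatrix}_{0, 0}}$ and ${^{\perp}\begin{pmatrix}\begin{smallmatrix}0\\B\end{smallmatrix}\end{pmatrix}_{0, 0}}$; by Proposition \ref{iff}(1) the former membership is equivalent to conditions (1) and (2), and by Proposition \ref{iff}(2) the latter is equivalent to conditions (3) and (4). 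Thus the whole theorem reduces to proving $(\star)$.

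To prove $(\star)$ I would first record the shape of the relevant projective modules. By Lemma \ref{projectives}(1), taking $P={}_AA$ and $Q={}_BB$, one has ${}_\Lambda\Lambda\cong\begin{pmatrix}\begin{smallmatrix}A\\M\end{smallmatrix}\end{pmatrix}_{\id_M,0}\oplus\begin{pmatrix}\begin{smallmatrix}N\\B\end{smallmatrix}\end{pmatrix}_{0,\id_N}$. The crucial observation is that $\begin{pmatrix}\begin{smallmatrix}0\\M\end{smallmatrix}\end{pmatrix}_{0,0}$ and $\begin{pmatrix}\begin{smallmatrix}N\\0\end{smallmatrix}\end{pmatrix}_{0,0}$ are themselves projective left $\Lambda$-modules: since $N\otimes_BM=0$ we may rewrite $\begin{pmatrix}\begin{smallmatrix}0\\M\end{smallmatrix}\end{pmatrix}_{0,0}=\begin{pmatrix}\begin{smallmatrix}N\otimes_BM\\M\end{smallmatrix}\end{pmatrix}_{0,\id_{N\otimes_BM}}$, and since ${}_BM$ is projective, a decomposition of ${}_BM$ into indecomposable projectives realizes this module as a direct summand of a direct sum of the projectives $\begin{pmatrix}\begin{smallmatrix}N\otimes_BQ\\Q\end{smallmatrix}\end{pmatrix}_{0,\id}$ of Lemma \ref{projectives}(1); dually, $M\otimes_AN=0$ together with projectivity of ${}_AN$ gives that $\begin{pmatrix}\begin{smallmatrix}N\\0\end{smallmatrix}\end{pmatrix}_{0,0}=\begin{pmatrix}\begin{smallmatrix}N\\M\otimes_AN\end{smallmatrix}\end{pmatrix}_{\id_{M\otimes_AN},0}$ is projective. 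Since moreover ${}_BM\in\add({}_BB)$ and ${}_AN\in\add({}_AA)$ and the assignments $Y\mapsto\begin{pmatrix}\begin{smallmatrix}0\\Y\end{smallmatrix}\end{pmatrix}_{0,0}$, $X\mapsto\begin{pmatrix}\begin{smallmatrix}X\\0\end{smallmatrix}\end{pmatrix}_{0,0}$ are additive, we get $\begin{pmatrix}\begin{smallmatrix}0\\M\end{smallmatrix}\end{pmatrix}_{0,0}\in\add\begin{pmatrix}\begin{smallmatrix}0\\B\end{smallmatrix}\end{pmatrix}_{0,0}$ and $\begin{pmatrix}\begin{smallmatrix}N\\0\end{smallmatrix}\end{pmatrix}_{0,0}\in\add\begin{pmatrix}\begin{smallmatrix}A\\0\end{smallmatrix}\end{pmatrix}_{0,0}$. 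Finally, checking exactness componentwise by Lemma \ref{projectives}(3), one has short exact sequences of left $\Lambda$-modules
$$0\lxr\begin{pmatrix}\begin{smallmatrix}0\\M\end{smallmatrix}\end{pmatrix}_{0,0}\xrightarrow{\binom{0}{1}}\begin{pmatrix}\begin{smallmatrix}A\\M\end{smallmatrix}\end{pmatrix}_{\id_M,0}\xrightarrow{\binom{1}{0}}\begin{pmatrix}\begin{smallmatrix}A\\0\end{smallmatrix}\end{pmatrix}_{0,0}\lxr0,$$
$$0\lxr\begin{pmatrix}\begin{smallmatrix}N\\0\end{smallmatrix}\end{pmatrix}_{0,0}\xrightarrow{\binom{1}{0}}\begin{pmatrix}\begin{smallmatrix}N\\B\end{smallmatrix}\end{pmatrix}_{0,\id_N}\xrightarrow{\binom{0}{1}}\begin{pmatrix}\begin{smallmatrix}0\\B\end{smallmatrix}\end{pmatrix}_{0,0}\lxr0.$$

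Now $(\star)$ follows by chasing long exact sequences of $\Ext_\Lambda(W,-)$ for $W=\begin{pmatrix}\begin{smallmatrix}X\\Y\end{smallmatrix}\end{pmatrix}_{f,g}$. If $W\in\leftidx^\perp\Lambda$, then $\Ext^i_\Lambda(W,-)$ annihilates every object of $\add\Lambda$, in particular $\begin{pmatrix}\begin{smallmatrix}0\\M\end{smallmatrix}\end{pmatrix}_{0,0}$ and $\begin{pmatrix}\begin{smallmatrix}A\\M\end{smallmatrix}\end{pmatrix}_{\id_M,0}$, so applying $\Hom_\Lambda(W,-)$ to the first short exact sequence squeezes $\Ext^i_\Lambda(W,\begin{pmatrix}\begin{smallmatrix}A\\0\end{smallmatrix}\end{pmatrix}_{0,0})$ between $\Ext^i_\Lambda(W,\begin{pmatrix}\begin{smallmatrix}A\\M\end{smallmatrix}\end{pmatrix}_{\id_M,0})=0$ and $\Ext^{i+1}_\Lambda(W,\begin{pmatrix}\begin{smallmatrix}0\\M\end{smallmatrix}\end{pmatrix}_{0,0})=0$ for $i\ge1$; hence $W\in{^{\perp}\begin{pmatrix}\begin{smallmatrix}A\\0\end{smallmatrix}\end{pmatrix}_{0,0}}$, and symmetrically $W\in{^{\perp}\begin{pmatrix}\begin{smallmatrix}0\\B\end{smallmatrix}\end{pmatrix}_{0,0}}$. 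Conversely, if $W$ lies in both perpendicular categories, then $\Ext^i_\Lambda(W,\begin{pmatrix}\begin{smallmatrix}0\\M\end{smallmatrix}\end{pmatrix}_{0,0})=0$ (being a direct summand of a finite direct sum of copies of $\Ext^i_\Lambda(W,\begin{pmatrix}\begin{smallmatrix}0\\B\end{smallmatrix}\end{pmatrix}_{0,0})$, which vanishes) and likewise $\Ext^i_\Lambda(W,\begin{pmatrix}\begin{smallmatrix}N\\0\end{smallmatrix}\end{pmatrix}_{0,0})=0$ for $i\ge1$; the first short exact sequence then forces $\Ext^i_\Lambda(W,\begin{pmatrix}\begin{smallmatrix}A\\M\end{smallmatrix}\end{pmatrix}_{\id_M,0})=0$ and the second forces $\Ext^i_\Lambda(W,\begin{pmatrix}\begin{smallmatrix}N\\B\end{smallmatrix}\end{pmatrix}_{0,\id_N})=0$, so $\Ext^i_\Lambda(W,\Lambda)=0$ for $i\ge1$. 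This establishes $(\star)$ and with it the theorem.

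The only delicate step is the one just highlighted: that $\begin{pmatrix}\begin{smallmatrix}0\\M\end{smallmatrix}\end{pmatrix}_{0,0}$ and $\begin{pmatrix}\begin{smallmatrix}N\\0\end{smallmatrix}\end{pmatrix}_{0,0}$ are projective. Using only the decomposition of ${}_\Lambda\Lambda$ and the two short exact sequences without this fact, one transfers the vanishing of $\Ext^i_\Lambda(W,\Lambda)$ to $\Ext^i_\Lambda(W,\begin{pmatrix}\begin{smallmatrix}A\\0\end{smallmatrix}\end{pmatrix}_{0,0})$ only as a self-referential estimate, expressing the latter as a direct summand of a finite direct sum of copies of $\Ext^{i+2}_\Lambda(W,\begin{pmatrix}\begin{smallmatrix}A\\0\end{smallmatrix}\end{pmatrix}_{0,0})$, which does not terminate. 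It is exactly the interplay of the vanishing hypotheses $M\otimes_AN=0=N\otimes_BM$ with the one-sided projectivities of ${}_BM$ and ${}_AN$ that makes these two modules projective and collapses the argument; the remaining hypotheses, projectivity of $M_A$ and $N_B$, enter through Proposition \ref{iff}.
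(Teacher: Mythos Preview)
Your proof is correct and follows essentially the same approach as the paper's own proof: both arguments use Proposition \ref{iff} together with the two short exact sequences displayed, the observation that $\begin{pmatrix}\begin{smallmatrix}0\\M\end{smallmatrix}\end{pmatrix}_{0,0}$ and $\begin{pmatrix}\begin{smallmatrix}N\\0\end{smallmatrix}\end{pmatrix}_{0,0}$ are projective $\Lambda$-modules (via $M\otimes_AN=0=N\otimes_BM$ and projectivity of ${}_BM,{}_AN$), and the $\add$ relations coming from projectivity of ${}_BM$ and ${}_AN$. Your formulation of the intermediate identity $(\star)$ and the closing remark on where each hypothesis enters are a bit more explicit than the paper's presentation, but the mathematical content is the same.
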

\begin{proof}\ Consider exact sequences of left $\Lambda$-modules:
\[
0\lxr \begin{pmatrix}\begin{smallmatrix}0 \\ M \end{smallmatrix}\end{pmatrix}_{0,0} \lxr \begin{pmatrix}\begin{smallmatrix}A \\ M \end{smallmatrix}\end{pmatrix}_{\id_{M}, 0}\lxr \begin{pmatrix}\begin{smallmatrix}A \\ 0 \end{smallmatrix}\end{pmatrix}_{0,0} \lxr 0
\]
and
\[
0\lxr \begin{pmatrix}\begin{smallmatrix} N \\ 0 \end{smallmatrix}\end{pmatrix}_{0,0}\lxr \begin{pmatrix}\begin{smallmatrix} N \\ B \end{smallmatrix}\end{pmatrix}_{0, \id_{N}}\lxr \begin{pmatrix}\begin{smallmatrix}0 \\ B \end{smallmatrix}\end{pmatrix}_{0,0}\lxr 0.
\]

\vskip 10pt

$\Longleftarrow:$  \ \ By Proposition \ref{iff} one has

$$\begin{pmatrix}\begin{smallmatrix}X\\Y\end{smallmatrix}\end{pmatrix}_{f, g}\in \ {^{\perp}\begin{pmatrix}\begin{smallmatrix}A\\0\end{smallmatrix}\end{pmatrix}_{0, 0}} \ \ \ \mbox{and}
\ \ \begin{pmatrix}\begin{smallmatrix}X\\Y\end{smallmatrix}\end{pmatrix}_{f, g}\in \  {^{\perp}\begin{pmatrix}\begin{smallmatrix}0\\B\end{smallmatrix}\end{pmatrix}_{0, 0}}.$$

\vskip5pt

\noindent Since $_{A}N$ and $_{B}M$ are projective, it follows that

$$\begin{pmatrix}\begin{smallmatrix}N\\0\end{smallmatrix}\end{pmatrix}_{0, 0}\in \ {\rm add} \begin{pmatrix}\begin{smallmatrix}A\\0\end{smallmatrix}\end{pmatrix}_{0, 0} \ \ \
\mbox{and} \ \ \begin{pmatrix}\begin{smallmatrix}0\\M\end{smallmatrix}\end{pmatrix}_{0, 0}\in \ {\rm add} \begin{pmatrix}\begin{smallmatrix}0\\B\end{smallmatrix}\end{pmatrix}_{0, 0}$$

\vskip5pt \noindent and hence $$\begin{pmatrix}\begin{smallmatrix}X\\Y\end{smallmatrix}\end{pmatrix}_{f, g}\in \ {^{\perp}\begin{pmatrix}\begin{smallmatrix}N\\0\end{smallmatrix}\end{pmatrix}_{0, 0}} \ \ \ \mbox{and} \ \ \begin{pmatrix}\begin{smallmatrix}X\\Y\end{smallmatrix}\end{pmatrix}_{f, g}\in \ {^{\perp}\begin{pmatrix}\begin{smallmatrix}0\\M\end{smallmatrix}\end{pmatrix}_{0, 0}}.$$

\vskip5pt \noindent Hence $\begin{pmatrix}\begin{smallmatrix}X\\Y\end{smallmatrix}\end{pmatrix}_{f, g}\in \ {^{\perp}\begin{pmatrix}\begin{smallmatrix}A\\M\end{smallmatrix}\end{pmatrix}_{\id_{M}, 0}} \ \ \mbox{and} \
\begin{pmatrix}\begin{smallmatrix}X\\Y\end{smallmatrix}\end{pmatrix}_{f, g}\in \ {^{\perp}\begin{pmatrix}\begin{smallmatrix}N\\B\end{smallmatrix}\end{pmatrix}_{0, \id_{N}}}.$
That is,  $\begin{pmatrix}\begin{smallmatrix}X\\Y\end{smallmatrix}\end{pmatrix}_{f, g}$ is a semi-Gorenstein-projective $\Lambda$-module.

\vskip 10pt

$\Longrightarrow:$ \ Conversely, if $\begin{pmatrix}\begin{smallmatrix}X\\Y\end{smallmatrix}\end{pmatrix}_{f, g}$ is a semi-Gorenstein-projective $\Lambda$-module,
i.e.,

$$\begin{pmatrix}\begin{smallmatrix}X\\Y\end{smallmatrix}\end{pmatrix}_{f, g}\in \ {^{\perp}\begin{pmatrix}\begin{smallmatrix}A\\M\end{smallmatrix}\end{pmatrix}_{\id_{M}, 0}} \ \ \
\mbox{and} \ \ \begin{pmatrix}\begin{smallmatrix}X\\Y\end{smallmatrix}\end{pmatrix}_{f, g}\in \ {^{\perp}\begin{pmatrix}\begin{smallmatrix}N\\B\end{smallmatrix}\end{pmatrix}_{0, \id_{N}}}.$$

\vskip5pt
\noindent Since $M\otimes_AN = 0 = N\otimes_BM$ and $_{A}N$ and $_{B}M$ are projective, one sees that

\[
\begin{pmatrix}\begin{smallmatrix}N\\0\end{smallmatrix}\end{pmatrix}_{0, 0}=\begin{pmatrix}\begin{smallmatrix}N\\M\otimes_{A}N\end{smallmatrix}\end{pmatrix}_{0, 0}\ \ \ \mbox{and}\ \ \
\begin{pmatrix}\begin{smallmatrix}0\\M\end{smallmatrix}\end{pmatrix}_{0, 0}=
\begin{pmatrix}\begin{smallmatrix}N\otimes_{B}M\\M\end{smallmatrix}\end{pmatrix}_{0, 0}
\]
\vskip5pt
\noindent are projective left $\Lambda$-modules. Thus

$$\begin{pmatrix}\begin{smallmatrix}X\\Y\end{smallmatrix}\end{pmatrix}_{f, g}\in \ {^{\perp}\begin{pmatrix}\begin{smallmatrix}N\\0\end{smallmatrix}\end{pmatrix}_{0, 0}} \ \ \
\mbox{and} \ \ \ \begin{pmatrix}\begin{smallmatrix}X\\Y\end{smallmatrix}\end{pmatrix}_{f, g}\in \ {^{\perp}\begin{pmatrix}\begin{smallmatrix}0\\M\end{smallmatrix}\end{pmatrix}_{0, 0}}$$

\vskip5pt
\noindent and hence

$$\begin{pmatrix}\begin{smallmatrix}X\\Y\end{smallmatrix}\end{pmatrix}_{f, g}\in \ {^{\perp}\begin{pmatrix}\begin{smallmatrix}A\\0\end{smallmatrix}\end{pmatrix}_{0, 0}} \ \ \
\mbox{and} \ \ \ \begin{pmatrix}\begin{smallmatrix}X\\Y\end{smallmatrix}\end{pmatrix}_{f, g}\in {^{\perp}\begin{pmatrix}\begin{smallmatrix}0\\B\end{smallmatrix}\end{pmatrix}_{0, 0}}.$$
\vskip5pt
\noindent Then the assertion follows from Proposition \ref{iff}. \end{proof}

Taking $M = 0$ or $N = 0$ in Theorem \ref{semi} one gets the following consequence, which is known in \cite[Theorem 1.1]{XZ}.

\begin{cor}\label{triangsemi} \ {\rm (\cite{XZ})} \ $(1)$ \ Let $\Lambda=\begin{pmatrix}\begin{smallmatrix}
A & N \\
0 & B \\
\end{smallmatrix}\end{pmatrix}$. Assume that \ $_AN$ and $N_{B}$ are projective modules.
Then $\begin{pmatrix}\begin{smallmatrix}X\\Y\end{smallmatrix}\end{pmatrix}_{0, g}\in \ ^\perp \Lambda$ if and only if
${\rm Hom}_A(g, A):{\rm Hom}_{A}(X, A)\lxr {\rm Hom}_{A}(N\otimes_{B}Y, A)$ is an epimorphism,
${\rm Ext}_{A}^{i}(g, A): {\rm Ext}_{A}^{i}(X, A)\longrightarrow {\rm Ext}_{A}^{i}(N\otimes_{B}Y, A)$ is an isomorphism for $i\geq 1,$ and $Y\in \ ^\perp B$.

\vskip5pt

$(2)$ \ Let $\Lambda=\begin{pmatrix}\begin{smallmatrix}
A & 0 \\
M & B \\
\end{smallmatrix}\end{pmatrix}$. Assume that \ $_BM$ and $M_{A}$ are projective modules.
Then $\begin{pmatrix}\begin{smallmatrix}X\\Y\end{smallmatrix}\end{pmatrix}_{f, 0}\in \ ^\perp \Lambda$
if and only if $X\in \ ^\perp A$,  ${\rm Hom}_B(f, B):{\rm Hom}_{B}(Y, B)\lxr {\rm Hom}_B(M\otimes_AX, B)$ is an epimorphism, and
${\rm Ext}_{B}^{i}(f, B):{\rm Ext}_{B}^{i}(Y, B)\longrightarrow {\rm Ext}_{B}^{i}(M\otimes_{A}X, B)$ is an isomorphism for $i\geq 1$.
\end{cor}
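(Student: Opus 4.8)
The plan is to obtain Corollary \ref{triangsemi} as a direct specialization of Theorem \ref{semi}, applying it with $M = 0$ for part (1) and with $N = 0$ for part (2).

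For part (1), I would first observe that when $M = 0$ the Morita ring $\begin{pmatrix}\begin{smallmatrix} A & N \\ M & B \end{smallmatrix}\end{pmatrix}$ becomes the upper triangular matrix algebra $\begin{pmatrix}\begin{smallmatrix} A & N \\ 0 & B \end{smallmatrix}\end{pmatrix}$, and that all hypotheses of Theorem \ref{semi} are then automatic except the projectivity of $_AN$ and $N_B$: indeed $M\otimes_A N = 0 = N\otimes_B M$ trivially, and $_BM = 0 = M_A$ are projective as zero modules. Since $\Hom_B(M\otimes_A X, Y) = \Hom_B(0,Y) = 0$, every left $\Lambda$-module has the form $\begin{pmatrix}\begin{smallmatrix}X\\Y\end{smallmatrix}\end{pmatrix}_{0,g}$ with $g\colon N\otimes_B Y\lxr X$, matching the usual description of modules over a triangular matrix algebra. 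Feeding such a module into Theorem \ref{semi}: conditions (1) and (2) are reproduced verbatim; since $M\otimes_A X = 0$ we have $\Hom_B(M\otimes_A X, B) = 0$ and $\Ext^i_B(M\otimes_A X, B) = 0$ for $i\ge 1$, so condition (3) holds vacuously and condition (4) --- that $\Ext^i_B(f,B)\colon\Ext^i_B(Y,B)\lxr 0$ be an isomorphism for all $i\ge 1$ --- is equivalent to $Y$ being semi-Gorenstein-projective, i.e.\ $Y\in{}^\perp B$. This is exactly the assertion of part (1).

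Part (2) is symmetric: taking $N = 0$ turns the Morita ring into the lower triangular matrix algebra $\begin{pmatrix}\begin{smallmatrix} A & 0 \\ M & B \end{smallmatrix}\end{pmatrix}$, forces $g = 0$ because $\Hom_A(N\otimes_B Y, X) = \Hom_A(0,X) = 0$, and leaves $_BM, M_A$ to be assumed projective while $_AN = 0 = N_B$ are projective for free. In Theorem \ref{semi}, conditions (3) and (4) are then reproduced verbatim, condition (1) is vacuous because $\Hom_A(N\otimes_B Y, A) = 0$, and condition (2) collapses to $\Ext^i_A(X,A) = 0$ for $i\ge 1$, i.e.\ $X\in{}^\perp A$, which gives part (2). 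There is essentially no obstacle in this argument --- the whole content lies in Theorem \ref{semi} --- and the only point deserving attention is the routine verification that the degenerate datum genuinely meets the hypotheses of that theorem, which it does since the zero module is projective and the relevant tensor products vanish identically; this also makes the resulting conditions coincide with the description of semi-Gorenstein-projectivity over triangular matrix algebras given in \cite[Theorem 1.1]{XZ}.
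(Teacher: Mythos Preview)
Your proposal is correct and follows exactly the paper's own approach: the paper states that Corollary \ref{triangsemi} is obtained by ``taking $M = 0$ or $N = 0$ in Theorem \ref{semi}'' and gives no further argument. Your detailed verification that the degenerate datum satisfies the hypotheses of Theorem \ref{semi} and that conditions (3)--(4) (resp.\ (1)--(2)) collapse to $Y\in{}^\perp B$ (resp.\ $X\in{}^\perp A$) is precisely the routine check the paper leaves implicit.
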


\section{\bf Weakly Gorensteinness of the Morita rings in Theorem \ref{semi}}

The aim of this section is to prove the following result, which answers the question when the algebras in Theorem \ref{semi} are left weakly Gorenstein.

\vskip5pt

\begin{thm}\label{weakGmorita} \ Let $\Lambda=\begin{pmatrix}\begin{smallmatrix}
A & N \\
M & B \\
\end{smallmatrix}\end{pmatrix}$  with $M\otimes_{A}N=0=N\otimes_{B}M$. Assume that \ $_BM, \ _AN$, \ $M_{A}$ and $N_{B}$ are projective modules.
Then $\Lambda$ is left weakly-Gorenstein if and only if  so are $A$ and $B$.
\end{thm}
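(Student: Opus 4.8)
The plan is to combine Theorem~\ref{semi} (the description of $^\perp\Lambda$), Corollary~\ref{gpmorita} (the description of $\Gp(\Lambda)$), and Theorem~\ref{wgmono} (the homological monomorphism property of weakly Gorenstein algebras). First I would prove the ``only if'' direction: assume $\Lambda$ is left weakly Gorenstein and show $A$ and $B$ are. Given a semi-Gorenstein-projective $A$-module $X$, I would produce a semi-Gorenstein-projective $\Lambda$-module out of it using Lemma~\ref{im}(2): the module $\begin{pmatrix}\begin{smallmatrix}X\\ M\otimes_A X\end{smallmatrix}\end{pmatrix}_{\id_{M\otimes_A X},0}$ has $\cok f = 0$ and $g = 0$, so conditions (1)--(4) of Theorem~\ref{semi} are automatically met (noting $N\otimes_B(M\otimes_A X) = (N\otimes_B M)\otimes_A X = 0$), hence it lies in $^\perp\Lambda$. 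Since $\Lambda$ is left weakly Gorenstein, it is Gorenstein-projective, so by Corollary~\ref{gpmorita} it is monic with $\cok(\id)=0\in\Gp(B)$ (trivial) and $\cok g = \cok(0:N\otimes_B(M\otimes_A X)=0\to X) = X\in\Gp(A)$. Thus $X$ is Gorenstein-projective, so $A$ is left weakly Gorenstein; symmetrically, using $\begin{pmatrix}\begin{smallmatrix}N\otimes_B Y\\ Y\end{smallmatrix}\end{pmatrix}_{0,\id_{N\otimes_B Y}}$, one gets $B$ left weakly Gorenstein.

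For the ``if'' direction, assume $A$ and $B$ are left weakly Gorenstein and let $\begin{pmatrix}\begin{smallmatrix}X\\Y\end{smallmatrix}\end{pmatrix}_{f,g}\in{}^\perp\Lambda$. By Theorem~\ref{semi}, the map $g\colon N\otimes_B Y\to X$ satisfies: $\Hom_A(g,A)$ is epic and $\Ext^i_A(g,A)$ is an isomorphism for $i\ge 1$; likewise $f\colon M\otimes_A X\to Y$ satisfies the dual conditions over $B$. Now apply Theorem~\ref{wgmono} with $\phi = g$: since $A$ is left weakly Gorenstein, $g$ is a monomorphism and $\cok g\in\Gp(A)$. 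Similarly, applying Theorem~\ref{wgmono} with $\phi = f$ over $B$, $f$ is a monomorphism and $\cok f\in\Gp(B)$. Therefore $\begin{pmatrix}\begin{smallmatrix}X\\Y\end{smallmatrix}\end{pmatrix}_{f,g}$ is a monic $\Lambda$-module with $\cok f\in\Gp(B)$ and $\cok g\in\Gp(A)$, so by Corollary~\ref{gpmorita} it is Gorenstein-projective. Hence $^\perp\Lambda\subseteq\Gp(\Lambda)$; the reverse inclusion always holds, so $^\perp\Lambda = \Gp(\Lambda)$, i.e. $\Lambda$ is left weakly Gorenstein.

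The main obstacle, and the only genuinely delicate point, is checking in the ``only if'' direction that the test modules I write down actually satisfy all four conditions of Theorem~\ref{semi} (equivalently, that they lie in $^\perp\Lambda$) — this requires carefully using $M\otimes_A N = 0 = N\otimes_B M$ together with the projectivity hypotheses so that the relevant $\Tor$ and $\Ext$ terms vanish, which is why $\begin{pmatrix}\begin{smallmatrix}X\\ M\otimes_A X\end{smallmatrix}\end{pmatrix}_{\id,0}$ is even a well-defined left $\Lambda$-module with $N\otimes_B(M\otimes_A X)=0$. In fact the cleanest route is to invoke Lemma~\ref{Extiso}(1): $\Ext^i_\Lambda\!\big(\begin{pmatrix}\begin{smallmatrix}X\\ M\otimes_A X\end{smallmatrix}\end{pmatrix}_{\id,0},\,\Lambda\big)$ splits as a sum of $\Ext^i_A(X,-)$ evaluated on the $A$-components of the indecomposable projective $\Lambda$-modules, and since $X$ is semi-Gorenstein-projective over $A$ these vanish for $i\ge 1$ (one must also handle the projectives of the second type $\begin{pmatrix}\begin{smallmatrix}N\otimes_B Q\\ Q\end{smallmatrix}\end{pmatrix}_{0,\id}$, whose $A$-component is $N\otimes_B Q$, a projective $A$-module by the hypothesis $_AN$ projective, so $\Ext^i_A(X, N\otimes_B Q)=0$ for $i\ge1$ automatically). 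Everything else is a formal assembly of the three cited results, so no further calculation is needed.
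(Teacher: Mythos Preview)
Your proposal is correct and follows essentially the same route as the paper's own proof: both directions are handled by combining Theorem~\ref{semi}, Corollary~\ref{gpmorita}, and Theorem~\ref{wgmono} exactly as you describe, with the same test module $\begin{pmatrix}\begin{smallmatrix}X\\ M\otimes_A X\end{smallmatrix}\end{pmatrix}_{\id,0}$ used for the ``only if'' direction. Your alternative verification of semi-Gorenstein-projectivity via Lemma~\ref{Extiso}(1) is a clean shortcut (the paper simply cites Theorem~\ref{semi} directly), and your observation that $N\otimes_B Q$ is a projective $A$-module because $_AN$ is projective and $Q\in\add B$ is exactly what makes that route work.
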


\subsection{Gorenstein-projective modules over a class of Morita rings}

\begin{defn} \label{compatible} \ Let $A$ and $B$ be Artin algebras.

\vskip 5pt

$(1)$ \ \ A left module $_{A}L$ is ${\rm Hom}$-compatible, provided that ${\rm Hom}_{A}(P^{\bullet}, L)$ is acyclic for each complete projective resolution $P^{\bullet}$ in $A\mbox{-}{\rm mod}$.

\vskip 5pt

$(2)$ \ \  A right module $L_{B}$ is tensor-compatible, provided that $L\otimes_{B}Q^{\bullet}$ is acyclic for each complete projective resolution $P^{\bullet}$ in $B\mbox{-}{\rm mod}$.

\vskip 5pt

$(3)$ \ \ A bomodule $_{A}L_{B}$ is compatible, provided that the left module $_{A}L$ is ${\rm Hom}$-compatible and the right module $L_{B}$ is tensor-compatible.
\end{defn}

This notion of a compatible bimodule has been introduced in \cite [Definition 1.1]{Zh13}, to describe Gorenstein-projective modules over triangular matrix rings. See also \cite{GP}
and \cite {GX24}. Note that if ${\rm proj.dim} _AL<\infty$ and ${\rm proj.dim} L_B<\infty$, then $_{A}L_{B}$ is a compatible bimodule (\cite[Proposition 1.3]{Zh13}).

\vskip 5pt

A left module $\begin{pmatrix}\begin{smallmatrix}X \\ Y\end{smallmatrix}\end{pmatrix}_{f, g}$ over Morita ring $\Lambda=\begin{pmatrix}\begin{smallmatrix}
A & N \\
M & B \\
\end{smallmatrix}\end{pmatrix}$ will be said to be {\it monic}, provided that the structure maps
$f:M\otimes_{A}X\lxr Y$ and
$g:N\otimes_{B}Y\lxr X$  are injective maps.

\vskip5pt

\begin{thm}\label{Gproj} \ {\rm (\cite[Theorem 3.10]{GP},  \cite[Proposition 3.14]{GX24})} \ \  Let  $\Lambda=\begin{pmatrix}\begin{smallmatrix}
A & N \\
M & B \\
\end{smallmatrix}\end{pmatrix}$  with $M\otimes_{A}N=0=N\otimes_{B}M$. Assume that  $_{B}M_{A}$ and $_{A}N_{B}$ are compatible bimodules. Then

\vskip5pt

$(1)$ \ If $\begin{pmatrix}\begin{smallmatrix}X\\Y\end{smallmatrix}\end{pmatrix}_{f, g}$ is monic with $\cok f\in \Gproj(B)$ and $\cok g\in \Gp(A)$, then
$\begin{pmatrix}\begin{smallmatrix}X\\Y\end{smallmatrix}\end{pmatrix}_{f, g}\in \Gproj(\Lambda)$.

\vskip5pt

$(2)$ \ Conversely, assume that the right $\Lambda$-module $(M, 0)_{0,0}$ is tensor-compatible
and $\begin{pmatrix}\begin{smallmatrix}N\\0\end{smallmatrix}\end{pmatrix}_{0,0}$ is ${\rm Hom}$-compatible.
If $\begin{pmatrix}\begin{smallmatrix}X\\Y\end{smallmatrix}\end{pmatrix}_{f, g}\in \Gproj(\Lambda)$,
then it is monic with $\cok f\in \Gproj(B)$ and $\cok g\in \Gproj(A)$.
\end{thm}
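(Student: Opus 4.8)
The plan is to prove both implications by transporting complete projective resolutions between $\Lambda\text{-mod}$ and the pair $A\text{-mod}$, $B\text{-mod}$ through the two functors $T_1(Z)=\begin{pmatrix}\begin{smallmatrix}Z\\ M\otimes_A Z\end{smallmatrix}\end{pmatrix}_{\id_{M\otimes_A Z},\,0}$ and $T_2(W)=\begin{pmatrix}\begin{smallmatrix}N\otimes_B W\\ W\end{smallmatrix}\end{pmatrix}_{0,\,\id_{N\otimes_B W}}$, which by Lemma \ref{projectives}(1) carry projectives onto the indecomposable projective $\Lambda$-modules. The first step is bookkeeping: record, directly from the quadruple description and the vanishing $M\otimes_A N=0=N\otimes_B M$, the identifications $\Hom_\Lambda(T_1(Z),T_1(P))\iso\Hom_A(Z,P)$, $\Hom_\Lambda(T_1(Z),T_2(Q))\iso\Hom_A(Z,N\otimes_B Q)$, $\Hom_\Lambda(T_2(W),T_2(Q))\iso\Hom_B(W,Q)$, $\Hom_\Lambda(T_2(W),T_1(P))\iso\Hom_B(W,M\otimes_A P)$, together with $(M,0)_{0,0}\otimes_\Lambda T_1(P)\iso M\otimes_A P$, $(M,0)_{0,0}\otimes_\Lambda T_2(Q)=0$, $\Hom_\Lambda(T_1(P),\begin{pmatrix}\begin{smallmatrix}N\\0\end{smallmatrix}\end{pmatrix}_{0,0})\iso\Hom_A(P,N)$ and $\Hom_\Lambda(T_2(Q),\begin{pmatrix}\begin{smallmatrix}N\\0\end{smallmatrix}\end{pmatrix}_{0,0})=0$. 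Combined with the componentwise exactness criterion Lemma \ref{projectives}(3), these reduce every acyclicity check below to acyclicity of a $\Hom$- or tensor-complex over $A$ or $B$ built from $A$, $B$, ${}_AN$, ${}_BM$, $N_B$, $M_A$, which is exactly what the compatibility hypotheses control.

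For the converse (2), fix $\begin{pmatrix}\begin{smallmatrix}X\\Y\end{smallmatrix}\end{pmatrix}_{f,g}\in\Gp(\Lambda)$ and a complete projective resolution $P^\bullet$ with $P^i=T_1(U^i)\oplus T_2(V^i)$. By Lemma \ref{projectives}(3) the $X$-row $C^\bullet$ ($C^i=U^i\oplus N\otimes_B V^i$) and the $Y$-row $D^\bullet$ ($D^i=M\otimes_A U^i\oplus V^i$) are acyclic with $0$-cocycles $X$ and $Y$; the structure maps of the $P^i$ assemble into chain maps $M\otimes_A C^\bullet\to D^\bullet$ and $N\otimes_B D^\bullet\to C^\bullet$ which, after cancelling $M\otimes_A N=0=N\otimes_B M$, are the split inclusions $M\otimes_A U^\bullet\hookrightarrow D^\bullet$ and $N\otimes_B V^\bullet\hookrightarrow C^\bullet$ with cokernel complexes $V^\bullet$ and $U^\bullet$. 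Now $M\otimes_A U^\bullet\iso(M,0)_{0,0}\otimes_\Lambda P^\bullet$ is acyclic by tensor-compatibility of $(M,0)_{0,0}$; passing to $0$-cocycles in $0\to M\otimes_A U^\bullet\to D^\bullet\to V^\bullet\to 0$ (using right exactness of $M\otimes_A-$ to identify the $0$-cocycle of $M\otimes_A U^\bullet$ with $M\otimes_A X$) gives exactness of $0\to M\otimes_A X\xrightarrow{f}Y\to\cok f\to 0$, so $f$ is monic, $\cok f$ is the $0$-cocycle of $V^\bullet$, and $V^\bullet$ is acyclic. Since $\Hom_\Lambda(P^\bullet,\begin{pmatrix}\begin{smallmatrix}N\\0\end{smallmatrix}\end{pmatrix}_{0,0})\iso\Hom_A(U^\bullet,N)$ is acyclic by Hom-compatibility of $\begin{pmatrix}\begin{smallmatrix}N\\0\end{smallmatrix}\end{pmatrix}_{0,0}$, and $\Hom_\Lambda(P^\bullet,T_2(B))$ (acyclic, being $\Hom_\Lambda(P^\bullet,-)$ into a projective) sits in a short exact sequence of complexes with $\Hom_A(U^\bullet,N)$ and $\Hom_B(V^\bullet,B)$, a two-out-of-three argument makes $\Hom_B(V^\bullet,B)$ acyclic; thus $V^\bullet$ is a complete projective resolution of $\cok f$ over $B$ and $\cok f\in\Gp(B)$. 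With $V^\bullet$ now a complete projective resolution over $B$, the main hypothesis re-enters: $N\otimes_B V^\bullet$ is acyclic by tensor-compatibility of $N_B$, so $0\to N\otimes_B V^\bullet\to C^\bullet\to U^\bullet\to 0$ together with $N\otimes_B\cok f\iso N\otimes_B Y$ yields exactness of $0\to N\otimes_B Y\xrightarrow{g}X\to\cok g\to 0$; and $\Hom_B(V^\bullet,M)$ is acyclic by Hom-compatibility of ${}_BM$, so the short exact sequence of complexes relating $\Hom_\Lambda(P^\bullet,T_1(A))$, $\Hom_B(V^\bullet,M)$ and $\Hom_A(U^\bullet,A)$ makes $\Hom_A(U^\bullet,A)$ acyclic, whence $\cok g\in\Gp(A)$. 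Therefore $\begin{pmatrix}\begin{smallmatrix}X\\Y\end{smallmatrix}\end{pmatrix}_{f,g}$ is monic with Gorenstein-projective cokernels.

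For (1), assume $\begin{pmatrix}\begin{smallmatrix}X\\Y\end{smallmatrix}\end{pmatrix}_{f,g}$ is monic with $G:=\cok g\in\Gp(A)$, $H:=\cok f\in\Gp(B)$, and fix complete projective resolutions $U^\bullet$ of $G$ over $A$ and $V^\bullet$ of $H$ over $B$. As a warm-up, $T_1(G),T_2(H)\in\Gp(\Lambda)$: $T_1(U^\bullet)$ is a complex of projective $\Lambda$-modules, acyclic because its $Y$-row $M\otimes_A U^\bullet$ is acyclic by tensor-compatibility of $M_A$, and $\Hom_\Lambda(T_1(U^\bullet),-)$ on the two kinds of projectives equals $\Hom_A(U^\bullet,P)$ (acyclic, $U^\bullet$ complete) and $\Hom_A(U^\bullet,N\otimes_B Q)$ (acyclic, by Hom-compatibility of ${}_AN$, as $N\otimes_B Q$ is a summand of a sum of copies of ${}_AN$); dually $T_2(V^\bullet)$ is a complete projective resolution of $T_2(H)$ via tensor-compatibility of $N_B$ and Hom-compatibility of ${}_BM$. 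For the module itself, put $P^i:=T_1(U^i)\oplus T_2(V^i)$ and equip its rows with triangular differentials: the $X$-row $N\otimes_B V^\bullet\oplus U^\bullet$ and the $Y$-row $M\otimes_A U^\bullet\oplus V^\bullet$, with off-diagonal entries given by chain maps $\beta^\bullet\colon U^\bullet\to(N\otimes_B V^\bullet)[1]$ and $\gamma^\bullet\colon V^\bullet\to(M\otimes_A U^\bullet)[1]$; the vanishing $M\otimes_A N=0=N\otimes_B M$ guarantees that any such $\beta^\bullet,\gamma^\bullet$ make the maps $P^i\to P^{i+1}$ compatible with the $\Lambda$-structure, so $P^\bullet$ is a complex of projective $\Lambda$-modules. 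Choosing $\beta^\bullet,\gamma^\bullet$ so that the row sequences $0\to N\otimes_B V^\bullet\to C^\bullet\to U^\bullet\to 0$ and $0\to M\otimes_A U^\bullet\to D^\bullet\to V^\bullet\to 0$ induce on $0$-cocycles precisely the canonical sequences $0\to N\otimes_B H\to X\to G\to 0$ and $0\to M\otimes_A G\to Y\to H\to 0$ (possible since $U^\bullet$, $V^\bullet$ consist of projectives), one gets that $P^\bullet$ is acyclic with $0$-cocycle $\begin{pmatrix}\begin{smallmatrix}X\\Y\end{smallmatrix}\end{pmatrix}_{f,g}$, and that $\Hom_\Lambda(P^\bullet,-)$ on the two kinds of projectives decomposes into the four complexes $\Hom_A(U^\bullet,P)$, $\Hom_A(U^\bullet,N\otimes_B Q)$, $\Hom_B(V^\bullet,Q)$, $\Hom_B(V^\bullet,M\otimes_A P)$, all acyclic as in the warm-up; hence $P^\bullet$ is a complete projective resolution and $\begin{pmatrix}\begin{smallmatrix}X\\Y\end{smallmatrix}\end{pmatrix}_{f,g}\in\Gp(\Lambda)$.

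The main obstacle is in (1): one must check that $\beta^\bullet,\gamma^\bullet$ can be chosen so that the extensions of $0$-cocycles are exactly the canonical sequences determined by $f$ and $g$ — not merely extensions with the same end terms — so that the induced structure maps on $Z^0(P^\bullet)$ are $f$ and $g$ and $Z^0(P^\bullet)$ is the given quadruple. This comes down to comparing $\Ext^1_A(G,N\otimes_B H)$ (resp. $\Ext^1_B(H,M\otimes_A G)$) with homotopy classes of chain maps $U^\bullet\to(N\otimes_B V^\bullet)[1]$ (resp. $V^\bullet\to(M\otimes_A U^\bullet)[1]$), i.e. to verifying that the relevant halves of these acyclic complexes compute the $\Ext$-groups correctly. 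In (2) the delicate point — and the reason the two auxiliary hypotheses are imposed — is that monicity of $f$, $g$ and Gorenstein-projectivity of their cokernels must be read off an arbitrary complete projective resolution: without tensor-compatibility of $(M,0)_{0,0}$ the complex $M\otimes_A U^\bullet$ need not stay acyclic, and without Hom-compatibility of $\begin{pmatrix}\begin{smallmatrix}N\\0\end{smallmatrix}\end{pmatrix}_{0,0}$ one cannot split $\Hom_A(U^\bullet,N)$ off the $\Hom$-complex into $T_2(B)$, so neither conclusion follows from compatibility of ${}_BM_A$ and ${}_AN_B$ alone.
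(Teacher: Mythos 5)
The paper does not actually prove this theorem: it attributes part (1) to \cite[Theorem 3.10]{GP} and part (2) to \cite[Proposition 3.14]{GX24}, so your self-contained argument is necessarily a different (and more informative) route, and its overall architecture --- decompose everything through $T_1$ and $T_2$, read the two rows of a complex of projective $\Lambda$-modules as degreewise split extensions of $U^\bullet$ by $N\otimes_BV^\bullet$ and of $V^\bullet$ by $M\otimes_AU^\bullet$ --- is the right one. Your part (2) is essentially complete. One small imprecision: ``right exactness of $M\otimes_A-$'' only yields a surjection $M\otimes_AX\twoheadrightarrow Z^0(M\otimes_AU^\bullet)$; injectivity (which is exactly the monicity of $f$ you are extracting) also needs the acyclicity of $M\otimes_AC^\bullet$, which forces $\Tor_1^A(M,Z^1(C^\bullet))=0$ and hence $M\otimes_AZ^0(C^\bullet)\hookrightarrow M\otimes_AC^0$. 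You have that acyclicity in hand at that point, so this is fixable, not fatal.

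The genuine gap is the one you yourself flag in part (1), and your parenthetical justification ``possible since $U^\bullet$, $V^\bullet$ consist of projectives'' does not close it. Projectivity of the $U^i$ lets you run the horseshoe lemma in degrees $\le -1$ and realize the prescribed extension $0\to N\otimes_BH\to X\to G\to 0$ on the resolution half. But in degrees $\ge 0$ you are extending the complex to the \emph{right}, and at each stage you must extend the inclusion $Z^i(N\otimes_BV^\bullet)\hookrightarrow N\otimes_BV^i$ along $Z^i(N\otimes_BV^\bullet)\hookrightarrow Z^i(C^\bullet)$; this is an obstruction in $\Ext^1_A(Z^i(U^\bullet),\,N\otimes_BV^i)$, and no amount of projectivity of $U^i$ or $V^i$ kills it. What kills it is the Hom-compatibility of ${}_AN$, invoked a second time: since $V^i$ is $B$-projective, $N\otimes_BV^i\in\add({}_AN)$, and Hom-compatibility of ${}_AN$ gives precisely $\Ext^{j}_A(G',N)=0$ for all $j\ge1$ and every Gorenstein-projective $G'$, in particular for the cosyzygies $Z^i(U^\bullet)$. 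Dually, Hom-compatibility of ${}_BM$ is what allows the construction of $\gamma^\bullet$ in non-negative degrees. Without these vanishings the chain maps $\beta^\bullet,\gamma^\bullet$ realizing the canonical extensions need not exist, so the ``comparison with homotopy classes of chain maps'' you propose cannot succeed in general; with them, the two-sided horseshoe construction goes through and the rest of your part (1) (acyclicity of the four $\Hom$-complexes and identification of $Z^0(P^\bullet)$ with the given quadruple) is sound.
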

\begin{proof} \ The assertion (1) is a special case of \cite[Theorem 3.10]{GP}, although the notion of a compatible bimodule used in \cite{GP} is
not precisely the same as in Definition \ref{compatible}(2), the proof of \cite[Theorem 3.10]{GP} only uses Definition \ref{compatible}(2).
The assertion (2) follows from  \cite[Proposition 3.14]{GX24}. (However the statement of Theorem \ref{Gproj} is not available either in \cite {GP} or in \cite{GX24}.) \end{proof}

We stress that if $M$ or $N$ is not a compatible bimodule then Theorem \ref{Gproj} is no longer true, even in the special case of triangular matrix algebra (see \cite [Remark 1.5]{Zh13}).

\begin{cor}\label{gpmorita}
\ Let $\Lambda=\begin{pmatrix}\begin{smallmatrix}
A & N \\
M & B \\
\end{smallmatrix}\end{pmatrix}$ with $M\otimes_{A}N=0=N\otimes_{B}M$. Assume that \ $_BM, \ _AN$, \ $M_{A}$ and $N_{B}$ are projective modules.
Then $\begin{pmatrix}\begin{smallmatrix}X\\Y\end{smallmatrix}\end{pmatrix}_{f, g}\in \Gproj(\Lambda)$ if and only if it is a monic $\Lambda\mbox{-}$module
with  $\cok f\in \Gproj(B)$ and $\cok g\in \Gproj(A)$.
\end{cor}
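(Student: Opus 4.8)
The goal is to derive the corollary from the general Theorem~\ref{Gproj}, so the whole task reduces to checking that the stronger hypothesis ``$_BM,\ _AN,\ M_A,\ N_B$ are projective'' implies all the compatibility conditions required there. First I would note that a projective module has projective dimension zero, so each of ${}_BM$, $M_A$, ${}_AN$, $N_B$ has finite projective dimension; hence by \cite[Proposition~1.3]{Zh13} both $_BM_A$ and $_AN_B$ are compatible bimodules. Consequently Theorem~\ref{Gproj}(1) applies verbatim and gives the implication ``$\Leftarrow$'': if $\begin{pmatrix}\begin{smallmatrix}X\\Y\end{smallmatrix}\end{pmatrix}_{f,g}$ is monic with $\cok f\in\Gproj(B)$ and $\cok g\in\Gproj(A)$, then $\begin{pmatrix}\begin{smallmatrix}X\\Y\end{smallmatrix}\end{pmatrix}_{f,g}\in\Gproj(\Lambda)$.

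For the implication ``$\Rightarrow$'' I need the two extra hypotheses of Theorem~\ref{Gproj}(2): that the right $\Lambda$-module $(M,0)_{0,0}$ is tensor-compatible and that the left $\Lambda$-module $\begin{pmatrix}\begin{smallmatrix}N\\0\end{smallmatrix}\end{pmatrix}_{0,0}$ is $\Hom$-compatible. Here the standing assumptions $M\otimes_AN=0=N\otimes_BM$ do the real work: they make these two modules \emph{projective} on the appropriate side. Indeed, since $M\otimes_AN=0$ one has $\begin{pmatrix}\begin{smallmatrix}N\\0\end{smallmatrix}\end{pmatrix}_{0,0}=\begin{pmatrix}\begin{smallmatrix}N\\M\otimes_AN\end{smallmatrix}\end{pmatrix}_{\id_{M\otimes_AN},0}$, which by Lemma~\ref{projectives}(1) is a direct summand of $\begin{pmatrix}\begin{smallmatrix}A^n\\M^n\end{smallmatrix}\end{pmatrix}_{\id,0}$, because ${}_AN$, being projective, is a summand of some $A^n$; thus $\begin{pmatrix}\begin{smallmatrix}N\\0\end{smallmatrix}\end{pmatrix}_{0,0}$ is a projective left $\Lambda$-module. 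Symmetrically, using the list of indecomposable projective \emph{right} $\Lambda$-modules recalled in Subsection~5.1, $(M,0)_{0,0}=(M,M\otimes_AN)_{0,\id}$ is a direct summand of $(A^m,N^m)_{0,\id}$ since $M_A$ is projective, so it is a projective right $\Lambda$-module. Finally, a projective left $\Lambda$-module $L$ is trivially $\Hom$-compatible, because $\Hom_\Lambda(P^\bullet,L)$ is acyclic for every complete projective resolution $P^\bullet$ by the very definition of such a resolution; and a projective right $\Lambda$-module is flat, hence tensor-compatible. So both hypotheses of Theorem~\ref{Gproj}(2) hold, and it yields: if $\begin{pmatrix}\begin{smallmatrix}X\\Y\end{smallmatrix}\end{pmatrix}_{f,g}\in\Gproj(\Lambda)$, then it is monic with $\cok f\in\Gproj(B)$ and $\cok g\in\Gproj(A)$.

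Everything above is essentially bookkeeping, and there is no serious obstacle once Theorem~\ref{Gproj} is granted. The only place that demands a moment's care is the identification of $\begin{pmatrix}\begin{smallmatrix}N\\0\end{smallmatrix}\end{pmatrix}_{0,0}$ and $(M,0)_{0,0}$ with summands of the standard projective left, respectively right, $\Lambda$-modules of Lemma~\ref{projectives} and Subsection~5.1 --- and it is precisely there that the hypotheses $M\otimes_AN=0=N\otimes_BM$ together with the one-sided projectivity of $M$ and $N$ enter. After that, the corollary is immediate.
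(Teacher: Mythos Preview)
Your proof is correct and follows essentially the same route as the paper's own argument: reduce to Theorem~\ref{Gproj} by verifying that the projectivity assumptions on $M$ and $N$ make $_BM_A$ and $_AN_B$ compatible bimodules, and that the vanishing $M\otimes_AN=0=N\otimes_BM$ forces $(M,0)_{0,0}$ and $\begin{pmatrix}\begin{smallmatrix}N\\0\end{smallmatrix}\end{pmatrix}_{0,0}$ to be projective (hence tensor- and $\Hom$-compatible) $\Lambda$-modules. Your write-up just spells out in more detail why these modules are summands of the standard projectives from Lemma~\ref{projectives} and Subsection~5.1.
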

\begin{proof} \ Since $_BM, \ _AN$, \ $M_{A}$ and $N_{B}$ are projective modules,   $_{B}M_{A}$ and $_{A}N_{B}$ are compatible bimodules.
Since $M\otimes_{A}N=0=N\otimes_{B}M$, the right $\Lambda$-module $(M, 0)_{0,0} = (M, M\otimes_AN, 0, 0)$ is a projective module, and hence it is tensor-compatible.
Also, the left $\Lambda$-module $\begin{pmatrix}\begin{smallmatrix}N\\0\end{smallmatrix}\end{pmatrix}_{0,0} = \begin{pmatrix}\begin{smallmatrix} N\\ M\otimes_AN\end{smallmatrix}\end{pmatrix}_{0,0} $ is
a projective module, and hence it is ${\rm Hom}$-compatible. Thus,  the assertion follows from Theorem \ref{Gproj}.\end{proof}

An Artin  algebra $A$ is  GP-{\it free}, if any Gorenstein-projective $A$-module is projective; and  GP-{\it finite}, if $A$ admits only finitely many isomorphism classes of indecomposable Gorenstein-projective $A$-modules.
Since there is a duality $\Hom_A(-, A): \Gp(A) \longrightarrow \Gp(A^{\rm op})$, the notion of GP-free and GP-finite is left-right symmetric.
By Corollary \ref{gpmorita} one has the following consequences.

\vskip 5pt

\begin{cor}\label{finitepd} \ Let  $\Lambda=\begin{pmatrix}\begin{smallmatrix}
A & N \\
M & B \\
\end{smallmatrix}\end{pmatrix}$ with $M\otimes_{A}N=0=N\otimes_{B}M$. Assume that  $_{B}M_{A}$ and $_{A}N_{B}$ are compatible bimodules.

\vskip 5pt

{\rm (1)} \ \ If $\Lambda$ is {\rm GP}-free, then so are $A$ and $B$.

\vskip5pt

Conversely, assume that the right $\Lambda$-module $(M, 0)_{0,0}$ is tensor-compatible
and left $\Lambda$-module $\begin{pmatrix}\begin{smallmatrix}N\\0\end{smallmatrix}\end{pmatrix}_{0,0}$ is ${\rm Hom}$-compatible. If  $A$ and $B$ are {\rm GP}-free, then so is $\Lambda$.

\vskip 5pt

{\rm (2)} \ \ If $\Lambda$ is {\rm GP}-finite, then so are $A$ and $B$.

\vskip 5pt

{\rm (3)} \ \ Assume that $(M, 0)$ is tensor-compatible. If $\gdim{B}<\infty$ and $\pd{_{A}N}<\infty$,
then all the indecomposable Gorenstein-projective left $\Lambda$-modules are precisely
$\begin{pmatrix}\begin{smallmatrix}G\\M\otimes_{A}G\end{smallmatrix}\end{pmatrix}_{\id_{M\otimes_{A}G}, 0}, \ \ \begin{pmatrix}\begin{smallmatrix} N\otimes_BQ \\ Q\end{smallmatrix}\end{pmatrix}_{0, \id_{N\otimes_BQ}}$,
where $G$ runs over all the indecomposable Gorenstein-projective left $A$-modules, and
$Q$ runs over all the indecomposable projective left $B$-modules.

\vskip 5pt

{\rm (4)} \ \ Assume that $(M, 0)_{0,0}$ is tensor-compatible
and $\begin{pmatrix}\begin{smallmatrix}N\\0\end{smallmatrix}\end{pmatrix}_{0,0}$ is ${\rm Hom}$-compatible. If $\gdim{A}<\infty$ and $\pd{_{B}M}<\infty$, then all the indecomposable Gorenstein-projective left $\Lambda$-modules are precisely
$\begin{pmatrix}\begin{smallmatrix}P\\M\otimes_AP\end{smallmatrix}\end{pmatrix}_{\id_{M\otimes_AP}, 0},   \begin{pmatrix}\begin{smallmatrix} N\otimes_BE \\ E\end{smallmatrix}\end{pmatrix}_{0, \id_{N\otimes_BE}},$
where $P$ runs over all the indecomposable projective left $A$-modules, and
$E$ runs over all the indecomposable Gorenstein-projective left $B$-modules.
\end{cor}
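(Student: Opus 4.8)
The plan is to deduce all four assertions from the description of the Gorenstein-projective $\Lambda$-modules in Theorem \ref{Gproj}, applied to two families of test modules: for $G\in\Gp(A)$ put $\mathbb{G}:=\begin{pmatrix}\begin{smallmatrix}G\\ M\otimes_{A}G\end{smallmatrix}\end{pmatrix}_{\id_{M\otimes_{A}G}, 0}$, and for $E\in\Gp(B)$ put $\mathbb{E}:=\begin{pmatrix}\begin{smallmatrix}N\otimes_{B}E\\ E\end{smallmatrix}\end{pmatrix}_{0, \id_{N\otimes_{B}E}}$. Since $M\otimes_{A}N=0=N\otimes_{B}M$, the module $\mathbb{G}$ is monic with $\cok(\id_{M\otimes_{A}G})=0\in\Gp(B)$ and $\cok\bigl(0\colon N\otimes_{B}(M\otimes_{A}G)\lxr G\bigr)=G\in\Gp(A)$, so $\mathbb{G}\in\Gp(\Lambda)$ by Theorem \ref{Gproj}(1), and symmetrically $\mathbb{E}\in\Gp(\Lambda)$. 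A short computation with $\Hom_{\Lambda}$ — again using the two vanishing relations, so that no projectivity of $M$ or $N$ is needed — gives ring isomorphisms $\End_{\Lambda}(\mathbb{G})\iso\End_{A}(G)$ and $\End_{\Lambda}(\mathbb{E})\iso\End_{B}(E)$, whence $\mathbb{G}$ is indecomposable exactly when $G$ is, and $\mathbb{E}$ is indecomposable exactly when $E$ is.

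For (1), suppose $\Lambda$ is GP-free. Then $\mathbb{G}$ is projective, so by Lemma \ref{projectives}(1) it is a finite direct sum of modules $\begin{pmatrix}\begin{smallmatrix}P\\ M\otimes_{A}P\end{smallmatrix}\end{pmatrix}_{\id,0}$ and $\begin{pmatrix}\begin{smallmatrix}N\otimes_{B}Q\\ Q\end{smallmatrix}\end{pmatrix}_{0,\id}$; comparing the cokernels of the structure maps $f$ of the two sides (an isomorphism of $\Lambda$-modules induces an isomorphism of these cokernels as $B$-modules) kills every summand of the second type, whence $\mathbb{G}\iso\begin{pmatrix}\begin{smallmatrix}P\\ M\otimes_{A}P\end{smallmatrix}\end{pmatrix}_{\id,0}$ and so $G\iso P$ is projective; thus $A$ is GP-free, and the symmetric argument with $\mathbb{E}$ shows $B$ is GP-free. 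Conversely, assume $A$ and $B$ are GP-free and that the extra hypotheses of Theorem \ref{Gproj}(2) hold. Given $\begin{pmatrix}\begin{smallmatrix}X\\Y\end{smallmatrix}\end{pmatrix}_{f,g}\in\Gp(\Lambda)$, Theorem \ref{Gproj}(2) makes it monic with $C:=\cok g$ a projective $A$-module and $D:=\cok f$ a projective $B$-module. Applying $M\otimes_{A}-$ to $0\to N\otimes_{B}Y\to X\to C\to0$ and using $M\otimes_{A}N=0$ gives $M\otimes_{A}X\iso M\otimes_{A}C$, so the exact sequence $0\to M\otimes_{A}C\to Y\to D\to0$ splits; dually $N\otimes_{B}Y\iso N\otimes_{B}D$ and $0\to N\otimes_{B}D\to X\to C\to0$ splits. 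Tracking $f$ and $g$ through these splittings identifies $\begin{pmatrix}\begin{smallmatrix}X\\Y\end{smallmatrix}\end{pmatrix}_{f,g}\iso\begin{pmatrix}\begin{smallmatrix}C\\ M\otimes_{A}C\end{smallmatrix}\end{pmatrix}_{\id,0}\oplus\begin{pmatrix}\begin{smallmatrix}N\otimes_{B}D\\ D\end{smallmatrix}\end{pmatrix}_{0,\id}$, a projective $\Lambda$-module; hence $\Lambda$ is GP-free.

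For (2), the assignments $G\mapsto\mathbb{G}$ and $E\mapsto\mathbb{E}$ carry indecomposable Gorenstein-projective $A$- resp.\ $B$-modules to indecomposable Gorenstein-projective $\Lambda$-modules and are injective on isomorphism classes (read off the first components), so GP-finiteness of $\Lambda$ forces GP-finiteness of $A$ and of $B$. For (3), $\gdim B<\infty$ implies that every Gorenstein-projective $B$-module has finite projective dimension and is therefore projective, i.e.\ $B$ is GP-free; the hypotheses that $(M,0)_{0,0}$ is tensor-compatible and $\pd{}_{A}N<\infty$ are then used to guarantee that Theorem \ref{Gproj}(2) applies and that the two splitting sequences above exist — the relevant $\Ext^{1}$'s vanish because $\cok f$ is $B$-projective and $\cok g\in\Gp(A)$ is orthogonal to the finite-projective-dimension $A$-module $N\otimes_{B}\cok f$ — after which the splitting argument of (1)'s converse decomposes every indecomposable Gorenstein-projective $\Lambda$-module into one of the form $\begin{pmatrix}\begin{smallmatrix}G\\ M\otimes_{A}G\end{smallmatrix}\end{pmatrix}_{\id,0}$ with $G\in\Gp(A)$ indecomposable, or $\begin{pmatrix}\begin{smallmatrix}N\otimes_{B}Q\\ Q\end{smallmatrix}\end{pmatrix}_{0,\id}$ with $Q$ indecomposable projective over $B$. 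Part (4) is the mirror image, interchanging the roles of $A$ and $B$ (and of $M$ and $N$).

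The step I expect to be the main obstacle is the ``assembling'' in the converse of (1), reused in (3)--(4): promoting the two splittings of $A$- and $B$-modules to a genuine direct-sum decomposition of the $\Lambda$-module requires checking that the splitting isomorphisms are compatible with the structure maps $f$ and $g$, and this is precisely where $M\otimes_{A}N=0=N\otimes_{B}M$ is essential, since those relations collapse the cross terms $M\otimes_{A}X\iso M\otimes_{A}\cok g$ and $N\otimes_{B}Y\iso N\otimes_{B}\cok f$. A secondary, more bookkeeping-type point is to verify that the finiteness hypotheses in (3) and (4) really do furnish the compatibility conditions required to invoke Theorem \ref{Gproj}(2).
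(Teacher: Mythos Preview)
Your proposal is correct and takes essentially the same approach as the paper: embed $\Gp(A)$ and $\Gp(B)$ into $\Gp(\Lambda)$ via the modules you call $\mathbb{G}$ and $\mathbb{E}$ using Theorem~\ref{Gproj}(1), transfer indecomposability through the endomorphism-ring isomorphisms $\End_\Lambda(\mathbb{G})\cong\End_A(G)$ and $\End_\Lambda(\mathbb{E})\cong\End_B(E)$, and for the converse parts invoke Theorem~\ref{Gproj}(2), split the two short exact sequences (using projectivity of $\cok f$ and the $\Ext^1$-vanishing furnished by $\pd{}_A N<\infty$), and reassemble into the displayed direct sum---precisely the step you flag as the main obstacle, which the paper also resolves only by writing down the decomposition with explicit structure maps. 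The one small difference is in~(3): the paper obtains the Hom-compatibility of $\begin{pmatrix}\begin{smallmatrix}N\\0\end{smallmatrix}\end{pmatrix}_{0,0}$ needed for Theorem~\ref{Gproj}(2) by observing that $M\otimes_A N=0$ identifies it with $\begin{pmatrix}\begin{smallmatrix}N\\M\otimes_A N\end{smallmatrix}\end{pmatrix}_{\id,0}$ and declares this projective, rather than appealing to $\pd{}_A N<\infty$ as you do.
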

\begin{proof}  \ (1) \ \ Assume that $\Lambda$ is GP-free. Let $X\in \Gp(A)$.
Then $\begin{pmatrix}\begin{smallmatrix}X\\M\otimes_{A}X\end{smallmatrix}\end{pmatrix}_{\id_{M\otimes_{A}X}, 0}\in \Gp(\Lambda)$, by Theorem \ref{Gproj}(1).
Thus, by assumption $\begin{pmatrix}\begin{smallmatrix}X\\M\otimes_{A}X\end{smallmatrix}\end{pmatrix}_{\id_{M\otimes_{A}X}, 0}$ is a projective $\Lambda$-module.
Using Lemma \ref{projectives} one easily deduces that $X$  is a projective $A$-module. So $A$ is GP-free. Similarly, one can see $B$ is GP-free, by using
$\begin{pmatrix}\begin{smallmatrix}N\otimes_{B}Y\\Y\end{smallmatrix}\end{pmatrix}_{0, \id_{N\otimes_{B}Y}}$.

\vskip 5pt

Conversely, assume that $(M, 0)_{0,0}$ is tensor-compatible
and  $\begin{pmatrix}\begin{smallmatrix}N\\0\end{smallmatrix}\end{pmatrix}_{0,0}$ is ${\rm Hom}$-compatible,
and that $A$ and $B$ are GP-free. Let  $\begin{pmatrix}\begin{smallmatrix}X\\Y\end{smallmatrix}\end{pmatrix}_{f, g}\in \Gproj(\Lambda)$. Then by Theorem \ref{Gproj}(2)
one has an exact sequence of $B$-modules
$$0\lxr M\otimes_{A}X\stackrel {f}{\lxr} Y\stackrel{\pi_{f}}{\lxr} \cok f\lxr 0$$
with $\cok f\in \Gproj(B)$, and an exact sequence of $A$-modules
$$0\lxr N\otimes_{B} Y\stackrel{g}{\lxr} X\stackrel{\pi_{g}}{\lxr} \cok g\lxr 0$$
with $\cok g\in \Gproj(A)$. Since $B$ is GP-free,  $\cok{f}=Q$ is a projective left $B$-module. Thus $Y\cong Q\oplus (M\otimes_{A}X)$.
Similarly, since $A$ is GP-free, $\cok{g}=P$ is a projective left $A$-module. Thus $X \cong  P\oplus (N\otimes_{B}Y)$.
Since $M\otimes_A N = 0 = N\otimes_BM$, it follows that
$$X \cong  P\oplus (N\otimes_{B}Q), \ \ \ \ Y\cong Q\oplus (M\otimes_{A}P).$$
Therefore
$$\begin{aligned}
\begin{pmatrix}\begin{smallmatrix}X\\Y\end{smallmatrix}\end{pmatrix}_{f, g} \iso \begin{pmatrix}\begin{smallmatrix}P\\M\otimes_{A}P\end{smallmatrix}\end{pmatrix}_{\id_{M\otimes_{A}P}, 0}\oplus \begin{pmatrix}\begin{smallmatrix}N\otimes_{B}Q\\Q\end{smallmatrix}\end{pmatrix}_{0, \id_{N\otimes_{B}Q}}
\end{aligned}$$
is a projective $\Lambda$-module. Thus $\Lambda$ is GP-free.

\vskip 5pt

(2) \ \ If $X$ is an indecomposable Gorenstein-projective $A$-module,
then $\begin{pmatrix}\begin{smallmatrix}X\\M\otimes_{A}X\end{smallmatrix}\end{pmatrix}_{\id_{M\otimes_{A}X}, 0}\in \Gp(\Lambda)$ by Theorem \ref{Gproj}(1).
Also, since ${\rm End}_\Lambda(\begin{pmatrix}\begin{smallmatrix}X\\M\otimes_{A}X\end{smallmatrix}\end{pmatrix}_{\id_{M\otimes_{A}X}, 0}) \cong {\rm End}_A(X)$ is local, $\begin{pmatrix}\begin{smallmatrix}X\\M\otimes_{A}X\end{smallmatrix}\end{pmatrix}_{\id_{M\otimes_{A}X}, 0}$ is indecomposable as a $\Lambda$-module.
Thus, if $\Lambda$ is GP-finite, then $A$ is GP-finite.

\vskip5pt

Similarly, if $Y\in \Gproj(B)$ is indecomposable, then $\begin{pmatrix}\begin{smallmatrix}N\otimes_{B}Y\\Y\end{smallmatrix}\end{pmatrix}_{0, \id_{N\otimes_{B}Y}}\in\Gp(B)$ is indecomposable.
Thus if $\Lambda$ is GP-finite, then $B$ is GP-finite.

\vskip 5pt

(3) \ \ Let $\begin{pmatrix}\begin{smallmatrix}X\\Y\end{smallmatrix}\end{pmatrix}_{f, g}\in \Gproj(\Lambda)$. Since $M\otimes_AN = 0$,
$\binom{N}{0} = \binom{N}{M\otimes_AN}$ is a projective $\Lambda$-module, and hence it is Hom-compatible. By Theorem \ref{Gproj}(2) one has an exact sequence of $B$-modules
$$0\lxr M\otimes_{A}X\stackrel {f}{\lxr} Y\stackrel{\pi_{f}}{\lxr} \cok f\lxr 0$$
with $\cok f\in \Gproj(B)$, and an exact sequence of $A$-modules
$$0\lxr N\otimes_{B} Y\stackrel{g}{\lxr} X\stackrel{\pi_{g}}{\lxr} \cok g\lxr 0$$
with $\cok g\in \Gproj(A)$.
Since $\gdim{B}<\infty$,  $\cok{f}=Q$ is a projective left $B$-module. Thus $Y\cong Q\oplus (M\otimes_{A}X)$.
Since $M\otimes_A N = 0 = N\otimes_BM$, it follows that $M\otimes_A X \cong M\otimes_A \cok g$ and $N\otimes_BY\cong N\otimes_B Q$, and hence $Y \cong Q\oplus (M\otimes_{A}\cok g)$.

\vskip 5pt

Since $\pd{_{A}N}<\infty$ and $_BQ$ is projective, one has $\pd_A(N\otimes_{B}Q) <\infty.$
Since $\cok g\in \Gproj(A)$, it follows that  ${\rm Ext}_{A}^{1}(\cok{g}, N\otimes_{B}Q)=0$, and hence
${\rm Ext}_{A}^{1}(\cok{g}, N\otimes_{B}Y)=0$.
Thus
$X \cong \cok{g}\oplus (N\otimes_{B}Y) \cong \cok{g}\oplus (N\otimes_{B}Q)$. Therefore
$$\begin{pmatrix}\begin{smallmatrix}X\\Y\end{smallmatrix}\end{pmatrix}_{f, g}  \iso \begin{pmatrix}\begin{smallmatrix}\cok{g}\oplus (N\otimes_{B}Q)\\(M\otimes_{A}\cok g)\oplus Q\end{smallmatrix}\end{pmatrix}_{\begin{pmatrix}\begin{smallmatrix}
1 & 0\\
0 & 0\\
\end{smallmatrix}\end{pmatrix}, \begin{pmatrix}\begin{smallmatrix}
0 & 0\\
0 & 1\\
\end{smallmatrix}\end{pmatrix}} \iso \begin{pmatrix}\begin{smallmatrix}\cok{g}\\M\otimes_{A}\cok g\end{smallmatrix}\end{pmatrix}_{\id_{M\otimes_{A}\cok g}, 0}\oplus \begin{pmatrix}\begin{smallmatrix}N\otimes_{B}Q\\Q\end{smallmatrix}\end{pmatrix}_{0, \id_{N\otimes_{B}Q}}.
$$
From this one sees the assertion $(3)$.

\vskip 5pt

The assertion (4) can be similarly proved.
\end{proof}

\subsection {A property of left weakly Gorenstein algebras}

\begin{thm}\label{wgmono}
Let $A$ be a left weakly Gorenstein algebra. If $\phi: X\longrightarrow Y$ is a left $A$-homomorphism such that $\phi^*=\Hom_A(\phi,A)$ is an epimorphism and
$\Ext^i_A(\phi,A)$ is an isomorphism for $i\geq 1$, then $\phi$ is a monomorphism and $\cok\phi\in \Gp(A)$.
\end{thm}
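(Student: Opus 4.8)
The plan is to factor $\phi$ through its image and analyze the two pieces separately, then assemble the result using the characterization that $A$ is left weakly Gorenstein, i.e., $^\perp A = \Gp(A)$. First I would form the image $\Ima\phi$ and write $\phi = \iota\circ p$ with $p\colon X\twoheadrightarrow\Ima\phi$ an epimorphism and $\iota\colon\Ima\phi\hookrightarrow Y$ a monomorphism, and set $K=\Ker\phi=\Ker p$. The strategy is to show that $K=0$ directly from the hypothesis that $\phi^*$ is epic and $\Ext^i_A(\phi,A)$ is an isomorphism for $i\geq 1$: these conditions will force $K$ to be semi-Gorenstein-projective with vanishing $\Ext^0$-dual as well, so that $K$ satisfies $\Ext^i_A(K,A)=0$ for all $i\geq 0$. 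At this point I would invoke the strong Nakayama conjecture; but since that is open in general, the honest route is different — I would instead show directly that $K$ is Gorenstein-projective (using $A$ left weakly Gorenstein) and then a separate argument that a Gorenstein-projective submodule of $X$ arising this way must vanish. Concretely: from $0\to K\to X\xrightarrow{p}\Ima\phi\to 0$ and $0\to\Ima\phi\xrightarrow{\iota} Y\to\cok\phi\to 0$, apply $\Hom_A(-,A)$ and compare the long exact sequences; the isomorphisms $\Ext^i_A(\phi,A)$ and surjectivity of $\phi^*$ will yield $\Ext^i_A(\cok\phi,A)=0$ for $i\geq 1$ (so $\cok\phi\in{}^\perp A$) and simultaneously $\Ext^i_A(K,A)=0$ for all $i\geq 1$ together with $K^*=0$.

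Next, having $\cok\phi\in{}^\perp A=\Gp(A)$, I would use Lemma~\ref{lem:GP_app} in the form: the sequence $0\to\Ima\phi\to Y\to\cok\phi\to 0$ need not directly be an approximation sequence, so instead I would argue via Lemma~\ref{lem:syzygy_GP}-type reasoning. Actually the cleaner approach: since $\cok\phi\in\Gp(A)$, a syzygy of $\cok\phi$ computed from a projective cover sits in $\Gp(A)$, and the pullback/comparison shows $\Ima\phi$ differs from such a syzygy by a projective summand, giving $\Ima\phi\in\Gp(A)$. Then from $0\to K\to X\to\Ima\phi\to 0$: since $\Ima\phi$ is Gorenstein-projective, $\Ext^1_A(\Ima\phi, P)=0$ for projective $P$, but I want information forcing $K=0$. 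Here the key point is that $K^*=0$ combined with $\Ext^i_A(K,A)=0$ for $i\geq 1$ means $K$ satisfies $\Ext^i_A(K,A)=0$ for all $i\geq 0$; if $K$ is moreover Gorenstein-projective (which follows once we know $X$'s relevant syzygy data — but $X$ itself need not be Gorenstein-projective), then $K=0$ since a nonzero Gorenstein-projective module has nonzero $A$-dual. So the real engine is: deduce $K\in\Gp(A)$ and $K^*=0$, hence $K=0$.

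To get $K\in\Gp(A)$ I would observe that $K$ is semi-Gorenstein-projective (established above) and that $A$ left weakly Gorenstein gives $K\in\Gp(A)$ immediately — this is the cleanest use of the hypothesis. Then $K=0$ because for a Gorenstein-projective module $G$ one has a monomorphism $G\hookrightarrow Q$ into a projective $Q$ (take the first map of a complete resolution), whence $G^*\neq 0$ if $G\neq 0$; but $K^*=0$, so $K=0$, i.e., $\phi$ is monic. Finally, with $\phi$ monic and $0\to X\xrightarrow{\phi}Y\to\cok\phi\to 0$, we already have $\cok\phi\in{}^\perp A=\Gp(A)$, completing the proof. The main obstacle I anticipate is establishing $K^*=0$ and the $\Ext$-vanishing for $K$ cleanly from the hypothesis on $\phi$: the long exact sequence for $\Hom_A(-,A)$ applied to the two short exact sequences must be chased carefully, because the connecting maps interlock the $\Ext^i_A(\Ima\phi,A)$ terms with both $\Ext^i_A(\cok\phi,A)$ and $\Ext^i_A(K,A)$, and one must check that surjectivity of $\phi^*$ (not just bijectivity of the higher $\Ext$'s) is exactly what is needed at the bottom degree to force $K^*=0$ rather than merely $\Ext^1_A(K,A)=0$.
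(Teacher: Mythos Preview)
Your approach has a genuine gap at the diagram-chasing step. You claim that from the factorization $\phi=\iota\circ p$ (with $K=\Ker\phi$, $I=\Ima\phi$, $C=\cok\phi$) the two long exact sequences in $\Hom_A(-,A)$ yield both $C\in{}^\perp A$ and $K^*=0$, $\Ext^{\ge 1}_A(K,A)=0$. But a careful chase gives only $\Ext^1_A(C,A)=0$ together with
\[
\Ext^{j}_A(C,A)\;\cong\;\Ext^{\,j-2}_A(K,A)\qquad(j\ge 2).
\]
Indeed, writing $\alpha_i=\Ext^i_A(\iota,A)$ and $\beta_i=\Ext^i_A(p,A)$, the hypothesis $\beta_i\alpha_i$ iso gives $\alpha_i$ split mono and $\beta_i$ split epi, whence $\Ext^i_A(I,A)=\Ima\alpha_i\oplus\Ker\beta_i$; the two long exact sequences then identify $\Ker\beta_i\cong\Ext^{i-1}_A(K,A)$ and $\cok\alpha_i\cong\Ext^{i+1}_A(C,A)$. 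So the vanishing of $\Ext^{\ge 2}_A(C,A)$ is \emph{equivalent} to $K^*=0$ and $\Ext^{\ge 1}_A(K,A)=0$, not a consequence of the hypotheses. In particular you never obtain $K\in{}^\perp A$, so the step ``$A$ left weakly Gorenstein $\Rightarrow K\in\Gp(A)$'' is unjustified, and the argument is circular.

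The paper sidesteps exactly this interlocking by a different construction: take a projective cover $0\to L\to P\xrightarrow{\pi}Y\to 0$ and form the pullback $E$ of $\pi$ along $\phi$, with induced map $\psi\colon E\to P$. Comparing the two resulting long exact sequences (the row through $P$ has all higher $\Ext$'s zero) one reads off directly that $\psi^*$ is epi and $E\in{}^\perp A=\Gp(A)$. Since $\Hom_A(-,A)$ is a duality on $\Gp(A)$, $\psi^*$ epi forces $\psi$ mono, hence $\phi$ mono; and then $\cok\phi\cong\cok\psi$ lies in ${}^\perp A=\Gp(A)$ by a short argument from the sequence $0\to E\to P\to\cok\psi\to 0$. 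The pullback-with-projective trick is precisely what breaks the circular dependence you flagged as the ``main obstacle'' but could not resolve.
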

\begin{proof} \ Let $0\longrightarrow L\longrightarrow P\stackrel{\pi}\longrightarrow Y\longrightarrow 0$ be an exact sequence, where $\pi$ is the projective cover.
Consider the pull-back
$$\xymatrix{0\ar[r]&L\ar[r]\ar@{=}[d]&E\ar[r]\ar[d]_-{\psi}&X\ar[r]\ar[d]^\phi&0.\\
0\ar[r]&L\ar[r]&P\ar[r]^\pi&Y\ar[r]&0 }$$

\vskip5pt

\noindent Applying $\Hom_A(-,A)$  one gets a commutative diagram with exact rows (we write ${\rm E}$ for ${\rm Ext})$:

$$\xymatrix{Y^*\ar[r]\ar@{->>}[d]_{\phi^*}&P^*\ar[r]\ar[d]^{\psi^*}&L^*\ar[r]\ar@{=}[d]& {\rm E}^1(Y, A)\ar[r]\ar[d]_\cong^{{\rm E}^1(\phi,A)}&0\ar[r]\ar[d] &
{\rm E}^1(L, A)\ar[r]\ar@{=}[d] & \cdots \\
X^*\ar[r]&E^*\ar[r]&L^*\ar[r]&{\rm E}^1(X,A)\ar[r]^-0&{\rm E}^1(E, A)\ar[r] & {\rm E}^1(L, A)\ar[r] & \cdots }$$
where by assumption $\phi^*$  is an epimorphism, and the following commutative diagram with exact rows for any $t\ge 1:$

$$\xymatrix@C=0.4cm{\cdots \ar[r] & {\rm E}^t(Y,A)\ar[r]\ar[d]_\cong^-{{\rm E}^{t}(\phi,A)}&0\ar[r]\ar[d]&{\rm E}^t(L,A)\ar[r]^-\cong \ar@{=}[d]& {\rm E}^{t+1}(Y,A)\ar[r]\ar[d]_\cong^{{\rm E}^{t+1}(\phi,A)}& 0\ar[r]\ar[d]&{\rm E}^{t+1}(L,A) \ar@{=}[d]\ar[r]& \cdots \\
\cdots \ar[r] & {\rm E}^t(X,A)\ar[r]^-0& {\rm E}^t(E,A)\ar[r]^-0&{\rm E}^t(L,A)\ar[r]^-\cong & {\rm E}^{t+1}(X,A)\ar[r]^-0 & {\rm E}^{t+1}(E,A)\ar[r]^-0 & {\rm E}^{t+1}(L,A) \ar[r]& \cdots }$$
where by assumption $\Ext^t_A(\phi,A)$ is an isomorphism for $t\ge 1$.

\vskip5pt

Hence, it is easy to see that $\psi^*$ is an epimorphism and $E\in\leftidx{^\perp}A=\Gp(A)$, via the analysis of exactness.
Since $\Hom_A(-,A):  \Gp(A) \longrightarrow \Gp(A^{\rm op})$ is a duality, it follows that $\psi$ is a monomorphism.
Thus, $\phi$ is a monomorphism.

\vskip5pt

So one has an exact sequence $0\longrightarrow E\stackrel \psi\longrightarrow P\longrightarrow \cok \psi \longrightarrow 0$
with epimorphism $\psi^*$ and $E\in \Gp(A)$. It follows that $\cok \psi\in\leftidx{^\perp}A=\Gp(A)$, and hence $\cok\phi\cong\cok\psi$ is a Gorenstein-projective $A$-module.
\end{proof}

\subsection{Proof of Theorem \ref{weakGmorita}}

$\Longleftarrow$: \ Assume that $A$ and $B$ are left weakly-Gorenstein. Let $\begin{pmatrix}\begin{smallmatrix}X\\Y\end{smallmatrix}\end{pmatrix}_{f, g}\in \ ^\perp \Lambda$.
By Theorem \ref{semi}, the following conditions are satisfied:
\vskip5pt

(1) \  ${\rm Hom}_A(g, A):{\rm Hom}_{A}(X, A)\lxr {\rm Hom}_{A}(N\otimes_{B}Y, A)$ is an epimorphism$;$

(2) \ ${\rm Ext}_{A}^{i}(g, A): {\rm Ext}_{A}^{i}(X, A)\longrightarrow {\rm Ext}_{A}^{i}(N\otimes_{B}Y, A)$ is an isomorphism for $i\geq 1;$

(3) \ ${\rm Hom}_B(f, B):{\rm Hom}_{B}(Y, B)\lxr {\rm Hom}_B(M\otimes_AX, B)$ is an epimorphism$;$

(4) \  ${\rm Ext}_{B}^{i}(f, B):{\rm Ext}_{B}^{i}(Y, B)\longrightarrow {\rm Ext}_{B}^{i}(M\otimes_{A}X, B)$ is an isomorphism for $i\geq 1$.

\vskip5pt

\noindent It follows from Theorem \ref{wgmono}  that $f$ and $g$ are monomorphisms, $\cok f\in\Gp(B)$ and $\cok g\in\Gp(A)$. Thus, by Corollary \ref{gpmorita}, $\begin{pmatrix}\begin{smallmatrix}X\\Y\end{smallmatrix}\end{pmatrix}_{f, g}\in\Gp(\Lambda)$.
By definition $\Lambda$ is left weakly Gorenstein.

\vskip 5pt

$\Longrightarrow:$ \ Conversely, assume that $\Lambda$ is left weakly-Gorenstein. Let $X\in\ ^\perp A$. It is easy to see that $\begin{pmatrix}\begin{smallmatrix}X\\ M\otimes X\end{smallmatrix}\end{pmatrix}_{1, 0}$ satisfies all the four conditions in Theorem \ref{semi}, and hence it is a semi-Gorenstein projective $\Lambda$-module. By the assumption that $\Lambda$ is left weakly Gorenstein, one infers that $\begin{pmatrix}\begin{smallmatrix}X\\ M\otimes X\end{smallmatrix}\end{pmatrix}_{1, 0}\in\Gp(\Lambda)$. Hence, by Corollary \ref{gpmorita}, $\cok (0=N\otimes_B M\otimes_A X\lxr X)=X$ is a Gorenstein-projective $A$-module. Thus, $A$ is left weakly Gorenstein. Similarly, one can show that $B$ is left weakly Gorenstein. \hfill $\square$

\subsection{Application to $T_n(A)$} Let $A$ be an Artin algebra. Put  $T_n(A) =
\left(\begin{smallmatrix}
A&A&\cdots&A&A\\
0&A&\cdots&A&A\\
\vdots&\vdots&\ddots&\vdots&\vdots\\
0&0&\cdots&A&A\\ 0&0&\cdots&0&A
\end{smallmatrix}\right)$ to be the $n\times n$ upper triangular matrix algebra, $n\ge 2$. Then
$T_n(A) = \left(\begin{smallmatrix}
T_{n-1}(A)&N\\
0&A
\end{smallmatrix}\right)$ $T_n(A)$ is a Morita ring, where $N = \left(\begin{smallmatrix}
A\\
\vdots\\
A
\end{smallmatrix}\right)_{(n-1)\times 1}.$ Note that $_{T_{n-1}(A)}N$ and $N_A$ are projective modules.

\vskip5pt

A left $T_n(A)$-module can be identified with $\left(\begin{smallmatrix}
X_n\\
\vdots\\
X_1
\end{smallmatrix}\right)_{(\phi_i)},$
where $X_i\in A$-mod for all $i$, and $\phi_i: \ X_i\longrightarrow
X_{i+1}$ are $A$-maps for $1\le i\le n-1$.   Using induction on $n$, and Corollary \ref{triangsemi}(1),  Corollary \ref{gpmorita}, and Theorem \ref{weakGmorita}, respectively, one gets the following consequence.

 \vskip10pt

\begin{cor} \label{Tn(A)} \ $(1)$ \ {\rm (\cite[Corolary 4.2]{XZ})} \ \ $\left(\begin{smallmatrix}
X_n\\
\vdots\\
X_1
\end{smallmatrix}\right)_{(\phi_i)}\in \ ^\perp T_n(A)$ if and only if
\ $\phi^*_i: \ \Hom_A(X_{i+1}, A) \longrightarrow \Hom_A(X_i, A)$
are epimorphisms for $1\le i\le n-1$, and $X_i\in \ ^\perp A$ for
all $i$.

\vskip5pt

$(2)$ \ $\left(\begin{smallmatrix}
X_n\\
\vdots\\
X_1
\end{smallmatrix}\right)_{(\phi_i)}\in \Gp(T_n(A))$ if
and only if $X_i\in \Gp(A)$ for all
$i$, $\phi_i: X_i \longrightarrow X_{i+1}$ are monomorphisms, and
$\cok\phi_i\in \Gp(A)$ for $1\le i\le
n-1$.

\vskip5pt

$(3)$ \  $T_n(A)$ is a left weakly Gorenstein algebra if and only if so is $A$.\end{cor}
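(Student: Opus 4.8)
The plan is to prove all three parts simultaneously by induction on $n\ge 2$, using the decomposition $T_n(A)=\begin{pmatrix}\begin{smallmatrix}T_{n-1}(A)&N\\0&A\end{smallmatrix}\end{pmatrix}$ with $N=\begin{pmatrix}\begin{smallmatrix}A\\\vdots\\A\end{smallmatrix}\end{pmatrix}_{(n-1)\times 1}$. This is a Morita ring of the type treated above with ``$M$''$=0$ and ``$B$''$=A$: one checks at once that $M\otimes N=0=N\otimes M$, and that $M=0$, $_{T_{n-1}(A)}N$, $N_A$ are all projective (indeed $_{T_{n-1}(A)}N\cong T_{n-1}(A)e$ for a suitable primitive idempotent $e$ with $eT_{n-1}(A)e\cong A$ and $eT_{n-1}(A)\cong{}_AA$, while $N_A\cong A^{\,n-1}$), so Corollary \ref{triangsemi}(1), Corollary \ref{gpmorita} and Theorem \ref{weakGmorita} are all available for this decomposition. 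Under the identification of modules, $\left(\begin{smallmatrix}X_n\\\vdots\\X_1\end{smallmatrix}\right)_{(\phi_i)}$ corresponds to $\begin{pmatrix}\begin{smallmatrix}X'\\X_1\end{smallmatrix}\end{pmatrix}_{0,g}$, where $X'=\left(\begin{smallmatrix}X_n\\\vdots\\X_2\end{smallmatrix}\right)_{(\phi_i)_{i\ge 2}}$ is a $T_{n-1}(A)$-module, $N\otimes_A X_1$ is the ``constant'' $T_{n-1}(A)$-module with $X_1$ at every vertex and identity maps, and (via the adjunction $\Hom_{T_{n-1}(A)}(T_{n-1}(A)e\otimes_A X_1,-)\cong\Hom_A(X_1,e(-))$) the structure map $g$ encodes precisely the datum $\phi_1\colon X_1\to X_2$; moreover $g$ has components $g_j=\phi_{j-1}\cdots\phi_1\colon X_1\to X_j$. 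Part (3) is then immediate: by Theorem \ref{weakGmorita}, $T_n(A)$ is left weakly Gorenstein iff both $T_{n-1}(A)$ and $A$ are, and by the induction hypothesis $T_{n-1}(A)$ is left weakly Gorenstein iff $A$ is; the base case $n=2$ is $T_2(A)=\begin{pmatrix}\begin{smallmatrix}A&A\\0&A\end{smallmatrix}\end{pmatrix}$, again covered directly by Theorem \ref{weakGmorita}.

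For part (1), apply Corollary \ref{triangsemi}(1) to the above decomposition: $\left(\begin{smallmatrix}X_n\\\vdots\\X_1\end{smallmatrix}\right)_{(\phi_i)}\in{}^\perp T_n(A)$ iff $\Hom_{T_{n-1}(A)}(g,T_{n-1}(A))$ is an epimorphism, $\Ext^i_{T_{n-1}(A)}(g,T_{n-1}(A))$ is an isomorphism for $i\ge 1$, and $X_1\in{}^\perp A$. Because $_{T_{n-1}(A)}N\cong T_{n-1}(A)e$ is projective with $eT_{n-1}(A)\cong{}_AA$, one computes $\Hom_{T_{n-1}(A)}(N\otimes_A X_1,T_{n-1}(A))\cong\Hom_A(X_1,A)$ and $\Ext^i_{T_{n-1}(A)}(N\otimes_A X_1,T_{n-1}(A))\cong\Ext^i_A(X_1,A)$, naturally in $X_1$. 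In particular, once $X_1\in{}^\perp A$ the source of each $\Ext^i$-isomorphism vanishes, which forces $\Ext^i_{T_{n-1}(A)}(X',T_{n-1}(A))=0$, i.e.\ $X'\in{}^\perp T_{n-1}(A)$; the $\Hom$-epimorphism condition then unwinds (through the $e$-evaluation map $\Hom_{T_{n-1}(A)}(X',T_{n-1}(A))\to\Hom_A(X_2,A)$) to ``$\phi_1^\ast$ is an epimorphism'', and, given $X_1\in{}^\perp A$, the degree-$1$ part of the $\Ext$-isomorphism condition becomes ``$X_2\in{}^\perp A$''. Feeding $X'\in{}^\perp T_{n-1}(A)$ into the induction hypothesis — which yields $\phi_i^\ast$ epi for $2\le i\le n-1$ and $X_i\in{}^\perp A$ for $2\le i\le n$ — gives the stated characterization; the base case $n=2$ is Corollary \ref{triangsemi}(1) together with the elementary remark that, given $X_1\in{}^\perp A$, ``$\Ext^i_A(\phi_1,A)$ iso for all $i\ge 1$'' is equivalent to ``$X_2\in{}^\perp A$''.

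For part (2), use Corollary \ref{gpmorita}: $\left(\begin{smallmatrix}X_n\\\vdots\\X_1\end{smallmatrix}\right)_{(\phi_i)}\in\Gp(T_n(A))$ iff $g$ is monic, $X_1\in\Gp(A)$ (here $\cok f=X_1$ since $M=0$), and $\cok g\in\Gp(T_{n-1}(A))$. Since $g_j=\phi_{j-1}\cdots\phi_1$, the module $\cok g$ is $\left(\begin{smallmatrix}\cok g_n\\\vdots\\\cok g_2\end{smallmatrix}\right)$ with the induced maps $\bar\phi_i\colon\cok g_i\to\cok g_{i+1}$. A Snake Lemma computation on the two short exact sequences $0\to X_1\xrightarrow{g_j}X_j\to\cok g_j\to 0$ and $0\to X_1\xrightarrow{g_{j+1}}X_{j+1}\to\cok g_{j+1}\to 0$ (vertical maps $\mathrm{id}$, $\phi_j$, $\bar\phi_j$) shows that $\phi_j$ is mono iff $\bar\phi_j$ is, and in that case $\cok\bar\phi_j\cong\cok\phi_j$ together with a short exact sequence $0\to\cok g_j\to\cok g_{j+1}\to\cok\phi_j\to 0$. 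Rewriting ``$\cok g\in\Gp(T_{n-1}(A))$'' by the induction hypothesis as ``$\cok g_j\in\Gp(A)$ for all $j$, $\bar\phi_i$ mono and $\cok\bar\phi_i\in\Gp(A)$ for $2\le i\le n-1$'', and invoking closure of $\Gp(A)$ under extensions (and isomorphisms), an induction on $j$ translates the conditions ``$g$ monic, $X_1\in\Gp(A)$, $\cok g\in\Gp(T_{n-1}(A))$'' into ``all $\phi_i$ mono, all $X_i\in\Gp(A)$, all $\cok\phi_i\in\Gp(A)$'', and conversely; the base case $n=2$ is direct from Corollary \ref{gpmorita}.

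The genuinely delicate point — and the only real obstacle — is the bookkeeping in the inductive steps of (1) and (2): fixing a vertex labelling of $T_n(A)$ so that the Morita-ring structure map $g$ (respectively $\ker g$ and $\cok g$) is correctly matched with the combinatorial data $(\phi_i)_i$, and checking that the isomorphisms $\Hom/\Ext^i_{T_{n-1}(A)}(N\otimes_A X_1,T_{n-1}(A))\cong\Hom/\Ext^i_A(X_1,A)$ are natural in the precise way needed for $\Hom_{T_{n-1}(A)}(g,T_{n-1}(A))$ to be identified, up to image, with $\phi_1^\ast$. Once this identification is pinned down, the rest is routine diagram chasing and repeated application of the results quoted above; part (3), as noted, needs none of this and is a two-line induction from Theorem \ref{weakGmorita}.
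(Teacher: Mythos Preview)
Your approach—induction on $n$ via the decomposition $T_n(A)=\begin{pmatrix}\begin{smallmatrix}T_{n-1}(A)&N\\0&A\end{smallmatrix}\end{pmatrix}$ together with Corollary~\ref{triangsemi}(1), Corollary~\ref{gpmorita}, and Theorem~\ref{weakGmorita} for parts (1), (2), (3) respectively—is exactly what the paper indicates (it gives no further details beyond naming these three ingredients). One small slip in your bookkeeping: since $N=T_{n-1}(A)e$ with $e$ the idempotent at the source vertex, the summand $\Hom_{T_{n-1}(A)}(X',T_{n-1}(A)e)$ identifies with $\Hom_A(X_n,A)$ (not $\Hom_A(X_2,A)$), so the Hom-epimorphism condition unwinds to ``$(\phi_{n-1}\cdots\phi_1)^\ast$ is surjective'' rather than directly to ``$\phi_1^\ast$ is surjective''; but once induction on $X'\in{}^\perp T_{n-1}(A)$ supplies surjectivity of $\phi_2^\ast,\ldots,\phi_{n-1}^\ast$, these two conditions are equivalent, so your conclusion stands.
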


For the lower triangular matrix algebras one has the corresponding results.

\subsection{Examples}

\begin{exam}
\label{ex1}\ Let $k$ be a field and $A=B$ the path $k$-algebra of the quiver $3 \longrightarrow 2 \longrightarrow 1$.
We write the connection of paths from the right to the left.
Take $M=Ae_{1}\otimes_{k}e_{3}A$ and $N=Ae_{2}\otimes_{k}e_{3}A$.
Then $_{A}M,\ M_{A},\ _{A}N,\ N_{A}$ are projective modules, and  $M\otimes_{A}N=0=N\otimes_{A}M$.
By {\rm Corollary \ref{finitepd}(1)},  $\Lambda=\begin{pmatrix}\begin{smallmatrix}
A & N \\
M & A \\
\end{smallmatrix}\end{pmatrix}$  is {\rm GP}-free, thus, there are $6$ indecomposable Gorenstein-projective left $\Lambda$-modules, they are precisely the indecomposable projective left $\Lambda$-modules$:$
\[
\xymatrix@C=0.1cm@R=0.2cm{
{\begin{pmatrix}\begin{smallmatrix}Ae_1\\0\end{smallmatrix}\end{pmatrix}_{0, 0}}, \ \  {\begin{pmatrix}\begin{smallmatrix}Ae_2\\0\end{smallmatrix}\end{pmatrix}_{0, 0}},
\ \ {\begin{pmatrix}\begin{smallmatrix}Ae_3\\ Ae_1\end{smallmatrix}\end{pmatrix}_{\id_{Ae_{1}}, 0}},
\ \ {\begin{pmatrix}\begin{smallmatrix}0\\ Ae_1\end{smallmatrix}\end{pmatrix}_{\id_{Ae_{1}}, 0}},
\ \ {\begin{pmatrix}\begin{smallmatrix}0\\ Ae_2\end{smallmatrix}\end{pmatrix}_{\id_{Ae_{1}}, 0}},
\ \ {\begin{pmatrix}\begin{smallmatrix}Ae_2\\ Ae_3\end{smallmatrix}\end{pmatrix}_{0, \id_{Ae_2}}}.}
\]
\end{exam}

\vskip 5pt

\begin{exam}
\label{ex2}\ Let $kQ$ the path algebra of the quiver
\[
\xymatrix@C=0.9cm@R=0.2cm{1\ar[rr]^-\alpha & & 2 \ar[ld]^{\beta} \\
& 3 \ar[lu]^{\gamma} }
\]
and $A=B=kQ/J^{2}$, where $J$ is the ideal of $kQ$ generated by all the arrows.
Then $A$ is a self-injective  Nakayama algebra. Thus $A$ is of finite representation type and $A$ is weakly Gorenstein,  and  $$\Gproj(A)= \ ^\perp A = A\mbox{-}{\rm mod} = \{Ae_1, Ae_2, Ae_3, S_1, S_2, S_3\}.$$ Take $M=N=Ae_{1}\otimes_{k}e_{3}A$.
Then  $_{A}M$ and $M_{A}$ are projective modules, and $M\otimes_{A}N=0=N\otimes_{A}M$. Note that
$$M\otimes_A Ae_{1}=0, \ \ M\otimes_A Ae_{2} = Ae_1  = M\otimes_A Ae_3,  \ \ M\otimes_A S_1 = 0 = M\otimes_A S_2, \ \ M\otimes_A S_3 = Ae_1$$
as left $A$-modules.

\vskip 5pt

By {\rm Theorem \ref{weakGmorita}},  $\Lambda=\begin{pmatrix}\begin{smallmatrix}
A & M \\
M & A \\
\end{smallmatrix}\end{pmatrix}$ is a left weakly Gorenstein algebra. By {\rm Corollary \ref{gpmorita}},  there are $12$ indecomposable Gorenstein-projective left $\Lambda$-modules$:$
\[
\xymatrix@C=0.1cm@R=0.2cm{
{\begin{pmatrix}\begin{smallmatrix}0 \\ Ae_1\end{smallmatrix}\end{pmatrix}_{0, 0}},   & {\begin{pmatrix}\begin{smallmatrix}0\\S_1\end{smallmatrix}\end{pmatrix}_{0, 0}},
&  {\begin{pmatrix}\begin{smallmatrix}0\\ S_2\end{smallmatrix}\end{pmatrix}_{0, 0}},
&  {\begin{pmatrix}\begin{smallmatrix}Ae_1\\ 0 \end{smallmatrix}\end{pmatrix}_{0, 0}},
&  {\begin{pmatrix}\begin{smallmatrix}Ae_1\\ Ae_2\end{smallmatrix}\end{pmatrix}_{0, \id_{Ae_{1}}}},
&  {\begin{pmatrix}\begin{smallmatrix}Ae_1\\ Ae_3\end{smallmatrix}\end{pmatrix}_{0, \id_{Ae_1}}},
\\ {\begin{pmatrix}\begin{smallmatrix}Ae_1\\ S_3 \end{smallmatrix}\end{pmatrix}_{0, \id_{Ae_1}}},
&  {\begin{pmatrix}\begin{smallmatrix}Ae_2\\ Ae_1\end{smallmatrix}\end{pmatrix}_{\id_{Ae_1}, 0}},
&  {\begin{pmatrix}\begin{smallmatrix}Ae_3\\ Ae_1\end{smallmatrix}\end{pmatrix}_{\id_{Ae_1}, 0}},
&  {\begin{pmatrix}\begin{smallmatrix}S_1\\ 0\end{smallmatrix}\end{pmatrix}_{0, 0}},
&  {\begin{pmatrix}\begin{smallmatrix}S_2\\ 0\end{smallmatrix}\end{pmatrix}_{0, 0}},
&  {\begin{pmatrix}\begin{smallmatrix}S_3\\ Ae_1\end{smallmatrix}\end{pmatrix}_{\id_{Ae_1}, 0}}}
\]
and they are precisely all the indecomposable semi-Gorenstein-projective left $\Lambda$-modules.\end{exam}

\begin{exam} \label{thirdexam} \  Let $A$ be an Artin algebra such that there is a double semi-Gorenstein-projective $A$-module $X$ $($i.e., both $X$ and  $X^* = \Hom_A(X, A)$ are semi-Gorenstein-projective$)$, and that $X$ is not torsionless.
The existence of such an algebra $A$ is guaranteed by {\rm \cite {RZ2}}.  By {\rm Theorem \ref{weakGmorita}},  $\Lambda=\begin{pmatrix}\begin{smallmatrix}
A & A \\
0 & A \\
\end{smallmatrix}\end{pmatrix}$ is not left wekly Gorenstein. In fact, let $\varphi: X\longrightarrow P$ be a left $({\rm add} A)$-approximation of $X$.
By {\rm Theorem \ref{semi}} and {\rm Corollary \ref{gpmorita}},  $$\begin{pmatrix}\begin{smallmatrix}P\\ X\end{smallmatrix}\end{pmatrix}_{0, \varphi}, \ \ \ \begin{pmatrix}\begin{smallmatrix}X\\ 0\end{smallmatrix}\end{pmatrix}_{0, 0}, \ \ \ \begin{pmatrix}\begin{smallmatrix}X\\ X\end{smallmatrix}\end{pmatrix}_{0, 1}$$
are semi-Gorenstein-projective $\Lambda$-modules, but they are not Gorenstein-projective $\Lambda$-modules.
Moreover, all these three modules are double semi-Gorenstein-projective $\Lambda$-modules, which are not monic and not torsionless {\rm (\cite[Theorem 1.7, Lemma 5.6]{Z})}.

\end{exam}

\begin{exam} \ Let $A$ be a finite-dimensional algebra over field $k$.  Consider the matrix algebra
$$\Lambda=\begin{pmatrix}\begin{smallmatrix} A&A&0&A\\ 0&A&0&0 \\ 0&A&A&A \\ 0&0&0&A \end{smallmatrix}\end{pmatrix}\cong A\otimes kQ
= A\otimes k\left(\begin{smallmatrix}1 &\longleftarrow &2\\ \uparrow &&\downarrow \\ 4&\longrightarrow &3\end{smallmatrix}\right) $$
 Notice that $\Lambda$ can also be viewed as a Morita algebra $\Lambda=\begin{pmatrix}\begin{smallmatrix} B & M \\ M & B \end{smallmatrix}\end{pmatrix}$,
 where $B=T_2(A)=\begin{pmatrix}\begin{smallmatrix} A & A \\ 0 & A \end{smallmatrix}\end{pmatrix}\cong A\otimes k(2\lxr 1)$
 and the bimodule $M=\begin{pmatrix}\begin{smallmatrix} 0 & A \\ 0 & 0\end{smallmatrix} \end{pmatrix} =B(1\otimes e_1)\otimes (1\otimes e_2)B$ is projective as a left and right $B$-module, satisfying $M\otimes_BM=0$.

\vskip5pt

Thus, one can conclude from either {\rm Theorem \ref{thm:weak_G}} or {\rm Theorem  \ref{weakGmorita}} that $\Lambda$ is left weakly Gorenstein if and only if so is $A$.
Also, $\Lambda$ is Gorenstein if and only if $A$ is Gorenstein. This follows from  {\rm \cite[Proposition 2.2]{AR2}} or  {\rm \cite[Corollary 4.10, 4.15]{GP}}.

\vskip5pt

Assume that $A$ is not left weakly Gorenstein. We are going to explicitly construct some semi-Goresntein-projective $\Lambda$-modules which are not Gorenstein-projective.
The following equivalence allows us to interpret a $\Lambda$-module in two different ways.

\vskip5pt

A $B$-module is denoted by $\begin{pmatrix}\begin{smallmatrix} X\\Y\end{smallmatrix}\end{pmatrix}_{f}$, where $f:Y\longrightarrow X$ is an $A$-homomorphism.
A $\Lambda$-module is $\left( \begin{pmatrix}\begin{smallmatrix} X_1\\Y_1\end{smallmatrix}\end{pmatrix}_{f_1},
\begin{pmatrix}\begin{smallmatrix} X_2\\ Y_2\end{smallmatrix}\end{pmatrix}_{f_2},  \varphi_1, \varphi_2 \right)$,
where $$\varphi_1 \in \Hom_B(M\otimes_B \begin{pmatrix}\begin{smallmatrix} X_1\\Y_1\end{smallmatrix}\end{pmatrix}_{f_1},\begin{pmatrix}\begin{smallmatrix} X_2\\ Y_2\end{smallmatrix}\end{pmatrix}_{f_2})\cong \Hom_B(\begin{pmatrix}\begin{smallmatrix} Y_1\\0\end{smallmatrix}\end{pmatrix}_{0}, \begin{pmatrix}\begin{smallmatrix} X_2\\ Y_2\end{smallmatrix}\end{pmatrix}_{f_2})\cong \Hom_A(Y_1,X_2)$$
$$\varphi_2 \in \Hom_B(M\otimes_B \begin{pmatrix}\begin{smallmatrix} X_2\\Y_2\end{smallmatrix}\end{pmatrix}_{f_2},\begin{pmatrix}\begin{smallmatrix} X_1\\ Y_1\end{smallmatrix}\end{pmatrix}_{f_1})\cong \Hom_B(\begin{pmatrix}\begin{smallmatrix} Y_2\\0\end{smallmatrix}\end{pmatrix}_{0}, \begin{pmatrix}\begin{smallmatrix} X_1\\ Y_1\end{smallmatrix}\end{pmatrix}_{f_1})\cong \Hom_A(Y_2,X_1).$$

\vskip5pt
\noindent Let $g_1\in\Hom_A(Y_2,X_1)$ and $g_2\in\Hom_A(Y_1,X_2)$ be the image of $\varphi_1$ and $\varphi_2$ under the above isomorphism, respectively. Then
$$
\left( \begin{pmatrix}\begin{smallmatrix} X_1\\Y_1\end{smallmatrix}\end{pmatrix}_{f_1}, \begin{pmatrix}\begin{smallmatrix} X_2\\ Y_2\end{smallmatrix}\end{pmatrix}_{f_2},  \varphi_1, \varphi_2 \right) \ \ \mapsto \ \
\begin{smallmatrix} X_1 &\stackrel{f_1}\longleftarrow & Y_1\\ g_1\uparrow &&  \downarrow g_2 \\ Y_2 & \stackrel{f_2}\longrightarrow & X_2 \end{smallmatrix}
$$
yields an equivalence $\Lambda\modu\cong \rep(Q, A)$.

\vskip5pt

Let $A$ be a finite-dimensional $k$-algebra such that there is semi-Gorenstein-projective $A$-module $X$ and that $X$ is not torsionless.
The existence of such an algebra $A$ is guaranteed by {\rm \cite {RZ2}}.
Let $\alpha: X\longrightarrow P$ be a left $({\rm add}A)$-approximation of $X$.
As mentioned in {\rm Example \ref{thirdexam}},  $B=T_2(A)$ is not left wekly Gorenstein, and  $$\begin{pmatrix}\begin{smallmatrix}P\\ X\end{smallmatrix}\end{pmatrix}_{ \alpha}, \ \ \ \begin{pmatrix}\begin{smallmatrix}X\\ 0\end{smallmatrix}\end{pmatrix}_{0}, \ \ \ \begin{pmatrix}\begin{smallmatrix}X\\ X\end{smallmatrix}\end{pmatrix}_{1}$$
are semi-Gorenstein-projective $B$-modules, but they are not Gorenstein-projective $B$-modules.

\vskip5pt

Consider the following representation in $\rep(A,Q):$
$$G=\begin{smallmatrix} P &\stackrel{\alpha}\longleftarrow & X\\ \uparrow &&  \downarrow \mbox{\tiny $\alpha$}  \\ 0 &  \longrightarrow & P \end{smallmatrix}
$$
We claim that $G$ is a semi-Gorenstein-projective $\Lambda$-module which is not Goresntein-projective.

\vskip5pt

In fact, under the above equivalence, $G=\left(\begin{pmatrix}\begin{smallmatrix} P\\ X\end{smallmatrix}\end{pmatrix}_{\alpha}, \begin{pmatrix}\begin{smallmatrix} P\\ 0\end{smallmatrix}\end{pmatrix}_0,  \ \varphi_1, \ \varphi_2 \right)$, where
\begin{align*} \varphi_1=\begin{pmatrix}\begin{smallmatrix} \alpha\\0\end{smallmatrix}\end{pmatrix}: \ & \begin{pmatrix}\begin{smallmatrix} X\\0\end{smallmatrix}\end{pmatrix}_{0}\lxr \begin{pmatrix}\begin{smallmatrix} P\\0\end{smallmatrix}\end{pmatrix}_{0}
\\
\varphi_2=0 : \ & \begin{pmatrix}\begin{smallmatrix} 0\\0\end{smallmatrix}\end{pmatrix}_{0}\lxr \begin{pmatrix}\begin{smallmatrix} P\\X\end{smallmatrix}\end{pmatrix}_{\alpha}.\end{align*}
Since $\alpha: X\longrightarrow P$ is a left $({\rm add}A)$-approximation, it follows that $\Hom_A(\alpha, A): \Hom_A(P, A) \longrightarrow \Hom_A(X, A)$
is an epimorphism, and hence
$$\Hom_B(\varphi_1,B) = \begin{pmatrix}\begin{smallmatrix} \Hom(\alpha, A) &0 \\ 0& \Hom(\alpha, A)\end{smallmatrix}\end{pmatrix}: \Hom(P, A)\oplus \Hom(P, A) \longrightarrow \Hom(X, A)\oplus \Hom(X, A)$$
is an epimorphism for $i\ge 1$. It is clear that $\Hom(\varphi_2,B)$ is an epimorphism. Since both $\begin{pmatrix}\begin{smallmatrix} P\\ X\end{smallmatrix}\end{pmatrix}_{\alpha}$ and $\begin{pmatrix}\begin{smallmatrix} P\\ 0\end{smallmatrix}\end{pmatrix}_0$ are semi-Gorenstein-projective $B$-modules, it follows that $\Ext^i_B(\varphi_1,B)=0=\Ext^i_B(\varphi_2,B)$ are isomorphisms. Thus according to {\rm Theorem \ref{semi}}, $G$ is a semi-Goresntein-projective $\Lambda$-module. However, since $\varphi_1$ is not a monomorphism, $G$ is not monic and hence not a Goresntein-projective $\Lambda$-module.
\end{exam}

\section{\bf Appendix: Cartan-Eleinberg Isomorphism}
 On page 209 and 205 of \cite{CE56},  H. Cartan and S. Eilenberg gave the following
 isomorphism for computing $\Ext$-groups of tensor product of modules, which is a powerful tool in studying modules over tensor algebras.

 \begin{thm} \ {\rm (\cite[p.209, p.205]{CE56})} \ Let $A$ and $B$ be left Noetherian algebras over a semi-simple commutative ring $k$,
 $M$ a finitely generated left $A$-module,  and $X$ a finitely generated left $B$-module. Then for any left $A$-module $N$ and left $B$-module $Y$,  there is a functorial isomorphism$:$
$$ \Phi: \ \Ext_A(M,N)\otimes_k\Ext_B(X,Y)\longrightarrow \Ext_{A\otimes_kB}(M\otimes_k X, N\otimes_k Y).$$
 \end{thm}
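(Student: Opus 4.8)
The plan is to compute all three $\Ext$-groups as cohomology of explicit cochain complexes built from projective resolutions, and then to identify them by two applications of the Künneth formula. First I would choose a projective resolution $P_\bullet\to M$ over $A$ and a projective resolution $Q_\bullet\to X$ over $B$, each consisting of \emph{finitely generated} projective modules; this is possible since $A$ and $B$ are left Noetherian and $M,X$ are finitely generated. Because $k$ is semisimple, the functor $-\otimes_k-$ is exact and every $k$-module is flat, so the Künneth formula applies with vanishing $\Tor$-terms and gives
$$H_n(P_\bullet\otimes_k Q_\bullet)\ \cong\ \bigoplus_{p+q=n}H_p(P_\bullet)\otimes_k H_q(Q_\bullet),$$
which equals $M\otimes_k X$ in degree $0$ and vanishes in positive degrees. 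Since each $P_p\otimes_k Q_q$ is a direct summand of a free $(A\otimes_k B)$-module, it is projective, so $P_\bullet\otimes_k Q_\bullet\to M\otimes_k X$ is a projective resolution over $A\otimes_k B$. Consequently
$$\Ext^n_{A\otimes_k B}(M\otimes_k X,\ N\otimes_k Y)\ =\ H^n\,\Hom_{A\otimes_k B}(P_\bullet\otimes_k Q_\bullet,\ N\otimes_k Y).$$

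Second, I would introduce the natural morphism of cochain complexes
$$\mu:\ \Hom_A(P_\bullet,N)\otimes_k\Hom_B(Q_\bullet,Y)\ \longrightarrow\ \Hom_{A\otimes_k B}(P_\bullet\otimes_k Q_\bullet,\ N\otimes_k Y),$$
$\mu(f\otimes g)=\big(p\otimes q\mapsto (-1)^{|g|\,|p|}\,f(p)\otimes g(q)\big)$, the sign being the Koszul sign needed for $\mu$ to commute with the differentials. Since $P_p$ and $Q_q$ are finitely generated projective, $\mu$ is an isomorphism in each degree: for free modules it reduces to the obvious identity $\Hom_A(A^a,N)\otimes_k\Hom_B(B^b,Y)=N^a\otimes_k Y^b\cong (N\otimes_k Y)^{ab}=\Hom_{A\otimes_k B}((A\otimes_k B)^{ab},N\otimes_k Y)$, and the general case follows by passing to direct summands. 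Thus the right-hand complex above is isomorphic, as a complex, to the tensor product of the two cochain complexes $\Hom_A(P_\bullet,N)$ and $\Hom_B(Q_\bullet,Y)$.

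Third, I would apply the Künneth formula a second time to this tensor product of cochain complexes of $k$-modules; semisimplicity of $k$ again removes the $\Tor$-correction, giving
$$H^n\big(\Hom_A(P_\bullet,N)\otimes_k\Hom_B(Q_\bullet,Y)\big)\ \cong\ \bigoplus_{p+q=n}\Ext^p_A(M,N)\otimes_k\Ext^q_B(X,Y).$$
Chaining the three isomorphisms produces $\Phi$. In cohomological degree $0$ it is the explicit map $g\otimes h\mapsto(l\otimes u\mapsto g(l)\otimes h(u))$ of Corollary~\ref{homiso}; in higher degrees it is only defined on cohomology classes, which is why the inverse is not explicit. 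Functoriality in $N$ and $Y$ is immediate because every construction is functorial in the second $\Hom$-argument, and functoriality in $M$ and $X$ follows from the comparison theorem for projective resolutions together with the uniqueness up to homotopy of the induced maps.

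The main obstacle I anticipate is organizational rather than conceptual: one must quote the Künneth formula in a version valid for the (cohomologically bounded-below) complexes of arbitrary $k$-modules occurring here — this is exactly what the semisimplicity of $k$ guarantees, since then every module is flat and the $\Tor_1^k$ terms vanish — and one must fix the Koszul sign conventions so that $\mu$ is genuinely a chain map and so that the two Künneth isomorphisms are compatible with $\mu$. Beyond this sign bookkeeping and the routine verification that $\mu$ is a degreewise isomorphism, the argument is formal.
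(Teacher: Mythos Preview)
Your proposal is correct and follows essentially the same route as the paper: take finitely generated projective resolutions, show their tensor product is a projective resolution of $M\otimes_k X$ via K\"unneth, identify $\Hom_{A\otimes_k B}(P_\bullet\otimes_k Q_\bullet,\,N\otimes_k Y)$ with $\Hom_A(P_\bullet,N)\otimes_k\Hom_B(Q_\bullet,Y)$ degreewise using that the terms are finitely generated projective, and apply K\"unneth again. The paper additionally proves the degree-$0$ $\Hom$-isomorphism for arbitrary finitely generated $M,X$ (not only projectives), but for the $\Ext$-statement only the projective case is needed, exactly as you observe.
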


In this appendix, we provide a direct and elementary proof of the Cartan-Eleinberg isomorphism
in the following context, based on the  K\"unneth formula for the homology of the tensor product of two complexes. In the following $\otimes = \otimes_k$.

  \begin{thm} \label{CEisofd}  {\rm (\cite[p.209, p.205]{CE56})} \ If $A$ and $B$ are finite-dimensional algebras over a field $k$.
  Let $M$ and $N$ be finitely generated left $A$-modules, and $X$ and $Y$ finitely generated left $B$-modules. Then

  \vskip5pt

$(1)$ \ There is a functorial isomorphism$:$
$$ \Phi:  \ \Hom_A(M,N)\otimes\Hom_B(X,Y)\longrightarrow \Hom_{A\otimes B}(M\otimes X, N\otimes Y),$$
where $\Phi(\sum\limits_i f_i\otimes g_i)(m\otimes x)=\sum\limits_i f_i(m)\otimes g_i(x)$.

\vskip5pt

$(2)$ \ There is a functorial isomorphism for $n\geq 0:$
$$\Phi: \ \bigoplus\limits_{i=0}^n\Ext^i_A(M,N)\otimes \Ext^{n-i}_B(X,Y)\longrightarrow \Ext^n_{A\otimes B}(M\otimes  X, N\otimes  Y).$$
\end{thm}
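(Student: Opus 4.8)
The plan is to prove $(1)$ first, by dévissage, and then to deduce $(2)$ by assembling projective resolutions of $M$ over $A$ and of $X$ over $B$ into a projective resolution of $M\otimes X$ over $A\otimes B$, and applying the K\"unneth formula twice. For $(1)$, the map $\Phi$ is well defined and natural in all four variables, so I only need bijectivity. When $M=A$ and $X=B$ both sides are identified with $N\otimes Y$ via evaluation at $1\otimes 1$, and $\Phi$ becomes the identity; since $\Hom_A(-,N)$, $\Hom_B(-,Y)$, $\Hom_{A\otimes B}(-,N\otimes Y)$ and $-\otimes_k-$ are all additive, $\Phi$ is then an isomorphism whenever $M$ is a finitely generated free $A$-module and $X$ a finitely generated free $B$-module. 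For general finitely generated $M$ and $X$, take finite free presentations $A^{m_1}\to A^{m_0}\to M\to 0$ and $B^{n_1}\to B^{n_0}\to X\to 0$; because $k$ is a field, $-\otimes_k-$ is exact, so tensoring the first presentation with $X$ (and the second with $M$) gives exact sequences of $A\otimes B$-modules. Applying the left-exact Hom functors and the exact functor $-\otimes_k-$ produces a commutative diagram with exact rows comparing $\Hom_A(M,N)\otimes\Hom_B(X,Y)$ with $\Hom_{A\otimes B}(M\otimes X,N\otimes Y)$, in which the remaining vertical maps are the isomorphisms already known for free modules; the five lemma---applied first with $M$ free to treat arbitrary finitely generated $X$, then once more to treat arbitrary finitely generated $M$---finishes $(1)$.

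For $(2)$, I would choose resolutions $P_\bullet\to M$ and $Q_\bullet\to X$ by finitely generated projective modules (available since $A$ and $B$ are Noetherian and $M,X$ are finitely generated), and form the total complex $T_\bullet=\operatorname{Tot}(P_\bullet\otimes_k Q_\bullet)$, with $T_n=\bigoplus_{i+j=n}P_i\otimes Q_j$. Each $P_i\otimes Q_j$ is a direct summand of some $(A\otimes B)^r$, so $T_\bullet$ is a complex of finitely generated projective $A\otimes B$-modules; and by the K\"unneth formula for the tensor product of complexes over the field $k$ there are no $\Tor$ corrections, so $H_n(T_\bullet)=\bigoplus_{i+j=n}H_i(P_\bullet)\otimes H_j(Q_\bullet)$ equals $M\otimes X$ for $n=0$ and vanishes otherwise. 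Hence $T_\bullet\to M\otimes X$ is a projective resolution over $A\otimes B$, so
\[
\Ext^n_{A\otimes B}(M\otimes X,N\otimes Y)=H^n\big(\Hom_{A\otimes B}(T_\bullet,N\otimes Y)\big).
\]
The cochain complex on the right is the total complex of the double complex with $(i,j)$-term $\Hom_{A\otimes B}(P_i\otimes Q_j,N\otimes Y)$, which by part $(1)$ is naturally isomorphic to $\Hom_A(P_i,N)\otimes_k\Hom_B(Q_j,Y)$ compatibly with both differentials up to the usual signs. Therefore $\Hom_{A\otimes B}(T_\bullet,N\otimes Y)\cong\operatorname{Tot}\big(\Hom_A(P_\bullet,N)\otimes_k\Hom_B(Q_\bullet,Y)\big)$, and a second application of the K\"unneth formula, now to this first-quadrant double complex of $k$-spaces, yields
\[
H^n\big(\operatorname{Tot}(\Hom_A(P_\bullet,N)\otimes\Hom_B(Q_\bullet,Y))\big)=\bigoplus_{i+j=n}\Ext^i_A(M,N)\otimes\Ext^j_B(X,Y).
\]
Composing these identifications gives the desired $\Phi$; functoriality in all four arguments comes from naturality of the two K\"unneth isomorphisms and of the isomorphism of $(1)$, together with the standard comparison argument showing independence of the chosen resolutions, and a direct check in cohomological degree $0$ recovers the explicit formula of $(1)$.

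I expect the main obstacle to be the verification that $T_\bullet\to M\otimes X$ is a genuine projective resolution---this is precisely the chain-level K\"unneth theorem over a field, where exactness of $-\otimes_k-$ is what removes the $\Tor$ terms---together with the double-complex bookkeeping: identifying $\Hom$ of a total complex with the total complex of the $\Hom$'s, getting the Koszul signs right, and checking naturality and resolution-independence. No convergence problem intervenes, since both double complexes are concentrated in a single quadrant.
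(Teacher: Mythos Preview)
Your proposal is correct and follows essentially the same approach as the paper: prove (1) by reducing to free modules via free presentations and left exactness, then deduce (2) by applying the K\"unneth formula twice, once to build a projective resolution $P_\bullet\otimes Q_\bullet$ of $M\otimes X$ and once to compute the cohomology of $\Hom_A(P_\bullet,N)\otimes_k\Hom_B(Q_\bullet,Y)$. The only notable difference is in the free-module step of (1): you reduce directly to the case $M=A$, $X=B$ (where $\Phi$ is the identity on $N\otimes Y$) and invoke additivity, whereas the paper first proves injectivity of $\Phi$ in general by embedding into the vector-space case and then shows bijectivity for free modules via a dimension count; your route is slightly more streamlined here.
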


Note that for our application in this paper,
we need the concrete form of the isomorphism $\Phi$ for the case of $n =0$, and even in this case of $n = 0$, the inverse of $\Phi$ seems to be not available.

\subsection{The isomorphism between $\Hom$-spaces} \ We will first prove Theorem \ref{CEisofd}(1). In a sketch, this will be done in steps:
\begin{enumerate}
\item \ $\Phi$ is injective.
\item \ $\Phi$ is functorial in each variable.
\item \ $\Phi$ is an isomorphism for free modules $M, X$.
\item  \ $\Phi$ is an isomorphism for any $M, X$.
\end{enumerate}

\vskip5pt

\begin{lem} \label{basiciso} {\rm (\cite[p.116]{ASS})} \ Let $A$ be a finite-dimensional $k$-algebra,  $M$ a finite-dimensional left $A$-module, and $P$ a finite-dimensional projective left $A$-module.   Then there is a functorial isomorphism
$\varphi: \ P^*\mathop\otimes_A M\lxr \Hom_A(P,M)$
defined by $\varphi(f\otimes m)(p)=f(p)m$, where $P^*=\Hom_A(P,A)$.
\end{lem}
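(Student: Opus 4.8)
The plan is to show that $\varphi$ is a well-defined natural transformation of additive functors, and then to reduce the isomorphism claim to the transparent free case $P={}_AA$.

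First I would check that the formula defines a $k$-linear map $P^{*}\otimes_{A}M\to\Hom_{A}(P,M)$, where $P^{*}=\Hom_{A}(P,A)$ carries its canonical right $A$-module structure $(fa)(p)=f(p)a$. Well-definedness of the map on the tensor product over $A$ is the identity $(fa)(p)m=f(p)(am)$, and the fact that $\varphi(f\otimes m)$ lands in $\Hom_{A}(P,M)$ follows from $f(ap)=af(p)$, since then $\varphi(f\otimes m)(ap)=af(p)m=a\bigl(\varphi(f\otimes m)(p)\bigr)$. Next I would record that $\varphi=\varphi_{P,M}$ is natural in both variables: both $P\mapsto P^{*}\otimes_{A}M$ and $P\mapsto\Hom_{A}(P,M)$ are additive, contravariant in $P$ and covariant in $M$, and a direct substitution shows that $\varphi$ commutes with the maps induced by an $A$-homomorphism $P'\to P$ or $M\to M'$.

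The key step is the case $P={}_AA$. Using the standard identifications $A^{*}\cong A$ as right $A$-modules (via $f\mapsto f(1)$), $A\otimes_{A}M\cong M$, and $\Hom_{A}(A,M)\cong M$ (evaluation at $1$), one checks that $\varphi_{A,M}$ becomes the identity on $M$; in particular it is an isomorphism. Since both functors are additive in $P$, it follows that $\varphi_{A^{n},M}$ is an isomorphism for every $n\ge 1$, being identified with the direct sum of $n$ copies of $\varphi_{A,M}$. Finally, for an arbitrary finite-dimensional — hence finitely generated — projective $P$, choose $Q$ with $P\oplus Q\cong A^{n}$; naturality applied to the inclusion and projection of this decomposition exhibits $\varphi_{P,M}$ as a retract of $\varphi_{A^{n},M}$ in the category of morphisms, and a retract of an isomorphism is an isomorphism. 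Functoriality in $M$ (and in $P$) is then inherited from the first step.

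I do not expect a genuine obstacle here; the statement is classical. The only points demanding a little care are keeping the left/right module structures on $P^{*}$ and $\Hom_{A}(P,M)$ straight, verifying that the identifications used in the case $P=A$ are compatible with $\varphi$, and making the additivity bookkeeping in the reduction precise — for which the naturality established in the first step is exactly what is needed.
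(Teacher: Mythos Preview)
Your proof is correct and follows the standard approach. The paper does not provove this lemma at all; it simply cites it from \cite[p.116]{ASS}, so there is nothing to compare against.
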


If $A=k$, then this is precisely the well-known isomorphism between finite-dimensional vector spaces.

\vskip5pt

If a finite-dimensional vector spaces $V$ has a basis $\{v_i\}_{i\in I}$, then the dual basis of $V^\vee=\Hom_k(V,k)$ will be denoted by $\{v_i^\vee\}_{i\in I}$, where $v_i^\vee(v_j)=\delta_{ij}$.

\begin{lem} \label{spaceiso}
Let $M,N,X,Y$ be finite-dimensional $k$-linear spaces. Then there is an isomorphism
$$ \Phi_k:  \Hom_k(M,N)\otimes\Hom_k(X,Y)\longrightarrow \Hom_k(M\otimes X, N\otimes Y),$$
where $\Phi_k(\sum\limits_i f_i\otimes g_i)(m\otimes x)=\sum\limits_i f_i(m)\otimes g_i(x)$.
\end{lem}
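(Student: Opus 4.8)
The plan is to establish $\Phi_k$ in four short moves: well-definedness, naturality, and bijectivity via a basis count. \textbf{First} I would check that the formula makes sense. For fixed $f\in\Hom_k(M,N)$ and $g\in\Hom_k(X,Y)$, the assignment $(m,x)\mapsto f(m)\otimes g(x)$ is $k$-bilinear $M\times X\to N\otimes Y$, so by the universal property of $M\otimes X$ it induces a unique linear map which I denote $f\otimes g\colon M\otimes X\to N\otimes Y$. The resulting pairing $\Hom_k(M,N)\times\Hom_k(X,Y)\to\Hom_k(M\otimes X,N\otimes Y)$, $(f,g)\mapsto f\otimes g$, is again $k$-bilinear, hence factors through $\Hom_k(M,N)\otimes\Hom_k(X,Y)$; the induced map is $\Phi_k$, and by construction it acts on simple tensors by the stated formula.

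\textbf{Next} I would prove bijectivity by carrying a basis to a basis. Fix bases $\{m_a\}$, $\{n_c\}$, $\{x_b\}$, $\{y_d\}$ of $M,N,X,Y$. Recall the canonical isomorphism $\Hom_k(V,W)\cong W\otimes V^\vee$ sending $w\otimes\xi$ to $v\mapsto\xi(v)w$; under it, $\{n_c\otimes m_a^\vee\}$ is a basis of $\Hom_k(M,N)$ and $\{y_d\otimes x_b^\vee\}$ a basis of $\Hom_k(X,Y)$, so $\{(n_c\otimes m_a^\vee)\otimes(y_d\otimes x_b^\vee)\}$ is a basis of the source. Since $\{m_a\otimes x_b\}$ is a basis of $M\otimes X$, under $(M\otimes X)^\vee\cong M^\vee\otimes X^\vee$ its dual basis is $\{m_a^\vee\otimes x_b^\vee\}$. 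A direct evaluation shows $\Phi_k\bigl((n_c\otimes m_a^\vee)\otimes(y_d\otimes x_b^\vee)\bigr)$ is the map $m\otimes x\mapsto m_a^\vee(m)\,x_b^\vee(x)\,(n_c\otimes y_d)$, i.e.\ the basis element $(n_c\otimes y_d)\otimes(m_a\otimes x_b)^\vee$ of $\Hom_k(M\otimes X,N\otimes Y)\cong(N\otimes Y)\otimes(M\otimes X)^\vee$. Thus $\Phi_k$ sends a basis bijectively onto a basis, and is an isomorphism; in particular both spaces have dimension $(\dim M)(\dim N)(\dim X)(\dim Y)$. Functoriality in each of $M,N,X,Y$ is then immediate, since the universal properties, the identification $\Hom_k(V,W)\cong W\otimes V^\vee$, and the formula for $\Phi_k$ on simple tensors are all natural.

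There is no genuine obstacle here — this is the $A=B=k$ shadow of Lemma \ref{basiciso}. The only point requiring a little care is the bookkeeping of the canonical identifications, especially $(M\otimes X)^\vee\cong M^\vee\otimes X^\vee$ and the fact that the tensor product of dual bases is the dual of the tensor product of bases; that compatibility is precisely what makes the basis-to-basis computation above go through cleanly. If one prefers to avoid dual bases altogether, an alternative closing step is to verify injectivity of $\Phi_k$ directly (an element of the kernel, written in the form $\sum_j f_j\otimes g_j$ with the $g_j$ linearly independent, forces each $f_j=0$ by evaluating on $m\otimes x$ and using independence of the $g_j$) and then conclude by the dimension equality.
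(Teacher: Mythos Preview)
Your argument is correct and essentially identical to the paper's: both identify $\Hom_k(V,W)\cong V^\vee\otimes W$ via Lemma~\ref{basiciso}, choose bases of $M,N,X,Y$, and verify directly that $\Phi_k$ carries the resulting tensor basis of the source to that of the target. You have added the well-definedness check and an alternative injectivity-plus-dimension-count ending, but the core is the same basis-to-basis computation.
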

\begin{proof}
Let $\{m_i\}_{i\in I}$, $\{n_j\}_{j\in J}$, $\{x_s\}_{s\in S}$, $\{y_t\}_{t\in T}$ be basis of $M, N, X, Y$ respectively. By Lemma \ref{basiciso}, the space $$\Hom_k(M,N)\otimes \Hom_k(X,Y)\cong (M^\vee\otimes N)\otimes (X^\vee\otimes Y)$$ has a $k$-basis $\{m_i^\vee\otimes n_j\otimes x^\vee_s\otimes y_t\}_{(i,j,s,t)\in I\times J\times S\times T}$; while the space $$ \Hom_k(M\otimes X, N\otimes Y)\cong (M\otimes X)^\vee\otimes(N\otimes Y)\cong M^\vee\otimes X^\vee\otimes N\otimes Y$$ has a basis $\{m_i^\vee\otimes x^\vee_s\otimes n_j\otimes y_t\}_{(i,j,s,t)\in I\times J\times S\times T}$. It is straightforward to check that $\Phi_k$ sends $m_i^\vee\otimes n_j\otimes x^\vee_s\otimes y_t$ to $m_i^\vee\otimes x^\vee_s\otimes n_j\otimes y_t$. \end{proof}

\begin{cor}\label{cor:Phi_inj}
The map $\Phi:\Hom_A(M,N)\otimes\Hom_B(X,Y)\longrightarrow \Hom_{A\otimes B}(M\otimes X, N\otimes Y)$ is injective.
\end{cor}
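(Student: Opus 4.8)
The plan is to obtain this as an immediate consequence of Lemma~\ref{spaceiso}, by observing that the module-level map $\Phi$ is simply the restriction of the $k$-linear isomorphism $\Phi_k$ to the appropriate subspaces of module homomorphisms, and then running a two-line diagram chase.

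First I would record the obvious inclusions $\Hom_A(M,N)\subseteq\Hom_k(M,N)$, $\Hom_B(X,Y)\subseteq\Hom_k(X,Y)$, and $\Hom_{A\otimes B}(M\otimes X,N\otimes Y)\subseteq\Hom_k(M\otimes X,N\otimes Y)$. Since $k$ is a field, $-\otimes_k-$ is exact, so tensoring the first two inclusions yields an injective map $\iota_1\colon \Hom_A(M,N)\otimes\Hom_B(X,Y)\hookrightarrow\Hom_k(M,N)\otimes\Hom_k(X,Y)$; write $\iota_2$ for the third inclusion. Next I would check that the square
$$\xymatrix@C=1.4cm{\Hom_A(M,N)\otimes\Hom_B(X,Y)\ar[r]^-{\Phi}\ar[d]_-{\iota_1} & \Hom_{A\otimes B}(M\otimes X,N\otimes Y)\ar[d]^-{\iota_2}\\ \Hom_k(M,N)\otimes\Hom_k(X,Y)\ar[r]^-{\Phi_k} & \Hom_k(M\otimes X,N\otimes Y)}$$
commutes. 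This is immediate from the explicit formulas, since $\Phi(\sum_i f_i\otimes g_i)(m\otimes x)=\sum_i f_i(m)\otimes g_i(x)=\Phi_k(\sum_i f_i\otimes g_i)(m\otimes x)$; in particular this also re-confirms that $\Phi$ genuinely takes values in $\Hom_{A\otimes B}(M\otimes X,N\otimes Y)$, i.e.\ that $f\otimes g$ is an $(A\otimes B)$-homomorphism whenever $f$ is $A$-linear and $g$ is $B$-linear.

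Finally, since $\Phi_k$ is an isomorphism by Lemma~\ref{spaceiso} and $\iota_1$ is injective, the composite $\iota_2\circ\Phi=\Phi_k\circ\iota_1$ is injective, and hence $\Phi$ itself is injective. There is essentially no obstacle here: the argument is purely formal, the only point deserving a word being the injectivity of $\iota_1$, which rests on flatness of $k$-vector spaces. I would stress that this gives \emph{only} injectivity; surjectivity of $\Phi$ is a genuinely module-theoretic statement and will be handled separately in steps (2)--(4) of the sketch, by functoriality together with a d\'evissage to free modules via Lemma~\ref{basiciso}.
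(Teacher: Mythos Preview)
Your argument is correct and is essentially the same as the paper's own proof: both factor $\Phi$ through the $k$-linear isomorphism $\Phi_k$ of Lemma~\ref{spaceiso} via the commutative square with the inclusion maps $\Hom_A\hookrightarrow\Hom_k$, etc., and conclude injectivity from $\Phi_k\circ\iota_1=\iota_2\circ\Phi$ with $\Phi_k$ bijective and $\iota_1$ injective. You have merely added a word on why $\iota_1$ is injective and on the commutativity of the square, which the paper leaves implicit.
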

\begin{proof}
Denote by $\ell_A:\Hom_A(M,N)\longrightarrow \Hom_k(M,N)$ (respectively $\ell_B, \ell_{A\otimes B}$) the inclusion of $k$-subspaces. In the commutative diagram below
$$
\xymatrix{\Hom_A(M,N)\otimes \Hom_B(X,Y)\ar[d]_-{\ell_A\otimes \ell_B} \ar[r]^\Phi& \Hom_{A\otimes B}(M\otimes X, N\otimes Y)\ar[d]^{\ell_{A\otimes B}}\\
\Hom_k(M,N)\otimes \Hom_k(X,Y) \ar[r]^{\Phi_k}& \Hom_{k}(M\otimes X, N\otimes Y)}
$$
by Lemma \ref{spaceiso}, both $\Phi_k$ and $\ell_A\otimes \ell_B$ are injective, thus $\Phi$ is injective as well.
\end{proof}

The following fact is clear.

\begin{lem}\label{nat} \ The map $\Phi$ is natural in each variable.
\end{lem}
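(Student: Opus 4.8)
The plan is to verify the commutativity of the four naturality squares (one for each of the variables $M,N,X,Y$) directly, by evaluating on simple tensors and then on elements of $M\otimes X$. Since $\Hom_A(-,-)$ and $\Hom_B(-,-)$ are contravariant in the first slot and covariant in the second, the covariant cases ($N$ and $Y$) and the contravariant cases ($M$ and $X$) look slightly different; but within each type the verification is identical, so I would carry out one representative case of each type in full and leave the remaining two to the evident symmetry between the $A$-side and the $B$-side.

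For the covariant variable $N$: given an $A$-homomorphism $\alpha\colon N\to N'$, I would check
$$(\alpha\otimes 1_Y)_*\circ\Phi_{M,N,X,Y}=\Phi_{M,N',X,Y}\circ(\alpha_*\otimes 1_{\Hom_B(X,Y)}).$$
Both sides are $k$-linear, so it suffices to evaluate at a simple tensor $f\otimes g$ with $f\in\Hom_A(M,N)$, $g\in\Hom_B(X,Y)$, and then at $m\otimes x\in M\otimes X$. On the left, $\Phi(f\otimes g)$ is the map $m\otimes x\mapsto f(m)\otimes g(x)$, and post-composing with $\alpha\otimes 1_Y$ gives $m\otimes x\mapsto \alpha(f(m))\otimes g(x)$. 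On the right, $(\alpha_*\otimes 1)(f\otimes g)=(\alpha f)\otimes g$, and applying $\Phi$ sends $m\otimes x$ to $(\alpha f)(m)\otimes g(x)=\alpha(f(m))\otimes g(x)$; the two maps agree. Here one only needs that $\alpha\otimes 1_Y\colon N\otimes Y\to N'\otimes Y$ is an $(A\otimes B)$-homomorphism, which is immediate. The variable $Y$ is handled the same way, using a $B$-homomorphism $Y\to Y'$ and the functor $1_N\otimes(-)$.

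For the contravariant variable $M$: given an $A$-homomorphism $\beta\colon M'\to M$, the square to verify is
$$(\beta\otimes 1_X)^*\circ\Phi_{M,N,X,Y}=\Phi_{M',N,X,Y}\circ(\beta^*\otimes 1_{\Hom_B(X,Y)}).$$
Again it is enough to evaluate at $f\otimes g$ and then at $m'\otimes x\in M'\otimes X$: the left side gives $m'\otimes x\mapsto f(\beta(m'))\otimes g(x)$, while the right side produces $(f\beta)\otimes g$ and then, via $\Phi$, the map $m'\otimes x\mapsto (f\beta)(m')\otimes g(x)$, the same map. The variable $X$ is treated identically with a $B$-homomorphism $X'\to X$ and the factor $1_M\otimes(-)$.

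There is no genuine obstacle here: the assertion is pure bookkeeping, the only points to keep straight being that tensoring an $A$-homomorphism with a $B$-homomorphism yields an $(A\otimes B)$-homomorphism, and that $k$-bilinearity reduces every check to simple tensors. (Alternatively, one could deduce naturality from the commutative square appearing in Corollary \ref{cor:Phi_inj}: $\Phi$ is the map obtained by restricting the codomain of $\Phi_k\circ(\ell_A\otimes\ell_B)$ to the subspace $\Hom_{A\otimes B}(M\otimes X, N\otimes Y)$, the inclusions $\ell_A,\ell_B,\ell_{A\otimes B}$ are natural transformations, and $\Phi_k$ is the standard natural isomorphism of finite-dimensional $k$-spaces; but the elementwise computation above is the shortest route.)
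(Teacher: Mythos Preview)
Your proposal is correct and is precisely the direct verification the paper has in mind; the paper itself simply declares the fact ``clear'' without writing out any argument. The elementwise check on simple tensors you give is the standard (and essentially only) way to see it.
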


\begin{lem}\label{lem:free_dual}
Let $M$ be a left free $A$-module and $X$ a left free $B$-module. Then there is a right $(A\otimes B)$-module isomorphism $\Hom_A(M,A)\otimes \Hom_B(X,B)\cong \Hom_{A\otimes B}(M\otimes X, A\otimes B)$.
\end{lem}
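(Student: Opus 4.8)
The plan is to reduce everything to finitely generated free modules and then invoke the elementary fact that the $R$-dual of a free left $R$-module of finite rank is a free right $R$-module of the same rank. Since $A$ and $B$ are finite-dimensional and all modules under consideration are finitely generated, $M$ and $X$ are free of finite rank, say $M\cong A^{m}$ and $X\cong B^{n}$. Both sides of the claimed isomorphism are built from $\Hom$ in the contravariant variable and from $-\otimes_{k}-$, each of which turns a finite direct sum in the relevant slot into a finite direct sum; so it suffices to treat the case $m=n=1$ and then pass to direct sums.

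First I would use the evaluation-at-$1$ isomorphism $\Hom_{R}(R,R)\xrightarrow{\;\sim\;}R$, $f\mapsto f(1)$, of right $R$-modules, applied to $R=A$ and to $R=B$. Tensoring over $k$, and using that $A^{m}\otimes B^{n}$ (with $A$ acting through the left tensor factor and $B$ through the right) is a free $(A\otimes B)$-module of rank $mn$, one obtains
$$\Hom_{A}(M,A)\otimes\Hom_{B}(X,B)\;\cong\;A^{m}\otimes B^{n}\;\cong\;(A\otimes B)^{mn}$$
as right $(A\otimes B)$-modules. On the other hand $M\otimes X\cong A^{m}\otimes B^{n}\cong(A\otimes B)^{mn}$ as \emph{left} $(A\otimes B)$-modules, and applying $\Hom_{A\otimes B}(-,A\otimes B)$ together with the evaluation-at-$1$ isomorphism for $R=A\otimes B$ gives
$$\Hom_{A\otimes B}(M\otimes X,\,A\otimes B)\;\cong\;(A\otimes B)^{mn}$$
as right $(A\otimes B)$-modules. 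Comparing the two displays, both sides are free right $(A\otimes B)$-modules of rank $mn$, hence isomorphic. If one wants the identification to be the specific map $\Phi$ of Theorem \ref{CEisofd}(1) — which is what is used afterwards in combination with Corollary \ref{cor:Phi_inj} — it suffices to observe that $\Phi$ sends the dual basis element $e_{i}^{*}\otimes f_{j}^{*}$ of $\Hom_{A}(A^{m},A)\otimes\Hom_{B}(B^{n},B)$ to $(e_{i}\otimes f_{j})^{*}$, hence carries a $k$-basis to a $k$-basis.

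There is no serious obstacle here; the only point requiring attention is bookkeeping of the left/right module structures: $\Hom_{A}(M,A)$ and $\Hom_{B}(X,B)$ are right modules via precomposition with right multiplication, $M\otimes X$ is a left $(A\otimes B)$-module, and $\Hom_{A\otimes B}(M\otimes X,A\otimes B)$ is a right $(A\otimes B)$-module via the target, and one must check that the displayed isomorphisms intertwine exactly these actions. Verifying this is routine, so I would keep it brief in the write-up.
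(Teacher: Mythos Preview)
Your proof is correct and follows essentially the same approach as the paper: reduce to $M=A^{m}$, $X=B^{n}$, identify each side with $(A\otimes B)^{mn}$ via the standard dual-of-a-free-module isomorphisms, and conclude. The paper's version is more terse and does not spell out the right-module bookkeeping or the explicit tracking of $\Phi$ on dual bases, but the argument is the same.
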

\begin{proof} \ Without loss of generality one may assume $M = A^m$ and $X = B^n$. Then $\Hom_A(A^m,A)\otimes\Hom_B(B^n,B)\cong A^m\otimes B^n\cong (A\otimes B)^{mn}$, and $\Hom_{A\otimes B}(A^m\otimes B^n, A\otimes B)\cong \Hom_{A\otimes B}((A\otimes B)^{mn}, A\otimes B)\cong (A\otimes B)^{mn}$.
\end{proof}

\begin{lem}\label{lem:Phi_free}
When $M$ and $X$ are free modules,  $\Phi$ is a bijection.
\end{lem}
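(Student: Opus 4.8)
The plan is to reduce to the case where $M$ and $X$ are the standard finitely generated free modules $A^m$ and $B^n$, which is legitimate because $\Phi$ is natural in $M$ and $X$ (Lemma \ref{nat}) and every finitely generated free module is isomorphic to such a standard one; then I would combine the injectivity of $\Phi$, already established in Corollary \ref{cor:Phi_inj}, with a dimension count over $k$.

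First I would record the two dimension computations. On the source side, $\Hom_A(A^m,N)\cong N^m$ and $\Hom_B(B^n,Y)\cong Y^n$ as $k$-spaces, so
$$\dim_k\bigl(\Hom_A(A^m,N)\otimes\Hom_B(B^n,Y)\bigr)=mn\cdot\dim_k N\cdot\dim_k Y.$$
On the target side, $A^m\otimes B^n\cong(A\otimes B)^{mn}$ as left $(A\otimes B)$-modules, hence $\Hom_{A\otimes B}(A^m\otimes B^n,N\otimes Y)\cong(N\otimes Y)^{mn}$ and
$$\dim_k\Hom_{A\otimes B}(A^m\otimes B^n,N\otimes Y)=mn\cdot\dim_k(N\otimes_k Y)=mn\cdot\dim_k N\cdot\dim_k Y.$$
Since $\Phi$ is an injective $k$-linear map between finite-dimensional $k$-spaces of equal dimension, it is bijective.

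A more conceptual variant, closer to the spirit of the surrounding lemmas, would be to use Lemma \ref{basiciso} to identify $\Hom_A(M,N)\cong M^*\otimes_A N$ and $\Hom_B(X,Y)\cong X^*\otimes_B Y$, and Lemma \ref{lem:free_dual} to identify $(M\otimes X)^*\cong M^*\otimes X^*$, so that $\Hom_{A\otimes B}(M\otimes X,N\otimes Y)\cong(M^*\otimes X^*)\otimes_{A\otimes B}(N\otimes Y)$; under these identifications $\Phi$ becomes the canonical ``interchange'' isomorphism $(M^*\otimes_A N)\otimes_k(X^*\otimes_B Y)\to(M^*\otimes_k X^*)\otimes_{A\otimes B}(N\otimes_k Y)$. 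I expect the only mildly delicate point in this second route to be checking that the interchange map agrees with the explicit formula $\Phi(\sum_i f_i\otimes g_i)(m\otimes x)=\sum_i f_i(m)\otimes g_i(x)$, i.e.\ tracking elements through the chain of identifications. The dimension-count argument avoids this entirely, so I would present it as the main proof and mention the conceptual version only in passing.
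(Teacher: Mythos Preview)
Your proof is correct. Your main route—reducing to $M=A^m$, $X=B^n$ via naturality and then doing a straight $k$-dimension count on both sides—is valid because $N$ and $Y$ are finitely generated over finite-dimensional algebras, hence finite-dimensional over $k$; combined with the injectivity of $\Phi$ from Corollary~\ref{cor:Phi_inj} this finishes immediately.

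The paper's argument is a hybrid of your two routes. It also appeals to injectivity of $\Phi$ and then needs only the inequality $\dim_k(\Hom_A(M,N)\otimes\Hom_B(X,Y))\ge\dim_k\Hom_{A\otimes B}(M\otimes X,N\otimes Y)$. To get that, it uses exactly the identifications from your conceptual variant—Lemma~\ref{basiciso} and Lemma~\ref{lem:free_dual}—but rather than tracking $\Phi$ through them, it simply writes down the obvious interchange map $\Xi$ and observes that $\Xi$ is a surjection, which gives the dimension inequality without ever needing to verify that $\Xi$ agrees with $\Phi$. So the paper sidesteps precisely the ``mildly delicate point'' you flagged. Your direct dimension computation is shorter and avoids invoking Lemmas~\ref{basiciso} and~\ref{lem:free_dual} at all; the paper's version has the minor advantage of making visible the structural reason (the surjection $\Xi$) behind the dimension match.
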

\begin{proof}
 Since we have shown that $\Phi$ is injective in Corollary \ref{cor:Phi_inj}, to see it is bijective, it suffices to show   $$\dim_k(\Hom_A(M,N)\otimes\Hom_B(X,Y))\geq \dim_k\Hom_{A\otimes B}(M\otimes X, N\otimes Y).$$
 In fact, since $M$ and $X$ are free, by Lemma \ref{basiciso} there are isomorphisms $$\Hom_A(M,N)\cong \Hom_A(M,A)\otimes_A N, \ \ \ \ \Hom_B(X,Y)\cong\Hom_B(X,B)\otimes_B Y$$
 as well as
\begin{align*}
\Hom_{A\otimes B}(M\otimes X, N\otimes Y) & \cong  \Hom_{A\otimes B}(M\otimes X, A\otimes B)\otimes_{A\otimes B} (N\otimes Y)\\
 & \cong  (\Hom_A(M,A)\otimes \Hom_B(X,B)) \otimes_{A\otimes B} (N\otimes Y)\end{align*}
where the second isomorphism is due to Lemma \ref{lem:free_dual}.
Notice that the map $$\Xi: (\Hom_A(M,A)\otimes_A N)\otimes (\Hom_B(X,B)\otimes_B Y) \longrightarrow(\Hom_A(M,A)\otimes \Hom_B(X,B)) \otimes_{A\otimes B} (N\otimes Y)$$
given by $\Xi(f\otimes_A n\otimes g\otimes_B y)=(f\otimes g)\otimes_{A\otimes B}(n\otimes y)$ is clearly a surjection. This completes the proof.
\end{proof}

{\bf Proof of Theorem \ref{CEisofd}(1)} \ First, assume that $X$ is a finite-dimensional free $B$-module. Let $A_1\longrightarrow A_0\longrightarrow M\longrightarrow 0$ be a free presentation of $M$. In the following diagram with exact rows:
$$
\xymatrix@C=12pt@R=20pt{0\ar[r]& \Hom_A(M,N)\otimes \Hom_B(X,Y)\ar[r]\ar[d]_\Phi &\Hom_A(A_0,N)\otimes \Hom_B(X,Y)\ar[r]\ar[d]_\Phi^\wr &\Hom_A(A_1,N)\otimes \Hom_B(X,Y)\ar[d]_\Phi^\wr\\
0\ar[r]&\Hom_{A\otimes B}(M\otimes X, N\otimes Y)\ar[r]& \Hom_{A\otimes B}(A_0\otimes X, N\otimes Y)\ar[r]&\Hom_{A\otimes B}(A_1\otimes X, N\otimes Y)}
$$
the two squares commute, due to Lemma \ref{nat}.  Since the second and third vertical maps are bijections, by Lemma \ref{lem:Phi_free}, so is the first one.

\vskip5pt

Now, for an arbitrary finite-dimensional $B$-module $X$, take a free presentation $B_1\longrightarrow B_0\longrightarrow X\longrightarrow 0$.
In the following diagram with exact rows:
$$
\xymatrix@C=12pt@R=20pt{0\ar[r]& \Hom_A(M,N)\otimes \Hom_B(X,Y)\ar[r]\ar[d]_\Phi &\Hom_A(M,N)\otimes \Hom_B(B_0,Y)\ar[r]\ar[d]_\Phi^\wr &\Hom_A(M,N)\otimes \Hom_B(B_1,Y)\ar[d]_\Phi^\wr\\
0\ar[r]&\Hom_{A\otimes B}(M\otimes X, N\otimes Y)\ar[r]& \Hom_{A\otimes B}(M\otimes B_0, N\otimes Y)\ar[r]&\Hom_{A\otimes B}(M\otimes B_1, N\otimes Y)}
$$
Since we have shown the second and third vertical maps are bijections, so is the first one.   \hfill \qed

\subsection {The isomorphism between $\Ext$-groups}
From Theorem \ref{CEisofd}(1), we can deduce the Catan-Eilenberg isomorphisms for higher $\Ext$-groups. The key ingredient is the  K\"unneth formula, which we will recall first.

\vskip5pt

Let $(X^\bullet, d)$ be a complex of right $A$-modules and $(Y^\bullet,\partial)$ a complex of left $A$-modules. Then $(X^\bullet\mathop\otimes_A Y^\bullet, \delta)$ is a complex of abelian groups defined by $(X^\bullet\mathop\otimes_A Y^\bullet)_n=\bigoplus\limits_{i+j=n}(X_i\mathop\otimes_A Y_j)$,  and
$$\delta_n(x_i\otimes_A y_j)=dx_i\otimes_A y_j+(-1)^ix_i\otimes_A \partial y_j,  \ \forall \ x_i\in X_i, \  y_j\in Y_j.$$

\begin{thm}\label{thm:Kunneth} {\rm (\cite[Theorem 3.6.3]{Wei})} \
Let $(X^\bullet, d)$ and $(Y^\bullet,\partial)$ be  right $A$-module and left $A$-module complexes, respectively.
If $X_n$ and $\Ima d_n$ are flat $A$-modules for all $n$, then there is an exact sequence$:$
$$
0\lxr \bigoplus\limits_{i+j=n}H_i(X^\bullet)\otimes_A H_j(Y^\bullet)\lxr H_n(X^\bullet\otimes_A Y^\bullet)\lxr \bigoplus\limits_{i+j=n-1}{\rm Tor}^1_A(H_i(X^\bullet), H_j(Y^\bullet))\lxr 0.
$$
\end{thm}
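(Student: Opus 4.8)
The plan is to prove the Künneth exact sequence by the classical cycles-and-boundaries argument, reducing everything to the long exact homology sequence of a suitable short exact sequence of complexes. Write $Z_n=\Ker d_n$ and $B_n=\Ima d_{n+1}$, so that $\Ima d_n=B_{n-1}$ and $H_n(X^\bullet)=Z_n/B_n$. First I would record the two families of short exact sequences of right $A$-modules
$$0\to Z_n\to X_n\xrightarrow{d_n} B_{n-1}\to 0\qquad\text{and}\qquad 0\to B_n\to Z_n\to H_n(X^\bullet)\to 0,$$
and observe that each $Z_n$ is flat: from the first sequence and the long exact $\Tor$-sequence one gets $\Tor^A_1(Z_n,-)\cong\Tor^A_2(B_{n-1},-)=0$, using that both $X_n$ and $B_{n-1}=\Ima d_n$ are flat.

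Next I would assemble the first family into a short exact sequence of complexes $0\to Z_\bullet\to X_\bullet\to B_\bullet[-1]\to 0$, where $Z_\bullet$ and $B_\bullet$ carry zero differentials and $B_\bullet[-1]$ places $B_{n-1}$ in degree $n$. Since every $B_{n-1}$ is flat, tensoring degreewise with $Y_\bullet$ over $A$ keeps the sequence short exact, giving a short exact sequence of complexes of abelian groups $0\to Z_\bullet\otimes_A Y_\bullet\to X_\bullet\otimes_A Y_\bullet\to (B_\bullet[-1])\otimes_A Y_\bullet\to 0$. Because $Z_\bullet$ and $B_\bullet$ have zero differential and consist of flat modules, homology commutes with $\otimes_A$ in those factors, so $H_n(Z_\bullet\otimes_A Y_\bullet)\cong\bigoplus_{i+j=n}Z_i\otimes_A H_j(Y^\bullet)$ and $H_n((B_\bullet[-1])\otimes_A Y_\bullet)\cong\bigoplus_{i+j=n-1}B_i\otimes_A H_j(Y^\bullet)$. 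Feeding these into the long exact homology sequence yields
$$\cdots\to\bigoplus_{i+j=n}B_i\otimes_A H_j(Y^\bullet)\xrightarrow{\ \partial\ }\bigoplus_{i+j=n}Z_i\otimes_A H_j(Y^\bullet)\to H_n(X^\bullet\otimes_A Y^\bullet)\to\bigoplus_{i+j=n-1}B_i\otimes_A H_j(Y^\bullet)\xrightarrow{\ \partial\ }\cdots.$$

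The crucial step, and the one I expect to be the main obstacle, is to identify the connecting map $\partial$ with $\iota\otimes\id$, where $\iota\colon B_i\hookrightarrow Z_i$ is the inclusion; this is a direct but somewhat delicate chase through the definition of the connecting homomorphism for the tensored sequence of complexes. Granting this, I would tensor the second family $0\to B_i\to Z_i\to H_i(X^\bullet)\to 0$ with $H_j(Y^\bullet)$ and use the flatness of $Z_i$ (so that $\Tor^A_1(Z_i,H_j(Y^\bullet))=0$) to obtain $0\to\Tor^A_1(H_i(X^\bullet),H_j(Y^\bullet))\to B_i\otimes_A H_j(Y^\bullet)\xrightarrow{\iota\otimes\id}Z_i\otimes_A H_j(Y^\bullet)\to H_i(X^\bullet)\otimes_A H_j(Y^\bullet)\to 0$. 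Reading off cokernel and kernel of $\partial$ in the long exact sequence then gives $\operatorname{coker}\partial_n\cong\bigoplus_{i+j=n}H_i(X^\bullet)\otimes_A H_j(Y^\bullet)$ and $\Ker\partial_{n-1}\cong\bigoplus_{i+j=n-1}\Tor^A_1(H_i(X^\bullet),H_j(Y^\bullet))$, and the resulting short exact sequence $0\to\operatorname{coker}\partial_n\to H_n(X^\bullet\otimes_A Y^\bullet)\to\Ker\partial_{n-1}\to 0$ is precisely the claimed Künneth sequence. (Since this is Weibel's Theorem 3.6.3, one may of course simply cite it; but the self-contained argument above is what will actually be reused — over the field $k$, where the $\Tor$-term vanishes — in deducing Theorem \ref{CEisofd}(2) from Theorem \ref{CEisofd}(1).)
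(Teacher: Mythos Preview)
The paper does not prove this theorem at all; it simply cites it as \cite[Theorem 3.6.3]{Wei} and moves on. Your proposal supplies the standard cycles--boundaries proof (essentially Weibel's own argument), and it is correct as written---including the flatness of $Z_n$, the identification of the homology of $Z_\bullet\otimes_A Y_\bullet$ and $(B_\bullet[-1])\otimes_A Y_\bullet$, and the reading off of $\ker$ and $\operatorname{coker}$ of the connecting map.
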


We are interested in a special form of the K\"unneth formula:
\begin{thm}\label{thm:kunneth}
Let $(X^\bullet, d)$ and $(Y^\bullet,\partial)$ be  right $A$-module and left $A$-module complexes, respectively. If $\Ker d_n$ and $H_n(X^\bullet)$ are projective for all $n$, then there is an isomorphism of abelian groups$:$
$$
H_n(X^\bullet\otimes_A Y^\bullet)\cong\bigoplus\limits_{i+j=n}H_i(X^\bullet)\otimes_A H_j(Y^\bullet).
$$
\end{thm}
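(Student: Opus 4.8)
The plan is to reduce Theorem \ref{thm:kunneth} to the general K\"unneth formula recorded in Theorem \ref{thm:Kunneth}: I would first upgrade the hypotheses on $X^\bullet$ so that the flatness assumptions of Theorem \ref{thm:Kunneth} hold, then apply that theorem, and finally kill the resulting $\Tor$-term using the projectivity of the homology of $X^\bullet$.

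First I would show that every boundary module $\Ima d_n$ is projective. For each $n$ one has $\Ima d_{n+1}\subseteq \Ker d_n$ and $H_n(X^\bullet)=\Ker d_n/\Ima d_{n+1}$, giving a short exact sequence
$$0\lxr \Ima d_{n+1}\lxr \Ker d_n\lxr H_n(X^\bullet)\lxr 0.$$
Since $H_n(X^\bullet)$ is projective by hypothesis, this sequence splits, so $\Ima d_{n+1}$ is a direct summand of $\Ker d_n$; as $\Ker d_n$ is projective by hypothesis, $\Ima d_{n+1}$ is projective. Running this over all $n$, every $\Ima d_n$ is projective, in particular flat. Next, the short exact sequence
$$0\lxr \Ker d_n\lxr X_n\stackrel{d_n}\lxr \Ima d_n\lxr 0$$
splits because $\Ima d_n$ is projective, whence $X_n\cong \Ker d_n\oplus \Ima d_n$ is projective, hence flat.

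Now $X_n$ and $\Ima d_n$ are flat for all $n$, so Theorem \ref{thm:Kunneth} applies and produces an exact sequence
$$0\lxr \bigoplus_{i+j=n}H_i(X^\bullet)\otimes_A H_j(Y^\bullet)\lxr H_n(X^\bullet\otimes_A Y^\bullet)\lxr \bigoplus_{i+j=n-1}\Tor^1_A(H_i(X^\bullet),H_j(Y^\bullet))\lxr 0.$$
Since each $H_i(X^\bullet)$ is projective, it is flat, so $\Tor^1_A(H_i(X^\bullet),H_j(Y^\bullet))=0$ for all $i,j$; the rightmost term vanishes, and the remaining monomorphism
$\bigoplus_{i+j=n}H_i(X^\bullet)\otimes_A H_j(Y^\bullet)\lxr H_n(X^\bullet\otimes_A Y^\bullet)$
is therefore an isomorphism of abelian groups, which is exactly the claimed statement.

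I do not expect a real obstacle: the only points needing (routine) care are the bookkeeping of the homological indices in the two short exact sequences above, and checking that the two splitting arguments invoke only the stated projectivity hypotheses. Note also that the theorem only asserts an isomorphism of abelian groups, so no naturality of the K\"unneth map has to be tracked; this is consistent with the remark that even the inverse of $\Phi$ in the $n=0$ case of Theorem \ref{CEisofd} is not available in closed form.
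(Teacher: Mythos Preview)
Your proposal is correct and follows essentially the same route as the paper: both arguments use the two short exact sequences relating $\Ker d$, $\Ima d$, $H_n(X^\bullet)$ and $X_n$ to upgrade the projectivity hypotheses to projectivity of $\Ima d_n$ and $X_n$, then invoke Theorem \ref{thm:Kunneth} and observe that the $\Tor$-term vanishes because each $H_i(X^\bullet)$ is projective. You are slightly more explicit than the paper about the splitting arguments and about why the $\Tor$-term dies, which is fine.
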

\begin{proof} \ By the exact sequences $0\lxr \Ima d_{n-1}\lxr\Ker d_n\lxr H_n(X^\bullet)\lxr 0$ and
$0\lxr \Ker d_{n-1}\lxr X_n\lxr \Ima d_{n-1} \lxr 0$ one sees that $\Ima d_{n-1}$ and $X^n$ are projective modules for all $n$. So Theorem \ref{thm:Kunneth} applies.
\end{proof}

As an immediate corollary, we obtain a formula for projective resolutions of the tensors of modules.

\begin{cor} \ Let $A$ and $B$ be finite-dimensional $k$-algebras. Suppose that $(P^\bullet,d)$ is a projective resolution of a left $A$-module $X$ and $(Q^\bullet,\partial)$ is a projective resolution of a left $B$-module $Y$. Then $P^\bullet\otimes_k Q^\bullet$ is a projective resolution of the left $(A\otimes B)$-module $X\otimes Y$.
\end{cor}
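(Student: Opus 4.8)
The plan is to deduce this from the special K\"unneth formula (Theorem \ref{thm:kunneth}), applied with the ground ring taken to be the field $k$, once I have checked that the total complex $P^\bullet\otimes_k Q^\bullet$ is a complex of projective $(A\otimes B)$-modules. So the proof splits into three parts: projectivity of the terms, acyclicity, and compatibility of the augmentation map.

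\textbf{Projectivity of the terms.} The degree $n$ term of $P^\bullet\otimes_k Q^\bullet$ is $\bigoplus_{i+j=n}P_i\otimes_k Q_j$, so it suffices to show that $P\otimes_k Q$ is a projective left $(A\otimes B)$-module whenever $P$ is a projective left $A$-module and $Q$ is a projective left $B$-module. Writing $P$ as a direct summand of a free module $A^{(I)}$ and $Q$ as a direct summand of $B^{(J)}$, the module $P\otimes_k Q$ is a direct summand of $A^{(I)}\otimes_k B^{(J)}\cong (A\otimes B)^{(I\times J)}$, hence projective; this is the module-level form of the identity already used in Lemma \ref{lem:free_dual}. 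A direct sum of such modules is again projective.

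\textbf{Acyclicity.} Here I would apply Theorem \ref{thm:kunneth} with $A$ there replaced by $k$, and with $X^\bullet=P^\bullet$, $Y^\bullet=Q^\bullet$. Since $k$ is a field, every $k$-module is projective, so the hypotheses on $\Ker d_n$ and on $H_n(P^\bullet)$ are automatic, and the formula yields
$$H_n(P^\bullet\otimes_k Q^\bullet)\cong\bigoplus_{i+j=n}H_i(P^\bullet)\otimes_k H_j(Q^\bullet).$$
Because $P^\bullet$ and $Q^\bullet$ are projective resolutions, one has $H_i(P^\bullet)=0$ for $i>0$ and $H_0(P^\bullet)=X$, and likewise $H_j(Q^\bullet)=0$ for $j>0$ and $H_0(Q^\bullet)=Y$. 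Hence $H_n(P^\bullet\otimes_k Q^\bullet)=0$ for $n>0$ and $H_0(P^\bullet\otimes_k Q^\bullet)\cong X\otimes_k Y$.

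\textbf{The augmentation.} Finally, the tensor $\varepsilon\otimes\varepsilon'\colon P_0\otimes_k Q_0\lxr X\otimes_k Y$ of the two augmentation maps vanishes on boundaries, and chasing it through the K\"unneth isomorphism in degree zero shows that the induced map $H_0(P^\bullet\otimes_k Q^\bullet)\lxr X\otimes_k Y$ is exactly the isomorphism above. Therefore the sequence $\cdots\lxr (P^\bullet\otimes_k Q^\bullet)_1\lxr (P^\bullet\otimes_k Q^\bullet)_0\stackrel{\varepsilon\otimes\varepsilon'}{\lxr} X\otimes_k Y\lxr 0$ is exact, which, together with the first part, says precisely that $P^\bullet\otimes_k Q^\bullet$ is a projective resolution of $X\otimes_k Y$. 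The only step that is not purely formal is this last verification that the K\"unneth isomorphism in degree zero matches the natural augmentation; everything else reduces to the two facts already recorded, namely the behaviour of $\otimes_k$ on projectives and Theorem \ref{thm:kunneth} over a field.
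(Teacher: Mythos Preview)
Your proof is correct and follows essentially the same approach as the paper: apply Theorem \ref{thm:kunneth} over the field $k$ to compute $H_n(P^\bullet\otimes_k Q^\bullet)$. You add explicit verifications of projectivity of the terms and of the compatibility with the augmentation, which the paper's proof leaves implicit, but the core argument is identical.
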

\begin{proof} \ Consider $(P^\bullet,d)$ as a complex of right $k$-modules and $(Q^\bullet,\partial)$ as a complex of left $k$-modules. Then $P^\bullet\otimes_k Q^\bullet$ is a complex satisfying $H_n(P^\bullet\otimes_k Q^\bullet)\cong\bigoplus\limits_{i+j=n}H_i(P^\bullet)\otimes_k H_j(Q^\bullet)=0$ for $n>0$, and  $H_0(P^\bullet\otimes_k Q^\bullet)\cong X\otimes Y$ as left $(A\otimes B)$-modules.\end{proof}

{\bf Proof of Theorem \ref{CEisofd}(2)}  \ Let $(P^\bullet,d)$ and $(Q^\bullet,\partial)$ be projective resolutions of $M$ and $X$, respectively. Then $P^\bullet\otimes_k Q^\bullet$ is a projective resolution of $M\otimes X$. Applying $\Hom_{A\otimes B}(-,N\otimes Y)$ and using Theorem \ref{CEisofd}(1), we obtain a commutative diagram:
$$
\begin{tikzpicture}[baseline= (a).base]
\node[scale=0.6] (a) at (0,0){
\xymatrix{0\ar[r] &\Hom_{A\otimes B}(P_0\otimes Q_0, N\otimes Y)\ar[r]\ar[d]_\Phi^\wr&\Hom_{A\otimes B}(P_1\otimes Q_0\oplus P_0\otimes Q_1, N\otimes Y)\ar[r]\ar[d]_(0.4){\Phi}^(0.4){\wr}&\cdots\ar[r]& \Hom_{A\otimes B}(\bigoplus\limits_{i+j=n}P_i\otimes Q_j, N\otimes Y)\ar[r]\ar[d]_\Phi^\wr&\cdots\\
0\ar[r] &\Hom_A(P_0,N)\otimes \Hom_B(Q_0,Y)\ar[r]& \begin{smallmatrix}{\begin{matrix}\Hom_A(P_1,N)\otimes \Hom_B(Q_0,Y)\\ \oplus \\ \Hom_A(P_0,N)\otimes \Hom_B(Q_1,Y)\end{matrix}}\end{smallmatrix}\ar[r]&\cdots\ar[r]& \bigoplus\limits_{i+j=n}\Hom_A(P_i, N)\otimes \Hom_B( Q_j,Y)\ar[r]&\cdots
}
};
\end{tikzpicture}
$$
That is, there is an isomorphism of complexes $\Hom_{A\otimes B}(P^\bullet\otimes_k Q^\bullet, N\otimes Y)\cong\Hom_A(P^\bullet,N)\otimes \Hom_B(Q^\bullet,Y).$
Thus
\begin{eqnarray*}
  \Ext^n_{A\otimes B}(M\otimes X, N\otimes Y)&=& H_n\Hom_{A\otimes B}(P^\bullet\otimes_k Q^\bullet, N\otimes Y)\\
  &\cong& H_n(\Hom_A(P^\bullet,N)\otimes \Hom_B(Q^\bullet,Y))\\
  &\cong& \bigoplus\limits_{i+j=n} H_i\Hom_A(P^\bullet,N)\otimes H_j\Hom_B(Q^\bullet,Y)\\
  &=&\bigoplus\limits_{i+j=n} \Ext^i_A(M,N)\otimes \Ext^j_B(X,Y). \\
\end{eqnarray*}
This completes the proof. \hfill \qed

\end{document}